\newcommand{\nicecolor}{Navy}
\setlist[1]{wide}
\setlist[2]{leftmargin=15mm}
\setlist[enumerate]{label=\rm{(\arabic*)}}
\setlist[enumerate,2]{label=\rm({\it\roman*}), }
\setlist[itemize]{label=\raisebox{0.25ex}{\tiny$\bullet$}}
\definecolor{grisclair}{rgb}{0.9,0.9,0.9}
\tikzset{>=stealth}
\tikzset{link/.style={column sep=1.8cm,row sep=0.16cm}}
\tikzset{map/.style={row sep=0em, column sep=0em}}
\DeclareFontFamily{U}{mathb}{\hyphenchar\font45}
\DeclareFontShape{U}{mathb}{m}{n}{
	<5> <6> <7> <8> <9> <10> gen * mathb
	<10.95> mathb10 <12> <14.4> <17.28> <20.74> <24.88> mathb12
}{}
\DeclareSymbolFont{mathb}{U}{mathb}{m}{n}
\DeclareMathSymbol{\bigast}{1}{mathb}{"06}
\DeclareFontFamily{U}{mathx}{\hyphenchar\font45}
\DeclareFontShape{U}{mathx}{m}{n}{<-> mathx10}{}
\DeclareSymbolFont{mathx}{U}{mathx}{m}{n}
\DeclareMathAccent{\widebar}{0}{mathx}{"73}
\DeclareFontFamily{U}{mathx}{\hyphenchar\font45}
\DeclareFontShape{U}{mathx}{m}{n}{<-> mathx10}{}
\DeclareSymbolFont{mathx}{U}{mathx}{m}{n}
\DeclareMathAccent{\widebar}{0}{mathx}{"73}
\renewcommand{\to}{ \, \tikz[baseline=-.6ex] \draw[->,line width=.5] (0,0) -- +(.5,0); \, }
\renewcommand{\rightarrow}{ \, \tikz[baseline=-.6ex] \draw[->,line width=.5] (0,0) -- +(.5,0); \, }
\newcommand{\rat}{ \, \tikz[baseline=-.6ex] \draw[->,densely dashed,line width=.5] (0,0) -- +(.5,0); \, }
\renewcommand{\dashrightarrow}{\rat}
\renewcommand{\mapsto}{ \, \tikz[baseline=-.6ex] \draw[|->,line width=.5] (0,0) -- +(.5,0); \, }
\newcommand\iso{\stackrel{\sim}{\to}}
\newcommand{\tto}{ \, \tikz[baseline=-.6ex] \draw[->>,line width=.5] (0,0) -- +(.5,0); \, }
\renewcommand{\twoheadrightarrow}{\tto}
\newcommand{\hookto}{ \, \tikz[baseline=-.6ex] \draw[right hook->,line width=.5] (0,0) -- +(.5,0); \, }
\renewcommand{\hookrightarrow}{\hookto}
\tikzset{map/.style={row sep=0em, column sep=0em}}
\DeclareMathOperator{\Centralizer}{Cent}
\DeclareMathOperator{\Bir}{Bir}
\DeclareMathOperator{\Spec}{Spec}
\DeclareMathOperator{\rk}{rk}
\DeclareMathOperator{\Pic}{Pic}
\DeclareMathOperator{\Aut}{Aut}
\DeclareMathOperator{\PGL}{PGL}
\DeclareMathOperator{\GL}{GL}
\DeclareMathOperator{\Sym}{\mathfrak{S}}
\DeclareMathOperator{\Am}{Am}
\DeclareMathOperator{\Weil}{R}
\DeclareMathOperator{\Ind}{Ind}
\DeclareMathOperator{\Pl}{Pl}
\DeclareMathOperator{\Br}{Br}
\DeclareMathOperator{\Gal}{Gal}
\DeclareMathOperator{\Exc}{Exc}
\DeclareMathOperator{\Norm}{N}
\DeclareMathOperator{\indexx}{ind}
\DeclareMathOperator{\Cohom}{H}
\DeclareMathOperator{\Cocycles}{Z}
\DeclareMathOperator{\BirMori}{BirMori}
\DeclareMathOperator{\Fields}{\mathcal F}
\DeclareMathOperator{\Graph}{\mathcal G}
\theoremstyle{plain}
\newtheorem{thm}{Theorem}[section]
\newtheorem{lem}[thm]{Lemma}
\newtheorem{cor}[thm]{Corollary}
\newtheorem{prop}[thm]{Proposition}
\newtheorem{maintheorem}{Theorem}
\theoremstyle{definition}
\newtheorem{mydef}[thm]{Definition}
\newtheorem{rem}[thm]{Remark}
\newtheorem{ex}[thm]{Example}
\theoremstyle{definition}
\newtheorem{Notation}[thm]{Notation}
\definecolor{mypink}{RGB}{255,202,202}  
\definecolor{mygreen}{RGB}{212,255,219}  
\definecolor{myblue}{RGB}{212,234,255}  
\definecolor{myyellow}{RGB}{252,229,154}  
\definecolor{darkgreen}{RGB}{50,162,69}  
\tikzset{>=stealth}
\tikzset{link/.style={column sep=1.8cm,row sep=0.16cm}}
\tikzset{map/.style={row sep=0em, column sep=0em}}
\g@addto@macro{\endabstract}{\@setabstract}
\newcommand{\authorfootnotes}{\renewcommand\thefootnote{\@fnsymbol\c@footnote}}%
\title[Birational geometry of del Pezzo sextics]{Birational geometry of sextic del Pezzo surfaces}
\author{Elias Kurz}
\address{Elias Kurz, Universit\'e de Neuch\^{a}tel, L'Institut de math\'ematiques,
	Rue Emile-Argand 11,
	CH-2000, Neuch\^{a}tel,
	Switzerland \href{elias.kurz@unine.ch}{elias.kurz@unine.ch}}
\author{Egor Yasinsky}
\address{
	Egor Yasinsky, L'Institut de Math\'{e}matiques de Bordeaux, Universit\'{e} de Bordeaux, 351 Cours de la Lib\'{e}ration,
	33405 Talence Cedex, France
\href{egor.yasinsky@u-bordeaux.fr}{egor.yasinsky@u-bordeaux.fr}}
\subjclass[2010]{12G05, 14E07, 14E05, 14J26, 14J50, 14E30, 14J45, 14M22, 20F05}
\newcommand{\I}{\ensuremath{\mathrm{I}}}
\newcommand{\II}{\ensuremath{\mathrm{II}}}
\newcommand{\CC}{\mathbb C}
\newcommand{\FF}{\mathbf F}
\newcommand{\EE}{\mathbf E}
\newcommand{\id}{\mathrm{id}}
\newcommand{\LL}{\mathbf L}
\newcommand{\SplittingHex}{\mathbf F}
\newcommand{\kk}{\mathbf{k}}
\newcommand{\PP}{\mathbb P}
\newcommand{\ZZ}{\mathbb Z}
\newcommand{\KK}{\mathbf K}
\newcommand{\Dih}{\mathrm D}
\newcommand{\Klein}{\mathrm V}
\def\@tocline#1#2#3#4#5#6#7{\relax
	\ifnum #1>\c@tocdepth 
	\else
	\par \addpenalty\@secpenalty\addvspace{#2}%
	\begingroup \hyphenpenalty\@M
	\@ifempty{#4}{%
		\@tempdima\csname r@tocindent\number#1\endcsname\relax
	}{%
		\@tempdima#4\relax
	}%
	\parindent\z@ \leftskip#3\relax \advance\leftskip\@tempdima\relax
	\rightskip\@pnumwidth plus4em \parfillskip-\@pnumwidth
	#5\leavevmode\hskip-\@tempdima
	\ifcase #1
	\or\or \hskip 3em \or \hskip 4em \else \hskip 5em \fi%
	#6\nobreak\relax
	\hfill\hbox to\@pnumwidth{\@tocpagenum{#7}}\par
	\nobreak
	\endgroup
	\fi}
\numberwithin{equation}{section}
\begin{document}

	\maketitle

\begin{abstract}
	We study the biregular and birational geometry of degree 6 del Pezzo surfaces with Picard number 1, defined over an arbitrary perfect field. Using Galois cohomology techniques, we obtain an explicit description of cocycles for such surfaces and describe the Severi–Brauer varieties associated with them, recovering the biregular classification of sextic del Pezzo surfaces. We then compute the automorphism groups of such surfaces, describe their closed points in general position and investigate the structure of Sarkisov links at such points and the corresponding birational models, answering a question of M. Rost. Using this description, we show that degree 6 del Pezzo surfaces are the only solid surfaces that admit infinite pliability. We also find a system of generators and relations for the groups of birational transformations of such surfaces and use it to construct nontrivial quotients of these groups, including free groups on uncountable sets.
\end{abstract}

\setcounter{tocdepth}{1}

{
	\hypersetup{linkcolor=blue}
	\tableofcontents
}

\section{Introduction}\label{sec: intro}

\subsection{Biregular and birational classifications} The main goal of this paper is to study birational properties of del Pezzo surfaces of degree~6 defined over an arbitrary perfect field $\kk$. As it is classically known, over $\overline{\kk}$, every such surface is isomorphic to the blow-up of $\PP^2$ at three non-collinear points, and thus there is only one $\overline{\kk}$-isomorphism class of such surfaces; furthermore, these are del Pezzo surfaces of the smallest anticanonical degree that possess a toric structure.

Despite their simple description over $\overline{\kk}$, the biregular classification of sextic del Pezzo surfaces~is much less trivial and, to the best of our knowledge, was first addressed in the unpublished note \cite[Section 3]{Rost} of M. Rost and the work \cite{CTKarpenkoMerkurjev} of J.-L. Colliot-Thélène, N. Karpenko and A. Merkurjev as an auxiliary result for computing the canonical dimension of $\PGL_6$. A systematic treatment was later given by M. Blunk \cite{Blunk} and M. Blunk, S. Sierra and S. Paul Smith \cite{BlunkSierraSmith}, whose studies also included Quillen K-theory and derived categories of coherent sheaves for such surfaces. From the derived category perspective, sextic del Pezzo surfaces were further studied by A. Auel and M. Bernardara \cite[Section 9]{AuelBernadara} and A. Kuznetsov \cite{KuznetsovDP6}. Models of del Pezzo fibrations of degree $\geqslant 6$ via root stacks were investigated by A. Kresch and Yu. Tschinkel in a recent series of works \cite{KreschTschinkelModels,KreschTschinkel,KreschTschinkelInvolution,KreschTschinkelDP6}, while applications to rationality problems for cubic fourfolds was given by N. Addington, B. Hassett, Yu. Tschinkel and A. Várilly-Alvarado in \cite{Addington}.

\medskip

In all these cases, the descriptions of sextic del Pezzo surfaces were obtained in rather algebraic terms, namely in terms of certain finite dimensional $\kk$-algebras. This can be seen as a continuation of the classical algebraic descriptions of other toric del Pezzo surfaces. Recall that del Pezzo surfaces of degree 9, that is, Severi–Brauer surfaces, are in one-to-one correspondence with central simple $\kk$-algebras of degree 3. Similarly, $\overline{\kk}/\kk$-forms of $\PP^1\times\PP^1$, known as involution surfaces, are in bijection with central simple $\kk$-algebras of degree 4 equipped with generalized orthogonal involutions, see \cite[\S 5B]{Knus}. The development of this viewpoint on toric varieties can be found in the works of A. Duncan \cite{DuncanTwistedForms}, M. Ballard, A. Duncan, A. Lamarche and P.~ McFaddin \cite{BallardDuncanLamarcheMcFaddin} and also in a very recent preprint of A. Duncan and P. Singh \cite{DuncanSingh}, where the authors developed a general framework for classifying torsors of algebraic tori in terms of Brauer groups.

\medskip

Recall \cite[Proposition 5.1]{CTKarpenkoMerkurjev}, \cite[Definition 2.8]{Liedtke} that for any smooth projective geometrically irreducible variety $X$ over $\kk$, there exists an exact sequence of groups
\[
0\to\Pic(X)\to\Pic(X_{\overline{\kk}})^{\Gal(\overline{\kk}/\kk)}\to\Br(\kk)\to\Br(\kk(X)).
\]
The group $\Am(S)=\Pic(X_{\overline{\kk}})^{\Gal(\overline{\kk}/\kk)}/\Pic(X)\subset\Br(\kk)$ is called the \emph{Amitsur subgroup} of $X$ in~$\Br(\kk)$. The inspiration for our first result, Theorem \ref{thm: Biregular} below, is the following characterization of certain twisted forms of quadrics, which uses the Amitsur group.

\begin{thm}[{\cite[Theorem 2.9]{Trepalin2023}}]\label{thm: Trepalin Amitsur}
	Let $S$ and $S'$ be two del Pezzo surfaces of degree 8 with $\Pic(S)\simeq\Pic(S')\simeq\ZZ$. Then $S\simeq S'$ if and only if these surfaces have the same splitting field $\FF$ and $\Am(S_\FF)=\Am(S_\FF')$. In other words, the pair $(\FF,\Am(S_\FF))$ is a biregular invariant.
\end{thm}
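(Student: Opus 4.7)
The $(\Rightarrow)$ direction is immediate from the functoriality of Picard groups, splitting fields and Amitsur subgroups under isomorphisms of $\kk$-surfaces. The plan therefore focuses on $(\Leftarrow)$.

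First I would pin down the geometric structure of $S$. The hypothesis $\Pic(S)\simeq\ZZ$ on a degree $8$ del Pezzo surface excludes the blow-up of $\PP^2$ at a point as a geometric model, since its unique $(-1)$-curve is Galois-invariant and descends to $\kk$, forcing $\Pic(S)$ to have rank at least $2$. Hence $S_{\overline{\kk}}\simeq\PP^1\times\PP^1$, and the Galois action on $\Pic(S_{\overline{\kk}})=\ZZ f_1\oplus\ZZ f_2$ preserves the intersection form and the anticanonical class, so it factors through the swap of $f_1$ and $f_2$. The rank hypothesis forces this quotient action to be nontrivial, and its kernel $\Gal(\overline{\kk}/\FF)$ cuts out the quadratic splitting field $\FF$. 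Over $\FF$ each ruling $|f_i|$ yields a conic bundle $\pi_i : S_\FF\to C_i$, and the morphism $(\pi_1,\pi_2) : S_\FF\to C_1\times_\FF C_2$ has generic degree $f_1\cdot f_2=1$ between smooth proper surfaces, hence is an isomorphism. The generator $\sigma\in\Gal(\FF/\kk)$ swaps the rulings, so $\sigma^\ast[C_1]=[C_2]$ in $\Br(\FF)$, and a standard computation with products of Severi--Brauer varieties gives
\[
\Am(S_\FF)=\langle [C_1],[C_2]\rangle\subset\Br(\FF)[2].
\]

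The crucial step is to recover the unordered pair $\{[C_1],[C_2]\}$ from the $\sigma$-stable subgroup $A:=\Am(S_\FF)$. Since $|A|\in\{1,2,4\}$, a short case analysis suffices: if $|A|\leqslant 2$ the generating $\sigma$-orbit is forced; if $|A|=4$ the action of $\sigma$ on $A\simeq(\ZZ/2)^2$ cannot be trivial, for otherwise $[C_1]=[C_2]$ and $A$ would be cyclic, so $\sigma$ fixes one nontrivial element and swaps the remaining two, and only the swapped pair generates $A$. Applying the same analysis to $S'$ with its associated conics $C_1',C_2'$ and invoking $\Am(S_\FF)=\Am(S'_\FF)$, we deduce $\{[C_1],[C_2]\}=\{[C_1'],[C_2']\}$; after relabelling, $S_\FF$ and $S'_\FF$ acquire matching ruling data over $\FF$.

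The remaining task is to descend this $\FF$-level coincidence to an isomorphism $S\simeq S'$ of $\kk$-surfaces, and this is where I expect the main obstacle. My plan is to interpret such $\kk$-forms as cocycle classes in $H^1(\Gal(\FF/\kk),(\PGL_2\times\PGL_2)\rtimes\ZZ/2)$, the outer $\ZZ/2$ swapping the factors. Given the equality of unordered Brauer classes, any two choices of $\FF$-isomorphism $\sigma^\ast C_1\iso C_2$ differ by an element of $\PGL_2(\FF)$, which can be absorbed into the cocycle, so the descent data for $S$ and $S'$ are cohomologous and yield $S\simeq S'$ over $\kk$. Verifying this absorption step rigorously (rather than the combinatorial identification of the Brauer pair) is the part of the argument I anticipate requiring the most care.
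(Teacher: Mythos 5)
This statement is quoted from Trepalin (\cite[Theorem 2.9]{Trepalin2023}) and the paper gives no proof of its own, so there is nothing internal to compare against; judged on its merits, your argument is correct and is essentially the natural one. Two observations. First, the structural half of your proof — $S_{\overline{\kk}}\simeq\PP^1\times\PP^1$, the quadratic field $\FF$ cut out by the Galois action on the two rulings, the identification $S_\FF\simeq C_1\times_\FF C_2$ with $[C_2]=\sigma^*[C_1]$, and the uniqueness of the conic up to $\Gal(\FF/\kk)$-conjugation — is exactly Proposition \ref{prop: biregular classification of dP8} of this paper ($S\simeq\Weil_{\FF/\kk}C$, with $\FF$ and $C$ unique up to conjugation), and $\Am(S_\FF)=\langle[C_1],[C_2]\rangle$ is Example \ref{ex: Amitsur}; citing these would let you discharge the descent step you flag as the delicate part, since $\Weil_{\FF/\kk}C\simeq\Weil_{\FF/\kk}(\sigma^*C)$ canonically and the Weil restriction depends only on the $\FF$-isomorphism class of $C$. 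The genuinely new content is then the combinatorial step — recovering the unordered $\sigma$-orbit $\{[C],\sigma^*[C]\}$ from the $\sigma$-stable subgroup it generates in $\Br(\FF)[2]$ — and your case analysis on $|A|\in\{1,2,4\}$ is complete and correct (in particular the observation that for $|A|=4$ the swap of the two non-fixed elements is the unique two-element $\sigma$-orbit generating $A$). Second, a minor imprecision in the descent step as written: two $\FF$-isomorphisms $\sigma^*C_1\iso C_2$ differ by an element of $\Aut_\FF(C_2)$, which for a nontrivial conic is the group of $\FF$-points of an \emph{inner form} of $\PGL_2$ rather than $\PGL_2(\FF)$ itself; this does not affect the argument, since that twisted group still acts (simply) transitively on the swap-type descent data, so the absorption into the coboundary goes through and there is a unique $\kk$-form with prescribed $(\FF,C_1)$.
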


So, we begin by recovering the biregular classification of del Pezzo sextic surfaces focusing on the case of Picard rank 1 and using only minimal tools from Galois cohomology, namely an explicit description of cocycles; this will be useful in the remaining part of the paper. We formulate the resulting classification in terms of the Amitsur group, as was done in Theorem \ref{thm: Trepalin Amitsur} from above.

\begin{maintheorem}[cf. {\cite[Theorem 4.2]{CTKarpenkoMerkurjev}, \cite[Theorem 3.4]{Blunk}}]\label{thm: Biregular}
	Let $S$ and $S'$ be two del Pezzo surfaces of degree 6 over a perfect field $\kk$, with $\Pic(S)\simeq\Pic(S')\simeq\ZZ$. Let $\FF$ and $\FF'$ be the minimal extensions of $\kk$ over which all $(-1)$-curves on $S$ and $S'$, respectively, are defined. Then $S\simeq S'$ if and only if the following holds.
	\begin{enumerate}[leftmargin=*, labelindent=20pt, itemsep=5pt]
		\item $\FF=\FF'$.
		\item $\Am(S_\KK)=\Am(S'_\KK)$, where $\KK$ is the unique quadratic extension $\kk\subset\KK\subset\FF$ such that $\Pic(S_{\KK}) \simeq \Pic(S_{\KK}')\simeq\ZZ^2$.
		\item $\Am(S_\LL)=\Am(S'_\LL)$, where $\LL$ is the fixed subfield of the unique non-trivial central element of $\Gal(\FF/\kk)$, whenever such element exists (if it does not, the condition (3) is omitted).
	\end{enumerate}
\end{maintheorem}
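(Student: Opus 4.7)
The ``only if'' direction is immediate: the splitting field $\FF$ and each $\Am(S_F)$ are biregular invariants. For the converse, the plan is to exploit the cocycle description developed earlier in the paper.

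A sextic del Pezzo surface of Picard rank $1$ over $\kk$ is determined, up to isomorphism, by a class in $H^1(\Gal(\overline{\kk}/\kk), \Aut(S_{\overline{\kk}}))$, and the automorphism group of the geometric surface fits into an exact sequence
\[
1 \to T \to \Aut(S_{\overline{\kk}}) \to \Dih_6 \to 1,
\]
where $T$ is the neutral component, a $2$-dimensional split torus. The induced map $\rho_S \colon \Gal(\overline{\kk}/\kk) \to \Dih_6$ is precisely the Galois action on the hexagon of $(-1)$-curves, so its kernel fixes $\FF$. Condition (1) thus says that $\rho_S$ and $\rho_{S'}$ agree up to conjugation, and after adjusting cocycle representatives one may assume that the two classes have identical $\Dih_6$-parts. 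A preliminary combinatorial check shows that the image $\rho(\Gal)$ must be one of $\Sym_3$, $\ZZ/6$, or $\Dih_6$, that in each case there is a unique index-$2$ subgroup producing rank-$2$ invariants on the hexagon (ensuring $\KK$ is unambiguously defined), and that the center of $\rho(\Gal)$ contains at most one involution (ensuring $\LL$ is unambiguous when it exists).

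With the $\Dih_6$-part matched, the remaining obstruction to $S \simeq S'$ lies in $H^1(\kk, T^\rho)$, where $T^\rho$ denotes $T$ with the common Galois action via $\rho$. The plan is to decompose this group into summands corresponding to the order-$3$ and (when present) central order-$2$ subgroups of $\rho(\Gal)$, and to identify each summand with an Amitsur invariant. Concretely, the cyclic subgroup of order $3$ fixes $\KK$; via a flasque resolution of $T^\rho$ (equivalently, Shapiro's lemma applied to an induced sub-torus arising from that cyclic subgroup), one matches the corresponding summand with $\Am(S_\KK) \subset \Br(\KK)$ using the Picard--Brauer sequence
\[
0 \to \Pic(X) \to \Pic(X_{\overline{\kk}})^{\Gal(\overline{\kk}/F)} \to \Br(F) \to \Br(F(X))
\]
for $F = \KK$. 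A parallel argument using the central involution of $\rho(\Gal)$, when one exists, identifies the remaining summand with $\Am(S_\LL) \subset \Br(\LL)$. Conditions (2) and (3) then determine the class in $H^1(\kk, T^\rho)$, forcing the two cocycles to coincide and hence $S \simeq S'$.

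The main obstacle is to carry out this decomposition and matching uniformly over the three cases $\rho(\Gal) \in \{\Sym_3,\ \ZZ/6,\ \Dih_6\}$. This requires an explicit description of the lattice $\Pic(S_{\overline{\kk}})$ together with its invariant subgroups under the Galois groups corresponding to $\kk$, $\KK$, $\LL$, and $\FF$, and a careful bookkeeping of how the Picard--Brauer sequence interacts with base change. In particular, when $\rho(\Gal) = \Sym_3$ one must verify that the triviality of the center forces the ``$2$-primary'' summand of $H^1(\kk, T^\rho)$ to vanish, which is precisely why condition (3) can be omitted there; conversely, in the $\ZZ/6$ and $\Dih_6$ cases, one must check that the two summands are independent and together span all of $H^1(\kk, T^\rho)$, so that (2) and (3) suffice.
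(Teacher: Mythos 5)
Your strategy is essentially that of Colliot-Th\'el\`ene--Karpenko--Merkurjev and Blunk (non-abelian $\Cohom^1$ filtered through the torus, flasque resolutions), whereas the paper works with explicit cocycles in $(\FF^*)^2\rtimes\Dih_6$ and reduces everything to concrete norm equations. Both routes are legitimate, but as written your argument has two genuine gaps. The first is a logical slip: the fibre of $\Cohom^1(\kk,\Aut(S_{\overline{\kk}}))\to \Cohom^1(\kk,\Dih_6)$ over a fixed class $\rho$ is not $\Cohom^1(\kk,T^\rho)$ but its quotient by the action of the centralizer of the image of $\rho$ in $\Dih_6$, and this centralizer always contains the central involution, which acts on $T^\rho$ by inversion (in the $\ZZ/6$ case it also contains the rotation subgroup). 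Correspondingly, conditions (2) and (3) do \emph{not} pin down a class in $\Cohom^1(\kk,T^\rho)$: the Amitsur group only records the subgroup of $\Br$ generated by the relevant Brauer class, so $\Am(S_\KK)=\Am(S'_\KK)$ permits the associated Severi--Brauer surfaces to be opposite rather than isomorphic. Your concluding sentence (``forcing the two cocycles to coincide'') is therefore false as stated; the theorem survives only because this ambiguity is exactly absorbed by the quotient you omitted (in the paper this is the equivalence $(\rho,\xi)\sim(\rho^{-1},\xi^{-1})$ coming from conjugation by the central element).

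The second gap is the substantive one: everything you defer as ``the main obstacle'' is in fact the entire content of the theorem. The decomposition of (the relevant quotient of) $\Cohom^1(\kk,T^\rho)$ into a $\Br(\KK)[3]$-piece and a $\Br(\LL)[2]$-piece is not routine bookkeeping, because in the $\Dih_6$ (and $\ZZ/6$) case the two parameters are coupled: in the paper's coordinates the cocycle conditions include $\Norm_{h}(\xi)\Norm_g(\rho)=1$ and $g(\rho)\Norm_f(\rho)\Norm_h(\xi)=1$, so the ``$3$-primary'' and ``$2$-primary'' data are not independent coordinates on the cohomology set. Showing that agreement of the two Amitsur groups forces equivalence of cocycles requires producing an element $\zeta\in\FF^{gf}$ satisfying two norm conditions simultaneously ($\Norm_g(\zeta)=1$ and $\Norm_h(\zeta)=\Norm_h(\delta)$); the paper achieves this by reinterpreting such a $\zeta$ as a rational point on an auxiliary sextic surface $S_0$ and proving $S_0$ is $\kk$-rational --- a non-formal step with no counterpart in your outline. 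Until you either carry out that computation or verify that a flasque resolution of $T^\rho$ genuinely splits the cohomology as you hope, uniformly in the three cases, what you have is a plan rather than a proof.
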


The starting point of our approach is essentially the same as in all the aforementioned works: namely, a pair of field extensions of the base field over which certain birational contractions on a higher-degree del Pezzo surface are defined. However, we then focus primarily on this geometric data, which we refer to as \emph{Severi–Brauer data}. So, in more geometric terms and less formally, Theorem~\ref{thm: Biregular} means that two del Pezzo surfaces of degree 6 are isomorphic if and only if they have equivalent Severi–Brauer data, see Theorem~\ref{thm: Z6 iso criterion}, \ref{thm: S3 iso criterion} and \ref{thm: D6 iso criterion} for more precise statements. 

Our perspective is birational and situated within the paradigm of the Minimal Model Program, aiming to describe birational transformations of such surfaces and decide when two del Pezzo surfaces of degree 6 are birational, answering in particular  \cite[Question 2]{Rost}. The birational geometry of \emph{non-trivial} Severi–Brauer surfaces is rather poor: if two such surfaces are birational, then they are either isomorphic or opposite (that is, they correspond to opposite central simple algebras in the Brauer group $\Br(\kk)$), see \cite[Theorem 5]{Roquette}, \cite{Weinstein89}, or \cite[Lemma 1]{Weinstein22}. Surprisingly, the birational classification of del Pezzo surfaces of degree 8 was achieved only quite recently by A.~Trepalin \cite{Trepalin2023} and J.-L. Colliot-Thélène \cite{JLCTQuadrics}, with an input from J. Kollár's work \cite[7]{KollarConics}. If $S$ is a del Pezzo surface of degree 8 with $S(\kk)=\varnothing$ and $\Pic(S)\simeq\ZZ$, then \cite[Theorem 1.6]{Trepalin2023} states that any minimal surface birationally equivalent to $S$ is isomorphic to $S$. In particular, the pair $(\FF,\Am(S_\FF))$ from Theorem \ref{thm: Trepalin Amitsur} is not only a biregular but also a \emph{birational invariant}. Our next theorem shows that the birational classification of del Pezzo surfaces of degree 6 is more subtle; we will also see that the natural field extensions associated with such a surface are, in general, not birational invariants.

We use the notation of Theorem \ref{thm: Biregular}.

\begin{maintheorem}\label{thm: Birational classification}
		Let $S$ and $S'$ be two del Pezzo surfaces of degree $6$ over a perfect field $\kk$, with $\Pic(S) \simeq \Pic(S') \simeq \ZZ$. Then $S$ and $S'$ are birational if and only if one of the following holds:		
		\begin{enumerate}[leftmargin=*, labelindent=20pt, itemsep=5pt]
			\item $\Am(S_{\KK}) = \Am(S'_{\KK'}) = \Am(S_{\LL}) = \Am(S'_{\LL'}) = 0$.
			\item $\LL = \LL'$, $\Am(S_{\LL}) = \Am(S'_{\LL}) \ne 0$, $\Am(S_{\KK}) = \Am(S'_{\KK'})=0$, and $S$ has a point of degree 2 whose splitting field is $\KK'$.
			\item $\KK = \KK'$, $\Am(S_{\KK}) = \Am(S'_{\KK}) \ne 0$,  $\Am(S_{\LL}) =\Am(S'_{\LL'})=0$, and $S$ has a point of degree 3 whose splitting field is $\LL'$
			\item $\KK = \KK'$, $\Am(S_{\KK}) = \Am(S'_{\KK}) \neq 0$, and $\LL = \LL'$, $\Am(S_{\LL}) = \Am(S'_{\LL}) \neq 0$.
		\end{enumerate} 
\end{maintheorem}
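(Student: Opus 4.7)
The plan is to prove the two directions separately. The ``if'' direction is constructive, while the ``only if'' direction rests on the Sarkisov program for surfaces: any birational map $S\dashrightarrow S'$ between Mori fibre spaces factors as a composition of elementary Sarkisov links. Since both $S$ and $S'$ have $\Pic\simeq\ZZ$, the real task is to classify Sarkisov links out of a minimal sextic del Pezzo surface and to track how the Severi–Brauer data $(\FF,\KK,\LL,\Am(S_\KK),\Am(S_\LL))$ transforms under each link.

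The center of an elementary link out of $S$ is a closed point $p\in S$ in general position, and the type of the link, together with the isomorphism class of the target Mori fibre space, is controlled by the degree of $p$ and by how its splitting field sits inside $\FF$. Using the classification of such links carried out earlier in the paper (referenced in the abstract), one should establish the following dichotomy. If $p$ has degree $2$ (resp. $3$) with splitting field equal to $\KK$ (resp. $\LL$), the link is a birational self-map of $S$—a hexagonal analogue of a Bertini involution. If $p$ has degree $2$ with a different splitting field $\KK''\subset\FF$, which occurs only when $\Gal(\FF/\kk)\simeq\Dih_6$, the link outputs another sextic del Pezzo $S'$ with $\Pic(S')\simeq\ZZ$ whose quadratic invariant is $\KK''$ in place of $\KK$, whose cubic field is preserved, and which satisfies $\Am(S'_{\KK''})=\Am(S_{\KK''})$; in particular the existence of such a $p$ forces $\Am(S_\KK)=0$. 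The analogous picture holds for points of degree $3$ and the field $\LL$. Any remaining target (namely $\PP^2$, a non-trivial Severi–Brauer surface, a conic bundle, or a del Pezzo of smaller degree) only appears when the relevant Amitsur class is already trivial. Applying Theorem~\ref{thm: Biregular} after each link shows that the Severi–Brauer data reachable from $S$ by a finite chain of links falls into one of the four configurations listed in the statement.

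For sufficiency, case~(1) follows because the vanishing of all Amitsur obstructions allows the Galois-equivariant contraction of the hexagon of $(-1)$-curves down to $\PP^2$, so both $S$ and $S'$ are $\kk$-rational. Case~(4) reduces directly to Theorem~\ref{thm: Biregular}, since every component of the Severi–Brauer data coincides. Cases~(2) and~(3) are handled by exhibiting the explicit link produced by blowing up the prescribed point of degree $2$ (resp. $3$) on $S$ and contracting the strict transform of a Galois-stable triangle (resp. pair of opposite edges) of the hexagon; the target of this link has precisely the Severi–Brauer data of $S'$ and is therefore isomorphic to $S'$ by Theorem~\ref{thm: Biregular}.

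The main obstacle is the fine control of the invariants through iterated links: a priori a long chain of links could scramble $\KK$ and $\LL$ in uncontrolled ways. The key point is that the Galois structure of the hexagon severely restricts which quadratic and cubic subfields of $\FF$ can arise as splitting fields of closed points on $S$. Combined with the biregular classification, this forces every birational transit between minimal sextic del Pezzo surfaces to factor through one of the four prescribed patterns. A secondary subtlety, absent in the degree-$8$ situation of Theorem~\ref{thm: Trepalin Amitsur}, is that the pair $(\FF,\Am(S_\FF))$ is \emph{not} a birational invariant here, and one must work with the finer pair of subfields $\KK$ and $\LL$ together with the pointwise Amitsur data.
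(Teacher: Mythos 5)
Your overall strategy (Sarkisov program, classification of links at closed points of degree $2$ and $3$, tracking the Severi--Brauer data through links, and then invoking Theorem~\ref{thm: Biregular}) is the same as the paper's, which concludes via Corollaries~\ref{cor: 2 linkSBData}, \ref{cor: 3 linkSBData} and Proposition~\ref{prop: one link}. However, there are two genuine gaps in your taxonomy of links and in the necessity direction.

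First, you assume that the center of a non-trivial link is a point of degree $2$ (resp.\ $3$) whose splitting field is a subfield of $\FF$, asserting that a splitting field other than $\KK$ ``occurs only when $\Gal(\FF/\kk)\simeq\Dih_6$.'' This is false: Propositions~\ref{prop: Z6 closed points}, \ref{prop: S3 closed points} and \ref{prop: D6 closed points} produce $2$-points and $3$-points in general position whose splitting field $\EE$ satisfies $\EE\cap\FF=\kk$ (or is only quadratic over $\kk$ for $3$-points), and the resulting link lands on a sextic del Pezzo surface whose field $\KK'$ (resp.\ $\LL'$) equals $\EE\not\subset\FF$ and whose hexagon splits over a field $\FF'$ not contained in $\FF$ (Propositions~\ref{prop: 2-points new Splitting Field} and \ref{prop: 3-points new Splitting Field}). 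These links are exactly what makes $\Graph_S$ --- and the pliability --- potentially infinite (Example~\ref{example: main example}), and they are indispensable for the sufficiency of cases (2) and (3), where $\KK'$ (resp.\ $\LL'$) is an arbitrary field, not a subfield of $\FF$. Your list of ``remaining targets'' ($\PP^2$, conic bundles, smaller del Pezzo surfaces) is also off: for non-rational $S$ the classification of links shows the target of a type II link at a point of degree $2$ or $3$ is always another minimal sextic del Pezzo surface, and links at points of degree $1$ or $5$ do not exist since $S(\kk)=\varnothing$.

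Second, in the necessity direction, cases (2) and (3) assert a condition on $S$ itself, namely that $S$ carries a point of degree $2$ (resp.\ $3$) splitting over $\KK'$ (resp.\ $\LL'$). A chain of links $S=S_0\dashrightarrow\cdots\dashrightarrow S_n=S'$ only tells you that $S_{n-1}$ has such a point; transferring it back to $S$ requires either the birational invariance of the sets $\Fields_d(S)$ of splitting fields of $d$-points in general position (Proposition~\ref{prop: splitting fields are invariants}, whose proof needs a case analysis of where the point sits relative to $\Exc(\chi)$ and $\Ind(\chi)$), or the fact that any two birational minimal sextic del Pezzo surfaces are already connected by a \emph{single} link (Proposition~\ref{prop: one link}). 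Your appeal to ``the Galois structure of the hexagon severely restricts which quadratic and cubic subfields of $\FF$ can arise as splitting fields'' does not supply this, and is moreover built on the incorrect premise addressed above. Finally, a small point on case (1): the hexagon cannot be Galois-equivariantly contracted to $\PP^2_\kk$ when $\Pic(S)\simeq\ZZ$; the correct argument is that $\Am(S_\KK)=\Am(S_\LL)=0$ forces $\indexx(S)=1$, hence $S(\kk)\neq\varnothing$ and $S$ is $\kk$-rational by Iskovskikh's criterion.
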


\subsection{Pliability of del Pezzo surfaces}

The study of birational models of Mori fibre spaces and, in particular, of Fano varieties is a vast and one of the most important topics in modern birational geometry. One of the key concepts in this theory is the notion of \emph{birational rigidity}. It originates in the famous work of V. Iskovskikh and Yu. Manin who showed that a complex smooth quartic 3-fold has no birational maps to Fano varieties, other
than isomorphisms to itself. In this paper, we study birational maps between geometrically rational 2-dimensional Mori fibre spaces. These are either del Pezzo surfaces $S$ with $\Pic(S)\simeq\ZZ$, considered as fibrations over $B=\Spec\kk$, or conic bundles $\pi\colon S\to B$ with $\Pic(S)\simeq\ZZ^2$ and $B_{\overline{\kk}}\simeq\PP_{\overline{\kk}}^1$. 

\begin{mydef}\label{def: BR}
	Let $S$ be a del Pezzo surface over $\kk$ with $\Pic(S)\simeq\ZZ$. It is called \emph{birationally rigid} if for any birational map $S\dashrightarrow S'$ to the total space $S'$ of another Mori fibre space $S'\to B'$ one has $S\simeq S'$. If, moreover, every birational self-map $S\dashrightarrow S$ is actually biregular, then $S$ is called \emph{birationally superrigid}. 
	
	Finally, if $S$ is not birational to (the total space of) any conic bundle $S'$, then $S$ is called \emph{solid}. 
\end{mydef}

Following A. Corti \cite{CortiMella,CortiReid}, one can argue that the notion of birational rigidity is too restrictive and one should study varieties which are birational to only \emph{finitely many} different Mori fibre spaces, up to some natural equivalence. Let $\pi\colon S\to B$ and $\pi'\colon T'\to B'$ be two such Mori fibre spaces of dimension 2. If there is a birational map $\varphi\colon S\dashrightarrow S'$ fitting into the commutative diagram
\[\xymatrix@C+1pc{
		S\ar@{-->}[r]^{\varphi}\ar[d]_{\pi}  & S'\ar[d]^{\pi'}\\
		B\ar@{->}[r]^{\sigma} & B'  
}\]
where $\sigma$ is an isomorphism, and we have an induced isomorphism of generic fibres, then we say that our Mori fibre spaces are \emph{square equivalent}.

\begin{mydef}[cf. {\cite[Definition 1.3]{CortiMella}}]
	The \emph{pliability} of a Mori fibre space $S/B$ is the set (or, in some references, the cardinality of the set)
	\[
	\Pl(S/B)=\{S'\to B'\ \text{is a Mori fibre space}\ |\ \text{$S'$ is birational to $S$} \}\ /\ \text{square equivalence}.
	\]
\end{mydef}

So, a solid del Pezzo surface $S$ is birationally rigid if and only if $\Pl(S)=1$. If we know that our del Pezzo surface cannot be birationally transformed into fibrations with a base of positive dimension, then it is natural to ask into which (and how many) other minimal del Pezzo surfaces it can be transformed, and more generally --- whether the number of such birational models is finite. Our Theorem \ref{thm: pliability} provides an answer to this question in dimension 2, and also shows that del Pezzo surfaces of degree 6 possess, in a certain sense, an exceptional property that sets them apart from all other del Pezzo surfaces.

Let $\indexx(S)$ be the greatest common divisor of closed points on $S$.

\begin{maintheorem}\label{thm: pliability}
	Let $S$ be a birationally solid del Pezzo surface over $\kk$. Suppose that $\Pl(S)=\infty$. Then $S$ is a sextic del Pezzo surface with $\indexx(S)=2$ or $\indexx(S)=3$. Furthermore, there exist a~field $\kk$ and a solid sextic del Pezzo surface $S$ over $\kk$ such that $\Pl(S)=\infty$. 
\end{maintheorem}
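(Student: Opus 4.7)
The theorem has a necessity direction and an existence claim; I would prove them separately, with most of the effort going into necessity because the existence can be read off from the Sarkisov link dictionary developed earlier in the paper.

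\emph{Necessity.} The plan is to argue by degree. Since $S$ is solid and geometrically rational, the Enriques--Iskovskikh classification of minimal geometrically rational surfaces reduces the argument to the case where $S$ is a del Pezzo surface of degree $d\in\{1,\dots,9\}$ with $\Pic(S)\simeq\ZZ$. I would then eliminate every degree except $6$. In degree $5$ the surface has a $\kk$-point by Enriques--Swinnerton-Dyer, hence is $\kk$-rational, hence not solid. In degree $7$ the line through the two blown-up geometric points is unique and Galois-invariant, which forces $\Pic(S)$ to have rank at least $2$, so this case does not occur. In degree $9$ Roquette's theorem on Severi--Brauer surfaces gives $\Pl(S)\leq 2$. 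In degree $8$ the birational refinement of Theorem \ref{thm: Trepalin Amitsur} due to Trepalin gives $\Pl(S)=1$. In degrees $1\leq d\leq 4$ classical (super)rigidity results of Iskovskikh, Manin and Segre give $\Pl(S)<\infty$.

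Once $S$ is known to be a sextic del Pezzo with $\Pic(S)\simeq\ZZ$, I would rule out the indices $1$ and $6$. If $\indexx(S)=1$, then $S$ carries a $\kk$-rational point; for a sextic del Pezzo of Picard rank $1$ this implies $\kk$-rationality via the standard criterion, contradicting solidity. If $\indexx(S)=6$, then every closed point of $S$ has degree divisible by $6$, and the explicit description of closed points in general position and of the Sarkisov links they initiate (developed earlier in the paper) produces only finitely many non-isomorphic link targets, so $\Pl(S)<\infty$. Hence $\indexx(S)\in\{2,3\}$.

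\emph{Existence.} For the second assertion, the plan is to choose a perfect field $\kk$ with infinitely many non-isomorphic quadratic (respectively cubic) extensions and a Brauer class of order $3$ (respectively $2$) of appropriate splitting behaviour, and to build a solid sextic del Pezzo surface $S$ over $\kk$ of Picard rank $1$ and index $2$ (respectively $3$), using the cocycle description underlying Theorem \ref{thm: Biregular}. By the Sarkisov link analysis from degree $2$ (resp. $3$) closed points, each such point yields a link to another sextic del Pezzo surface $S'$, and the invariants $(\FF',\KK',\LL')$ attached to $S'$ by Theorem \ref{thm: Birational classification} depend on the residue field of the point. Arranging the field so that infinitely many pairwise distinct triples are realised gives $\Pl(S)=\infty$.

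\emph{Main obstacle.} The principal difficulty is the existence part: one must match a Galois-theoretic input (an infinite family of residue fields of degree $2$ or $3$) with the Sarkisov link output and certify pairwise non-isomorphism of the resulting targets as Mori fibre spaces over $\Spec\kk$, which requires the full birational classification of Theorem \ref{thm: Birational classification}. A secondary delicate point in the necessity direction is the index $6$ sub-case, where infinitely many closed points of degree $6$ may exist and one must verify that the corresponding links do not produce infinitely many non-isomorphic del Pezzo models.
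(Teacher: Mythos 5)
Your necessity argument follows essentially the same route as the paper: case-by-case elimination by degree, using Roquette for degree $9$, Trepalin for degree $8$, existence of rational points for degrees $7$ and $5$, and Iskovskikh's classification of links for degrees at most $4$, then the index dichotomy in degree $6$. Two small corrections there. For degree $4$ your blanket appeal to rigidity is wrong when $S(\kk)\ne\varnothing$: such a surface is birational to a conic bundle (blow up the point and use the residual conics on the resulting cubic surface), so it is not rigid — but it is also not solid, which is how the paper disposes of that subcase; under your solidity hypothesis the conclusion survives, but the reason is solidity, not rigidity. And your ``secondary delicate point'' about index $6$ is vacuous: two-dimensional Sarkisov links out of a sextic del Pezzo surface have centres of degree at most $5$, so an index-$6$ surface admits no links at all and is birationally super-rigid (Proposition \ref{prop: index 6}); there is nothing to verify about degree-$6$ centres.

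The genuine gap is the existence claim, which is the substantive half of the theorem. You correctly identify what is needed — a field $\kk$ and a solid, non-rational sextic del Pezzo surface of index $2$ or $3$ carrying infinitely many closed points of the relevant degree, in general position, with pairwise distinct splitting fields, together with a certificate that the resulting link targets are pairwise non-isomorphic — but you do not produce any of it, and ``arranging the field so that infinitely many pairwise distinct triples are realised'' is precisely the point at issue. The paper does this in Example \ref{example: main example}: take $\FF=\CC(t_1,t_2,t_3,s)$ with $\Sym_3$ permuting the $t_i$, set $\kk=\FF^{\Sym_3}$ and $\xi=s$; non-rationality is the statement $s\notin\Norm_g(\FF^*)$, proved by a degree count in $s$; solidity is automatic since an index-$3$ surface only admits links at $3$-points leading to sextic del Pezzo surfaces of Picard rank $1$ (Proposition \ref{prop: structure of maps for dP6}); and the uncountable family $\EE_z=\kk\big(\sqrt[3]{s(t_1+z)(t_2+z)(t_3+z)}\big)$, $z\in\CC$, of pairwise distinct cubic Galois extensions is realised by explicit $3$-points in general position (Propositions \ref{prop: S3 closed points} and \ref{prop: position of points}). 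Corollary \ref{cor: 3 linkSBData} then shows the link at a point with splitting field $\EE_z$ lands on a surface with $\LL'=\EE_z$, so the targets are pairwise non-isomorphic and $\Pl(S)=\infty$. Without a construction of this kind your proposal proves only the first assertion of the theorem.
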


\begin{rem}
	In general, it is not obvious how to construct irrational Fano varieties of Picard number 1 with large pliability, including the equivariant setting of so-called $G$-Fano varieties, see e.g. \cite{CheltsovSarikyan,CheltsovShramovIcosahedron, YasinskyRigidity} The pioneering work of A.~  Corti and M.~ Mella \cite{CortiMella} provides an example of a singular quartic threefold $X$ with $\Pl(X)=2$, the other model being a Fano threefold. The work \cite{Sarikyan} of A. Sarikyan gives an example of a Fano--Enriques threefold with pliability 9 (however, these are not solid, as they admit del Pezzo fibrations as models). As far as we know, Theorem \ref{thm: pliability} provides the first example in the literature of a solid Fano variety with infinite pliability.
\end{rem}

In Theorem~\ref{thm: birational rigidity}, we will formulate an explicit criterion for birational rigidity. A surface with infinite pliability will be constructed in Example~\ref{example: main example}. 

\subsection{Groups of birational self-maps}

There is an extensive amount of literature on groups of birational transformations of the projective plane $\PP_\kk^2$ over $\kk$, known as the plane Cremona group $\Bir(\PP_\kk^2)$; here we do not attempt to survey these results and refer the reader elsewhere. All known presentations of these groups are quite involved and probably cannot be significantly simplified, since the projective plane typically admits many different birational transformations into two-dimensional Mori fibrations (see Section \ref{sec: Sarkisov program} on the Sarkisov program). By contrast, as was noticed above, a non-trivial Severi-Brauer surface $S$ admits only two birational models: one has $\Pl(S)=2$. Despite this unpromising property, the group of its birational transformations $\Bir(S)$ enjoys some peculiar features \cite{Shramov1,Shramov2,BlancSchneiderYasinsky,Kurz}. For example, in \cite[Theorem D]{BlancSchneiderYasinsky}, it was shown that it admits a surjective homomorphism 
\[
\Bir(S)\twoheadrightarrow
\bigoplus_I\ZZ/3\ZZ\ast\left (\bigast_J\ZZ \right ),
\]
where $|I|\geqslant 1$, and both $I$ and $J$ can be infinite and even uncountable in certain cases. In particular, unlike plane Cremona groups, they are not generated by involutions and, typically, by elements of finite order; (The generation of plane Cremona groups by involutions was shown in \cite{LamySchneider}.) This is the first known example of del Pezzo surfaces enjoying such property.

In this paper, we show that a similar property holds for certain del Pezzo surfaces of degree~6.

\begin{maintheorem}\label{thm: main theorem}
	Let $S$ be a del Pezzo surface of degree 6 over a perfect field $\kk$ with $\rk\Pic(S)=1$ and $S(\kk)=\varnothing$. Then one and only one of the following holds.
	\begin{enumerate}[leftmargin=*, labelindent=15pt, itemsep=5pt]
		\item If $\indexx(S)=2$, then there is a non-trivial surjective group homomorphism
		\[
		\Bir_\kk(S)\twoheadrightarrow\left (\bigoplus_{I}\ZZ/2\ZZ \right ) \ast \left (\bigast_{J} \ZZ / 2 \ZZ \right ),
		\]
		where $I\ne\varnothing$.
		\item If $\indexx(S)=3$, then there is a non-trivial surjective group homomorphism
		\[ 
		\Bir_{\kk}(S) \twoheadrightarrow \left (\bigast_{I} \ZZ \right ) \ast \left (\bigast_{J} \ZZ / 2 \ZZ \right ),
		\] 
		where at least one of the sets $I$ and $J$ is not empty. In particular, $\Bir_\kk(S)$ is not generated by elements of finite order, once $I\ne \varnothing$. The latter condition is satisfied for every $S$ having $\Pl(S)\geqslant 3$.
		\item If $\indexx(S)=6$, then $\Bir_\kk(S)=\Aut_\kk(S)$ is an abelian group. 
	\end{enumerate}
\end{maintheorem}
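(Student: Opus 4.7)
The plan is to apply the Sarkisov program in the arithmetic setting, following the approach used for non-trivial Severi--Brauer surfaces in \cite{BlancSchneiderYasinsky}. The program presents $\Bir_\kk(S)$ as a group generated by $\Aut_\kk(S)$ together with all Sarkisov links issuing from $S$, modulo elementary relations coming from elementary Sarkisov triangles.

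The first step is to classify the Sarkisov links from $S$. Since $S(\kk)=\varnothing$ and $K_S^2=6$, every such link begins by blowing up a closed point of degree $2$ or $3$: degree $1$ is excluded by hypothesis, and solidity of $S$ rules out links to conic bundles, which would otherwise be expected from links of type I. A two-ray game analysis shows that every such link ends at another sextic del Pezzo surface of Picard rank $1$. Links based at degree-$2$ points are involutions, by the Galois-equivariant symmetry between the two rays of the two-ray game, while those based at degree-$3$ points have infinite order generically. Case (3) is then immediate: $\indexx(S)=6$ forces the absence of closed points of degree $2$ or $3$, so no Sarkisov links exist, hence $\Bir_\kk(S)=\Aut_\kk(S)$; the latter is abelian by the explicit automorphism computation carried out earlier in the paper.

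For Cases (1) and (2), I would choose representatives $\{\chi_\alpha\}$ of Sarkisov links up to the equivalence $\chi\sim\sigma\chi\tau$ with $\sigma,\tau\in\Aut_\kk(S)$, and define a candidate homomorphism by sending each $\chi_\alpha$ to a generator of the appropriate factor ($\ZZ/2\ZZ$ for degree-$2$ points, $\ZZ$ for degree-$3$ points) and sending $\Aut_\kk(S)$ to the identity. Verifying that this is a well-defined surjection reduces to checking that every elementary Sarkisov relation either involves links in distinct equivalence classes (so becomes a commutator-type identity, vacuously true in a free product), or differs from the identity by an automorphism (killed by construction). The split into $\bigoplus$- and $\bigast$-parts in Case (1) records whether two involutive links commute (that is, whether the two two-ray games admit a common resolution) or generate a free product.

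The main obstacle is the bookkeeping of elementary Sarkisov relations and showing that they do not collapse the target; concretely, one must exhibit enough geometrically independent links to guarantee non-triviality as stated. For Case (1), a single degree-$2$ point (existing by $\indexx(S)=2$) already suffices to populate $I$. For Case (2), the hypothesis $\Pl(S)\geqslant 3$ directly supplies at least one link to a non-isomorphic model, and since such a link cannot be absorbed into $\Aut_\kk(S)$ by conjugation, it forces a nontrivial generator in the $\bigast_I\ZZ$ factor; if only degree-$3$ links are available then $J$ may be empty, and vice versa, which is precisely the meaning of the ``at least one of $I$ and $J$ non-empty'' clause.
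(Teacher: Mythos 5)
Your overall strategy (Sarkisov program, send links to generators of a free-product target, kill automorphisms, check that elementary relations die) is the paper's strategy, but several of your key claims are wrong or unproved, and they are exactly the points where the real work lies.

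First, links based at degree-$2$ points are \emph{not} involutions: they are links of type II $S\dashrightarrow S'$ whose target is a sextic del Pezzo surface with $\KK'=\EE$ the splitting field of the centre, so in general $S'\not\simeq S$ (Proposition \ref{prop: 2-points new Splitting Field}). The reason the first factor in case (1) is a direct sum of $\ZZ/2\ZZ$'s is not an involutive symmetry of the $2$-ray game but the length-$6$ elementary relations $\chi_6\circ\cdots\circ\chi_1=\id$ of Proposition \ref{prop: relations of 2-links}, in which $\chi_1\sim\chi_4$, $\chi_2\sim\chi_5$, $\chi_3\sim\chi_6$: each equivalence class appears exactly twice with the same orientation, so the image of the relation is $2([\chi_1]+[\chi_2]+[\chi_3])$, which forces the target to be both abelian and $2$-torsion. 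Your test ``relations either involve links in distinct classes or differ by an automorphism'' fails precisely here. Moreover, you never identify the source of the free factor $\bigast_J\ZZ/2\ZZ$ in case (1): it is indexed by degree-$4$ points in general position (birational Geiser involutions), which participate in no elementary relation and hence survive freely.

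Second, in case (2) you misread $I$ and $J$: since $\indexx(S)=3$ there are no degree-$2$ points at all, and \emph{both} sets index links at degree-$3$ points; $J$ consists of the ``almost involutions'' (links equivalent to their own inverses), $I$ of the remaining (positive) classes. The key fact making the target a genuine free product is that for index $3$ there are \emph{no} elementary relations whatsoever: a rank-$3$ fibration dominating such a link would be a blow-up of a cubic surface of index $3$ at a point of degree $1$ or $2$, which cannot exist. You do not establish this, and without it the free product could collapse. Finally, non-triviality of the quotient is not automatic from the mere existence of a degree-$2$ or degree-$3$ point: a generic such point gives a link to a non-isomorphic model whose chosen representative is killed by construction. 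What is needed is the existence of \emph{self-links}, i.e.\ points of degree $2$ (resp.\ $3$) in general position splitting over the distinguished quadratic (resp.\ cubic/sextic) subfields of $\FF$, which is the content of Propositions \ref{prop: 2-points} and \ref{prop: 3-points} and requires the explicit cocycle parametrisation.
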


Roughly speaking, the sets $I$ in the homomorphisms above correspond to closed points on $S$ that allow transitions to other birational models via Sarkisov links at those points, while the sets~$J$ correspond to points where Sarkisov links lead to an isomorphic surface. For precise definitions, we refer the reader to Section \ref{sec: quotients}.

\vspace{0.3cm}

This paper is organized as follows. In Section \ref{sec: intro}, we recall the language of Galois cohomology, as well as the basic properties of Severi--Brauer surfaces (including some explicit computations of twisted actions on them, which we were unable to find in the literature), and also the birational classification of del Pezzo surfaces of degree 8. We then briefly review the Sarkisov theory. Section~\ref{sec: biregular classification} is devoted to the classification of non-$\kk$-rational sextic del Pezzo surfaces of Picard rank 1 up to isomorphism, using an explicit description of cocycles, and so we prove Theorem \ref{thm: Biregular}. In Section~ \ref{sec: auto}, we compute the automorphism groups of such surfaces. In Section \ref{section: closed points}, we describe closed points of degrees 2 and 3 on our surfaces and study the generality of their positions. In Section~\ref{sec: birational}, we study Sarkisov links between del Pezzo surfaces of degree 6 and, as a consequence, derive Theorem~\ref{thm: pliability}. Section~\ref{sec: quotients} opens with the proof of Theorem~\ref{thm: Birational classification}. The remaining part is devoted to the proof of Theorem~\ref{thm: main theorem}. Along the way, we give the presentation of the groups $\Bir_\kk(S)$, where $S$ is a minimal sextic del Pezzo surface of index 2 or 3.

\subsection*{Acknowledgement} Our special thanks go to Jérémy Blanc for his interest in the project and constant support. We would also like to thank Ivan Cheltsov, Arman Sarikyan, and Evgeny Shinder for useful discussions.

\section{Preliminaries}\label{sec: prelim}

\subsection{Fields and Galois theory}\label{subsec: Galois theory} In this paper, we will mostly work with finite Galois extensions of our base field $\kk$, which is assumed to be perfect. For the convenience of the reader, below we recollect some basic algebraic facts which will be systematically used throughout this text. Their proofs can be found in \cite[14.4]{DummitFoote}.

\begin{lem}\label{lem: intersection of galois extensions is galois}
	Let $\EE_1$ and $\EE_2$ be two Galois extensions of a field $\kk$. Fix the algebraic closure $\overline{\kk}$ of ~$\kk$. Then the intersection $\EE_1\cap\EE_2$ in $\overline{\kk}$ is Galois over~$\kk$.
\end{lem}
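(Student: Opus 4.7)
The plan is to show that $\EE_1 \cap \EE_2$ is both separable and normal over $\kk$, since a finite (or even arbitrary algebraic) extension is Galois precisely when it is normal and separable.

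For separability, since $\kk$ is perfect by the standing assumption of the section, every algebraic extension of $\kk$ is separable; in particular, the subfield $\EE_1 \cap \EE_2 \subset \overline{\kk}$ is separable over $\kk$. Alternatively, even without the perfectness hypothesis, $\EE_1/\kk$ being Galois means it is separable, and a subextension of a separable extension is separable, so $\EE_1 \cap \EE_2$ is separable over $\kk$.

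For normality, I would use the characterization of a normal algebraic extension $\EE/\kk$ inside $\overline{\kk}$: it is the condition that $\sigma(\EE) \subseteq \EE$ for every $\sigma \in \Gal(\overline{\kk}/\kk)$. Since $\EE_1/\kk$ and $\EE_2/\kk$ are both normal (being Galois), each satisfies $\sigma(\EE_i) \subseteq \EE_i$ for all such $\sigma$. Therefore for any $\sigma \in \Gal(\overline{\kk}/\kk)$ we have
\[
\sigma(\EE_1 \cap \EE_2) \subseteq \sigma(\EE_1) \cap \sigma(\EE_2) \subseteq \EE_1 \cap \EE_2,
\]
so $\EE_1 \cap \EE_2$ is stable under the absolute Galois group and hence normal over $\kk$.

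Combining the two properties, $\EE_1 \cap \EE_2$ is Galois over $\kk$. Since no step presents a genuine obstacle — both properties are inherited by intersections in a straightforward way — the only thing to be careful about is that one is working inside a fixed algebraic closure $\overline{\kk}$, so that the intersection $\EE_1 \cap \EE_2$ is unambiguously defined; this is precisely why the statement fixes $\overline{\kk}$ at the outset. A textbook reference for this argument is \cite[Proposition 14.4.19]{DummitFoote}.
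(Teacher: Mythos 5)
Your proof is correct and is exactly the standard argument (separability passes to subextensions, and normality follows from stability of the intersection under $\Gal(\overline{\kk}/\kk)$, which each $\EE_i$ enjoys); the paper does not give its own proof but simply refers to \cite[14.4]{DummitFoote} for this and the companion lemma, and the argument there is the same one you wrote. Nothing further is needed.
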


Recall that, giving two Galois extensions $\EE_1$ and $\EE_2$ of a field $\kk$, their \emph{composite} $\EE_1\EE_2$ in $\overline{\kk}$ is the smallest subfield of $\overline{\kk}$ that contains both $\EE_1$ and $\EE_2$. 

\begin{lem}\label{lem: Galois group of the composite}
	Let $\EE_1$ and $\EE_2$ be two Galois extensions of a field $\kk$. The homomorphism
		\[
		\psi\colon\Gal(\EE_1\EE_2/\kk)\to \Gal(\EE_1/\kk)\times\Gal(\EE_2/\kk),\ \ g\mapsto (g|_{\EE_1},g|_{\EE_2})
		\]
		is injective and induces an isomorphism
		\[
		\psi\colon\Gal(\EE_1\EE_2/\kk)\iso\{(u,v)\in\Gal(\EE_1/\kk)\times(\EE_2/\kk)\colon u|_{\EE_1\cap\EE_2}=v|_{\EE_1\cap\EE_2}\}. 
		\]	
		In particular, $\psi$ is an isomorphism if and only if $\EE_1\cap\EE_2=\kk$.
\end{lem}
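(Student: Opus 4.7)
The plan is to establish the three claims in order: well-definedness and injectivity of $\psi$, identification of the image with the equalizer set, and the characterization of when $\psi$ is surjective.

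First I would observe that $\psi$ is well-defined because each $\EE_i$, being Galois over $\kk$, is stable under every $\kk$-automorphism of $\overline{\kk}$ and in particular under every element of $\Gal(\EE_1\EE_2/\kk)$; hence $g|_{\EE_i}$ indeed lies in $\Gal(\EE_i/\kk)$. Injectivity is then immediate: the composite $\EE_1\EE_2$ is generated as a field by $\EE_1\cup\EE_2$, so any element of $\Gal(\EE_1\EE_2/\kk)$ that acts trivially on both subfields must be the identity. That $\psi$ lands in the prescribed subgroup of pairs $(u,v)$ with $u|_{\EE_1\cap\EE_2}=v|_{\EE_1\cap\EE_2}$ is a direct verification: both restrictions equal $g|_{\EE_1\cap\EE_2}$.

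The main step is to show that $\psi$ surjects onto this equalizer subgroup. My approach is a cardinality count rather than an explicit construction of a preimage. By Lemma~\ref{lem: intersection of galois extensions is galois}, $\FF:=\EE_1\cap\EE_2$ is Galois over $\kk$, and the classical degree formula for Galois extensions gives
\[
[\EE_1\EE_2:\kk]=\frac{[\EE_1:\kk]\,[\EE_2:\kk]}{[\FF:\kk]}.
\]
On the other hand, for each $w\in\Gal(\FF/\kk)$ there are exactly $[\EE_i:\FF]$ elements of $\Gal(\EE_i/\kk)$ restricting to $w$, so the set $E:=\{(u,v)\colon u|_\FF=v|_\FF\}$ has cardinality
\[
|\Gal(\FF/\kk)|\cdot[\EE_1:\FF]\cdot[\EE_2:\FF]=\frac{[\EE_1:\kk]\,[\EE_2:\kk]}{[\FF:\kk]}=|\Gal(\EE_1\EE_2/\kk)|.
\]
Combined with injectivity and the inclusion $\psi(\Gal(\EE_1\EE_2/\kk))\subseteq E$, this forces equality.

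Finally, the last assertion is formal: the equalizer set $E$ coincides with the whole product $\Gal(\EE_1/\kk)\times\Gal(\EE_2/\kk)$ precisely when the condition $u|_\FF=v|_\FF$ is vacuous, i.e.\ when $\Gal(\FF/\kk)$ is trivial, which by Galois theory is equivalent to $\FF=\kk$. The only step requiring any care is the degree formula $[\EE_1\EE_2:\kk]=[\EE_1:\kk][\EE_2:\kk]/[\FF:\kk]$, which is where Galoisness (as opposed to mere separability) of one of the extensions is essential; I would either cite it as standard or derive it by noting that $\Gal(\EE_1\EE_2/\EE_2)\to\Gal(\EE_1/\FF)$, $g\mapsto g|_{\EE_1}$, is an isomorphism.
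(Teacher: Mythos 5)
Your proof is correct; the paper does not prove this lemma itself but cites \cite[14.4]{DummitFoote}, and your argument (injectivity via generation of the composite, then a cardinality count using the degree formula $[\EE_1\EE_2:\kk]=[\EE_1:\kk][\EE_2:\kk]/[\EE_1\cap\EE_2:\kk]$) is precisely the standard proof given there. The only implicit hypothesis is finiteness of the extensions, which the counting argument requires, but this is consistent with the paper's standing convention that all Galois extensions in play are finite.
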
 

\begin{rem}\label{rem: Galois group of the composite}
	In this paper, Lemma \ref{lem: Galois group of the composite} will be used either in the situation when $\EE_1\cap\EE_2=\kk$, so that $\Gal(\EE_1\EE/\kk)$ can be identified with the direct product $\Gal(\EE_1/\kk)\times\Gal(\EE_2/\kk)$, or when $\EE_1\cap\EE_2$ is a quadratic extension of $\kk$. Specifically, assume that $\Gal(\EE_2/\kk)=\langle w,t\rangle\simeq\Sym_3$, where $w$ and $t$ are generators of orders 3 and 2, respectively.
	\begin{itemize}
		\item Suppose that $\Gal(\EE_1/\kk)$ is
		\[
		\langle g,h\ |\ g^3=h^2=\id,\ gh=hg\rangle\simeq\ZZ/6\ZZ\ \ \text{or}\ \ \langle g,f\ |\ g^3=f^2=\id,\ fgf=g^2\rangle\simeq\Sym_3.
		\] 
		Then, by Lemma \ref{lem: intersection of galois extensions is galois}, one has $\EE_1\cap\EE_2=\EE_1^g=\EE_2^w$. The group $\Gal(\EE_1\EE_2/\kk)$, under the above embedding $\psi$, is thus generated by $(g,\id)$, $(\id,w)$ and $(h,t)$ for $\Gal(\EE_1/\kk)\simeq\ZZ/6\ZZ$, and by $(g,\id)$, $(\id,w)$ and $(f,t)$ for $\Gal(\EE_1/\kk)\simeq\Sym_3$.
		\item Suppose that 
		\[
		\Gal(\EE_1/\kk)=\langle g,f,h\ |\ g^3=f^2=h^2=\id,\ hf=fh,\ gh=hg,\ fgf=g^2 \rangle\simeq\Dih_6.
		\]
		Possible quadratic extensions of $\kk$ in this case are $\EE_1^{\langle u,g\rangle}=\EE_2^w$, where $u\in\{h,f,s=hf\}$.  The group $\Gal(\EE_1\EE_2/\kk)$, is generated by $(g,\id)$, $(\id,w)$, $(u,\id)$ and\footnote{In each case, the first entry of this generator can be chosen to be any element whose class in $\Dih_6/\langle g,u\rangle$ is not trivial.} 
		\[
		(f,t)\ \text{if}\ u=h,\ \ \ \ (h,t)\ \text{if}\ u=f,\ \ \ \ (h,t)\ \text{if}\ u=s.
		\]
	\end{itemize}
\end{rem}

Finally, given a finite Galois extension $\EE_1/\EE_2$, an automorphism $g\in\Gal(\EE_1/\EE_2)$ of order $n$, and an element $\lambda\in\EE_1$ we call the element
\[
\Norm_g(\lambda)=\prod_{k=0}^{n-1}g^k(\lambda)\in\EE_2
\]
the \emph{$g$-norm} of $\lambda$.

\begin{thm}[Hilbert's Theorem 90, {\cite[Example 2.3.4]{Gille}}]\label{thm: Hilbert 90}
	Let $\EE_1/\EE_2$ be a Galois extension of fields with cyclic Galois group $\Gal(\EE_1/\EE_2)$ generated by an element $g$. If $\lambda\in\EE_1$ is an element of $g$-norm 1, i.e. $\Norm_g(\lambda)=1$, then there is $\mu\in\EE_1$ such that $\lambda=\mu/g(\mu)$.
\end{thm}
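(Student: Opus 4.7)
The plan is to give the classical Lagrange resolvent proof using Dedekind's linear independence of characters. Since $\Norm_g(\lambda) = 1$ forces $\lambda \neq 0$, the idea is to build an explicit candidate $\mu$ of the form
\[
\mu_\nu = \nu + \lambda \cdot g(\nu) + \lambda g(\lambda) \cdot g^2(\nu) + \cdots + \lambda g(\lambda) \cdots g^{n-2}(\lambda) \cdot g^{n-1}(\nu),
\]
depending on an auxiliary parameter $\nu \in \EE_1$, and show that for a suitable choice of $\nu$ we have $\mu_\nu \ne 0$ and $\lambda = \mu_\nu/g(\mu_\nu)$.

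First I would verify the identity $\lambda \cdot g(\mu_\nu) = \mu_\nu$ by a direct manipulation: applying $g$ termwise shifts each factor up by one, then multiplying by $\lambda$ reproduces the first $n-1$ summands of $\mu_\nu$, while the new top term collects the full product $\lambda g(\lambda)\cdots g^{n-1}(\lambda) \cdot g^n(\nu) = \Norm_g(\lambda)\cdot \nu = \nu$, using both $g^n = \id$ and the hypothesis. This algebraic check is the routine but essential step, and I would perform it carefully without leaving it implicit.

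The second step is to exhibit $\nu$ with $\mu_\nu \neq 0$. Viewing $\nu \mapsto \mu_\nu$ as the $\EE_1$-linear combination
\[
\mu_\nu = \sum_{k=0}^{n-1} c_k \, g^k(\nu), \qquad c_0 = 1,\ c_k = \lambda \cdot g(\lambda) \cdots g^{k-1}(\lambda),
\]
with all $c_k \neq 0$ (since $\lambda \neq 0$), Dedekind's theorem on the linear independence of the distinct characters $\id, g, g^2, \ldots, g^{n-1}$ of $\EE_1$ into $\EE_1$ guarantees that this function of $\nu$ is not identically zero. Hence there exists $\nu \in \EE_1$ with $\mu_\nu \neq 0$, and then $\lambda = \mu_\nu/g(\mu_\nu)$ is the desired expression.

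The main (and only) potentially delicate point is the appeal to linear independence of characters; everything else reduces to a telescoping identity. Since this fact is standard and can simply be quoted, I expect the whole proof to fit in a few lines once the resolvent $\mu_\nu$ is written down. No further case analysis on $n$ is needed, and the argument does not use anything specific about $\EE_2$ beyond the cyclicity of $\Gal(\EE_1/\EE_2)$.
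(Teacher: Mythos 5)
Your argument is correct: the telescoping identity $\lambda\,g(\mu_\nu)=\mu_\nu$ follows exactly as you describe from $g^n=\id$ and $\Norm_g(\lambda)=1$, and Dedekind--Artin linear independence of the distinct characters $\id,g,\dots,g^{n-1}$ on $\EE_1^*$ guarantees a $\nu$ with $\mu_\nu\neq 0$, since all coefficients $c_k$ are nonzero. The paper does not prove this statement but simply cites \cite[Example~2.3.4]{Gille}; your Lagrange-resolvent argument is the standard proof (and is essentially the cyclic specialization of the averaging argument used in that reference to show $\Cohom^1(G,\EE_1^*)=1$), so nothing further is needed.
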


\subsection{Galois cohomology}\label{subsec: Galois cohomology}

We recall some basic facts about Galois cohomology and twisted forms of varieties, following \cite[I\S 5, III\S 1]{Serre}. Given a profinite group $G$ and a $G$-group $A$ (where the action will be denoted by $g(a)$ or simply $g a$ for $g\in G$, $a\in A$), one defines a \emph{1-cocycle} as a continuous map $\alpha\colon G\to A$, $g\mapsto \alpha_g$, which satisfies the condition
\[
\alpha_{g_1g_2}=\alpha_{g_1}g_1(\alpha_{g_2}),\ \ g_1,g_2\in G.
\]
The set of these 1-cocycles is denoted $\Cocycles^1(G,A)$. Two cocycles $\alpha$ and $\alpha'$ are said to be \emph{cohomologous} $\alpha \sim \alpha'$ if there is $\beta\in A$ such that $\alpha_{g}'=\beta^{-1}\alpha_gg(\beta)$. This is an equivalence relation and the corresponding quotient of $\Cocycles^1(G,A)$ is the first cohomology (pointed) set $\Cohom^1(G,A)$.

Now let $X$ be an algebraic variety over $\kk$ and consider a Galois extension $\LL\supset\kk$ with the Galois group $G=\Gal(\LL/\kk)$. Set $X_{\LL}=X\times_{\Spec\kk}\Spec{\LL}$. Then $\Aut(X_{\LL})$ is equipped with a continuous action of $G$, given by
\begin{equation}\label{eq: Galois action on automorphisms}
\alpha\mapsto g(\alpha)=(\id\otimes g)\circ\alpha\circ (\id\otimes g^{-1}),\ \ g\in\Gal(\LL/\kk). 
\end{equation}
We then consider 1-cocycles with values in $\Aut(X_\LL)$ and the corresponding cohomology set \[\Cohom^1(\Gal(\LL/\kk),\Aut(X_\LL)).\]
Recall that a $\kk$-variety $Y$ is called an \emph{$\LL/\kk$-form} of $X$ if $Y_\LL\simeq X_\LL$ as $\LL$-varieties. It turns out that, if $X$ is quasiprojective, then the set of $\LL/\kk$ forms of $X$ is in bijection with the set $\Cohom^1(\Gal(\LL/\kk),\Aut(X_\LL))$. More precisely, given an $\LL/\kk$-form $Y$ of $X$, the corresponding 1-cocycle $\alpha$ can be chosen as follows. We fix an isomorphism $\varphi\colon Y_\LL\iso X_\LL$ and take $\alpha\colon G\to\Aut(X_\LL)$ to be the function such that the action of $G$ on $Y_\LL$ induces the \emph{twisted action} on $X_\LL$ via
\[
\varphi\circ (\id\otimes g)\circ\varphi^{-1}=\alpha_g\circ (\id\otimes g),
\]
or, in a slightly less formal form which we will use in the future:
\begin{equation}\label{eq: twisted action}
	\varphi\circ g\circ \varphi^{-1}=\alpha_g\circ g.
\end{equation}
Note that the cohomology class of $\alpha$ does not depend on $\varphi$.

\subsection{Severi-Brauer varieties}\label{subsec: Severi-Brauer varieties}

Recall that a {\it Severi-Brauer variety} over $\kk$ is a projective algebraic variety $X$ such that $X_{\overline{\kk}}=X\times_{\Spec\kk}\Spec\overline{\kk}\simeq\PP^{n}_{\overline{\kk}}$. Severi-Brauer varieties of dimension $n-1$ over~$\kk$ are in one-to-one correspondence with central simple $\kk$-algebras of degree~$n$ as they are both parametrized by the set $\Cohom^1(\Gal(\kk^{\rm sep}/\kk),\PGL_n(\kk^{\rm sep}))$. In particular, if a Severi-Brauer variety $X$ corresponds to a central simple algebra $A$, then one can define the \emph{opposite} variety $X^{\rm op}$ as the variety which corresponds to the inverse of $A$ in the Brauer group $\Br(\kk)$. 

A Severi–Brauer variety $X$ always splits (becomes \emph{trivial}) over a finite Galois extension $\LL$ of the base field~ $\kk$, i.e.~one has $X_\LL\simeq\PP_\LL^n$, see e.g. \cite[Corollary 5.1.5]{Gille}. 

\begin{prop}\label{prop: SB closed points}
	Let $X$ be a non-trivial Severi-Brauer surface over a field $\kk$. Then one has the following.
	\begin{enumerate}[leftmargin=*, labelindent=20pt, itemsep=5pt]
		\item The surface $X$ does not contain points of degree not divisible by~$3$.
		\item The surface $X$ always contains a closed point of degree $3$.
		\item Any point of degree 3 or 6 on $S$ is in general position.
	\end{enumerate}
\end{prop}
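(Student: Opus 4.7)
My plan is to handle the three parts separately, using two unifying ingredients. First, $X$ corresponds to a central simple $\kk$-algebra $A$ of degree~$3$ which, being non-trivial, is a division algebra of index~$3$. Second, $\Pic(X)$ is generated by $-K_X$, whose pullback to $X_{\overline{\kk}}\simeq\PP^2_{\overline{\kk}}$ equals $3L$, where $L$ is the hyperplane class; in particular, any $\kk$-rational effective divisor on $X$ has geometric degree divisible by~$3$. This divisibility constraint will be the key lever for~(3).

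For part~(1), I would simply note that if $p\in X$ has degree $d$, then the residue field $\kappa(p)$ splits $A$, so $\indexx(A)=3$ divides $d$. For part~(2), I would invoke the classical existence of a maximal separable subfield $F\subset A$ with $[F:\kk]=3$; then $F$ splits $A$, yielding an $F$-point of $X$ and hence a closed point of degree dividing $3$, which by (1) is exactly~$3$.

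For part~(3), the guiding principle is that any subvariety of $X_{\overline{\kk}}$ uniquely determined by a Galois-stable configuration of our geometric points descends to a $\kk$-divisor whose class lies in $\ZZ\cdot 3L$. For a degree-$3$ point with collinear geometric components $p_1,p_2,p_3$, the unique line through them would descend to a divisor of class $L\notin\ZZ\cdot 3L$, contradiction. For a degree-$6$ point $p_1,\ldots,p_6$, I would first exclude collinear triples: if $k\in\{3,4,5,6\}$ of the $p_i$ lie on a line, the Galois orbit of that line consists of $r$ lines and descends to a divisor of class $rL$, so $3\mid r$; combined with the incidence identity $kr=6m$ (where $m$ counts orbit lines through each $p_i$, well-defined by Galois-transitivity), the cases $k\geqslant 4$ force $r=1$ (any two lines each containing $\geqslant 4$ of the $p_i$ would share at least $2$ of them), contradicting $3\mid r$, and $k=3$ forces $m\geqslant 3$, but then three distinct lines through a single $p_i$ would require $1+3\cdot 2=7$ distinct points of the six-point set, impossible. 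Once collinear triples are ruled out, if all six points lay on a conic, that conic would be uniquely determined by any five of them, hence Galois-stable and defined over $\kk$, with class $2L\notin\ZZ\cdot 3L$; this disposes of the conic case.

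The main obstacle is the incidence analysis inside~(3) for degree-$6$ points; once the divisibility $3\mid r$ and the identity $kr=6m$ are in hand, each case resolves by a short pigeonhole argument, but one must treat the collinear sizes $k\in\{3,4,5,6\}$ separately before invoking the uniqueness of a conic through five non-collinear points.
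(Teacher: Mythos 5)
Your proposal is correct, but note that the paper does not prove this statement at all: it simply cites Koll\'ar for parts (1)--(2) and Shramov for part (3), so you have in effect reconstructed the arguments of those references. Parts (1) and (2) via the index of the underlying division algebra and the existence of a separable maximal subfield are exactly the standard route. For part (3), your key lever --- that the image of $\Pic(X)\to\Pic(X_{\overline{\kk}})=\ZZ L$ is $3\ZZ L$ because $L$ maps to the order-$3$ class $[A]\in\Br(\kk)$, so every Galois-stable effective divisor has degree divisible by $3$ --- is the same mechanism as in Shramov's Lemma 2.6. Your incidence bookkeeping for degree-$6$ points is sound: each line in the Galois orbit of $\ell$ meets the six-point set in exactly $k$ points because Galois permutes that set, the count $kr=6m$ is legitimate since transitivity of Galois on the six components makes $m$ constant, and the pigeonhole eliminations of $k\geqslant 4$ (two distinct lines cannot share two points) and $k=3$ (from $3\mid r=2m$ one gets $m\geqslant 3$, forcing $1+2m\geqslant 7$ distinct points) are complete. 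Two minor points worth a sentence each in a write-up: in the conic step, a reducible or non-reduced conic through all six points would already force three collinear points, so after excluding collinear triples the conic is irreducible and is the unique conic through any five of the points, hence Galois-stable of class $2L\notin 3\ZZ L$; and the uniqueness/descent arguments implicitly use that the geometric components of a closed point are distinct and transitively permuted, which holds in the paper's standing assumption that $\kk$ is perfect.
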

\begin{proof}
	See \cite[Theorem 53]{Kollar_SB} for the first two claims, and \cite[Lemma 2.6]{Shramov1} for the third one.
\end{proof}

The following statement is one of the crucial birational properties of non-trivial Severi-Brauer surfaces.

\begin{prop}[{\cite{Weinstein89}, also follows from \cite[Theorem 2.6]{Isk1996}}]\label{prop: birationality of SB}
	Let $S$ be a non-trivial Severi-Brauer surface over $\kk$ and $S\dashrightarrow S'$ be a birational map where $S'$ is a del Pezzo surface with $\rk\Pic(S')=1$. Then either $S'\simeq S$, or $S'\simeq S^{\rm op}$.
\end{prop}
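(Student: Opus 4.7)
My plan is to invoke the two-dimensional Sarkisov program (as reviewed later in Section \ref{sec: Sarkisov program}) and reduce the statement to analyzing a single elementary Sarkisov link starting on $S$. Since $S$ is a Mori fibre space with $\rk\Pic(S)=1$ and base $\Spec\kk$, and $S'$ is assumed to be a del Pezzo surface with $\rk\Pic(S')=1$, any birational map $\chi\colon S\dashrightarrow S'$ decomposes into a finite sequence of elementary links between two-dimensional Mori fibre spaces. The first link in this chain begins at $S$. Thus it suffices to show that any elementary link $S\dashrightarrow T$ from a non-trivial Severi--Brauer surface either produces a conic bundle (which can then be excluded as an intermediate target in view of the next link in the chain), or produces a Mori fibre space $T$ with $\rk\Pic(T)=1$ isomorphic to $S$ or $S^{\rm op}$; iterating this, one concludes.

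The core of the proof is the description of such elementary links. Any link from $S$ starts by the blow-up of a closed point of some degree~$d$, and by Proposition~\ref{prop: SB closed points} the degree~$d$ is divisible by~$3$, so $d\in\{3,6,9,\dots\}$. A numerical check using $(-K_S)^2=9$ together with the two-dimensional Sarkisov machinery (Iskovskikh's table of elementary links, \cite[Theorem 2.6]{Isk1996}) shows that in fact only the case $d=3$ can give rise to an admissible elementary link ending in a Mori fibre space with Picard rank~$1$: blowing up a degree~$3$ point $p$ on $S$ yields a sextic del Pezzo surface $Y$ of Picard rank~$2$, which admits a second extremal contraction $Y\to S'$ onto another del Pezzo surface of degree~$9$ and Picard rank~$1$, namely another Severi--Brauer surface. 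Higher-degree points would force the Mori cone analysis to produce conic bundle targets or fail the Sarkisov inequalities, which can be excluded in the present situation by the rigidity of the hexagon configuration of $(-1)$-curves on $Y_{\overline{\kk}}$.

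It remains to identify $S'$ with $S^{\rm op}$. Over $\overline{\kk}$ the surface $Y_{\overline{\kk}}$ carries the hexagon of six $(-1)$-curves split into two Galois-stable triples of pairwise disjoint curves; contracting one triple recovers $S_{\overline{\kk}}\simeq\PP^2_{\overline{\kk}}$, while contracting the other triple recovers $S'_{\overline{\kk}}$. Passing from one triple to the opposite triple in the hexagon inverts the cocycle with values in $\PGL_3(\overline{\kk})$ that defines $S$, because the two triples of $(-1)$-curves on $Y_{\overline{\kk}}$ are exchanged by the standard Cremona involution of $\PP^2_{\overline{\kk}}$, whose induced action on $\Cohom^1(\Gal(\overline{\kk}/\kk),\PGL_3(\overline{\kk}))$ is inversion. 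Consequently, $S'$ corresponds to the inverse class of $S$ in $\Br(\kk)$, which is precisely the definition of $S^{\rm op}$. If the initial link already gave $S'\simeq S$, we are in the trivial case; otherwise $S'\simeq S^{\rm op}$, and the same argument applied to the next link in the chain of links gives either $S$ or $S^{\rm op}$ again, so finite iteration finishes the proof.

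The main difficulty, and the step I would expect to be the most delicate, is the ruling out of Sarkisov links starting at points of degree $\geqslant 6$ on $S$ and the exclusion of conic bundle intermediaries; the first is controlled by the geometry of the hexagon of $(-1)$-curves on the blow-up and the Sarkisov inequalities on the degree of the link, while the second is eliminated because a birational map between two Mori fibre spaces with zero-dimensional bases that factors through a conic bundle would produce, after further links, a Mori fibre space of Picard rank~$1$ — leaving us with the same case analysis again.
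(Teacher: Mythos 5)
Your overall strategy --- run the two-dimensional Sarkisov program, use Proposition~\ref{prop: SB closed points} to restrict the possible centres of links, and track the Brauer class through each link --- is exactly the route suggested by the paper's second citation (the paper itself gives no proof, only references). However, there is a genuine error in the case analysis. You claim that ``only the case $d=3$ can give rise to an admissible elementary link ending in a Mori fibre space with Picard rank~$1$'' and that higher-degree points ``fail the Sarkisov inequalities'' or produce conic bundles. This is false for $d=6$: by Proposition~\ref{prop: SB closed points}(3) every point of degree $6$ on a non-trivial Severi--Brauer surface is automatically in general position, its blow-up is a cubic surface $T$ with invariant Picard rank $2$, and the second extremal contraction of $T$ contracts the Galois orbit of the six conics $C_i=2H-\sum_{k\ne i}E_k$ (which are pairwise disjoint), yielding a bona fide link of type II to another degree $9$ del Pezzo surface via the linear system $H'=5H-2\sum E_i$. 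These degree-$6$ links are not exotic --- they are among the principal generators of $\Bir(S)$ in the literature you would be drawing on --- so they cannot be discarded; they must be analysed. The theorem survives because $[S']=5[S]=-[S]$ in $\Br(\kk)$, so such a link again lands on $S^{\rm op}$, but as written your proof omits this case entirely and replaces it with an incorrect exclusion.

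Two smaller points. First, your elimination of conic bundle intermediaries is circular as stated (``further links bring us back to the same case analysis'' does not rule anything out); the correct argument is simply that a degree $9$ del Pezzo surface without points of degree $1$ or $2$ admits no links of type I or III at all, so the Sarkisov chain never leaves the class of degree $9$ del Pezzo surfaces of index $3$. Second, the identification of the target of the degree-$3$ link with $S^{\rm op}$ is better justified by the Amitsur computation $[S']=2[S]=-[S]$ (the new hyperplane class is $2H-E_1-E_2-E_3$ and the exceptional class descends to $\kk$), which is the uniform argument that also handles $d=6$; the appeal to ``the Cremona involution acting by inversion on $\Cohom^1$'' is morally the same statement but, as phrased, conflates an automorphism of $Y_{\overline{\kk}}$ with an operation on cocycles and would need to be made precise along the lines of Lemma~\ref{lem: SB surface Z3 twisted action}.
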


In what follows, we will need several results on explicit parametrization of Severi-Brauer curves and surfaces. Since we were unable to find these results in the literature, we provide their complete proofs below.

\begin{lem}\label{lem: SB curve twisted action}
	Let $\LL/\kk$ be an extension of degree $2$, with $\Gal(\LL/\kk)=\langle h\rangle$. Let  $C$ be a Severi-Brauer curve defined over $\kk$ that splits over $\LL$. There is an isomorphism $\varphi\colon C_{\LL} \rightarrow \PP^1_{\LL}$ such that the twisted action \eqref{eq: twisted action} is given by 
	\[
	\alpha_h= \begin{pmatrix}
		0 & \delta \\
		1 & 0
	\end{pmatrix}\in \PGL_2(\LL)
	\] 
	for $\delta \in \kk^*$ and every  $\delta\in \kk^*$ gives a Severi-Brauer curve $C_{\delta}$. Moreover, $C_{\delta} \simeq C_{\delta'}$ if and only if $\delta'/\delta\in \Norm_h(\LL^*)$. 
\end{lem}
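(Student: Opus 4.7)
The plan is to work directly with the cocycle description of Severi--Brauer curves and bring a general cocycle to the stated normal form. First, I would fix any isomorphism $\varphi\colon C_{\LL}\iso\PP^1_{\LL}$ and let $\alpha_h\in\PGL_2(\LL)$ be the associated $1$-cocycle from \eqref{eq: twisted action}; lifting $\alpha_h$ to some $M\in\GL_2(\LL)$, the cocycle identity $\alpha_h\cdot h(\alpha_h)=1$ reads $M\cdot h(M)=\lambda I$ in $\GL_2(\LL)$ for some $\lambda\in\LL^*$, and applying $h$ (noting that $M$ and $h(M)$ commute since their product is scalar) gives $h(\lambda)=\lambda$, hence $\lambda\in\kk^*$. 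Replacing $\varphi$ by $\psi\circ\varphi$ for $\psi\in\PGL_2(\LL)$ produces the cohomologous cocycle $\psi\alpha_h h(\psi)^{-1}$, so the task reduces to exhibiting $\psi$ that conjugates $M$ into $\smallmat{0 & \lambda \\ 1 & 0}$.

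The crucial step, which I expect to be the main obstacle, is achieving this normal form. I would reformulate the question in terms of the $h$-semilinear operator $\sigma\colon\LL^2\to\LL^2$, $\sigma(v)=M\cdot h(v)$, which satisfies $\sigma^2=\lambda\cdot\id$. A basis $(e_1,e_2)$ in which $\sigma$ has matrix $\smallmat{0 & \lambda \\ 1 & 0}$ is characterised by $\sigma(e_1)=e_2$ and $\sigma(e_2)=\lambda e_1$; setting $e_2=\sigma(e_1)$ enforces the second relation automatically via $\sigma^2=\lambda\cdot\id$, so everything reduces to finding $e_1\in\LL^2\setminus\{0\}$ with $e_1$ and $\sigma(e_1)$ linearly independent over $\LL$. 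My argument for existence would be by contradiction: if every nonzero $v$ satisfied $\sigma(v)=\mu_v v$ for some $\mu_v\in\LL^*$, then comparing $\sigma(v+w)$ with $\sigma(v)+\sigma(w)$ on two $\LL$-independent vectors forces $\mu_v\equiv\mu$ to be independent of $v$; but then semilinearity gives $h(c)\mu v=\sigma(cv)=\mu cv$, i.e.\ $h(c)=c$ for every $c\in\LL$, contradicting $\LL\neq\kk$. This guarantees such $e_1$, and the corresponding change of basis brings the cocycle to the required form with $\delta=\lambda\in\kk^*$.

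For the converse existence, any $\delta\in\kk^*$ makes $M_\delta=\smallmat{0 & \delta \\ 1 & 0}$ a valid cocycle, since $M_\delta\cdot h(M_\delta)=M_\delta^2=\delta I$ (using $\delta\in\kk$); its class in $\PGL_2(\LL)$ therefore defines an $\LL/\kk$-twisted form $C_\delta$ of $\PP^1_{\LL}$, i.e.\ a Severi--Brauer curve split by $\LL$.

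It remains to characterise when $C_\delta\simeq C_{\delta'}$: this holds iff $M_\delta$ and $M_{\delta'}$ are cohomologous in $\PGL_2(\LL)$, i.e.\ $gM_\delta=\mu M_{\delta'}h(g)$ in $\GL_2(\LL)$ for some $g\in\GL_2(\LL)$ and $\mu\in\LL^*$. Writing $g=\smallmat{a & b \\ c & d}$ and expanding entry-by-entry, the $(2,1)$-entry gives $d=\mu h(a)$, which substituted into the $(1,2)$-entry yields $a\delta=\mu h(\mu)\delta' a=\Norm_h(\mu)\delta' a$; since $g$ is invertible at least one of $a,c$ is nonzero, forcing $\delta/\delta'=\Norm_h(\mu)\in\Norm_h(\LL^*)$. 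Conversely, given $\delta'/\delta=\Norm_h(\mu)$, a direct computation verifies that $g=\diag(\mu,1)$ realises $gM_\delta h(g)^{-1}=h(\mu)^{-1}M_{\delta'}$ in $\GL_2(\LL)$, whence $M_\delta\sim M_{\delta'}$ in $\PGL_2(\LL)$ and $C_\delta\simeq C_{\delta'}$. Once the semilinear normal-form argument is in place, these final steps are routine matrix arithmetic.
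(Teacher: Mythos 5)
Your proof is correct, and it reaches the normal form by a genuinely different route than the paper. The paper starts from the geometric fact that $C$ carries a closed point of degree $2$ split by $\LL$: sending its two geometric components to $[1:0]$ and $[0:1]$ forces $\alpha_h$ to be anti-diagonal at once, and the cocycle condition then pins $\delta$ to $\kk^*$. You instead stay entirely inside Galois cohomology: you lift an arbitrary cocycle to $M\in\GL_2(\LL)$, observe $Mh(M)=\lambda I$ with $\lambda\in\kk^*$, and diagonalise-off-diagonally the $h$-semilinear operator $\sigma(v)=Mh(v)$ by producing a vector $e_1$ with $e_1,\sigma(e_1)$ independent (your contradiction argument — constancy of the eigenvalue on an independent pair, then $h(c)=c$ for all $c$ — is sound, granting the small unstated point that $\mu_{cv}=\mu$ follows by comparing $cv$ with a vector independent of $v$). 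This is the standard cyclic-algebra normal form and buys you independence from the existence of a degree-$2$ point on $C$ (a fact the paper uses without comment), at the cost of a slightly longer normalization step. The classification of $\delta$ up to $\Norm_h(\LL^*)$ is the same entry-by-entry matrix computation in both treatments, with your explicit $g=\diag(\mu,1)$ matching the paper's $\beta$.
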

\begin{proof}
	Let $\{p,h(p)\}$ be a point of degree 2 on $C$. Given an isomorphism $\varphi\colon C_{\LL}\iso\PP^1_{\LL}$, we can suppose (possibly composing with an automorphism of $\PP_{\textbf{L}}^1$), that $\varphi(p)=[1:0]$, $\varphi(h(p))=[0:1]$. This implies that the element $\alpha_h\in \PGL_2(\LL)$ satisfying $\varphi \circ h \circ \varphi^{-1} = \alpha_{h} \circ h$ is of the form 
	\[
	\alpha_h=\begin{pmatrix}
		0 & \delta \\
		1 & 0
	\end{pmatrix}
	\] 
	for some $\delta\in \LL^*$. The cocycle condition is $\alpha_h \cdot h(\alpha_h)=\id$, which corresponds to $\delta\in \kk^*$. 
	
	Let $\delta,\delta'\in\kk^*$ be such that $\delta'=\Norm_{h}(\lambda)\delta$ for some $\lambda\in L^*$, and put 
	\[
	\alpha_h'=\begin{pmatrix}
		0 & \delta' \\
		1 & 0
	\end{pmatrix},\ \ 
	\beta=\begin{pmatrix}
		\lambda & 0 \\
		0 & 1
	\end{pmatrix}.
	\]
	Then $\beta \alpha_{h}h(\beta^{-1}) = h(\lambda)^{-1}\alpha_{h}'$, i.e. the two 1-cocycles given by $\alpha_{h}$ and  $\alpha_{h}'$ are equivalent, hence $C_{\delta}\simeq C_{\delta'}$. Conversely, if the two 1-cycles are equivalent then there is matrix $\beta\in\GL_2(\LL)$ and an element $\lambda\in\LL^*$ such that $\lambda \alpha_h=\beta \alpha_{h}'h(\beta^{-1})$. Therefore,
	\[
	\lambda h(\lambda)\delta\I_2=(\lambda \alpha_h)\cdot h(\lambda \alpha_h) = \beta \alpha_h' h(\alpha_h') \beta^{-1}=\delta'\I_2,
	\]
	hence the claim. 
\end{proof}

\begin{lem}[{\cite[Lemmas 2.3.3 and 3.2.9]{BlancSchneiderYasinsky}}]\label{lem: SB surface Z3 twisted action}
Let $\EE/\kk$ be a Galois extension of degree $3$, with $\Gal(\EE/\kk)=\langle g\rangle$.
	There is a one-to-one correspondence between Severi-Brauer surfaces over $\kk$ that split over $\EE$ and $\kk^* /\Norm_g(\EE^*)$. For $\xi \in \kk^*$ and $X_\xi$ the corresponding Severi-Brauer surface, there is an isomorphism $\varphi_{\xi}: (X_\xi)_{\EE} \iso \PP^2_{\EE}$ such that the twisted action \eqref{eq: twisted action} is given by  \[\alpha_g= \begin{pmatrix}
		0 & 0 & \xi \\
		1 & 0 & 0 \\
		0 & 1 & 0
	\end{pmatrix}\in \PGL_3(\EE).\]
	Moreover, if $X$ corresponds to $\xi\in\kk^*$, then $X^{\rm op}$ corresponds to $\xi^{-1}$.
\end{lem}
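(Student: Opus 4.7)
The plan is to parametrize the cohomology set $\Cohom^1(\Gal(\EE/\kk),\PGL_3(\EE))$, which by Section~\ref{subsec: Galois cohomology} classifies $\EE/\kk$-forms of $\PP^2_\kk$. Since $\Gal(\EE/\kk)=\langle g\rangle$ is cyclic of order $3$, a $1$-cocycle is determined by a single element $\alpha_g\in\PGL_3(\EE)$ satisfying $\alpha_g\cdot g(\alpha_g)\cdot g^2(\alpha_g)=\id$ in $\PGL_3(\EE)$, and the goal is to produce a canonical representative indexed by an element $\xi\in\kk^*/\Norm_g(\EE^*)$.

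First, I would use a closed point $p$ of degree $3$ on $X$ to normalize an isomorphism $\varphi\colon X_\EE\iso\PP^2_\EE$: by Proposition~\ref{prop: SB closed points} a non-trivial $X$ carries such a point in general position, and in the trivial case a suitable degree-$3$ point on $\PP^2_\kk$ can be exhibited directly. Composing with an automorphism of $\PP^2_\EE$ lets me assume that $\varphi$ sends the three $\EE$-points $\{p,g(p),g^2(p)\}$ to $[e_0],[e_1],[e_2]$. Since these images are $\kk$-rational, hence fixed by the untwisted Galois action, the relation $\varphi\circ g\circ\varphi^{-1}=\alpha_g\circ g$ forces $\alpha_g$ to cyclically permute $[e_0]\to[e_1]\to[e_2]\to[e_0]$. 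After one global rescaling making the $(2,1)$-entry equal to $1$, the matrix takes the shape $\bigl(\begin{smallmatrix}0&0&a\\1&0&0\\0&b&0\end{smallmatrix}\bigr)$ for some $a,b\in\EE^*$, and the cocycle relation collapses to $ag(b)=g(a)g^2(b)=bg^2(a)$; in particular $ag(b)\in\kk^*$. A further diagonal change of coordinates (say $\diag(1,1,b^{-1})$) brings $\alpha_g$ into the form stated in the lemma, with $\xi=ag(b)$.

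Next I would check that the class of $\xi$ in $\kk^*/\Norm_g(\EE^*)$ is a genuine invariant of $X$ and that every $\xi\in\kk^*$ is realised. Two normalizations of $\varphi$ sending the chosen degree-$3$ orbit to $([e_0],[e_1],[e_2])$ differ by a diagonal matrix $D=\diag(d_1,d_2,d_3)$; replacing $\alpha_g$ by $D\alpha_g g(D)^{-1}$ and renormalizing back to the standard form multiplies $\xi$ by $\Norm_g(d_1)/\Norm_g(d_2)\in\Norm_g(\EE^*)$, so the class of $\xi$ modulo norms is unchanged. Choosing a different degree-$3$ point corresponds to composing $\varphi$ with an arbitrary element of $\PGL_3(\EE)$, which only changes the cocycle within its cohomology class. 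Conversely, for any $\xi\in\kk^*$ the displayed matrix satisfies $(\alpha_g)^3=\xi\cdot\mathrm{I}_3$, hence the identity in $\PGL_3(\EE)$, so each $\xi$ does give a bona fide cocycle, and hence a Severi--Brauer surface $X_\xi$. This yields the claimed bijection with $\kk^*/\Norm_g(\EE^*)$.

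Finally, for the last claim I would use that $X^{\rm op}$ is the dual Severi--Brauer variety, whose defining cocycle is the contragredient $g\mapsto(\alpha_g^T)^{-1}$. A direct computation, followed by a permutation of the ordered basis implemented by an element of $\PGL_3(\EE)$, rewrites this contragredient in the standard form with parameter $\xi^{-1}$, as desired. The main technical hurdle throughout is the careful bookkeeping of $\PGL_3$-scalings in the diagonal normalization step: one has to confirm that the residual freedom produces exactly a norm factor, rather than an arbitrary element of $\kk^*$, so that $\xi$ indeed descends to a well-defined class in $\kk^*/\Norm_g(\EE^*)$.
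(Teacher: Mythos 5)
Your construction of the normal form and the verification that every $\xi\in\kk^*$ yields a cocycle are correct, and they follow essentially the same route as the paper's proof of the $\Sym_3$ analogue, Lemma~\ref{lem: SB surface S3 twisted action} (the paper itself only cites the present cyclic case from \cite{BlancSchneiderYasinsky}). However, there is a genuine gap in the ``one-to-one'' part. You show that, once the degree-$3$ point and the ordering of its geometric components are fixed, the residual freedom is diagonal and changes $\xi$ by $\Norm_g(d_1)/\Norm_g(d_2)$; this gives \emph{sufficiency}, i.e.\ that $\xi'/\xi\in\Norm_g(\EE^*)$ implies $X_\xi\simeq X_{\xi'}$. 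But for the \emph{necessity} you only remark that choosing a different degree-$3$ point ``changes the cocycle within its cohomology class''. That does not show that the class of $\xi$ in $\kk^*/\Norm_g(\EE^*)$ is determined by the cohomology class: a priori two choices could produce standard cocycles $R_\xi$ and $R_{\xi'}$ that are cohomologous via a \emph{non-diagonal} element of $\PGL_3(\EE)$ with $\xi'/\xi$ not a norm. Without ruling this out, neither the injectivity of $\xi\mapsto X_\xi$ nor the well-definedness of the invariant $X\mapsto[\xi]$ is established, and the ``careful bookkeeping of scalings'' you flag as the main hurdle is not where the real difficulty lies.

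The missing step is exactly Step~4 of the paper's proof of Lemma~\ref{lem: SB surface S3 twisted action}: if $aR_\xi=A^{-1}R_{\xi'}\,g(A)$ for some $A\in\GL_3(\EE)$ and $a\in\EE^*$, iterate this relation three times and use $R_\xi^3=\xi\,\I_3$ to obtain $\Norm_g(a)\,A=R_\xi^3AR_{\xi'}^{-3}=\xi\xi'^{-1}A$, hence $\xi'/\xi=\Norm_g(a)\in\Norm_g(\EE^*)$. (Equivalently, one may invoke the injection of $\Cohom^1(\Gal(\EE/\kk),\PGL_3(\EE))$ into $\Br(\EE/\kk)\simeq\kk^*/\Norm_g(\EE^*)$ and compute the boundary of $R_\xi$, but some such argument must appear.) The remaining points of your proposal are fine: a degree-$3$ point split by $\EE$ and in general position is obtained by taking the Galois orbit of an $\EE$-point of $X_\EE\simeq\PP_\EE^2$ together with Proposition~\ref{prop: SB closed points}, your cocycle computation $ag(b)=g(a)g^2(b)=bg^2(a)$ is correct, and for $X^{\rm op}$ one even has $(R_\xi^{T})^{-1}=R_{\xi^{-1}}$ on the nose, so no basis permutation is needed.
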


\begin{lem}\label{lem: SB surface S3 twisted action}
Let $\EE/\kk$ be a Galois extension, with \[\Gal(\EE/\kk)=\langle s,g\ |\ g^3=s^2=(sg)^2=\id\rangle\simeq\Sym_3.\]

\begin{enumerate}\item\label{SB S3 twisted action}
For each $\tau\in \EE^*$ such that $\xi=\tau s(\tau) sg(\tau)\in \EE^g$, there is a unique Severi-Brauer surface $X_{\xi}$ defined over $\kk$ and an isomorphism $\varphi\colon (X_\xi)_\EE\iso\PP_\EE^2$ such that the twisted action~\eqref{eq: twisted action} is given by
\[
		\alpha_g = \begin{pmatrix}
			0 & 0 & \xi \\
			1 & 0 & 0 \\
			0 & 1 & 0
		\end{pmatrix},\ \ \
		\alpha_s = \begin{pmatrix}
			1 & 0 & 0 \\
			0 & 0 & \tau \\
			0 & s(\tau^{-1}) & 0
		\end{pmatrix}.
		\]
		\item\label{Each SB S3} Every Severi-Brauer surface defined over $\kk$ that admits a $3$-point that splits over $\EE$ is isomorphic to a surface $X_\xi$ as in \ref{SB S3 twisted action}.
		\item\label{SB S3 isomorphism} Two surfaces $X_\xi$ and $X_{\xi'}$ as in \ref{SB S3 twisted action} are isomorphic over $\kk$ if and only if \[\frac{\xi'}{\xi}\in \Norm_g(\EE^*).\]
		In particular, $X_\xi$ is trivial if and only $\xi\in \Norm_g(\EE^*)$.
	\end{enumerate}			
\end{lem}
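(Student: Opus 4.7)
The approach combines standard Galois descent with explicit matrix computations. For \ref{SB S3 twisted action}, since $\Sym_3=\langle g,s\mid g^3=s^2=(sg)^2=\id\rangle$, the matrices $\alpha_g$ and $\alpha_s$ define a 1-cocycle once the three corresponding relations are verified in $\PGL_3(\EE)$. The relation $g^3=\id$ gives $\alpha_g\cdot g(\alpha_g)\cdot g^2(\alpha_g)=\diag(\xi,g(\xi),g^2(\xi))$, which is a scalar matrix precisely because $\xi\in\EE^g$; the relation $s^2=\id$ is immediate from $\alpha_s\cdot s(\alpha_s)=I_3$. For $(sg)^2=\id$, one checks that $\alpha_{sg}=\alpha_s\cdot s(\alpha_g)$ is the monomial matrix whose $(1,3)$, $(2,2)$, $(3,1)$-entries are $s(\xi)$, $\tau$, $s(\tau^{-1})$ respectively, and consequently $\alpha_{sg}\cdot sg(\alpha_{sg})=\diag\bigl(s(\xi)g^2(\tau^{-1}),\;\tau\,sg(\tau),\;s(\tau^{-1})g^2(\xi)\bigr)$. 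The coincidence of these three entries in $\EE^*$---required for the class in $\PGL_3$ to be trivial---is verified by combining the defining identity $\xi=\tau s(\tau)sg(\tau)$, the hypothesis $\xi\in\EE^g$, and the Coxeter-type commutation $sg=g^2s$ in $\Sym_3$. The existence and uniqueness of $X_\xi$ then follow from the descent formalism of Subsection~\ref{subsec: Galois cohomology}.

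For \ref{Each SB S3}, let $X$ be a Severi--Brauer surface admitting a closed point $P$ of degree $3$ whose splitting field is $\EE$. Then $\Sym_3$ acts transitively on the three geometric points of $P_\EE$ with order-$2$ stabilizers; after relabeling, we may assume that $\langle s\rangle$ stabilizes a chosen point, so $g$ cycles the three points while $s$ swaps the other two. By Proposition~\ref{prop: SB closed points}(3) these three points are in general position, so we can choose $\varphi\colon X_\EE\iso\PP_\EE^2$ sending them to $[1{:}0{:}0]$, $[0{:}1{:}0]$, $[0{:}0{:}1]$, with $[1{:}0{:}0]$ as the $\langle s\rangle$-fixed point. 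The twisted action then forces $\alpha_g$ and $\alpha_s$ to be monomial matrices realizing these permutations of the coordinate axes; a diagonal rescaling normalizes them to the displayed forms, and the cocycle relations of \ref{SB S3 twisted action} give $\xi\in\EE^g$ together with $\tau\in\EE^*$ satisfying $\xi=\tau s(\tau)sg(\tau)$.

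For \ref{SB S3 isomorphism}, the ``if'' direction is proved by taking $\beta=\diag(1,\mu,\mu g(\mu))\in\PGL_3(\EE)$ for $\mu\in\EE^*$: a direct computation shows that $\beta\alpha_g g(\beta^{-1})$ represents the cyclic shift with parameter $\xi\cdot\Norm_g(\mu)^{-1}$ in $\PGL_3(\EE)$, and simultaneously $\beta\alpha_s s(\beta^{-1})$ represents $\alpha_s'$ with $\tau'=\tau\mu/(s(\mu)sg(\mu))$, which satisfies $\tau' s(\tau') sg(\tau')=\xi\cdot\Norm_g(\mu)^{-1}$; hence $X_\xi\simeq X_{\xi'}$ whenever $\xi'/\xi\in\Norm_g(\EE^*)$. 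Conversely, any isomorphism $X_\xi\simeq X_{\xi'}$ over $\kk$ base-changes to an isomorphism $(X_\xi)_\KK\simeq(X_{\xi'})_\KK$ of Severi--Brauer surfaces over $\KK=\EE^{\langle g\rangle}$, split by the cubic cyclic extension $\EE/\KK$; the restricted cocycles to $\Gal(\EE/\KK)=\langle g\rangle$ are exactly those of Lemma~\ref{lem: SB surface Z3 twisted action} with parameters $\xi,\xi'\in\KK^*$, so the cyclic case forces $\xi/\xi'\in\Norm_g(\EE^*)$. The final triviality statement is witnessed by the $\kk$-rational point $[1{:}1{:}1]\in X_1\simeq\PP_\kk^2$ (taking $\xi=\tau=1$), which combined with the isomorphism criterion identifies the trivial class with $\Norm_g(\EE^*)\subset\kk^*$.

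The main technical obstacle is the third cocycle check in \ref{SB S3 twisted action}: one must combine the three symbolic identities $\xi=\tau s(\tau)sg(\tau)$, $\xi\in\EE^g$ and $sg=g^2s$ with some care, and verify that the three diagonal entries of $\alpha_{sg}\cdot sg(\alpha_{sg})$ coincide as elements of $\EE^*$, not merely up to a common scalar. Once this is settled, \ref{Each SB S3} and \ref{SB S3 isomorphism} are essentially normal-form and base-change arguments.
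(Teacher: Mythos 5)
Your verification of the cocycle conditions in \ref{SB S3 twisted action} is correct: you check the relation $(sg)^2=\id$ where the paper checks the equivalent $gs=sg^2$, and the three diagonal entries of $\alpha_{sg}\cdot sg(\alpha_{sg})$ do coincide (entries $2$ and $3$ directly from $\xi=\tau s(\tau)sg(\tau)$ and $g(\xi)=\xi$; entries $1$ and $3$ after applying $s$ to the identity $g(\tau)sg^2(\tau)=\tau\, sg(\tau)$ that encodes $g(\xi)=\xi$). Your proof of \ref{Each SB S3} matches the paper's. For the ``only if'' half of \ref{SB S3 isomorphism} you take a genuinely different and shorter route: instead of the paper's explicit iteration of $A=a^{-1}R_\xi g(A)R_{\xi'}^{-1}$, you base-change to $\KK=\EE^{\langle g\rangle}$ and invoke the cyclic case (Lemma \ref{lem: SB surface Z3 twisted action}); this is valid, since the restriction of the cocycle to $\Gal(\EE/\KK)=\langle g\rangle$ is exactly the cyclic cocycle with parameter $\xi\in\KK^*$ and a $\kk$-isomorphism induces a $\KK$-isomorphism.

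The gap is in the ``if'' direction of \ref{SB S3 isomorphism}. Your conjugation by $\beta=\diag(1,\mu,\mu g(\mu))$ shows that the surface attached to the pair $(\xi,\tau)$ is isomorphic to the surface attached to the \emph{specific} pair $\bigl(\xi\Norm_g(\mu)^{-1},\,\tau\mu/(s(\mu)sg(\mu))\bigr)$. But the surface $X_{\xi'}$ in the statement is built from an \emph{arbitrary} $\tau''$ with $\tau'' s(\tau'') sg(\tau'')=\xi'$, and nothing in your argument identifies the two surfaces with the same $\xi'$ but different choices of $\tau$. Equivalently, you never justify that the notation $X_\xi$ is well defined, i.e.\ that the isomorphism class depends only on $\xi$. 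This is exactly Step 2 of the paper's proof: when $\theta=\tau''/\tau'$ satisfies $\theta s(\theta)sg(\theta)=1$, one must exhibit $\lambda\in\EE^*$ with $\tau''=\tau' s(\lambda)sg(\lambda)\lambda^{-1}$, a short but non-automatic Hilbert-90-type construction. Your $\KK$-restriction argument cannot substitute for it, since it only yields an isomorphism over $\KK$, not over $\kk$. Once this step is supplied, the remainder of your proof (including the triviality of $X_1$ via the fixed point $[1{:}1{:}1]$) is complete.
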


\begin{proof}
	\ref{SB S3 twisted action} To check that such a surface exists, we need to verify that there is a cocycle $r\mapsto \alpha_r$, or equivalently that $r\mapsto  \alpha_r\circ r$ is a group homomorphism from $\Gal(\EE/\kk)$ to $\PGL_3(\EE)\rtimes \Gal(\EE/\kk)$. As $\Gal(\EE/\kk)$ is generated by $g$ and $s$, the elements $\alpha_g$ and $\alpha_s$ determine the group homomorphism (or the cocycle). The relation $g^3=\id$ corresponds to $\alpha_g g(\alpha_g) g^2(\alpha_g)=\alpha_{g^3}=\id\in \PGL_3(\EE)$ and is equivalent to $g(\xi)=\xi$. The relation $\alpha_{s} s(\alpha_s)=\id$ is implied by the form of $\alpha_s$ and the relation $gs=sg^2$ corresponds to
	\[
	\alpha_{gs}=\alpha_g g(\alpha_s)=\alpha_{sg^2}=\alpha_ss(\alpha_{g^2})=\alpha_s s(\alpha_g)\cdot sg(\alpha_g)
	\] and thus, remembering that $g(\xi)=\xi$, to the equality
	\[
	\begin{pmatrix}
		0 & \xi sg^2(\tau^{-1}) & 0\\
		1 & 0 & 0\\
		0 & 0 & g(\tau)
	\end{pmatrix}=
	\begin{pmatrix}
		0 & s(\xi) & 0\\
		\tau & 0 & 0\\
		0 & 0 & s(\xi)s(\tau^{-1})
	\end{pmatrix}\in\PGL_3(\EE).
	\]
	This is equivalent to
	\[\tau\xi =s(\xi) sg^2(\tau), \quad s(\xi)=s(\tau)\tau g(\tau).\] Applying $s$ to the second equality, we obtain $\xi=\tau s(\tau) sg(\tau)$. Replacing it in the first equality, we obtain $\tau^2 s(\tau) sg(\tau)=s(\tau) \tau g(\tau) sg^2(\tau),$ which is equivalent to $\tau  sg(\tau)=  g(\tau) sg^2(\tau),$  and also to the equality $g(\xi)=\xi$.

	\ref{Each SB S3}
	Let $X$ be a Severi-Brauer surface defined over $\kk$ and let $\{p_1,p_2,p_3\}\subset X(\EE)$ be a $3$-point that splits over $\EE$. By changing the order, we may suppose that $g(p_1)=p_2$ and $s(p_1)=p_1$, which implies that $g$ and $s$ correspond to $(123)$ and $(23)$. We can choose $\varphi$ so that $p_1,p_2,p_3$ are sent onto $[1:0:0],[0:1:0],[0:0:1]$, respectively. Thus $\alpha_g$ is of the form $\left(\begin{smallmatrix}
			0 & 0 & \xi \\
			\beta & 0 & 0 \\
			0 & 1 & 0
		\end{smallmatrix}\right)$, for some $\xi,\beta\in \EE^*$. Replacing $\varphi$ with its composition by a diagonal automorphism, we may assume that $\beta=1$. We then write $\alpha_s=\left(\begin{smallmatrix}
			1 & 0 & 0 \\
			0 & 0 & \tau \\
			0 & \kappa & 0
		\end{smallmatrix}\right)$ with $\tau,\kappa\in \EE^*$. As $\alpha_s s(\alpha_s)=\alpha_{s^2}=\mathrm{id}\in \PGL_3(\EE)$ we find $\kappa=s(\tau^{-1})$ and obtain the form as in \ref{SB S3 twisted action}, where $\xi=\tau s(\tau) sg(\tau)$ and $g(\xi)=\xi$ are given by the fact that we have a cocycle, as explained above.
	
	\ref{SB S3 isomorphism} We take two Severi-Brauer surfaces $X_\xi$ and $X_{\xi'}$ as in \ref{SB S3 twisted action}, induced by $\xi,\tau\in \EE$ and $\xi',\tau'\in \EE$ respectively, with $\xi=\tau s(\tau) sg(\tau)$, $\xi'=\tau' s(\tau') sg(\tau')\in \EE^g$.
	
	\begin{itemize}[leftmargin=*, labelindent=15pt, itemsep=5pt]
	
	\item {\it Step 1.} We claim that if there is $\lambda \in \EE^*$ such that \[\tau'=\tau s(\lambda)sg(\lambda){\lambda^{-1}},\] then $X_\xi$ and $X_{\xi'}$ are isomorphic\footnote{In fact, this condition is both necessary and sufficient.}. Indeed, we have \[\xi'=\tau' s(\tau') sg(\tau')=\tau s(\tau) sg(\tau) \lambda g(\lambda) g^2(\lambda)=\xi \Norm_g(\lambda).\] Define $M=\left(\begin{smallmatrix}
			1 & 0 & 0\\
			0 & \lambda & 0 \\
			0 & 0 & \lambda g(\lambda)
		\end{smallmatrix}\right)\in \PGL_3(\EE)$ and calculate\footnote{The equality here should be understood as equality in $\PGL_3(\EE)$.} \[M^{-1}\begin{pmatrix}
			0 & 0 & \xi \\
			1 & 0 & 0 \\
			0 & 1 & 0
		\end{pmatrix}g(M)=\begin{pmatrix}
			0 & 0 & \xi' \\
			1 & 0 & 0 \\
			0 & 1 & 0
		\end{pmatrix},\ \ 
		M^{-1} \begin{pmatrix}
			1 & 0 & 0 \\
			0 & 0 & \tau \\
			0 & s(\tau^{-1}) & 0
		\end{pmatrix}s(M)=\begin{pmatrix}
			1 & 0 & 0 \\
			0 & 0 & \tau' \\
			0 & s(\tau'^{-1}) & 0
		\end{pmatrix},\]
		which implies that $X_\xi$ and $X_{\xi'}$ are isomorphic.		
		
	\item {\it Step 2.} If $\xi=\xi'$, we write $\theta=\tau'/\tau\in \EE^*$ and obtain $\theta s(\theta) sg(\theta)=1$, since $\tau s(\tau) sg(\tau)=\xi=\xi'=\tau' s(\tau')sg(\tau')$. By choosing $\lambda=sg(\theta)=\theta^{-1}s(\theta^{-1})\in \EE^s$, we obtain $\lambda=g(\theta)$ and then 
	$s(\lambda)sg(\lambda){\lambda^{-1}}=sg(\lambda)=\theta$. We then obtain $\tau'=\tau s(\lambda)sg(\lambda){\lambda^{-1}}$ as in Step 1 and thus $X_\xi$ and $X_{\xi'}$ are isomorphic. This explains why the surface only depends on $\xi$ and thus motivates the notation $X_\xi$.	
		
	\item {\it Step 3.} We now show the sufficiency in \ref{SB S3 isomorphism}. If there is $\mu\in \EE^*$ such that $\xi'=\Norm_g(\mu)\xi$, we write	
	\[\widetilde{\tau}=\tau'\frac{\mu} {s(\mu)sg(\mu)},\] and deduce \[\widetilde{\tau} s(\widetilde{\tau})sg(\widetilde{\tau})=\tau' s(\tau') sg(\tau')  \frac{\mu} {s(\mu)sg(\mu)}\frac{s(\mu)} {\mu g(\mu)}\frac{sg(\mu)} {g^2(\mu)\mu}= \frac{\xi'} {\Norm_g(\mu)}=\xi\in \EE^g.\]
	So, we obtain a Severi-Brauer $X_\xi$ given by $(\xi,\widetilde{\tau})$. By Step 2, this surface is isomorphic to $X_\xi$, given by $(\xi,\tau)$, and by Step 1 the surface is isomorphic to $X_{\xi'}$.
	
	\item {\it Step 4.} Finally, we show the necessity in \ref{SB S3 isomorphism}. Suppose that $X_\xi$ and $X_{\xi'}$ are isomorphic. We proceed as in the proof of \cite[Lemma 2.3.3 (4)]{BlancSchneiderYasinsky}. Write $R_\xi=\left(\begin{smallmatrix}
			0 & 0 & \xi \\
			1 & 0 & 0 \\
			0 & 1 & 0
		\end{smallmatrix}\right)\in \GL_3(\EE^g)$ and  $R_{\xi'}=\left(\begin{smallmatrix}
			0 & 0 & \xi' \\
			1 & 0 & 0 \\
			0 & 1 & 0
		\end{smallmatrix}\right)\in \GL_3(\EE^g)$ and obtain
	$A\in \GL_3(\EE)$ and $a\in \EE^*$ such that $aR_{\xi}=A^{-1} R_{\xi'} g(A)$. Hence, we find
	\[A = a^{-1}R_\xi g(A)R_{\xi'}^{-1}.\]
	Applying $g$ and $g^2$ gives $g(A) ={g(a)^{-1}}R_\xi g^2(A)R_{\xi'}^{-1}$ and $g^2(A)={g^2(a)^{-1}}R_\xi AR_{\xi'}^{-1}$. We then replace successively  $A$, $g(A)$ and $g^2(A)$ and obtain
	\[
	ag(a)g^2(a)A = g(a)g^2(a)R_\xi g(A)R_{\xi'}^{-1} = g^2(a)R_\xi^2 g^2(A)R_{\xi'}^{-2} =R_\xi^3 AR_{\xi'}^{-3} = \xi\xi'^{-1} A.
	\]
	This yields $\xi'/\xi=a g(a) g^2(a)\in \Norm_g(\EE^*)$, as required.
	
	We have proven that $X_\xi$ and $X_{\xi'}$ are isomorphic if and only if $\xi'/\xi\in \Norm_g(\EE^*)$. To finish the proof, it remains to check that $X_1$ is trivial. For this take $\tau=1$ to get $X_\xi\simeq X_1$, and observe that $[1:1:1]$ is fixed by $\varphi \Gal(\EE/\kk) \varphi^{-1}$, and thus $\varphi^{-1}([1:1:1])$ is an $\EE$-point of $S$ fixed by the Galois group $\Gal(\EE/\kk)$, so it is a $\kk$-rational point.
	
	\end{itemize}
	
\end{proof}

\subsection{Involution surfaces}\label{subsec: involution surfaces}

In this section, we recall the birational properties of del Pezzo surfaces~$S$, such that $S_{\overline{\kk}}\simeq\PP_{\overline{\kk}}^1\times\PP_{\overline{\kk}}^1$, also known as \emph{involution surfaces}. We follow recent works of A.~Trepain \cite{Trepalin2023} and J.-L. Colliot-Th\'{e}l\`{e}ne \cite{JLCTQuadrics}. First of all, let us state the biregular classification of such surfaces. Note that we have $\rk\Pic(S)\in\{1,2\}$, as $\Pic(S)\subseteq\Pic(S_{\overline{\kk}})$.

\begin{prop}[{\cite[Lemma 7.3]{ShramovVologodskyPointless}}]\label{prop: biregular classification of dP8}
	Let $S$ be a del Pezzo surface of degree 8 such that $S_{\overline{\kk}}\simeq\PP_{\overline{\kk}}^1\times\PP_{\overline{\kk}}^1$. Then one of the following cases holds:
	\begin{enumerate}[leftmargin=*, labelindent=20pt, itemsep=5pt]
		\item If $\rk\Pic(S)=1$, then $S\simeq\Weil_{\LL/\kk}C$, the Weil restriction of scalars for a conic $C$ over a quadratic extension $\LL/\kk$. The extension $\LL/\kk$ and the conic $C$ are uniquely determined by $S$ up to conjugation by $\Gal(\LL/\kk)$.
		\item If $\rk\Pic(S)=2$, then $S\simeq C_1\times C_2$, where $C_1$ and $C_2$ are two conics over $\kk$, uniquely determined by $S$. 
	\end{enumerate}
	Moreover, $S$ is isomorphic to a quadric in $\PP_\kk^3$ if and only if $S\simeq C\times C$ or $S\simeq\Weil_{\LL/\kk}C_\LL$ for some conic $C$ over $\kk$.
\end{prop}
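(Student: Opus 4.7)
The plan is to analyze the Galois action on $\Pic(S_{\overline{\kk}}) = \ZZ f_1 \oplus \ZZ f_2$, where $f_1, f_2$ are the two ruling classes (intersection form $f_i^2 = 0$, $f_1 \cdot f_2 = 1$, and $-K_S = 2(f_1+f_2)$). Since $\Gal(\overline{\kk}/\kk)$ preserves both the intersection form and the canonical class, it acts on $\{f_1,f_2\}$ either trivially or by swapping them, factoring through a finite quotient $\Gal(\FF/\kk)$ with $[\FF:\kk] \in \{1,2\}$. This dichotomy corresponds exactly to the two cases of the proposition.

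In the trivial case, $\rk\Pic(S) = 2$ and each class $f_i \in \Pic(S)$ is base-point-free, defining a $\PP^1$-fibration $\pi_i \colon S \to B_i$ onto a smooth geometrically rational $\kk$-curve, hence a conic. The product map $\pi_1 \times \pi_2 \colon S \to B_1 \times B_2$ is proper, birational, and an isomorphism after base change to $\overline{\kk}$, so it is an isomorphism of $\kk$-varieties. Uniqueness of $B_1, B_2$ comes from the fact that the two rulings on $S_{\overline{\kk}}$ are unique. In the non-trivial case, let $\LL/\kk$ be the quadratic extension corresponding to the kernel of the action, with $\Gal(\LL/\kk) = \langle \sigma \rangle$. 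Base-changing to $\LL$ and applying the previous paragraph gives $S_\LL \simeq C \times C'$ for conics over $\LL$; since $\sigma$ swaps the rulings we must have $C' \simeq C^\sigma$, which is precisely the descent datum of a Weil restriction, yielding $S \simeq \Weil_{\LL/\kk} C$. Here $\LL$ is uniquely determined as the fixed field of the stabilizer of a ruling class, and $C$ is determined up to the $\Gal(\LL/\kk)$-action permuting the two rulings of $S_\LL$.

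For the final claim, $S$ is a quadric in $\PP_\kk^3$ iff the very ample class $f_1+f_2$ (which is always Galois-invariant) admits a $\kk$-rational representative, i.e.\ lies in $\image\bigl(\Pic(S) \to \Pic(S_{\overline{\kk}})^{\Gal}\bigr)$. By the Amitsur-type exact sequence recalled in the introduction, this is equivalent to the image of $f_1+f_2$ in $\Br(\kk)$ being trivial. In the product case $S = C_1 \times C_2$, the class $f_i$ maps to the Brauer class $[C_i]$, so $f_1+f_2 \mapsto [C_1]+[C_2]$; since conic classes are $2$-torsion, this vanishes iff $[C_1]=[C_2]$, i.e.\ $C_1 \simeq C_2$. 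In the Weil-restriction case, I would argue by constructing the quadric embedding explicitly when $C = C'_\LL$ descends (using a $\kk$-rational divisor on $\Weil_{\LL/\kk} C'_\LL$ arising from the diagonal $C' \hookrightarrow \Weil_{\LL/\kk} C'_\LL$), and conversely by a restriction--corestriction argument identifying the Brauer obstruction for $f_1+f_2$ with the descent obstruction of $C$ along $\LL/\kk$.

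The step I expect to be the main obstacle is this last identification in the Weil-restriction case: relating the image of $f_1+f_2$ in $\Br(\kk)$ to the Brauer class of $C$ viewed via $\Br(\LL) \to \Br(\kk)$ requires a careful Galois-cohomology computation, particularly to verify that $f_1+f_2$ descends \emph{only} when $C$ itself descends (and not, say, in some exceptional case arising from $2$-torsion phenomena in the Brauer group of $\kk$).
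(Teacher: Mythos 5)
Your overall strategy --- reading off the two cases from the Galois action on the ruling classes $f_1,f_2$, producing the product/Weil-restriction structure by descent, and detecting the quadric embedding via the Brauer obstruction of $f_1+f_2$ --- is the standard one (the paper itself only cites \cite[Lemma 7.3]{ShramovVologodskyPointless} and gives no proof). But there are two genuine gaps. First, in the rank-$2$ case the claim that ``each class $f_i\in\Pic(S)$'' is false in general: by the very exact sequence you invoke later, $f_i$ lies in $\Pic(S)$ only when the Brauer class $[C_i]$ vanishes, i.e.\ only when $C_i\simeq\PP^1_\kk$ (for $S=C\times C$ with $C$ a non-trivial conic one has $f_1+f_2\in\Pic(S)$, represented by the diagonal, but $f_1\notin\Pic(S)$). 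Taken literally, your argument via $|f_i|$ would force $B_i\simeq\PP^1$ and ``prove'' the false statement $S\simeq\PP^1\times\PP^1$. The fix is to contract the Galois-stable extremal ray $\RR_{\geqslant 0}f_i$ of $\overline{\NE}(S)$ --- equivalently, to use the morphism defined by $|2f_i|$, noting that $2f_i\in\Pic(S)$ always --- which lands on a conic $B_i$ that need not be $\PP^1$.

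Second, the converse direction of the quadric criterion in the Weil-restriction case is exactly the non-formal input, and you have flagged it without closing it. Writing $S=\Weil_{\LL/\kk}D$, the image of $f_1+f_2$ in $\Br(\kk)$ is $\mathrm{cor}_{\LL/\kk}[D]$, and one must show that $\mathrm{cor}_{\LL/\kk}[D]=0$ forces $D\simeq C_\LL$ for a conic $C$ over $\kk$; the identity $\mathrm{cor}\circ\mathrm{res}=[\LL:\kk]$ only gives the other implication. This is a genuine theorem: by Albert--Riehm--Scharlau a quaternion algebra over $\LL$ with trivial corestriction admits an $\LL/\kk$-unitary involution, and by Albert's theorem it is then of the form $Q_0\otimes_\kk\LL$; alternatively one identifies $D$ with the conic of the even Clifford algebra $C_0(q)$ of the quadric, which is visibly $(\alpha,\beta)_\LL$ with $\alpha,\beta\in\kk^*$. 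Without one of these inputs the ``only if'' direction of the final assertion is unproved. The remaining steps (the product map being an isomorphism by faithfully flat descent, the adjunction morphism $S\to\Weil_{\LL/\kk}C$ realizing the Weil-restriction structure, the diagonal $C\hookrightarrow\Weil_{\LL/\kk}C_\LL$ giving a $\kk$-rational representative of $f_1+f_2$) are sound.
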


We then focus on del Pezzo surfaces $S$ of degree 8 with $\rk\Pic(S)=2$. By Proposition~\ref{prop: biregular classification of dP8}, one has $S\simeq C_1\times C_1$, where $C_1$ and $C_2$ are two conics over $\kk$, uniquely determined by $S$. The description of possible birational models of $S$ in the non-rational case (i.e. when $S(\kk)=\varnothing$) is due to J. Koll\`{a}r \cite{KollarConics} and A. Trepalin \cite{Trepalin2023}. 

\begin{mydef}[{\cite[7]{KollarConics}}]
	Let $C_1$ and $C_2$ be two smooth conics over $\kk$, and $b(C_1)$ and $b(C_2)$ be their classes in the Brauer group $\Br(\kk)$. If there exists a conic $C_3$ such that $b(C_3)=b(C_1)+b(C_2)$, then $C_3$ is called the \emph{Brauer product} $C_1*C_2$ of $C_1$ and $C_2$.
\end{mydef}

\begin{prop}[{\cite[Lemma 3.2]{Trepalin2023} and \cite[Lemma 8]{KollarConics}}]\label{prop: When Brauer product is defined}
	The Brauer product $C_1*C_2$ is defined if and only if there exists a quadratic extension $\LL/\kk$, such that both $C_1$ and $C_2$ have closed points of degree 2 whose splitting field is $\LL$; equivalently, the Severi-Brauer curves $C_1$ and $C_2$ are trivial over $\LL$.
\end{prop}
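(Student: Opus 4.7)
The plan is to rephrase the statement in terms of quaternion algebras and then apply Albert's theorem on common quadratic subfields. To a smooth conic $C$ over $\kk$ one attaches a quaternion algebra $A_C$ whose class in $\Br(\kk)$ is precisely $b(C)$, and $C\simeq C'$ iff $A_C\simeq A_{C'}$. A quadratic extension $\LL/\kk$ splits $C$ (i.e.\ $C_\LL\simeq\PP^1_\LL$) iff $\LL$ embeds in $A_C$ as a maximal subfield, iff $C(\LL)\ne\varnothing$, iff $C$ possesses a closed point of degree $2$ with residue field $\LL$. This already gives the equivalence of the two reformulations of the splitting condition in the statement, and reduces the proposition to showing that $b(C_1)+b(C_2)$ is represented by a (single) quaternion algebra iff $A_{C_1}$ and $A_{C_2}$ admit a common quadratic subfield.

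For the easy direction, I would assume $C_1$ and $C_2$ both split over $\LL=\kk(\sqrt{d})$. Then $A_{C_i}\simeq (d,a_i)_\kk$ for some $a_i\in \kk^*$, and the classical Brauer-group identity $(d,a_1)_\kk\otimes_\kk(d,a_2)_\kk \sim (d,a_1 a_2)_\kk$ shows that $b(C_1)+b(C_2)$ is the class of the quaternion $(d,a_1a_2)_\kk$, whose Severi--Brauer conic is the desired product $C_1*C_2$.

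For the converse, suppose $C_3=C_1*C_2$ exists, so $b(C_1)+b(C_2)=b(C_3)$. Since conic classes are $2$-torsion, the biquaternion algebra $A_{C_1}\otimes_\kk A_{C_2}$ is Brauer equivalent to the quaternion $A_{C_3}$; in particular its index divides $2$ and the algebra is not a division algebra. I would then invoke Albert's theorem: two quaternion $\kk$-algebras share a common quadratic subfield if and only if their tensor product has index at most $2$. This produces a common quadratic subfield $\LL$ of $A_{C_1}$ and $A_{C_2}$, which by the first paragraph is a common splitting field of $C_1$ and $C_2$.

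The main (indeed only) non-formal obstacle is Albert's theorem itself; everything else is a routine translation between conics, $2$-torsion Brauer classes, and quaternion algebras. In characteristic $\ne 2$, Albert's result can be obtained by analyzing the isotropy of the $6$-dimensional Albert form attached to $A_{C_1}\otimes A_{C_2}$, and for the level of generality needed here it is precisely the content of \cite[Lemma 8]{KollarConics} and \cite[Lemma 3.2]{Trepalin2023} cited in the statement.
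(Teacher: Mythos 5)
Your proposal is correct, and it is essentially the standard argument underlying the two references the paper cites (the paper itself offers no proof beyond the citation): passing from conics to quaternion algebras, using bilinearity of the symbol for the easy direction, and invoking Albert's linkage theorem (two quaternion algebras share a common quadratic splitting field if and only if their tensor product is non-division, equivalently of index at most $2$) for the converse is precisely how Koll\'ar's Lemma~8 and Trepalin's Lemma~3.2 go. The only point worth a remark is that the paper allows perfect fields of characteristic $2$, so one should use the characteristic-free form of the quaternion symbol and the corresponding version of Albert's theorem (due to Draxl in characteristic $2$, where the common quadratic subfield can be taken separable); this changes nothing of substance.
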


\begin{thm}[{\cite[Theorem 1.5]{Trepalin2023}, \cite[Example 6]{KollarConics}}]\label{thm: birational classification of dP8}
	Let $S$ be a del Pezzo surface of degree~8 over a perfect field $\kk$ such that $S_{\overline{\kk}}\simeq\PP_{\overline{\kk}}^1\times\PP_{\overline{\kk}}^1$. Suppose that $S(\kk)=\varnothing$ and $\rk\Pic(S)=2$. Then $S$ is a product of two smooth conics and one of the following holds:
	\begin{enumerate}[leftmargin=*, labelindent=20pt, itemsep=5pt]
		\item $S\simeq C\times C$ or $S\simeq C\times\PP_\kk^1$, where $C(\kk)=\varnothing$. Let $S\dashrightarrow S'$ be a birational map to another Mori fibre space $S'$. Then $S'$ is isomorphic to $C\times C$, or to $C\times\PP^1_\kk$, or to a $\kk$-form of a Hirzebruch surface $\mathbb{F}_{2k}$ over $C$. 
		\item $S\simeq C_1\times C_2$, where $C_1$ and $C_2$ are two conics with no $\kk$-points, not isomorphic to each other. Let $S\dashrightarrow S'$ be a birational map to another Mori fibre space $S'$. If the Brauer product $C_3=C_1*C_2$ is not defined, then $S'\simeq C_1\times C_2$. Otherwise $S'$ is isomorphic to $C_1\times C_2$, or to $C_1\times C_3$, or to $C_2\times C_3$. 
	\end{enumerate}	
\end{thm}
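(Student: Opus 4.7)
The plan is to invoke the Sarkisov program: every birational map between two-dimensional Mori fibre spaces decomposes into a chain of elementary Sarkisov links of types~I--IV. Since $S$ has $\rk\Pic(S)=2$, by Proposition~\ref{prop: biregular classification of dP8} we may write $S \simeq C_1 \times C_2$, and the two natural projections $\pi_i \colon S \to C_i$ ($i=1,2$) are precisely the two conic bundle Mori fibre space structures on $S$. So I would start from one of these and enumerate all possible elementary Sarkisov links in order to determine the full square-equivalence list in the birational class of $S$.

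For case~(2), where $C_1 \not\simeq C_2$ and neither has a $\kk$-point, I analyse Sarkisov links of type~II from $\pi_1\colon C_1 \times C_2 \to C_1$. Such a link is obtained by blowing up a suitable closed point $p$ lying on a smooth fibre $F \simeq C_2$ and contracting the strict transform of $F$, producing a new conic bundle $\pi_1'\colon S' \to C_1$ whose generic fibre is a conic with Brauer class $[C_2]_{\kappa(p)} + [(C_1)_{\kappa(p)}]$ in $\Br(\kappa(p))$. For this new fibration to descend to a product of two $\kk$-conics, one needs a closed point $p$ of degree~$2$ whose splitting field splits both $C_1$ and $C_2$; by Proposition~\ref{prop: When Brauer product is defined}, this is exactly the condition that $C_1 * C_2$ is defined. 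In the undefined case, every such link returns to $C_1 \times C_2$, whereas in the defined case one obtains $C_1 \times C_3$, and by swapping the roles of $\pi_1$ and $\pi_2$ also $C_2 \times C_3$. Links of the remaining types are then excluded: type~I would require a $\kk$-rational base-point (absent by hypothesis), type~III would demand a $(-1)$-divisor unavailable on a del Pezzo of degree~$8$ with $\rk\Pic = 2$, and type~IV simply swaps the two projections, preserving the square-equivalence class.

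For case~(1), where $C \simeq C_1 \simeq C_2$ or $S \simeq C \times \PP_\kk^1$, the extra symmetry and (in the $\PP^1$-case) an honest $\kk$-section provide further Sarkisov links. Elementary transformations centred on sections produce $\kk$-forms of Hirzebruch surfaces $\mathbb{F}_{2k}$ over $C$: geometrically over $\overline{\kk}$ this is the classical elementary transformation relating $\PP^1 \times \PP^1$ and $\mathbb{F}_{2k}$, and the Galois descent goes through because $C$ is already defined over $\kk$ and the constructions are $\Gal(\overline{\kk}/\kk)$-equivariant. The fact that every resulting Mori fibre space fibres over $C$ rather than over some other curve follows from the absence of $\kk$-points together with Koll\'ar's analysis of conic bundles over non-rational curves in~\cite[\S 7]{KollarConics}.

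The main obstacle is the precise bookkeeping of Brauer classes under an elementary transformation: one must verify that the blow-up--contraction procedure at a closed point $p$ indeed shifts the generic conic's Brauer class by $[(C_1)_{\kappa(p)}]$, and then identify precisely when this new class descends to an honest $\kk$-conic. This can be handled using the exact sequence
\[
0 \to \Pic(X) \to \Pic(X_{\overline{\kk}})^{\Gal(\overline{\kk}/\kk)} \to \Br(\kk) \to \Br(\kk(X))
\]
combined with Koll\'ar's explicit formula for the Brauer class of the generic fibre after an elementary transformation; the conclusion is that the new conic descends to $\kk$ exactly when $C_1$ and $C_2$ admit a common quadratic splitting field, which is the Brauer product condition of Proposition~\ref{prop: When Brauer product is defined}.
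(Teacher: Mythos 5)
First, a point of order: the paper does not prove this statement at all --- it is imported verbatim from \cite[Theorem 1.5]{Trepalin2023} and \cite[Example 6]{KollarConics}, so there is no internal proof to compare your argument against. Judged on its own merits, your sketch identifies the right framework (the two-dimensional Sarkisov program, elementary transformations of the two conic bundle structures, exclusion of links of types I, III, IV), but the step you rely on to discriminate between the possible outputs does not work. The Brauer class of the generic fibre of $\pi_1\colon C_1\times C_2\to C_1$ lives in $\Br(\kk(C_1))$, and $[C_1]$ restricts to zero there (the generic point is a $\kk(C_1)$-point of $C_1$). Consequently $[C_3]_{\kk(C_1)}=[C_1]_{\kk(C_1)}+[C_2]_{\kk(C_1)}=[C_2]_{\kk(C_1)}$: the generic fibres of $C_1\times C_2$, of $C_1\times C_3$, and of every relatively minimal conic bundle over $C_1$ in the birational class are \emph{isomorphic} conics over $\kk(C_1)$. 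So "checking when the new generic conic descends to a $\kk$-conic" is vacuous and cannot tell you which model a given link produces; the whole content of the theorem is about distinguishing non-isomorphic total spaces that share the same generic fibre, and that requires a finer invariant (explicit tracking of the models, as in Trepalin's proof) rather than the class in $\Br(\kk(X))$.

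Two further concrete gaps. (i) You restrict to centres of degree $2$, but a type~II link over $C_1$ only requires a closed point $q\in C_1$ whose residue field splits $C_2$, and such points of degree $4$ (e.g.\ the compositum of quadratic splitting fields of $C_1$ and $C_2$) exist even when the Brauer product is \emph{undefined} --- precisely the case where your analysis covers no links at all, yet the theorem asserts every such link returns to $C_1\times C_2$. (ii) The asymmetry between cases (1) and (2) is the heart of the statement: in case (1) non-product models (forms of $\mathbb{F}_{2k}$, and only for even index) appear, while in case (2) every Mori fibre space in the class is a product. Your sketch offers no reason why the Hirzebruch-type phenomenon is absent when $C_1\not\simeq C_2$ are both nontrivial, nor why only even Hirzebruch indices occur in case (1); both facts need an argument (parity of the degrees of admissible centres, and the rigidity of smooth conic bundles whose generic fibre is a nontrivial conic). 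As it stands the proposal is a plausible roadmap toward the results of Koll\'ar and Trepalin, not a proof.
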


\subsection{The Sarkisov program}\label{sec: Sarkisov program}

The main tool for studying groups of birational self-maps $\Bir_\kk(S)$ is the so-called \emph{Sarkisov program}, which allows to decompose every birational map $S\dashrightarrow S'$ between Mori fiber spaces (see below) into a composition of simpler maps, known as \emph{Sarkisov links}. We recall the basics of this theory very briefly and refer to \cite{LamyZimmermann} and \cite{BLZ} for more details. 

Let $S$ be a surface, and $r \geqslant 1$ an integer.
We say that $S$ is a \textit{rank $r$ fibration} if there exists a surjective morphism $\pi \colon S \to B$ with connected fibers, where $B$ is a point or a smooth curve, with relative Picard number equal to $r$, and such that the anticanonical divisor $-K_S$ is $\pi$-ample. We will write $S/B$ if we want to emphasize the base of the fibration. An isomorphism between two fibrations $S/B$ and $S'/B'$ of the same rank $r$ is an isomorphism $S \stackrel{\sim}{\to} S'$ such that there exists an isomorphism on the bases (necessarily uniquely defined) that makes the following diagram commutes: 
\[
\xymatrix{
	S \ar[rr]^{\sim} \ar[d]_{\pi} && S ' \ar[d]^{\pi'} \\
	B \ar[rr]^{\sim} && B'
}
\]
We say that a rank~$r$ fibration $S/B$ \emph{factorises through} a rank~$r'$ fibration $S'/B'$, or that \emph{$S'/B'$ is dominated by $S/B$}, if the fibrations $S/B$ and $S'/B'$ fit in a commutative diagram 
\[
\begin{tikzcd}[link]
	S \ar[rrr] \ar[dr,dashed] &&& B \\
	& S' \ar[r] & B' \ar[ur]
\end{tikzcd}
\]
where $S\rat S'$ is a birational contraction, and $B' \to B$ is a morphism with connected fibers. Note that $r \geqslant r'$.

Rank 1 fibrations are simply 2-dimensional Mori fiber spaces in the sense of Mori theory (see e.g. \cite[Lemma 3.3]{BLZ}). So, geometrically rational Mori fiber spaces in dimension 2 are exactly del Pezzo surfaces $S$ with $\Pic(S)\simeq\ZZ$ and conic bundles $\pi\colon S\to B$ with $\Pic(S)\simeq\ZZ^2$.

The usual notion of Sarkisov links between 2-dimensional Mori fiber spaces can be rephrased in terms of rank $2$ fibrations. Namely, a 2-ray game allows to show \cite[Lemma 3.7]{BLZ} that a rank~$2$ fibration $T/B$ factorises through exactly two rank~$1$ fibrations $S/B$ and $S'/B'$ (up to isomorphisms). The induced birational map $\chi: S\rat S'$ is called a {\it Sarkisov link}. In this paper, we will mostly consider \emph{links of type \II} between del Pezzo surfaces. These are diagrams
\begin{equation}\label{eq: Sarkisov link of type II}
	\xymatrix{
		&T\ar@{->}[dl]_{\eta}\ar@{->}[dr]^{\eta'}&\\
		S\ar@{-->}[rr]^{\chi}\ar[dr]&& S'\ar[dl]\\
		& \Spec\kk &}
\end{equation}
where $S$ and $S'$ are both minimal del Pezzo surfaces, and $B=\Spec\kk$.

The notion of rank~$3$ fibration recovers the notion of an {\it elementary relation} between Sarkisov links.

\begin{prop}[{\cite[Proposition 4.3]{BLZ}}]\label{pro:from T3}
	Let $Z/B$ be a rank~$3$ fibration.
	Then there are only finitely many Sarkisov links $\chi_i$ dominated by $Z/B$, and they fit in a relation
	\begin{equation}\label{eq: relation}
		\chi_t \circ \dots \circ \chi_1 = \id.
	\end{equation}
	In this situation, we say that (\ref{eq: relation})	is an \emph{elementary relation} between Sarkisov links, coming from the rank $3$ fibration $Z/B$.
	Observe that the elementary relation is uniquely defined by $Z/B$, up to taking the inverse, cyclic permutations and insertion of isomorphisms. 
\end{prop}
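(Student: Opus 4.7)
The plan is to exploit the polyhedrality of the relative Mori cone of the rank $3$ fibration $Z/B$. Since $Z/B$ has relative Picard number $3$, the cone $\overline{\NE}(Z/B)$ is a rational polyhedral cone of dimension $3$ by the relative Cone Theorem, using the relative anticanonical positivity built into the definition of a rank $r$ fibration.

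First, I would classify the rank $2$ sub-fibrations dominated by $Z/B$. By definition, each such sub-fibration $T/B_T$ fits into a factorisation $Z \rat T \to B_T \to B$ with $Z \rat T$ a birational contraction, and these factorisations are in natural bijection with the codimension-$1$ faces of $\overline{\NE}(Z/B)$ (equivalently, facets of the relative nef cone). Rational polyhedrality in dimension $3$ forces this set to be finite. By \cite[Lemma 3.7]{BLZ}, each such $T/B_T$ factorises through exactly two rank $1$ fibrations, and the induced birational map between them is a Sarkisov link $\chi_i$. This establishes the finiteness of the collection $\{\chi_i\}$.

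Second, to construct the cyclic relation, I would take a $2$-dimensional affine slice of the relative nef cone of $Z/B$, producing a convex polygon. Its edges correspond bijectively to the rank $2$ sub-fibrations above, and its vertices correspond to the rank $1$ fibrations dominated by $Z/B$, with incidence given by factorisation: a vertex lies on an edge if and only if the corresponding Mori fibre space factors through the corresponding rank $2$ fibration. Traversing the polygon cyclically produces an ordered cyclic sequence $\chi_1, \dots, \chi_t$ of Sarkisov links, and their composition is a birational self-map of the starting Mori fibre space. By construction this self-map is compatible with every rank $2$ fibration encountered en route, so uniqueness of the factorisation in \cite[Lemma 3.7]{BLZ} forces it to be the identity.

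The main obstacle is the second step: one must rigorously verify that the cyclic composition is the identity rather than merely some birational self-map, which requires carefully tracking the identifications made at each wall-crossing and invoking uniqueness of the two rank $1$ fibrations dominated by a given rank $2$ fibration. A secondary difficulty is matching the face structure of $\overline{\NE}(Z/B)$ with the intrinsic classification of sub-fibrations dominated by $Z/B$; this is essentially formal, but it is what allows us to transfer the combinatorics of the polygon to the birational geometry of the links. Uniqueness up to inverse, cyclic permutation and insertion of isomorphisms is then immediate from the fact that the polygon itself has these symmetries.
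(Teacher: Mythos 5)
This proposition is quoted in the paper directly from \cite[Proposition 4.3]{BLZ} without proof, and your sketch follows essentially the argument given there: finiteness of the rank $2$ and rank $1$ fibrations dominated by $Z/B$ via polyhedrality/finiteness of models, arrangement of them as the edges and vertices of the boundary polygon of a $2$-dimensional slice, and the cyclic relation read off by traversing that boundary. Two small points: the rank $2$ fibrations are indexed by the facets of the relative \emph{nef} (or movable) cone, dually by the extremal rays of $\overline{\NE}(Z/B)$ rather than its codimension-one faces as written in your first step; and the step you flag as the main obstacle is in fact immediate once each link is written as $\chi_i=\psi_i\circ\psi_{i-1}^{-1}$ for the birational contractions $\psi_j\colon Z\dashrightarrow S_j$, since the composition around the closed polygon telescopes to $\psi_0\circ\psi_0^{-1}=\id$.
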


Let $S/B$ a Mori fiber space. We denote by $\BirMori(S)$ the {\it groupoid} of birational maps between Mori fiber spaces birational to $S$. Note that $\Bir(S)$ is a subgroupoid of $\BirMori(S)$. We now state the crucial technical result that we are going to use in this paper. Its first part is known as the {\it Sarkisov program}, which in most general form was proven by C. Hacon and J. McKernan \cite{HMcK}, and by Iskovskikh in dimension 2 \cite{Isk1996}. The second part is inspired by A.-S. Kaloghiros \cite[Theorem 1.3]{Kaloghiros} and was proven in \cite{LamyZimmermann,BLZ}.

\begin{thm}[\bf Sarkisov program]\label{thm: sarkisov} 
	Let $S/B$ be a Mori fiber space (of dimension 2). 
	\begin{enumerate}
		\item\label{sarkisov1} The groupoid $\BirMori(S)$ is generated by Sarkisov links and automorphisms.
		\item\label{sarkisov2} Any relation between Sarkisov links in $\BirMori(S)$ is generated by trivial and elementary relations.
	\end{enumerate}
\end{thm}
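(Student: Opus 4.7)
The plan is to follow the general strategy of the Sarkisov program as developed by Hacon--McKernan and cast in the rank-$r$ fibration framework that the paper has already set up.

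For part \eqref{sarkisov1}, given a birational map $\chi\colon S\dashrightarrow S'$ between Mori fiber spaces, I would associate to it the classical Sarkisov degree: pick a very ample linear system on $S'$, pull it back to $S$ through a common resolution, and compare the resulting mobile system with $-K_S$. Whenever $\chi$ fails to be an isomorphism of fiber spaces, the Noether--Fano inequality exhibits a base point (or a high-multiplicity curve) to which one can attach a Sarkisov link $\chi_1\colon S\dashrightarrow S_1$ such that $\chi\circ\chi_1^{-1}\colon S_1\dashrightarrow S'$ has strictly smaller Sarkisov degree. In dimension~$2$ the degree lives in a well-ordered set, so this untwisting procedure terminates after finitely many iterations, yielding a factorisation $\chi=\chi_n\circ\cdots\circ\chi_1$ modulo an automorphism of fiber spaces; this is exactly \eqref{sarkisov1}.

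For part \eqref{sarkisov2}, I would adopt Kaloghiros's $2$-complex formalism as implemented in \cite{BLZ}. Build a $2$-complex whose $0$-cells are rank-$1$ fibrations birational to $S$, whose $1$-cells are rank-$2$ fibrations (i.e.\ Sarkisov links), and whose $2$-cells are rank-$3$ fibrations (i.e.\ elementary relations, via Proposition~\ref{pro:from T3}). Part \eqref{sarkisov1} then says exactly that this complex is path-connected, and part \eqref{sarkisov2} is equivalent to the statement that the complex is simply connected. To prove simple connectedness I would take any loop (a relation between Sarkisov links) and fill it inductively: any two consecutive Sarkisov links in the loop can, after possibly inserting trivial relations, be realised as two of the three $2$-ray games supported on a single rank-$3$ fibration, and the associated elementary relation closes that portion of the loop.

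The main obstacle is precisely this last filling step: guaranteeing that two adjacent Sarkisov links in an arbitrary relation can always be organised into one rank-$3$ fibration. This requires delicate Mori-theoretic bookkeeping --- relative cones of curves, running $(K+\Delta)$-MMPs with scaling, and controlling how extremal contractions combine --- and constitutes the geometric heart of the $2$-complex method of \cite{BLZ,Kaloghiros}. In dimension~$2$, however, the situation is considerably more tractable: the extremal rays on any given surface are explicit and finite in number, and Iskovskikh's classical generators-and-relations presentation for $\Bir_\kk(\PP_\kk^2)$ and for conic bundle Cremona groups offers a more elementary alternative route to \eqref{sarkisov2} once \eqref{sarkisov1} has been established.
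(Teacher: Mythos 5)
This theorem is not proved in the paper: it is quoted as a known result, with part \eqref{sarkisov1} attributed to Iskovskikh \cite{Isk1996} (and Hacon--McKernan in general) and part \eqref{sarkisov2} to Kaloghiros and to \cite{LamyZimmermann,BLZ}. So there is no in-paper proof to compare against; what you have written is a roadmap of the proofs in those references, and as a roadmap it is accurate: Noether--Fano untwisting along the Sarkisov degree for \eqref{sarkisov1}, and the rank-$r$ fibration $2$-complex (path-connectedness versus simple-connectedness) for \eqref{sarkisov2}.

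Read as a proof, however, the proposal has genuine gaps, and you flag the main one yourself without closing it. The ``filling step'' --- showing that an arbitrary relation can be decomposed into elementary relations coming from rank-$3$ fibrations --- is the entire content of \eqref{sarkisov2}; asserting that consecutive links ``can be realised as two of the three $2$-ray games supported on a single rank-$3$ fibration'' is both unproven and, as stated, inaccurate: a rank-$3$ fibration dominates finitely many links but not necessarily three (Proposition~\ref{pro:from T3}), and for surfaces the elementary relations between links centred at $2$-points have length six (Proposition~\ref{prop: relations of 2-links}), so the local picture around a rank-$3$ fibration is a polygon of variable length, not a triangle. Likewise, termination of the untwisting in \eqref{sarkisov1} is asserted via ``the degree lives in a well-ordered set'' but the lexicographic decrease of the Sarkisov degree and the impossibility of infinite decrease need the explicit dimension-$2$ classification of links (or a DCC argument) to be justified. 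None of this affects the paper, which legitimately uses the theorem as a black box, but your write-up should either do the same or actually carry out the $2$-complex argument of \cite{BLZ} in the surface case.
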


Trivial relations will be defined in Section \ref{sec: quotients}.

\section{Biregular classification}\label{sec: biregular classification}

Building on the construction \cite{CTKarpenkoMerkurjev} of J.-L. Colliot-Thélène, N. Karpenko and A. Merkurjev, in \cite[Theorem 3.4]{Blunk} M. Blunk gives an algebraic way to classify sextic del Pezzo surfaces $S$ over a field $\kk$. To every such surface, one associates a pair of separable $\kk$-algebras $B$ and $Q$, defined as endomorphism rings of
certain locally free sheaves on $S$. The algebras $B$ and $Q$ are Azumaya over their centers, which are étale quadratic and cubic extensions of $\kk$, respectively, and can be recovered from the action of $\Gal(\overline{\kk}/\kk)$ on the $(-1)$-curves of $S$. We also refer to \cite[Table 4]{AuelBernadara} for the list of possible Blunk's data and  corresponding arithmetic invariants of $S$. 

Likewise, starting from the action of the Galois group on the $(-1)$-curves and some extensions of $\kk$ associated with this action, we provide below a particularly transparent description of sextic del Pezzo surfaces $S$ with $\Pic(S)\simeq\ZZ$ from a birational perspective. Our primary motivation for this approach is to understand the birational models of $S$, their Sarkisov links and birational self-maps.

\subsection{The blow-up model over $\overline{\kk}$} Let $\kk$ be a perfect field and $S$ be a del Pezzo surface of degree $K_S^2=6$ over $\kk$. Recall that $S_{\overline{\kk}}$ is the blow-up $\pi\colon S_{\overline{\kk}}\to\PP^2_{\overline{\kk}}$ in three non-collinear points $p_1,p_2,p_3$, which we may assume to be $[1:0:0]$, $[0:1:0]$ and $[0:0:1]$, respectively. In particular, there is a unique del Pezzo surface of degree 6 over $\overline{\kk}$, and it can be given by the equation
\begin{equation}\label{eq: del Pezzo of degree 6}
	S_{\overline{\kk}}=\big\{([x_1:x_2:x_3],[y_1:y_2:y_3])\in\PP^2_{\overline{\kk}}\times\PP^2_{\overline{\kk}}:\ x_1y_1=x_2y_2=x_3y_3 \big\}.
\end{equation}
The projection to the first factor $\PP^2_{\overline{\kk}}$ in (\ref{eq: del Pezzo of degree 6}) is the blow down of three lines $E_1=\{x_2=x_3=0\}$, $E_2=\{x_1=x_3=0\}$ and $E_3=\{x_1=x_2=0\}$ onto the standard coordinate points, while the projection to the second factor is the blow down of $F_1=\{y_2=y_3=0\}$, $F_2=\{y_1=y_3=0\}$ and $F_3=\{y_1=y_2=0\}$.

\begin{equation}\label{pic: hexagon}
	\begin{tikzpicture}
		\newdimen\R
		\R=2.7cm
		\draw (0:\R) \foreach \x in {60,120,...,360} {  -- (\x:\R) };
		
		\foreach \x/\l/\p in
		{ 60/{}/above,
			120/{}/above,
			180/{}/left,
			240/{}/below,
			300/{}/below,
			360/{}/right
		}
		\node[inner sep=1pt,circle,draw,fill,label={\p:\l}] at (\x:\R) {};
		
		\foreach \start/\end/\label in
		{ 0/60/$F_3\ \ $,
			60/120/$E_1$,
			120/180/$\ \ F_2$,
			180/240/$E_3\ \ $,
			240/300/$F_1$,
			300/360/$\ \ E_2$
		}
		\draw (\start:\R) -- (\end:\R) node[midway, below, fill=white] {\label};
		\foreach \start/\end/\label in
		{ 0/60/$y_1=y_2=0$,
			60/120/$x_2=x_3=0$,
			120/180/$y_1=y_3=0$,
			180/240/$x_1=x_2=0$,
			240/300/$y_2=y_3=0$,
			300/360/$x_1=x_3=0$
		}
		\draw (\start:\R) -- (\end:\R) node[midway, above, fill=white] {\label};
	\end{tikzpicture}
\end{equation}

In the anticanonical embedding $S_{\overline{\kk}}\hookrightarrow\PP^6_{\overline{\kk}}$ these $(-1)$-curves $\{E_i,F_i\}_{i=1,2,3}$ form a regular hexagon $\Sigma$. It is naturally acted on by $\Aut(S_{\overline{\kk}})$, so there is a homomorphism
\[
\Phi\colon \Aut(S_{\overline{\kk}})\to \Aut(\Sigma)\simeq\Dih_{6}=\langle r,s\ |\ r^6=s^2=\id,\ srs=r^{-1}\rangle,
\]
where $r$ is a rotation by $\pi/3$ and $s$ is a reflection. (On the figure \ref{pic: hexagon}, we choose $s$ to swap $E_1$ with $F_1$, $E_3$ with $F_2$ and $E_2$ with $F_3$.) The kernel of $\Phi$ is the maximal torus $T_{\overline{\kk}}\subset\PGL_3(\overline{\kk})$, isomorphic to $\overline{\kk}^*\times\overline{\kk}^*$, and it acts on $S$ by
\begin{equation}\label{eq: del Pezzo 6 torus action}
	(\lambda_0,\lambda_1,\lambda_2)\cdot ([x_1:x_2:x_3],[y_1:y_2:y_3])=([\lambda_0 x_1:\lambda_1x_2:\lambda_2x_3],[\lambda_0^{-1}y_1:\lambda_1^{-1}y_2:\lambda_2^{-1}y_3]).
\end{equation}
In what follows, we set $\lambda_0=1$ and identify such toric automorphism with a pair $(\lambda_1,\lambda_2)$. The action of $T_{\overline{\kk}}$ on $S\setminus\Sigma$ is faithful and transitive, and the automorphism group of $\Aut(S_{\overline{\kk}})$ fits into the short exact sequence 
\begin{equation}\label{eq: SES for Aut of DP6}
	\begin{tikzcd}
		1 
		\ar{r}
		& 
		{T_{\overline{\kk}}} 
		\ar{r}
		& 
		\Aut(S_{\overline{\kk}}) 
		\ar{r}{\Phi}
		& 
		\Dih_6
		\ar{r}
		& 
		1
	\end{tikzcd}
\end{equation}
with $\Phi(\Aut(S))\simeq\Dih_6\simeq\Sym_3\times\ZZ/2$, where $\ZZ/2$ is generated by the lift of the standard quadratic involution, acting by
\begin{equation}\label{eq: del Pezzo 6 Cremona}
	\iota\colon ([x_1:x_2:x_3],[y_1:y_2:y_3])\mapsto ([y_1:y_2:y_3],[x_1:x_2:x_3]),
\end{equation}
and $\Sym_3$ acts naturally by permutations on each of the two triples $x_1,x_2,x_3$ and $y_1,y_2,y_3$. In what follows, we denote
\begin{gather}\label{eq: dP6 order 3 rotation}
	\theta\colon ([x_1:x_2:x_3],[y_1:y_2:y_3])\mapsto ([x_3:x_1:x_2],[y_3:y_1:y_2]),\\
	\sigma\colon ([x_1:x_2:x_3],[y_1:y_2:y_3])\mapsto ([y_1:y_3:y_2],[x_1:x_3:x_2])
\end{gather}
Finally, notice that the short exact sequence \eqref{eq: SES for Aut of DP6} splits and one has $\Aut(S_{\overline{\kk}})\simeq T_{\overline{\kk}}\rtimes\Dih_6$, where the action is given by
\begin{equation}\label{eq: semidirect structure}
	\theta\circ (\lambda_1,\lambda_2)\circ\theta^{-1}=(\lambda_2^{-1},\lambda_1\lambda_2^{-1}),\ \ \iota\circ (\lambda_1,\lambda_2)\circ\iota=(\lambda_1^{-1},\lambda_2^{-1}),\ \ \sigma\circ (\lambda_1,\lambda_2)\circ\sigma=(\lambda_2^{-1},\lambda_1^{-1})
\end{equation}
Now let $S$ be a sextic del Pezzo surface over $\kk$ and $\SplittingHex$ be the splitting field of the hexagon~ $\Sigma$, i.e. the smallest subfield $\kk\subset\SplittingHex\subset\overline{\kk}$, over which all six $(-1)$-curves are defined. Over $\SplittingHex$, there is a birational morphism $S_{\SplittingHex}\to\PP_{\SplittingHex}^2$ which blows down the triple $\{E_1,E_2,E_3\}$, hence $S_{\SplittingHex}$ can be given by the standard equation (\ref{eq: del Pezzo of degree 6}). We denote $M$ the surface given by this equation.

As was recalled in Section \ref{subsec: Galois cohomology}, the set of $\kk$-surfaces $S$ that are $\SplittingHex$-isomorphic to $M$, up to $\kk$-isomorphism, is parametrised by the cohomology set \[
\Cohom^1(\Gal(\SplittingHex/\kk),\Aut_{\SplittingHex}(M))=\Cohom^1(\Gal(\SplittingHex/\kk),(\SplittingHex^*)^2\rtimes\Dih_6).
\]
By the minimality of $\SplittingHex$, we may assume that there is an embedding $\epsilon\colon G=\Gal(\SplittingHex/\kk) \hookrightarrow\ZZ/2\times\Sym_3$ and denote the elements of $\Aut(\Sigma)\simeq\ZZ/2\times\Sym_3$ by $(i,\tau)$, where $i\in\{0,1\}$ and $\tau\in\Sym_3$. We suppose that the cycle~$(123)$ corresponds to the automorphism of $\Sigma$ which acts as $E_1\mapsto E_2\mapsto E_3\mapsto E_1$ on the hexagon~\eqref{pic: hexagon}, i.e. as $\Phi(\theta)$. 
	
The element $(1,\id)\in\ZZ/2\times\Sym_3$ sends the faces of $\Sigma$ to their opposites, i.e. acts as $\Phi(\iota)$. The element $(1,(23))$ acts as $\Phi(\sigma)$. We take, if they exist, 
\[
h=\epsilon^{-1}((1,\id)),\ \ g=\epsilon^{-1}((0,(123))),\ \ f=\epsilon^{-1}((1,(23))).
\]	
\begin{mydef}
	Given a subgroup $G\subset\Dih_6$, we call \emph{a sextic $G$-del Pezzo surface} a del Pezzo surface $S$ of degree 6 over $\kk$ such that $\Pic(S)\simeq\ZZ$ and the splitting field $\SplittingHex$ of its hexagon has $G$ as its Galois group over $\kk$, if such surface exists.
\end{mydef}

\begin{rem}
	Recall that by Iskovskikh's rationality criterion \cite[p. 642]{Isk1996}, a geometrically rational $\kk$-minimal surface $S$ is $\kk$-rational if and only if $K_S^2\geqslant 5$ and $S(\kk)\ne\varnothing$. In particular, a~sextic $G$-del Pezzo surface $S$ is $\kk$-rational if and only if $S(\kk)\ne\varnothing$.
\end{rem}

\begin{Notation}\label{notation: Galois groups}
	Since we assume $\Pic(S)\simeq\ZZ$, there are only 3 possibilities for the group $G$, namely $G=\langle r\rangle\simeq\ZZ/6\ZZ$, $G=\langle r^2,s\rangle\simeq\Sym_3$ and $G=\langle r,s\rangle\simeq\Dih_6$. In what follows, it will be convenient to choose the following presentations\footnote{It is not immediately clear why we take 2 (respectively, 3) generators for $\ZZ/6$ (respectively, for $\Dih_6$). However, this will simplify many formulas in our further calculations.} of these groups, using the Galois automorphisms from above (by abuse of notation, we denote $\epsilon(u)$ by $u$):
	
	\begin{itemize}
		\item $G=\langle g,h\ |\ g^3=h^2=\id,\ gh=hg\rangle$ when $G\simeq\ZZ/6$. 
		\item $G=\langle g,f\ |\ g^3=f^2=\id,\ fgf=g^2\rangle$ when $G\simeq\Sym_3$. 
		\item $G=\langle g,f,h\ |\ g^3=f^2=h^2=\id,\ hf=fh,\ gh=hg,\ fgf=g^2 \rangle$ when $G\simeq\Dih_6$.
	\end{itemize}
	
	In the last case, we set $s=fh$.
\end{Notation}

The corresponding actions of $G$ are shown on Figure \ref{fig: possible Galois actions}. 

\begin{figure}[ht]
	\centering
	\begin{subfigure}{0.32\textwidth}
		\centering
		\begin{tikzpicture}
			\newdimen\R
			\R=2.6cm
			\newdimen\A
			\A=1.8cm
			\newdimen\B
			\B=2.9cm
			\newdimen\C
			\C=2.2cm
			\draw (0:\R) \foreach \x in {60,120,...,360} {  -- (\x:\R) };
			\foreach \x/\l/\p in
			{ 60/{}/above, 120/{}/above, 180/{}/left,
				240/{}/below, 300/{}/below, 360/{}/right }
			\node[inner sep=1pt,circle,draw,fill,label={\p:\l}] at (\x:\R) {};
			\foreach \start/\end/\label in
			{ 0/60/$F_3\ \ $,
				60/120/$E_1$,
				120/180/$\ \ F_2$,
				180/240/$E_3\ \ $,
				300/360/$\ \ E_2$ }
			\draw (\start:\R) -- (\end:\R) node[midway, above, fill=white] {\label};
			\draw (240:\R) -- (300:\R) node[midway, below, fill=white] {$F_1$};
			
			\draw[<->, darkgreen] 
			($(60:\C)!0.5!(120:\C)$) -- ($(240:\C)!0.5!(300:\C)$)
			node[pos=0.6, below, fill=white] {\( h \)};
			\draw[<->, darkgreen] 
			($(120:\C)!0.5!(180:\C)$) -- ($(300:\C)!0.5!(360:\C)$);
			\draw[<->, darkgreen] 
			($(0:\C)!0.5!(60:\C)$) --  ($(180:\C)!0.5!(240:\C)$);
			\draw[->, red] 
			($(120:\A)!0.5!(180:\A)$) -- ($(0:\A)!0.5!(60:\A)$)
			node[midway, above,fill=white] {\( g \)};
			\draw[->, red] 
			($(0:\A)!0.5!(60:\A)$) -- ($(240:\A)!0.5!(300:\A)$);
			\draw[->, red] 
			($(240:\A)!0.5!(300:\A)$) -- ($(120:\A)!0.5!(180:\A)$);
		\end{tikzpicture}
		\caption{$\Gal(\FF/\kk)\simeq\ZZ/6\ZZ$}
	\end{subfigure}
	\hfill
	\begin{subfigure}{0.32\textwidth}
		\centering
		\begin{tikzpicture}
			\newdimen\R
			\R=2.6cm
			\newdimen\A
			\A=1.8cm
			\newdimen\B
			\B=2.4cm
			\newdimen\C
			\C=2.2cm
			\draw (0:\R) \foreach \x in {60,120,...,360} {  -- (\x:\R) };
			\foreach \x/\l/\p in
			{ 60/{}/above, 120/{}/above, 180/{}/left,
				240/{}/below, 300/{}/below, 360/{}/right }
			\node[inner sep=1pt,circle,draw,fill,label={\p:\l}] at (\x:\R) {};
			\foreach \start/\end/\label in
			{ 0/60/$F_3\ \ $,
				60/120/$E_1$,
				120/180/$\ \ F_2$,
				180/240/$E_3\ \ $,
				300/360/$\ \ E_2$ }
			\draw (\start:\R) -- (\end:\R) node[midway, above, fill=white] {\label};
			\draw (240:\R) -- (300:\R) node[midway, below, fill=white] {$F_1$};
			
			
			\draw[<->, blue] 
			($(60:\B)!0.5!(120:\B)$) -- ($(240:\B)!0.5!(300:\B)$)
			node[pos=0.1, left, fill=white] {\( f \)};  
			
			\draw[<->, blue] 
			($(120:\B)!0.5!(180:\B)$) -- ($(180:\B)!0.5!(240:\B)$);  
			
			\draw[<->, blue] 
			($(0:\B)!0.5!(60:\B)$) -- ($(300:\B)!0.5!(360:\B)$);  
			
			\draw[->, red] 
			($(120:\A)!0.5!(180:\A)$) -- ($(0:\A)!0.5!(60:\A)$)
			node[pos=0.3, midway, fill=white] {\( g \)};
			\draw[->, red] 
			($(0:\A)!0.5!(60:\A)$) -- ($(240:\A)!0.5!(300:\A)$);
			\draw[->, red] 
			($(240:\A)!0.5!(300:\A)$) -- ($(120:\A)!0.5!(180:\A)$);
		\end{tikzpicture}
		\caption{$\Gal(\FF/\kk)\simeq\Sym_3$}
	\end{subfigure}
	\hfill
	\begin{subfigure}{0.32\textwidth}
		\centering
		\begin{tikzpicture}
			\newdimen\R
			\R=2.6cm
			\newdimen\A
			\A=1.8cm
			\newdimen\B
			\B=2.4cm
			\newdimen\C
			\C=2.1cm
			\draw (0:\R) \foreach \x in {60,120,...,360} {  -- (\x:\R) };
			\foreach \x/\l/\p in
			{ 60/{}/above, 120/{}/above, 180/{}/left,
				240/{}/below, 300/{}/below, 360/{}/right }
			\node[inner sep=1pt,circle,draw,fill,label={\p:\l}] at (\x:\R) {};
			\foreach \start/\end/\label in
			{ 0/60/$F_3\ \ $,
				60/120/$E_1$,
				120/180/$\ \ F_2$,
				180/240/$E_3\ \ $,
				300/360/$\ \ E_2$ }
			\draw (\start:\R) -- (\end:\R) node[midway, above, fill=white] {\label};
			\draw (240:\R) -- (300:\R) node[midway, below, fill=white] {$F_1$};
			
			
			\draw[<->, blue] 
			($(60:\B)!0.5!(120:\B)$) -- ($(240:\B)!0.5!(300:\B)$)
			node[pos=0.1, left, fill=white] {\( f \)};  
			
			\draw[<->, blue] 
			($(120:\B)!0.5!(180:\B)$) -- ($(180:\B)!0.5!(240:\B)$);  
			
			\draw[<->, blue] 
			($(0:\B)!0.5!(60:\B)$) -- ($(300:\B)!0.5!(360:\B)$);  
			
			\draw[<->, darkgreen] 
			($(60:\C)!0.5!(120:\C)$) -- ($(240:\C)!0.5!(300:\C)$)
			node[pos=0.6, below, fill=white] {\( h \)};
			\draw[<->, darkgreen] 
			($(120:\C)!0.5!(180:\C)$) -- ($(300:\C)!0.5!(360:\C)$);
			\draw[<->, darkgreen] 
			($(0:\C)!0.5!(60:\C)$) --  ($(180:\C)!0.5!(240:\C)$);
			\draw[->, red] 
			($(120:\A)!0.5!(180:\A)$) -- ($(0:\A)!0.5!(60:\A)$)
			node[pos=0.3, midway, fill=white] {\( g \)};
			\draw[->, red] 
			($(0:\A)!0.5!(60:\A)$) -- ($(240:\A)!0.5!(300:\A)$);
			\draw[->, red] 
			($(240:\A)!0.5!(300:\A)$) -- ($(120:\A)!0.5!(180:\A)$);
		\end{tikzpicture}
		\caption{$\Gal(\FF/\kk)\simeq\Dih_6$}
	\end{subfigure}
	\caption{Possible actions of $\Gal(\SplittingHex/\kk)$ on $\Sigma$.}
	\label{fig: possible Galois actions}
\end{figure}

\subsection{Severi-Brauer data}\label{subsec: SB data}

The action of the Galois group $G=\Gal(\FF/\kk)$ on $\Sigma$ induces the following two actions:
\begin{itemize}
	\item The group $G$ acts on the set of triples $\{\{E_1,E_2,E_3\},\{F_1,F_2,F_3\}\}$, with a transitive action, as we assume $\Pic(S)\simeq\ZZ$. The stabilizer in $G$ of $\{E_1,E_2,E_3\}$ gives rise to a quadratic extension $\KK/\kk$, over which this triple is defined. Note that $\{F_1,F_2,F_3\}$ is then also defined over $\KK$. There are two $\KK$-blow-downs
	\[\xymatrix{
		&S\ar@{->}[dl]_{\eta}\ar@{->}[dr]^{\eta'}&\\	X\ar@{-->}[rr]^{\chi}&& X^{\rm op}\\
	}
	\]
	where $X$ is a Severi-Brauer surface, the birational morphism $\eta$ is the contraction of the triple $\{E_1,E_2,E_3\}$ onto a zero-dimensional $\KK$-subscheme of $X$, and $\eta'$ is the contraction of the triple $\{F_1,F_2,F_3\}$ onto a zero-dimensional $\KK$-subscheme of $X^{\rm op}$, see \cite[Proposition 2.1, Theorem 2.3]{Corn} and notice that characteristic zero assumption of loc. cit. is redundant, as del Pezzo surfaces are
	separably split. Note that $X$ uniquely determines $X^{\rm op}$.
	\item Furthermore, $G$ acts on the set of pairs $\{\{E_1,F_1\}, \{ E_2,F_2\},\{E_3,F_3\}\}$, with a transitive action, as $\Pic(S)\simeq\ZZ$. For each $i\in \{1,2,3\}$, there exists thus a cubic extension $\LL_i/\kk$ over which some pair $\{E_i,F_i\}$ is defined. Over this extension, we get a contraction $S\to Y_i$, where $Y_i$ is an involution surface over the field $\LL_i$. The extension $\LL_i/\kk$ is not necessarily Galois.
\end{itemize}

The described birational contractions serve as a starting point in the constructions of \cite{Blunk,CTKarpenkoMerkurjev}. To work with Galois cohomology, we will need to define similar data in more concrete terms, using Notation~\ref{notation: Galois groups}. 

\begin{mydef}\label{def: SB data}
	Let $S$ be a $G$-del Pezzo surface of degree 6, where $G$ is as in Notation \ref{notation: Galois groups}. A~\emph{Severi-Brauer data} for $S$ consists of:
	\begin{itemize}[leftmargin=*, labelindent=20pt, itemsep=5pt]
		\item $G\simeq\ZZ/6\ZZ$: A Severi-Brauer surface $X$ over the field $\KK=\FF^g$, such that $S_\KK$ is birational to~$X$, and an involution surface $Y$ over the field $\LL=\FF^h$ such that $S_\LL$ is birational to $Y$.
		\item $G\simeq\Sym_3$: A Severi-Brauer surface $X$ over the field $\KK=\FF^g$ such that $S_\KK$ is birational to $X$.
		\item $G\simeq\Dih_6$: A Severi-Brauer surface $X$ over the field $\KK=\FF^{\langle g,s\rangle}$, such that $S_\KK$ is birational to~$X$, and an involution surface $Y$ over the field $\LL=\FF^h$ such that $S_\LL$ is birational to $Y$.
	\end{itemize}
Let $S$ and $S'$ be two sextic $G$-del Pezzo surfaces with the same splitting field $\SplittingHex$. We say that their Severi-Brauer data are \emph{equivalent}, if $X$ is $\KK$-birational to $X'$ and $Y$ is $\LL$-birational to $Y'$.
\end{mydef}

\begin{rem}
	Let $S$ be a sextic $G$-del Pezzo surface over $\kk$.
	\begin{enumerate}[leftmargin=*, labelindent=20pt, itemsep=5pt]
		\item A Severi-Brauer data for $S$ exists, as follows from the discussion in the beginning of the paragraph, but is not uniquely defined: for example, there are two possible choices for the Severi-Brauer surface, $X$ and $X^{\rm op}$, and also several choices for an involution surface $Y$, as will be discussed below. 
		\item The extensions $\KK/\kk$ in Definition \ref{def: SB data} are all of degree~2. The extension $\LL/\kk$ is of degree~3 for $G\simeq\ZZ/6\ZZ$ and of degree~6 for $G\simeq\Dih_6$.
		Note that $\rk_\LL\Pic(Y)=2$ for $G\simeq\ZZ/6\ZZ$ and $G\simeq\Dih_6$. 
		\item Recall that $\Dih_6$ has four subgroups isomorphic to $\ZZ/2$. One of them is the normal subgroup $\langle h\rangle$, and the other three $\langle f_i\rangle$, $i\in\{1,2,3\}$ are conjugate, where $f_1=f$, $f_2=gf$, $f_3=g^2f=fg$. Furthermore, 
		\[
		\begin{aligned}
			f_1 &= f \quad & \text{swaps the sections}\ x_2 = x_3 = 0\ \text{and}\ y_2 = y_3 = 0, \\
			f_2 &= gf \quad & \text{swaps the sections}\ x_1 = x_2 = 0\ \text{and}\ y_1 = y_2 = 0, \\
			f_3 &= fg \quad & \text{swaps the sections}\ x_1 = x_3 = 0\ \text{and}\ y_1 = y_3 = 0.
		\end{aligned}
		\]		
		We denote $\Klein_i=\langle h,f_i\rangle$ the three Klein 4-groups. Hence, for $\Dih_6$-del Pezzo surfaces there are three cubic extensions $\LL_i=\FF^{\Klein_i}\supset\kk$ over which we have contractions to involution surfaces $Y_1$, $Y_2$ and $Y_3$. In this case, $\rk_{\LL_i}\Pic(Y_i)=1$ for $i\in \{1,2,3\}$.
	\end{enumerate}
\end{rem}

\begin{rem} \label{rem: EmbedUniq}
In the case of $G\simeq \ZZ / 6 \ZZ$ and $G\simeq \Sym_3$ there is only one embedding $\epsilon$ of $G=\Gal(\SplittingHex / \kk)$ into $ \Dih_6$, up to conjugacy. In the case of $G = \Dih_6$ there are up to conjugation two embeddings which corresponds to switching the roles of the elements called $f,s$. Since $\KK = \SplittingHex^{\langle g, s \rangle}$, the Severi-Brauer data uniquely gives the embedding.
\end{rem}

\subsection{Sextic $\ZZ/6\ZZ$-del Pezzo surfaces}

In this Section, we suppose that $\Gal(\FF/\kk)\simeq\ZZ/6\ZZ$ and find explicit parametrization of such del Pezzo surfaces via Galois cohomology. Throughout this paper, we use the split exact sequence \ref{eq: SES for Aut of DP6} and denote the automorphisms of $S_{\overline{\kk}}$ by pairs $((\lambda_1,\lambda_2),\delta)\in T_{\overline{\kk}}\rtimes\Dih_6$.

\begin{prop}\label{prop: Z6 twisted actions}
	Let $S$ be a sextic $\ZZ/6$-del Pezzo surface. Then the twisted action \eqref{eq: twisted action} can be chosen so that 
	\[
	\alpha_g = ((\xi^{-1}, \xi^{-1}),\epsilon(g)),\ \ \alpha_{h} = ((\rho, \rho g(\rho)), \epsilon(h))
	\]
	for 
	\begin{equation}\label{eq: Z6 coefficient conditions}
		\xi \in (\SplittingHex^g)^*,\ \ \rho \in (\SplittingHex^{h})^*,\ \ \Norm_{h}(\xi) \Norm_g(\rho) = 1.
	\end{equation}
	Moreover every such pair $\xi, \rho$ yields a del Pezzo surface with said properties.
\end{prop}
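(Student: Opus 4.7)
The plan is to parameterize a general cocycle, normalize it by a coboundary to obtain the stated form of $\alpha_g$, and then derive the form of $\alpha_h$ together with the norm identity directly from the three defining relations $g^3=h^2=\id$ and $gh=hg$ of $G$. Write $\alpha_g=((a_1,a_2),\epsilon(g))$ and $\alpha_h=((b_1,b_2),\epsilon(h))$ with $a_i,b_i\in\SplittingHex^*$; the finite parts are forced by the embedding $\epsilon\colon G\hookrightarrow\Dih_6$ controlling the Galois action on the hexagon $\Sigma$. Throughout, I use the conjugation rules from \eqref{eq: semidirect structure} with $\epsilon(g)=\theta$ and $\epsilon(h)=\iota$; note that $\theta$ and $\iota$ commute in $\Dih_6$.

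The first step is to normalize $\alpha_g$. A direct computation in the semidirect product shows that the coboundary by $\beta=((1,a_2/a_1),\id)$ replaces $(a_1,a_2)$ by a diagonal pair $(a,a)$, where $a=a_1\cdot g(a_1/a_2)$. Once $\alpha_g=((a,a),\theta)$, expanding the cocycle relation $\alpha_g\cdot g(\alpha_g)\cdot g^2(\alpha_g)=\id$ coming from $g^3=\id$ and applying the conjugation rule twice yields, in the two torus coordinates, the equalities $a=g(a)$ and $a=g^2(a)$. Hence $a\in(\SplittingHex^g)^*$, and setting $\xi:=a^{-1}$ gives $\alpha_g=((\xi^{-1},\xi^{-1}),\epsilon(g))$. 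The same coboundary updates $\alpha_h$ as well, but we simply relabel its torus entries as $b_1,b_2$. The relation $h^2=\id$ then expands to $b_i=h(b_i)$ for $i=1,2$, so $b_1,b_2\in(\SplittingHex^h)^*$.

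The last step is the crucial commuting relation $gh=hg$. Expanding both sides of $\alpha_g\cdot g(\alpha_h)=\alpha_h\cdot h(\alpha_g)$ and equating the two torus coordinates gives the pair of equations $b_1\cdot g(b_2)=\xi^{-1}h(\xi^{-1})$ and $b_2\cdot g(b_2)/g(b_1)=\xi^{-1}h(\xi^{-1})$. Taking their ratio forces $b_2=b_1\cdot g(b_1)$, and substituting this back into the first equation yields $\rho\cdot g(\rho)\cdot g^2(\rho)=\xi^{-1}\cdot h(\xi^{-1})$ for $\rho:=b_1\in(\SplittingHex^h)^*$, i.e. $\Norm_h(\xi)\Norm_g(\rho)=1$. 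This gives the asserted form of $\alpha_h$ together with the coefficient conditions \eqref{eq: Z6 coefficient conditions}.

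For the converse direction, given any $\xi\in(\SplittingHex^g)^*$ and $\rho\in(\SplittingHex^h)^*$ with $\Norm_h(\xi)\Norm_g(\rho)=1$, I would plug the proposed formulas back into the three defining relations and verify them by the same elementary computation (now running in reverse), producing a 1-cocycle $G\to\Aut_{\SplittingHex}(M)$; via the classification of forms recalled in Section~\ref{subsec: Galois cohomology}, this in turn yields a sextic del Pezzo surface with the claimed twisted action. The main obstacle is purely computational: one must perform the semidirect product expansions cleanly and track conjugation by $\theta$ and $\iota$, but no external input beyond the explicit torus action \eqref{eq: semidirect structure} is needed — in particular, Hilbert 90 is not required here.
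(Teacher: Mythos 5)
Your proposal is correct and follows essentially the same route as the paper: write the cocycle in the semidirect product $T_{\SplittingHex}\rtimes\Dih_6$, diagonalize $\alpha_g$ by a toric coboundary, and read off the conditions on $\xi$ and $\rho$ from the relations $g^3=h^2=\id$ and $gh=hg$, with the converse obtained by running the computation backwards. The only (cosmetic) difference is the order of operations — you normalize $\alpha_g$ first and then use $g^3=\id$ to see that the diagonal entry is $g$-invariant, whereas the paper first extracts the invariant $\xi_2 g(\xi_1)$ from the $g^3$ relation and then normalizes — and your computations (e.g.\ $a=a_1\,g(a_1/a_2)$ for the coboundary $\beta=((1,a_2/a_1),\id)$ in the convention $\alpha'_u=\beta^{-1}\alpha_u u(\beta)$, and the two coordinate equations from $gh=hg$) check out.
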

\begin{proof}
	The Galois group $G\simeq\ZZ/2\times\ZZ/3$ is generated by $(h,g)$. Letting 
	\[
	\alpha_g = ((\xi_1, \xi_2), \epsilon(g)),\ \ \alpha_{h} = ((\rho_1, \rho_2), \epsilon(h))
	\]
	we easily observe that this association defines a $1$-cocycle. We want to use the relations to get more information on the coefficients $\xi_i, \rho_i$. 
	
	First, we use that $g^3=\id$. One has
	$
	g(\alpha_g)=((g(\xi_1),g(\xi_2)),\epsilon(g)),
	$
	hence, using the semidirect product structure \eqref{eq: semidirect structure}, one gets
	\[
	\alpha_{g^2} = \alpha_g g(\alpha_g) = \left ( \left (\frac{\xi_1}{g(\xi_2)}, \frac{\xi_2 g(\xi_1)}{g(\xi_2)} \right ),\epsilon(g)^2 \right).
	\]
	Therefore, 
	\[
	((1,1),\id) = \alpha_{g^3} = \alpha_g g(\alpha_{g^2}) = \left (\left (\frac{\xi_1 g^2(\xi_2)}{g^2(\xi_1)g(\xi_2)}, \frac{\xi_2 g(\xi_1)}{g(\xi_2)g^2(\xi_1)}\right ),\id\right )
	\] 
	which is equivalent to $\xi_2 g(\xi_1) \in (\SplittingHex^g)^*$. Put $\xi= \xi_2 g(\xi_1)$ and replace $\varphi$ with $((\xi_1^{-1},\xi^{-1}),\id)\circ \varphi$, which replaces $\alpha_g$ with $((\xi^{-1}, \xi^{-1}),\epsilon(g))$.
	 
	Next, we use $h^2=\id$. One has $h(\alpha_h)=((h(\rho_1),h(\rho_2)),\epsilon(h))$, which gives 
	\[
	((1,1),\id) = \alpha_{h^2} = \alpha_{h}h(\alpha_{h}) = \left (\left (\frac{\rho_1}{h(\rho_1)}, \frac{\rho_2}{h(\rho_2)}\right ), \id\right )
	\]
	and thus $\rho_1,\rho_2\in ({\SplittingHex}^{h})^*$. 
	
	Finally, we use the relation $gh=hg$, which gives $\alpha_g g(\alpha_{h})=\alpha_{h}h(a_g)$. Hence, by using the actions \eqref{eq: semidirect structure}, we get 
	\[
	\left(\left( \rho_1h(\xi), \rho_2 h(\xi)\right ), \epsilon(g h)\right ) = \left (\left (\frac{1}{\xi g(\rho_2)}, \frac{g(\rho_1)}{\xi g(\rho_2)}\right ) ,\epsilon(hg)\right )	
	\] 
	which is equivalent to $\rho_2 = \rho_1 g(\rho_1)$, $\Norm_{h}(\xi)\Norm_{g}(\rho_1) = 1$. Letting $\rho= \rho_1$, we get a pair $(\xi, \rho) \in (\SplittingHex^g)^* \times (\SplittingHex^{h})^*$ as in the statement. 
\end{proof}

\begin{prop}\label{prop: Z6 equivalence}
	Let $S$ and $S'$ be sextic $\ZZ/6$-del Pezzo surfaces with the same splitting field $\SplittingHex$, parametrized by $(\rho, \xi)$ and $(\rho',\xi')$, respectively. Then $S \simeq S'$ if and only if $(\rho, \xi) \sim (\rho', \xi')$ where the equivalence relation is generated by:
	\begin{equation}\label{eq: Z6 equivalences}
	(\rho, \xi) \sim (g(\rho), \xi), \, (\rho, \xi) \sim (\rho^{-1}, \xi^{-1}), \, (\rho, \xi) \sim (\Norm_{h}(\lambda)\rho, \Norm_g(\lambda^{-1}) \xi)\ \text{for}\ \lambda \in \SplittingHex.
	\end{equation}
\end{prop}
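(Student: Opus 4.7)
Recall from Section~\ref{subsec: Galois cohomology} that $S\simeq S'$ is equivalent to the two associated cocycles being cohomologous in $\Cohom^1(G,\Aut_\SplittingHex(M))$, i.e., to the existence of some $\beta \in \Aut_\SplittingHex(M) = T_\SplittingHex \rtimes \Dih_6$ with $\alpha'_u = \beta^{-1}\alpha_u\, u(\beta)$ for all $u\in G=\langle g,h\rangle$. The task is therefore to translate this cohomology relation into the combinatorial equivalence on parameter pairs $(\rho,\xi)$ given by Proposition~\ref{prop: Z6 twisted actions}.

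\emph{Sufficiency.} For each of the three moves we exhibit an explicit $\beta\in T_\SplittingHex\rtimes\Dih_6$ that realizes it:
\begin{itemize}
\item For $(\rho,\xi)\sim (g(\rho),\xi)$, take $\beta=((\xi^{-1},\xi^{-1}),\epsilon(g))$.
\item For $(\rho,\xi)\sim(\rho^{-1},\xi^{-1})$, take $\beta=((1,1),\epsilon(h))$.
\item For $(\rho,\xi)\sim(\Norm_h(\lambda)\rho,\Norm_g(\lambda^{-1})\xi)$, take the torus element $\beta=((\lambda^{-1},\lambda^{-1}g(\lambda^{-1})),\id)$.
\end{itemize}
In each case, a direct computation of $\beta^{-1}\alpha_g\, g(\beta)$ and $\beta^{-1}\alpha_h\, h(\beta)$ via the semidirect product multiplication and the conjugation formulas~\eqref{eq: semidirect structure}, using the fact that $\xi\in \SplittingHex^g$ and $\rho\in \SplittingHex^h$, yields a cocycle again of the standard form with the claimed new parameters. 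In the first bullet the key nontrivial cancellation is that the second torus coordinate of $\alpha'_h$ comes out as $\Norm_h(\xi)^{-1}\rho^{-1}$, which equals $g(\rho)g^2(\rho)$ precisely by virtue of the cocycle identity $\Norm_h(\xi)\Norm_g(\rho)=1$.

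\emph{Necessity.} Conversely, suppose $\beta = ((\mu_1,\mu_2),\delta)\in T_\SplittingHex\rtimes\Dih_6$ realizes a cohomology between two cocycles both of standard form. Projecting the equality $\beta^{-1}\alpha_u\, u(\beta) = \alpha'_u$ to $\Dih_6$ and using that $G$ acts trivially on $\Dih_6$, we obtain $\delta^{-1}\epsilon(u)\delta=\epsilon(u)$ for $u\in\{g,h\}$, so $\delta\in C_{\Dih_6}(\epsilon(G))$. Since $\epsilon(G)=\langle \epsilon(g),\epsilon(h)\rangle$ is the cyclic subgroup of order $6$ in $\Dih_6$, which has index $2$ and is self-centralizing, we conclude $\delta\in\epsilon(G)$ and may write $\delta=\epsilon(g)^a\epsilon(h)^b$. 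Writing $\beta$ as the product of a pure torus element and $\delta$, and decomposing $\delta$ into its $\epsilon(g)$ and $\epsilon(h)$ factors, the transformation on the parameter pair becomes a composition of the three moves exhibited above (the torus factor contributing the third, the $\epsilon(g)$ factors the first, and the $\epsilon(h)$ factors the second).

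\emph{Main obstacle.} The principal technical difficulty is bookkeeping in the semidirect product $T_\SplittingHex\rtimes \Dih_6$: one must simultaneously track the nontrivial action of $\epsilon(g)$ on $T_\SplittingHex$ given by $(\lambda_1,\lambda_2)\mapsto(\lambda_2^{-1},\lambda_1\lambda_2^{-1})$, the inversion action of $\epsilon(h)$, and the Galois actions of $g$ and $h$ on coefficients in $\SplittingHex^*$. The first move is the most delicate to verify: the choice $\beta=((\xi^{-1},\xi^{-1}),\epsilon(g))$ does not arise from a standard-form-preserving torus twist composed with $\theta=((1,1),\epsilon(g))$, and the fact that it nonetheless sends a standard cocycle to a standard cocycle hinges precisely on the constraint $\Norm_h(\xi)\Norm_g(\rho)=1$.
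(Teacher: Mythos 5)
Your proposal is correct and follows essentially the same route as the paper: identify $S\simeq S'$ with the cocycles being cohomologous, compute the effect of a pure torus element, of $((1,1),\epsilon(h))$, and of $((\xi^{-1},\xi^{-1}),\epsilon(g))$ (the last using the identity $\Norm_h(\xi)\Norm_g(\rho)=1$ exactly as you note), and then reduce a general $\beta$ by observing that its $\Dih_6$-component must centralize $\epsilon(g)$ and hence lie in $\langle\epsilon(g),\epsilon(h)\rangle\simeq\ZZ/6$. The only differences are immaterial conventions (you use $\alpha'_u=\beta^{-1}\alpha_u u(\beta)$ rather than $\beta\alpha_u u(\beta^{-1})$, which swaps $g(\rho)$ for $g^2(\rho)$ in the first move — these generate the same equivalence).
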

\begin{proof}
	Fix the twisted actions $\varphi\colon S_\SplittingHex\iso M$ and $\varphi'\colon S_\SplittingHex'\iso M$, and let $\alpha,\alpha'\colon G\to\Aut_{\SplittingHex}(M)$ be the corresponding 1-cocylces. Then $S \simeq S'$ if and only if these cocycles are cohomologous, i.e. there is $\beta \in\Aut_{\SplittingHex}(M)$ such that 
	\begin{equation}\label{eq: Z6 cohomologous cycles}
		\alpha_g' = \beta \alpha_g g(\beta^{-1}),\ \  \alpha_{h}' = \beta \alpha_{h} h(\beta^{-1})
	\end{equation}
	Thus, we start by looking at possible $\beta$s that keep the parametrization found in Lemma \ref{prop: Z6 twisted actions}
	
	\vspace{0.3cm}
	
	$\beta = ((\lambda_1,\lambda_2), \id)$: 
	\[
	((\xi'^{-1}, \xi'^{-1}),\epsilon(g)) = \alpha_g' = \beta \alpha_g g(\beta^{-1}) = \left (\left (\frac{\lambda_1 g(\lambda_2)}{\xi}, \frac{\lambda_2 g(\lambda_2)}{g(\lambda_1) \xi}\right ),\epsilon(g)\right )
	\]
	This is the case if and only if $\lambda_2=\lambda_1 g(\lambda_1)$ and $\xi=\xi'\Norm_g(\lambda_1)$. Now for $\alpha_{h}$ we find:
	\[
	((\rho', \rho' g(\rho')), \epsilon(h)) = \alpha_{h}'=\beta \alpha_{h} h(\beta^{-1})= ((\Norm_{h}(\lambda_1) \rho, \Norm_{h}(\lambda_2) \rho g(\rho)), \epsilon(h))
	\] 
	which is equivalent to $\rho' =\Norm_{h}(\lambda_1)\rho$. 
	
	\vspace{0.3cm}
	
	$\beta = ((1,1),\epsilon(h))$:
	\begin{gather*}
	((\xi'^{-1}, \xi'^{-1}),\epsilon(g)) = \alpha_g' = \beta \alpha_g g(\beta^{-1}) = ((\xi,\xi),\epsilon(g))
	\\
	((\rho', \rho' g(\rho')), \epsilon(h)) = \alpha_{h}'=\beta \alpha_{h} h(\beta^{-1}) = ((\rho^{-1}, \rho^{-1}g(\rho^{-1})),\epsilon(h))
	\end{gather*}
	Therefore we get, that $\rho' = \rho^{-1}, \xi' = \xi^{-1}$. 
	
	\vspace{0.3cm}	
	$\beta = ((1,1),\epsilon(g))$:
	
	Put $\gamma=((\xi^{-1},\xi^{-1}),\id) \circ \beta=((\xi^{-1},\xi^{-1}),\epsilon(g))$. Then
	\begin{gather*}
	\alpha_g' = \gamma \alpha_g g(\gamma^{-1}) = \alpha_g,\ \ \ \
	\alpha_{h}' = \gamma\alpha_{h} h(\gamma^{-1}) = ((g^2(\rho), g^2(\rho)\rho),\epsilon(h)),
	\end{gather*}
	where we used that $\Norm_{h}(\xi) \Norm_g(\rho) = 1$. This leads us to $(\rho', \xi') = (g^2(\rho),\xi)$, but this is the same as the first equivalence in \eqref{eq: Z6 equivalences}.
	
	\vspace{0.3cm}
	
	Now take a general $\beta = ((\lambda_1, \lambda_2),\tau)=((\lambda_1,\lambda_2),\id)\circ ((1,1),\tau)$, $\tau\in \Dih_6$, that gives an equivalence of 1-cycles. The equations of \eqref{eq: Z6 cohomologous cycles} imply that $\tau$ belongs to the centralizer of $\epsilon(g)$ in $\Dih_6$, hence $\tau \in \langle \epsilon(g), \epsilon(h) \rangle\simeq\ZZ/6$. Therefore, $\beta$ is a composition of one of the maps from above, which finishes the proof.
\end{proof}

Recall that we assume $\KK=\SplittingHex^g$ and $\LL=\SplittingHex^h$ in our Severi-Brauer data of $S$. By definition, $S$ is $\KK$-birational to a Severi-Brauer surface $X$ and $\LL$-birational to an involution surface $Y$, but the choice of these surfaces is not unique. However, we are able to list all possibilities for them.

\begin{prop}\label{prop: Z6 parametrization of SB curves and surfaces}
	Let $S$ be a sextic $\ZZ/6$-del Pezzo surface, parametrized as in Proposition \ref{prop: Z6 twisted actions}. Then one has the following:
	\begin{enumerate}[leftmargin=*, labelindent=20pt, itemsep=5pt]
		\item\label{Z6 SB data surfaces} The Severi-Brauer surfaces over~$\KK=\SplittingHex^g$ given by $S$ are parametrized by $\xi$ and $\xi^{-1}$.
		\item\label{Z6 SB data curves} The involution surfaces over $\LL=\SplittingHex^h$ given by $S$ are $C_{\rho}\times C_{g(\rho)}$, $C_{\rho}\times C_{g^2(\rho)}$ and $C_{g(\rho)}\times C_{g^2(\rho)}$.
		\item\label{Z6 SB data curve iso} Moreover, the conics $C_{\rho}$, $C_{g(\rho)}$ and $C_{g^2(\rho)}$ are either all isomorphic to each other, or pairwise non-isomorphic and each of them is the Brauer product of the other two.
	\end{enumerate}
\end{prop}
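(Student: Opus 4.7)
All three parts follow by transporting the twisted Galois actions $\alpha_g, \alpha_h$ of Proposition~\ref{prop: Z6 twisted actions} through specific $\FF$-contractions of $M$ and reading off the resulting parameters with Lemmas~\ref{lem: SB surface Z3 twisted action} (for surfaces over $\KK$) and~\ref{lem: SB curve twisted action} (for conics over $\LL$).

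\textbf{Part (1).} The first projection $M \to \PP^2_\FF$ contracts $\{E_1,E_2,E_3\}$ and descends over $\KK = \FF^g$ to the Severi--Brauer surface $X$. Restricting $\alpha_g = ((\xi^{-1},\xi^{-1}),\epsilon(g))$ to the first factor yields a matrix $M_g \in \PGL_3(\FF)$ whose cube is $\xi^{-2}\,I$ in $\GL_3(\FF)$. Since $\xi\in\FF^g$, the identity $\xi^{-3}=\Norm_g(\xi^{-1})$ gives $[\xi^{-2}]=[\xi]$ in $\KK^*/\Norm_g(\FF^*)$, so Lemma~\ref{lem: SB surface Z3 twisted action} identifies $X$ with $X_\xi$. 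The claim $X^{\rm op}=X_{\xi^{-1}}$ follows from the last sentence of that lemma.

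\textbf{Part (2).} For each $i\in\{1,2,3\}$ the pair $\{E_i,F_i\}$ is stable under $h$ (which swaps the two $\PP^2$-factors of $M$ via the involution $\iota$), so the contraction $S_\LL \to Y_i$ is defined over $\LL=\FF^h$. A direct intersection calculation on the four residual $(-1)$-curves $\bar E_j,\bar F_j$ ($j\neq i$) shows that the two rulings of $Y_i$ are $\{\bar E_j,\bar F_j\}$ and $\{\bar E_k,\bar F_k\}$ with $\{j,k\}=\{1,2,3\}\setminus\{i\}$, and each is $h$-invariant (since $h$ sends $\bar E_\ell$ into the same ruling as $\bar F_\ell$). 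Hence $\rk_\LL\Pic(Y_i)=2$ and $Y_i$ is a product of two conics over $\LL$. The two ruling classes pull back to two of $\{H-E_1,H-E_2,H-E_3\}$ in $\Pic(M)$, with rational maps of the form $[x_\ell:x_m]$. Applying $\alpha_h = ((\rho,\rho g(\rho)),\epsilon(h))$ to such a map and using the defining relation $x_\ell y_\ell = x_m y_m$ on $M$ to re-express the output in terms of $[x_\ell:x_m]$ produces a cocycle matrix $\smallmat{0 & 1 \\ c & 0}$ with $c$ equal to $\rho$, $g(\rho)$, or $\rho g(\rho)$ according to the ruling. After a diagonal coboundary to reach the standard form of Lemma~\ref{lem: SB curve twisted action}, and using $c^2=\Norm_h(c)$ (so $C_c\simeq C_{c^{-1}}$) together with the cocycle identity $\rho\,g(\rho)\,g^2(\rho)=\Norm_g(\rho)=\Norm_h(\xi)^{-1}\in\Norm_h(\FF^*)$ (which gives $\rho\,g(\rho)\equiv g^2(\rho)\pmod{\Norm_h(\FF^*)}$), the three ruling conics are $C_\rho$, $C_{g(\rho)}$, $C_{g^2(\rho)}$, and they distribute as $Y_1=C_\rho\times C_{g^2(\rho)}$, $Y_2=C_\rho\times C_{g(\rho)}$, $Y_3=C_{g(\rho)}\times C_{g^2(\rho)}$.

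\textbf{Part (3) and main obstacle.} The injection $\LL^*/\Norm_h(\FF^*)\hookrightarrow \Br(\LL)$ is a group homomorphism, so
\[
b(C_\rho)+b(C_{g(\rho)})+b(C_{g^2(\rho)})=b\bigl(C_{\Norm_g(\rho)}\bigr)=b\bigl(C_{\Norm_h(\xi)^{-1}}\bigr)=0,
\]
since $\Norm_h(\xi)\in\Norm_h(\FF^*)$. Hence each of the three conics is the Brauer product of the other two. For the dichotomy: if $C_\rho$ is trivial then $\rho\in\Norm_h(\FF^*)$, and as $gh=hg$ the element $g$ preserves $\Norm_h(\FF^*)$, so $g(\rho),g^2(\rho)\in\Norm_h(\FF^*)$ and all three conics split; conversely, if any two of the conics are isomorphic then the $2$-torsion of conic Brauer classes forces the third to be trivial, reducing to the previous case. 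The principal technical hurdle is Part~(2): bookkeeping the two simultaneous normalizations, namely the diagonal coboundaries needed to put each pullback cocycle into the standard form of Lemma~\ref{lem: SB curve twisted action}, and the $\pmod{\Norm_h(\FF^*)}$ identifications needed to match each ruling parameter with the correct Galois translate $g^i(\rho)$.
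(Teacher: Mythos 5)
Your argument is correct, and for parts (1) and (2) it is essentially the paper's proof: you transport $\alpha_g$ and $\alpha_h$ through the contractions and read off the parameters from Lemmas~\ref{lem: SB surface Z3 twisted action} and~\ref{lem: SB curve twisted action}. The only cosmetic differences there are that in (1) you identify the class via the cube $M_g^3=\xi^{-2}I$ and the relation $[\xi^{-2}]=[\xi]$ in $\KK^*/\Norm_g(\FF^*)$, where the paper simply rescales the matrix by $\xi$ to reach the normal form $\smallmat{0&0&\xi\\1&0&0\\0&1&0}$ directly, and that in (2) you phrase the three conic bundles via the divisor classes $H-E_j$ where the paper writes out the maps $\pi_{01},\pi_{12},\pi_{02}$ explicitly; the resulting parameters $\rho$, $g(\rho)$, $\rho g(\rho)\equiv g^2(\rho)\ (\mathrm{mod}\ \Norm_h(\FF^*))$ and the distribution over the $Y_i$ agree with the paper's.

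Part (3) is where you genuinely diverge. The paper proves the Brauer-product statement by invoking Theorem~\ref{thm: birational classification of dP8} (the Koll\'ar--Trepalin classification of birational models of products of conics) over $\LL$, using that the three surfaces $C_{g^i(\rho)}\times C_{g^j(\rho)}$ are all $\LL$-birational to $S_\LL$. You instead use that $\delta\mapsto b(C_\delta)$ is a homomorphism $\LL^*/\Norm_h(\FF^*)\to\Br(\LL)$ (the cyclic/quaternion algebra construction -- true, though not stated as such in Lemma~\ref{lem: SB curve twisted action}) together with the cocycle relation $\Norm_g(\rho)=\Norm_h(\xi)^{-1}$ to get $b(C_\rho)+b(C_{g(\rho)})+b(C_{g^2(\rho)})=0$ outright. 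This is more self-contained, avoids the birational classification of degree~8 surfaces entirely, and in fact yields slightly more than the stated dichotomy: since the three classes sum to zero and are $2$-torsion, if any two conics are isomorphic then all three are split, so the ``all isomorphic'' branch can only occur in the trivial case. Both routes are valid; yours trades a geometric input for an algebraic one.
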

\begin{proof}
	\ref{Z6 SB data surfaces} Recall that the projection to the first factor in 
	\[
	M=\big\{([x_0:x_1:x_2],[y_0:y_1:y_2])\in\PP^2_{\SplittingHex}\times\PP^2_{\SplittingHex}:\ x_0y_0=x_1y_1=x_2y_2 \big\}.
	\]
	realizes $M$ as the standard blow-up of $\PP_{\SplittingHex}^2$. Since by Proposition \ref{prop: Z6 twisted actions} the twisted action is given by the automorphism
	\[
	\alpha_g\colon ([x_0:x_1:x_2],[y_0:y_1:y_2])\mapsto ([x_2:\xi^{-1}x_0:\xi^{-1}x_1],[y_2:\xi y_0:\xi y_1]),
	\] 
	we see, by projecting to the first and to the second factors, that the twisted actions on a Severi-Brauer surfaces are given, respectively, by the matrices
	\[
	\begin{pmatrix}
		0 & 0 & 1\\
		\xi^{-1} & 0 & 0\\
		0 & \xi^{-1} & 0
	\end{pmatrix}\sim
	\begin{pmatrix}
		0 & 0 & \xi\\
		1 & 0 & 0\\
		0 & 1 & 0
	\end{pmatrix}\ \ \text{and}\ \ \
	\begin{pmatrix}
		0 & 0 & 1\\
		\xi & 0 & 0\\
		0 & \xi & 0
	\end{pmatrix}\sim
	\begin{pmatrix}
		0 & 0 & \xi^{-1}\\
		1 & 0 & 0\\
		0 & 1 & 0
	\end{pmatrix},
	\]
	and we are done by Lemma \ref{lem: SB surface Z3 twisted action}.
	
	\ref{Z6 SB data curves} We consider 3 rational maps  $M \rat \PP^1_{\SplittingHex} \times \PP^1_{\SplittingHex}$ which define the contractions of the 2-sections in 3 conic bundles on $M$, namely:
	\begin{flalign}
		& \pi_{01}\colon ([x_0:x_1:x_2],[y_0:y_1:y_2]) \mapsto \begin{cases}
			([x_0:x_2],[y_1:y_2]), \quad y_1x_0 \neq 0 \\ ([y_2:y_0],[x_2:x_1]), \quad x_1y_0 \neq 0
		\end{cases}\ \text{contracts}\ x_0=x_1=y_0=y_1=0  & \label{eq: contraction of x0=x1} \\
		& \pi_{12}\colon ([x_0:x_1:x_2],[y_0:y_1:y_2]) \mapsto \begin{cases}
			([x_0:x_1],[y_0:y_2]), \quad x_1y_2 \neq 0 \\ ([y_1:y_0],[x_2:x_0]), \quad x_2y_1 \neq 0
		\end{cases}\ \text{contracts}\ x_1=x_2=y_1=y_2=0  & \label{eq: contraction of x1=x2}\\
		& \pi_{02}\colon ([x_0:x_1:x_2],[y_0:y_1:y_2]) \mapsto \begin{cases}
		([x_0:x_1],[y_1:y_2]), \quad x_0y_2 \neq 0 \\ ([y_1:y_0],[x_2:x_1]), \quad x_2y_0 \neq 0
		\end{cases}\ \text{contracts}\ x_0=x_2=y_0=y_2=0  & \label{eq: contraction of x0=x2}
	\end{flalign}
	For the map $\pi_{01}$, we let $([u_0:u_1],[v_0:v_1])=([x_0:x_2],[y_1:y_2])$ be the coordinates on $\PP_{\SplittingHex}^1\times\PP_{\SplittingHex}^1$, and find that the induced twisted action is
	\[
	([u_0:u_1],[v_0:v_1])\mapsto ([u_1:\rho g(\rho)u_0],[v_1:g(\rho^{-1})v_0])=([g^2(\rho)\Norm_h(\xi^{-1})u_1:u_0],[g(\rho)v_1:v_0]),
	\] 
	so the surface is isomorphic to $C_{g^2(\rho)}\times C_{g(\rho)}$. We leave to the reader to write down the other two contractions and verify the induced twisted action. Similarly, for $\pi_{12}$ and $\pi_{02}$ we find that the induced twisted action defines the surfaces $C_{\rho}\times C_{g^2(\rho)}$ and $C_{\rho}\times C_{g(\rho)}$, respectively.
	
	\ref{Z6 SB data curve iso} If any element among $\{\rho,g(\rho),g^2(\rho)\}$ is an $h$-norm, then the other two are also $h$-norms, since $h$ and $g$ commute. Thus, if any of these conics is $\LL$-trivial, then the other two are also $\LL$-trivial. Now suppose that two of these conics are isomorphic, say $\rho=\Norm_h(\lambda)g(\rho)$. Then $g(\rho)=\Norm_h(g(\lambda))g^2(\rho)$, and hence all 3 conics are isomorphic. Finally, if all 3 conics are pairwise non-isomorphic, then the claim follows from Theorem \ref{thm: birational classification of dP8} applied to $\kk=\LL$.
\end{proof}

\begin{thm}\label{thm: Z6 iso criterion}
	Let $S$ and $S'$ be two sextic $\ZZ/6\ZZ$-del Pezzo surfaces. Then $S\simeq S'$ if and only if $\FF=\FF'$ and their Severi-Brauer data are equivalent.
\end{thm}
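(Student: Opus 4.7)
The forward implication is immediate: any isomorphism $S\iso S'$ induces a Galois-equivariant bijection of $(-1)$-curves, so the splitting fields coincide ($\FF=\FF'$), and it transports any Severi--Brauer data for $S$ to one for $S'$, making the two data equivalent. For the backward implication, my plan is to parametrize $S$ and $S'$ by pairs $(\rho,\xi)$ and $(\rho',\xi')$ satisfying \eqref{eq: Z6 coefficient conditions} via Proposition~\ref{prop: Z6 twisted actions}, and, using Proposition~\ref{prop: Z6 equivalence}, reduce the problem to showing that $(\rho,\xi)\sim(\rho',\xi')$ under the relations \eqref{eq: Z6 equivalences}. I would first align $\xi$ with $\xi'$, and then align $\rho$ with $\rho'$, using the two pieces of the SB-data equivalence in turn.

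To align $\xi$, I would apply Proposition~\ref{prop: Z6 parametrization of SB curves and surfaces}(\ref{Z6 SB data surfaces}), which describes the Severi--Brauer surfaces attached to $S$ and $S'$ over $\KK$ as $\{X_\xi,X_{\xi^{-1}}\}$ and $\{X_{\xi'},X_{\xi'^{-1}}\}$; by Proposition~\ref{prop: birationality of SB} these exhaust the respective $\KK$-birational classes. Equivalence of SB data combined with Lemma~\ref{lem: SB surface Z3 twisted action} then yields $\xi'\xi^{\pm 1}\in\Norm_g(\FF^*)$. After applying the second equivalence of \eqref{eq: Z6 equivalences} to $S'$ if the sign is $+1$, I may assume $\xi'/\xi=\Norm_g(\lambda^{-1})$ for some $\lambda\in\FF^*$; the third equivalence of \eqref{eq: Z6 equivalences} then replaces $(\rho,\xi)$ by $(\Norm_h(\lambda)\rho,\xi')$, so that $\xi=\xi'$.

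To align $\rho$, I would use Proposition~\ref{prop: Z6 parametrization of SB curves and surfaces}(\ref{Z6 SB data curves}), which describes the three $\LL$-involution surfaces of $S$ as pairwise products of conics in $\{C_\rho,C_{g(\rho)},C_{g^2(\rho)}\}$, and similarly for $S'$. Theorem~\ref{thm: birational classification of dP8} on $\LL$-birational models of involution surfaces, combined with Proposition~\ref{prop: Z6 parametrization of SB curves and surfaces}(\ref{Z6 SB data curve iso}) identifying the Brauer product of any two of these conics as the third, would force the unordered triples of $\LL$-iso classes $\{[C_{g^i(\rho)}]\}_{i=0,1,2}$ and $\{[C_{g^i(\rho')}]\}_{i=0,1,2}$ to coincide. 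Applying the first equivalence in \eqref{eq: Z6 equivalences} cyclically to $\rho'$, I may assume $C_{\rho'}$ is $\LL$-isomorphic to $C_\rho$, and Lemma~\ref{lem: SB curve twisted action} then gives $\rho'=\Norm_h(\nu)\rho$ for some $\nu\in\FF^*$.

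The final step will be to realize the multiplication by $\Norm_h(\nu)$ as the effect of the third equivalence in \eqref{eq: Z6 equivalences}, applied with some $\lambda$ of $g$-norm one so that $\xi$ stays fixed. The cocycle constraint \eqref{eq: Z6 coefficient conditions} for both $(\rho,\xi)$ and $(\rho',\xi)$ forces $\Norm_g(\Norm_h(\nu))=1$. Applying Hilbert 90 (Theorem~\ref{thm: Hilbert 90}) to the cyclic cubic extension $\LL/\kk$ generated by $g|_{\LL}$, one can write $\Norm_h(\nu)=\beta/g(\beta)$ for some $\beta\in\LL^*$; meanwhile any $\lambda\in\FF^*$ of $g$-norm one has the form $\lambda=\mu/g(\mu)$ by Hilbert 90 for $\FF/\KK$, giving $\Norm_h(\lambda)=\Norm_h(\mu)/g(\Norm_h(\mu))$. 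The hard part will be ensuring that $\beta$ lies in $\Norm_h(\FF^*)$ up to an element of $\kk^*$; I expect this to require a careful interplay of the two commuting applications of Hilbert 90, using the residual freedom from the first equivalence in \eqref{eq: Z6 equivalences} together with the cocycle constraint.
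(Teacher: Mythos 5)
Your argument tracks the paper's own proof almost step for step (forward direction, alignment of $\xi$ via Proposition~\ref{prop: Z6 parametrization of SB curves and surfaces}\ref{Z6 SB data surfaces} and Lemma~\ref{lem: SB surface Z3 twisted action}, alignment of $\rho$ via Proposition~\ref{prop: Z6 parametrization of SB curves and surfaces}\ref{Z6 SB data curves} and Theorem~\ref{thm: birational classification of dP8}); the only structural difference is that you normalize so that the residual discrepancy sits on the $\rho$-coordinate, whereas the paper pushes it onto the $\xi$-coordinate. But the last step, which you explicitly leave open, is exactly where the content of the theorem lies, and the route you sketch for it is the wrong one. You need: given $\nu\in\FF^*$ with $\Norm_g(\Norm_h(\nu))=1$, produce $\lambda\in\FF^*$ with $\Norm_g(\lambda)=1$ and $\Norm_h(\lambda)=\Norm_h(\nu)$. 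Running Hilbert~90 on both sides as you propose, the set $\Norm_h(\{\lambda:\Norm_g(\lambda)=1\})$ becomes $\{b/g(b):b\in\Norm_h(\FF^*)\}$, while $\Norm_h(\nu)=\beta/g(\beta)$ with $\beta\in\LL^*$ determined only up to $(\LL^*)^g=\kk^*$; so your plan reduces to showing $\beta\in\kk^*\cdot\Norm_h(\FF^*)$, i.e.\ to a statement about the cokernel of the quadratic norm $\Norm_h\colon\FF^*\to\LL^*$ modulo $\kk^*$. That group is genuinely nontrivial in general (it is where the conics $C_\rho$ live), and no amount of ``residual freedom'' from the first two equivalences obviously controls it. As written, the proof does not close.

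The step is nevertheless true, and the fix is an explicit element in the spirit of the paper's $\mu=\lambda h(\lambda^{-1})\Norm_g(\lambda^{-1})$: take
\[
\lambda \;=\; \Norm_g(\nu)\cdot\nu^{-3}\cdot\Norm_h(\nu)^2 \;=\; \nu^{-2}\,g(\nu)\,g^2(\nu)\,\Norm_h(\nu)^2 .
\]
Then $\Norm_g(\lambda)=\Norm_g(\nu)^3\Norm_g(\nu)^{-3}\Norm_g(\Norm_h(\nu))^2=1$, and since $\Norm_h(\Norm_g(\nu))=\Norm_g(\Norm_h(\nu))=1$ and $\Norm_h(\nu)\in\FF^h$, one gets $\Norm_h(\lambda)=1\cdot\Norm_h(\nu)^{-3}\cdot\Norm_h(\nu)^{4}=\Norm_h(\nu)$. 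Applying the third equivalence of \eqref{eq: Z6 equivalences} with coefficient $\lambda^{-1}$ sends $(\Norm_h(\nu)\rho,\xi)$ to $(\rho,\xi)$, finishing the proof. With this insertion your argument is complete and is essentially the paper's proof with the final normalization transposed.
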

\begin{proof}
	If $S\simeq S'$ then $\SplittingHex=\SplittingHex'$ and therefore $\KK=\KK'$, $\LL=\LL'$. Since $S_\KK$ is $\KK$-isomorphic to $S_{\KK}'$, we find that $X$ is $\KK$-birational to $X'$. Similarly, $Y$ is $\LL$-birational to $Y'$. Conversely, suppose that $X$ is $\KK$-birational to $X'$ and $Y$ is $\LL$-birational to $Y'$. Further, suppose that $S$ and $S'$ are parametrized by the pairs $(\rho,\xi)$ and $(\rho',\xi')$, respectively. 
	We then need to show that these pairs are equivalent in the sense of Proposition~\ref{prop: Z6 equivalence}.   
		
	By Proposition \ref{prop: Z6 parametrization of SB curves and surfaces}, one has $X\simeq X_{\xi}$ or $X\simeq X_{\xi^{-1}}$. Similarly, $X'\simeq X_{\xi'}$ or $X'\simeq X_{{\xi'}^{-1}}$. Since $X$ is $\KK$-birational to $X'$, we get that $\xi=\xi'$ or $\xi={{\xi'}^{-1}}$ in $\KK^*/\Norm_g(\FF^*)$, see Lemma \ref{lem: SB surface Z3 twisted action} and Proposition \ref{prop: birationality of SB}. By using the equivalences of Proposition~\ref{prop: Z6 equivalence}, we can replace the pair $(\rho,\xi)$ with $(\rho^{-1},\xi^{-1})$, and hence from the very beginning assume that $\xi$ and $\xi'$ parametrize isomorphic Severi-Brauer surfaces; note that $\rho$ and $\rho^{-1}$ parametrize isomorphic curves. Further, by Proposition \ref{prop: Z6 parametrization of SB curves and surfaces} one has $Y\simeq C_{g^i(\rho)}\times C_{g^j(\rho)}$, where $i,j\in\{0,1,2\}$ and $i\ne j$. Similarly, $Y'\simeq C_{g^k(\rho')}\times C_{g^l(\rho')}$, where $k,l\in\{0,1,2\}$ and $k\ne l$. But since $Y$ and $Y'$ are $\LL$-birational, Theorem \ref{thm: birational classification of dP8} implies that $\rho'=g^t(\rho)$ in $\LL^*/\Norm_h(\FF^*)$ for $t\in\{0,1,2\}$. 
	
	So, by using the equivalences of Proposition~\ref{prop: Z6 equivalence}, we can switch, if necessary, between $ \rho'$, $g(\rho')$, and $g^2(\rho')$ to make sure that $\rho$ and $\rho'$ parametrize the same curve. This gives us $\lambda,\lambda' \in \SplittingHex$ such that $\xi' = \Norm_g(\lambda) \xi$ and $\rho' = \Norm_{h}(\lambda') \rho$. Using the last equivalence of \eqref{eq: Z6 equivalences}, we may assume that $\lambda' = 1$ and hence $\rho'=\rho$. Thus we need to show that $(\rho, \xi)\sim (\rho, \Norm_g(\lambda) \xi)$. The equalities $\Norm_{h}(\xi^{-1}) = \Norm_g(\rho)$ and $\Norm_{h}(\xi'^{-1}) = \Norm_g(\rho')$ imply
	\[
	\Norm_g(\rho)=\Norm_h(\xi^{-1})=\Norm_h\left ({\xi'}^{-1}\Norm_g(\lambda)\right )=\Norm_g(\rho')\Norm_h(\Norm_g(\lambda)),
	\]
	hence $\Norm_{h} (\Norm_g(\lambda))=1$. Define $\mu= \lambda h(\lambda^{-1})\Norm_g(\lambda^{-1})$ and observe that $\Norm_{h}(\mu)=1$. Since $\Norm_g(\lambda) = \Norm_g(h(\lambda)^{-1})$, we get $\Norm_g(\mu) = \Norm_g(\lambda^{-1})$. Using the last equivalence of \eqref{eq: Z6 equivalences} with coefficient~$\mu^{-1}$, we obtain the desired equivalence. 
\end{proof}

\subsection{Sextic $\Sym_3$-del Pezzo surfaces}

The group $G\simeq\Sym_3$ is generated by $\epsilon(g)$ and $\epsilon(f)$. 

\begin{lem}\label{lem: S3 twisted actions}
	Let $S$ be a sextic $\Sym_3$-del Pezzo surface. Then the twisted action \eqref{eq: twisted action} can be chosen so that 
	\[
	\alpha_g = ((\xi^{-1},\xi^{-1} ),\epsilon(g)), \, \alpha_{f} = ((\eta, f(\eta) ), \epsilon(f))
	\]
	for $\eta\in \SplittingHex^*$, and
	\begin{equation}\label{eq: S3 coefficients conditions short}
	\xi=\frac{\eta}{f(\eta)fg(\eta)}\in\SplittingHex^g.
	\end{equation}
	Moreover every $\eta\in F^*$ such that $\frac{\eta}{f(\eta)fg(\eta)}\in\SplittingHex^g$ yields a del Pezzo surface with said properties.
\end{lem}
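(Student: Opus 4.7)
The plan is to follow the blueprint of Proposition~\ref{prop: Z6 twisted actions}, now with $G=\Sym_3=\langle g,f\mid g^3=f^2=(gf)^2=\id\rangle$. Write a general 1-cocycle as $\alpha_g=((\xi_1,\xi_2),\epsilon(g))$ and $\alpha_f=((\eta_1,\eta_2),\epsilon(f))$, and impose the three defining relations of $\Sym_3$ in turn, using the semidirect product actions $\epsilon(g)\cdot(\lambda_1,\lambda_2)=(\lambda_2^{-1},\lambda_1\lambda_2^{-1})$ and $\epsilon(f)\cdot(\lambda_1,\lambda_2)=(\lambda_2^{-1},\lambda_1^{-1})$ from \eqref{eq: semidirect structure}.

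First, the relation $g^3=\id$ is treated verbatim as in Proposition~\ref{prop: Z6 twisted actions}: it forces $\xi_2\, g(\xi_1)\in(\SplittingHex^g)^*$, and replacing $\varphi$ with $((\xi_1^{-1},\xi^{-1}),\id)\circ\varphi$, where $\xi:=\xi_2\, g(\xi_1)$, normalises $\alpha_g$ to $((\xi^{-1},\xi^{-1}),\epsilon(g))$ with $\xi\in\SplittingHex^g$. Next, the relation $f^2=\id$ yields, after expanding $\alpha_{f^2}=\alpha_f\cdot f(\alpha_f)$ through the action of $\epsilon(f)$ on $T$, the single condition $\eta_2=f(\eta_1)$; writing $\eta:=\eta_1$ puts $\alpha_f$ into the required form $((\eta,f(\eta)),\epsilon(f))$.

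Third, the remaining relation $fg=g^2 f$ is equivalent to the cocycle identity $\alpha_f\cdot f(\alpha_g)=\alpha_{g^2}\cdot g^2(\alpha_f)$. Computing $\alpha_{g^2}=((1,\xi^{-1}),\epsilon(g^2))$ and expanding both sides via the semidirect multiplication yields two scalar equations
\begin{align*}
\eta\, f(\xi)&=g^2(\eta^{-1})\,fg(\eta),\\
f(\eta)\,f(\xi)&=\xi^{-1}\,g^2(\eta^{-1}).
\end{align*}
Dividing these side by side eliminates $f(\xi)$ and produces the closed form $\xi=\eta/(f(\eta)\cdot fg(\eta))$. Substituting this expression back into either equation gives an identity on $\eta$ which, after a short manipulation using the Galois relations $gf=fg^2$ and $fg^2=gf$, is equivalent to $g(\xi)=\xi$, i.e.\ $\xi\in\SplittingHex^g$.

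For the converse, it remains to verify that any $\eta\in\SplittingHex^*$ with $\xi:=\eta/(f(\eta)\,fg(\eta))\in\SplittingHex^g$ produces a valid 1-cocycle: the relations $g^3=\id$ and $f^2=\id$ hold by construction of $\alpha_g$ and $\alpha_f$, and $(gf)^2=\id$ reduces precisely to the compatibility $g(\xi)=\xi$ derived above. The main difficulty is bookkeeping rather than conceptual: one must distinguish carefully between the Galois actions of $g$ and $f$ on scalars, the conjugation actions of $\epsilon(g)$ and $\epsilon(f)$ on the torus $T$, and the multiplication law in $T\rtimes\Dih_6$. Once this setup is in place, as already developed in Proposition~\ref{prop: Z6 twisted actions}, the computations are routine.
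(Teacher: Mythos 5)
Your proposal is correct and follows essentially the same route as the paper: normalise $\alpha_g$ via the $g^3=\id$ relation exactly as in the $\ZZ/6\ZZ$ case, extract $\alpha_f=((\eta,f(\eta)),\epsilon(f))$ from $f^2=\id$, and reduce $fg=g^2f$ to the two scalar equations, which combine (your divide-and-substitute is just a repackaging of the paper's pair of conditions $\Norm_f(g(\eta))=\Norm_{gf}(\eta)$ and $\xi=\eta/(f(\eta)fg(\eta))$) into the single requirement $\xi=\eta/(f(\eta)fg(\eta))\in\SplittingHex^g$. The converse direction is handled the same way in both arguments, since the computations show the relations are necessary and sufficient for the cocycle condition.
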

\begin{proof}
	We start with the same $\xi\in(\SplittingHex^*)^g$, $\alpha_{g}$ that we found in the $\ZZ/6$-case. Let $\alpha_{f} = ((\lambda_1, \lambda_2), \epsilon(f))$. We again use relations to extract the information about coefficients.
	
	Firstly, $f^2=\id$, hence
	\[ 
	((1,1), \id) = \alpha_{f}f(\alpha_{f}) = \left (\left (\frac{\lambda_1}{f(\lambda_2)},\frac{\lambda_2}{f(\lambda_1)}\right ),\id\right ).
	\]
	Put $\eta= \lambda_1$, then $\lambda_2 = f(\eta)$.
	
	\vspace{0.3cm}
		
	Secondly, $fg=g^2f$. Then $\alpha_{f}f(\alpha_g) = \alpha_{g^2} g^2(\alpha_{f})$, which gives 
	\[ 
	\left (\left (\eta f(\xi), f(\eta\xi) \right ), \epsilon(fg)\right ) = \left (\left (g^2 (f(\eta)\eta^{-1}),g^2 (\eta^{-1}) \xi^{-1} \right ), \epsilon(g^2f)\right ).
	\]
	This is equivalent to 
	\begin{equation}\label{eq: S3 coefficients conditions}
		\Norm_{f}(g(\eta))=\Norm_{gf}(\eta),\ \ \xi=\frac{\eta}{f(\eta)fg(\eta)}.
	\end{equation}
	It remains to notice that, adding to these two conditions the requirement $\xi\in\SplittingHex^g$ is equivalent to the condition \eqref{eq: S3 coefficients conditions short}. 
\end{proof} 

\begin{prop}\label{prop: S3 equivalence}
	Let $S$ and $S'$ be $\Sym_3$-del Pezzo surfaces of degree $6$, parametrized by~$\eta$ and~$\eta'$, respectively. Then $S \simeq S'$ if and only if $\eta\sim\eta'$, where the equivalence relation is generated by\footnote{This implies $(\xi,\eta)\sim (\Norm_g(\lambda^{-1})\xi, \Norm_{f}(\lambda)fg(\lambda)\eta),\ (\xi,\eta)\sim (\xi^{-1},\eta^{-1}).$}:
	\[ 
	\eta\sim\Norm_{f}(\lambda)fg(\lambda)\eta,\ \ \ \eta\sim\eta^{-1}.
	\]
\end{prop}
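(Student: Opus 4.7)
The plan is to follow the same strategy as in Proposition \ref{prop: Z6 equivalence}, using the cocycle cohomology relation together with the split structure $\Aut_\FF(M)\simeq T_\FF\rtimes\Dih_6$. Fix isomorphisms $\varphi\colon S_\FF\iso M$ and $\varphi'\colon S'_\FF\iso M$ normalised as in Lemma~\ref{lem: S3 twisted actions}, with associated cocycles $\alpha,\alpha'\colon G\to\Aut_\FF(M)$. Then $S\simeq S'$ is equivalent to the existence of $\beta=((\lambda_1,\lambda_2),\tau)\in T_\FF\rtimes\Dih_6$ satisfying
\[
\alpha_g'=\beta\,\alpha_g\,g(\beta^{-1}),\qquad \alpha_f'=\beta\,\alpha_f\,f(\beta^{-1}).
\]

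The first step is to pin down the $\Dih_6$-part $\tau$ of $\beta$. Since $\Dih_6\subset\Aut_\FF(M)$ is defined over $\kk$, it is fixed by the Galois action, so comparing the $\Dih_6$-components in the two displayed relations yields $\tau\,\epsilon(g)\,\tau^{-1}=\epsilon(g)$ and $\tau\,\epsilon(f)\,\tau^{-1}=\epsilon(f)$. Hence $\tau$ lies in $\Centralizer_{\Dih_6}(\epsilon(g))\cap\Centralizer_{\Dih_6}(\epsilon(f))=\langle\epsilon(g),\epsilon(h)\rangle\cap\langle\epsilon(f),\epsilon(h)\rangle=\langle\epsilon(h)\rangle$, so only two cases remain: $\tau=\id$ and $\tau=\epsilon(h)$.

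In the case $\tau=\id$, the $\alpha_g$-equation reproduces the computation from the $\ZZ/6$-case and forces $\lambda_2=\lambda_1 g(\lambda_1)$ together with $\xi'=\Norm_g(\lambda_1^{-1})\xi$. Substituting this into the $\alpha_f$-equation and using the semidirect product action \eqref{eq: semidirect structure} of $\epsilon(f)$ on the torus, a direct expansion yields $\eta'=\lambda_1\,f(\lambda_1)\,fg(\lambda_1)\,\eta=\Norm_f(\lambda_1)\,fg(\lambda_1)\,\eta$, which is exactly the first equivalence with $\lambda=\lambda_1$. In the case $\tau=\epsilon(h)$, the torus part must again be trivial up to the first equivalence above; a short computation, using that $\epsilon(h)$ acts as $\iota$ (inverting both torus coordinates) and is central in $\Dih_6$, gives $\xi'=\xi^{-1}$ and $\eta'=\eta^{-1}$, which is the second equivalence (compatibility of $\xi$ is automatic via \eqref{eq: S3 coefficients conditions short}).

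Conversely, both equivalences are realised by explicit $\beta$'s: $\beta=((\lambda,\lambda g(\lambda)),\id)$ realises the first one, and $\beta=((1,1),\epsilon(h))$ realises $\eta\mapsto\eta^{-1}$. Combining the necessity and sufficiency, we conclude that $S\simeq S'$ if and only if $\eta\sim\eta'$ under the relation generated by these two moves. The only delicate bookkeeping is to keep track of how $\xi$ and $\eta$ transform simultaneously under the first equivalence and to verify that the constraint \eqref{eq: S3 coefficients conditions short} is preserved; this is the main technical point, but it reduces to a routine application of the identities $\Norm_g\circ\Norm_f=\Norm_f\circ\Norm_g$ on $\FF^{\langle g,f\rangle}=\kk$ inherited from the Galois theory of $\Sym_3$.
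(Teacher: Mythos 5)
Your proposal is correct and follows essentially the same route as the paper: you reduce the $\mathrm{D}_6$-part of $\beta$ to $\langle\epsilon(h)\rangle$ via the centralizer of $\langle\epsilon(g),\epsilon(f)\rangle$, read off the first equivalence from a toric $\beta=((\lambda,\lambda g(\lambda)),\id)$, and the second from $\beta=((1,1),\epsilon(h))$, exactly as in the paper's argument (which in turn leans on the $\ZZ/6$ computation of Proposition \ref{prop: Z6 equivalence}). The only difference is that you make the splitting of a general $\beta$ into these two types slightly more explicit, which the paper leaves implicit.
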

\begin{proof}
	As in the previous proof, we notice that if $\alpha_g$ and $\alpha_g'$ are equivalent via the toric automorphism, it is of the form $\beta=((\lambda,\lambda g(\lambda)),\id)$, where $\xi=\xi'\Norm_g(\lambda)$. Now for $\alpha_f$ we get
	\[
	\alpha_f'=((\eta',f(\eta')),\epsilon(f))=\beta\alpha_f f(\beta^{-1})=\left ( (\Norm_f(\lambda)\eta fg(\lambda), \Norm_f(\lambda) g(\lambda) f(\eta)),\epsilon(f) \right ),
	\]	
	which gives $\eta'=\Norm_{f}(\lambda)fg(\lambda)\eta$. 
	
	Further, equivalence via $\beta=((1,1), \epsilon(h))$ gives us $\eta'=\eta^{-1}$. (Here and below, by abuse of notation, we write $\epsilon(h)$ to denote the central involution of $\Dih_6$, although $G$ does not contain $h$.) Since the centralizer of $\langle \epsilon(g),\epsilon(f)\rangle$ in $\Dih_6$ is $\epsilon(h)$, we are done. 
\end{proof}

We now get a better reparametrization.

\begin{prop}\label{prop: S3 final parametrization}
	In the setting of Lemma \ref{lem: S3 twisted actions}, the twisted action $\varphi$ can be chosen so that
	\[
	\alpha_g = ((\xi^{-1},\xi^{-1}), \epsilon(g)),\ \ \ \alpha_f = ((\xi^{-1},\xi^{-1}),\epsilon(f)),
	\]
	where $\xi\in\kk^*$. Two such parametrizations yield the same surface if and only if $\xi \sim \xi'$ where the equivalence is generated by $\xi \sim \xi^{-1}, \, \xi \sim\Norm_g(\mu)\xi$, with $\mu\in{(\SplittingHex^*)}^{f}$
\end{prop}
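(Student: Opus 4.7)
The plan is two-fold. First, to establish the existence of the simplified form by a specific reduction within the equivalence of Proposition~\ref{prop: S3 equivalence}. Second, to enumerate the equivalences that remain among simplified parametrizations.

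For existence, starting from the parametrization $(\xi,\eta)$ of Lemma~\ref{lem: S3 twisted actions}, I would apply the equivalence $\eta \sim \Norm_f(\lambda)\,fg(\lambda)\,\eta$, $\xi \sim \xi/\Norm_g(\lambda)$ with the explicit choice $\lambda = g^2(\eta)$. Applying $g$ and $g^2$ to the cocycle identity $\xi\,f(\eta)\,fg(\eta) = \eta$, and using $gf = fg^2$ and $gfg = f$, one obtains two Galois shifts $\xi\,fg^2(\eta)\,f(\eta) = g(\eta)$ and $\xi\,fg(\eta)\,fg^2(\eta) = g^2(\eta)$. A direct computation then gives $f(\lambda) = gf(\eta)$ and $fg(\lambda) = f(\eta)$, so that
\[
\eta' = \lambda\,f(\lambda)\,fg(\lambda)\,\eta = g^2(\eta)\cdot gf(\eta)\cdot f(\eta)\cdot \eta = \Norm_g(\eta)/\xi
\]
by the first shift. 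This is manifestly $g$-invariant, and an analogous computation using the second shift together with $f(\xi) = f(\eta)/(\eta\,g(\eta))$ shows it is also $f$-invariant; hence $\eta' \in \kk^*$. The companion transformation yields $\xi' = \xi/\Norm_g(\eta) = (\eta')^{-1} \in \kk^*$, producing the required simplified form.

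For the equivalence relation, given two simplified cocycles $\xi,\xi' \in \kk^*$ yielding isomorphic surfaces, any intertwining element $\beta = ((\lambda_1,\lambda_2),\tau) \in T_{\SplittingHex} \rtimes \Dih_6$ must preserve the $\Dih_6$-parts $\epsilon(g),\epsilon(f)$ of both cocycles, forcing $\tau$ to centralise $\langle\epsilon(g),\epsilon(f)\rangle$ in $\Dih_6$, so $\tau \in \{\id,\epsilon(h)\}$. In the case $\tau = \id$, combining the constraints from $\alpha_g$ and $\alpha_f$ yields $\Norm_g(\lambda_1) = \Norm_f(\lambda_1)\,fg(\lambda_1)$, equivalently $g(\lambda_1)\,g^2(\lambda_1) = f(\lambda_1)\,g^2 f(\lambda_1)$. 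Setting $\nu = g^2(\lambda_1)/f(\lambda_1)$, one derives $\nu\,g^2(\nu) = 1$; applying $g$ and using $g^3 = \id$ gives $\nu\,g(\nu) = 1$ and thus $\nu^2 = 1$. The value $\nu = -1$ in characteristic $\neq 2$ is excluded because it would force $f(\Norm_g(\lambda_1)) = -\Norm_g(\lambda_1)$, contradicting $\Norm_g(\lambda_1)\in\kk^*$. Therefore $\nu = 1$, i.e.\ $f(\lambda_1) = g^2(\lambda_1)$, and writing $\mu = g(\lambda_1)^{-1}$ one verifies $\mu \in (\SplittingHex^*)^f$ and $\xi' = \xi\,\Norm_g(\mu)$. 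The case $\tau = \epsilon(h)$ is treated by an entirely analogous computation and produces $\xi' = \xi^{-1}\Norm_g(\mu)^{-1}$, which modulo the first case is generated by the extra relation $\xi\sim\xi^{-1}$.

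The main obstacle will be the careful bookkeeping of the three cyclic shifts of the cocycle relation together with the dihedral identities $gf = fg^2$ and $gfg = f$; in particular, showing that the dichotomy $\nu = \pm 1$ collapses to $\nu = 1$ in all characteristics, and verifying that the specific choice $\lambda = g^2(\eta)$ simultaneously kills both the $g$- and $f$-parts of $\eta'$, are the key non-trivial algebraic points.
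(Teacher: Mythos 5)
Your proof is correct and follows essentially the same strategy as the paper: normalize $\eta$ into $\kk^*$ via a specific toric equivalence from Proposition~\ref{prop: S3 equivalence} (you take $\lambda = g^2(\eta)$ where the paper takes $\lambda = fg(\eta^{-1})$; both work and lead to $\eta'=\Norm_g(\eta)/\xi=\xi'^{-1}\in\kk^*$), and then classify the intertwining automorphisms $\beta$, whose $\Dih_6$-part must centralize $\langle \epsilon(g),\epsilon(f)\rangle$. The only place you work harder than necessary is the $\nu=\pm 1$ dichotomy: equating the \emph{two} components of $\beta\alpha_f f(\beta^{-1})=((\lambda f(\lambda) fg(\lambda)\xi^{-1},\,\lambda g(\lambda) f(\lambda)\xi^{-1}),\epsilon(f))$ gives $fg(\lambda)=g(\lambda)$ at once, which together with $\lambda g(\lambda)f(\lambda)=\Norm_g(\lambda)$ yields $f(\lambda)=g^2(\lambda)$ with no sign ambiguity and no appeal to the characteristic or to $\Norm_g(\lambda)\in\kk^*$.
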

\begin{proof}
	We use equivalence found in Proposition \ref{prop: S3 equivalence} with coefficient $\lambda= fg(\eta^{-1})$ to repalce $\eta$ by $\Norm_f(fg(\eta^{-1})))fg(fg(\eta^{-1})))\eta = \Norm_f(g(\eta^{-1}))$. Hence we can assume that $\eta\in\SplittingHex^f$. But now the first equality in \eqref{eq: S3 coefficients conditions} implies $\eta\in\SplittingHex^g$. So, $\eta\in\kk^*$. Then the second equality in \eqref{eq: S3 coefficients conditions} implies $\eta=\xi^{-1}$.
	
	The equivalence via the automorphism $\beta=((1,1),\epsilon(h))$ gives that $(\xi,\xi)\sim (\xi^{-1},\xi^{-1})$. If $\beta=((\lambda,\lambda g(\lambda)),\id)$, then
	\[
	\alpha_g'=((\xi'^{-1}, \xi'^{-1}),\epsilon(g)) = \beta \alpha_g g(\beta^{-1})
	\]
	implies $\xi'=\xi\Norm_g(\lambda^{-1})$, as in the proof of Proposition \ref{prop: S3 equivalence}. On the other hand,
	\[
	\alpha_f'=(({\xi'}^{-1},{\xi'}^{-1}),\epsilon(f))=\beta\alpha_f f(\beta^{-1})=((\lambda f(\lambda)fg(\lambda)\xi^{-1},\lambda g(\lambda)f(\lambda)\xi^{-1}),\epsilon(f)).
	\]
	Then $\lambda f(\lambda)fg(\lambda)\xi^{-1}=\lambda g(\lambda)f(\lambda)\xi^{-1}$, so $fg(\lambda)=g(\lambda)$. On the other hand, since $\lambda g(\lambda)f(\lambda)\xi^{-1}$ must be $f,g$-invariant, we get $\lambda=gf(\lambda)$. So, $\lambda g(\lambda)f(\lambda)=\lambda g(\lambda) g^2(\lambda)=\Norm_g(\lambda)$, hence
	\[
	\alpha_f'=(({\xi'}^{-1},{\xi'}^{-1}),\epsilon(f))=\beta\alpha_f f(\beta^{-1})=((\Norm_g(\lambda)\xi^{-1},\Norm_g(\lambda)\xi^{-1}),\epsilon(f)),
	\]
	i.e. we again get $\xi'=\Norm_g(\lambda^{-1})\xi$. Putting $\mu=g(\lambda^{-1})$, we get the result.
\end{proof}

\begin{thm}\label{thm: S3 iso criterion}
	Let $S$ and $S'$ be two sextic $\Sym_3$-del Pezzo surfaces. Then $S\simeq S'$ if and only if $\FF=\FF'$ and their Severi-Brauer data are equivalent.
\end{thm}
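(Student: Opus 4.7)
The plan is to mirror the proof of Theorem~\ref{thm: Z6 iso criterion} in the $\Sym_3$-setting. For the forward direction, if $S\simeq S'$ then the splitting field $\FF$ of the hexagon of $(-1)$-curves is a biregular invariant and hence $\FF=\FF'$, whence $\KK=\KK'$. The induced $\KK$-isomorphism $S_\KK\iso S'_\KK$ must send the triple $\{E_1,E_2,E_3\}$ to either itself or to $\{F_1,F_2,F_3\}$ on $S'_\KK$; contracting in each case yields a $\KK$-isomorphism $X\iso X'$ or $X\iso (X')^{\rm op}$, giving in either case a $\KK$-birational equivalence.

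For the backward direction, I first invoke Proposition~\ref{prop: S3 final parametrization} to write $S$ and $S'$ as the surfaces determined by $\xi,\xi'\in \kk^*$, respectively. An analogue of the computation in Proposition~\ref{prop: Z6 parametrization of SB curves and surfaces}\ref{Z6 SB data surfaces}, applied to the two projections of the model $M$ onto $\PP^2_\FF$, shows that the Severi--Brauer surfaces over $\KK$ arising from $S$ are $X_\xi$ and its opposite $X_{\xi^{-1}}$, and similarly $X_{\xi'}, X_{\xi'^{-1}}$ for $S'$. Since $X$ is $\KK$-birational to $X'$, Proposition~\ref{prop: birationality of SB} yields $X\simeq X'$ or $X\simeq (X')^{\rm op}$ over $\KK$, and Lemma~\ref{lem: SB surface Z3 twisted action} then translates this into $\xi/\xi'\in \Norm_g(\FF^*)$ or $\xi\xi'\in \Norm_g(\FF^*)$ in $\KK^*$. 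Using the equivalence $\xi\sim\xi^{-1}$ of Proposition~\ref{prop: S3 final parametrization}, I reduce to the case $\xi/\xi'=\Norm_g(\lambda)$ for some $\lambda\in \FF^*$.

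The remaining step, and the heart of the argument, is to produce $\mu\in(\FF^*)^f$ with $\Norm_g(\mu)=\Norm_g(\lambda)$; once this is done, Proposition~\ref{prop: S3 final parametrization} yields $\xi\sim\xi'$ and hence $S\simeq S'$. Because $\Norm_g(\lambda)=\xi/\xi'\in\kk^*$, the element $\tau:=f(\lambda)/\lambda$ lies in $\ker(\Norm_g)$ and satisfies $f(\tau)=\tau^{-1}$, so it defines a $1$-cocycle for $\langle f\rangle$ acting on $\ker(\Norm_g)$; producing $\mu$ is then equivalent to trivializing this class, i.e.\ finding $\nu\in\ker(\Norm_g)$ with $\tau=\nu/f(\nu)$ and setting $\mu:=\lambda\nu$, which is automatically $f$-invariant and of correct $g$-norm.

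The main obstacle will be this cohomological vanishing. I plan to first apply Theorem~\ref{thm: Hilbert 90} to the cyclic extension $\FF/\FF^f$ to obtain a preliminary $\nu_0\in\FF^*$ with $\tau=\nu_0/f(\nu_0)$; a direct computation, using $f$-equivariance of $\Norm_g\colon\FF^*\to\KK^*$, shows that $\Norm_g(\nu_0)\in (\KK^*)^f=\kk^*$ automatically. One then needs to correct $\nu_0$ within its coset modulo $(\FF^f)^*$ to achieve $\Norm_g(\nu_0)=1$; the residual obstruction lives in $\kk^*/\Norm_g((\FF^f)^*)$ and should be handled by exploiting the hypothesis that $\xi,\xi'\in\kk^*$ together with the explicit description of $\tau$ to reduce the problem to an application of Hilbert's Theorem~90 for $\FF/\KK$, in the spirit of the final Hilbert~90 argument used in the proof of Theorem~\ref{thm: Z6 iso criterion}.
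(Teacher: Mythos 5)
Your reduction is the same as the paper's: after the (correct) forward direction and the identification of the Severi--Brauer surfaces attached to $S$ with $X_\xi$ and $X_{\xi^{-1}}$, everything comes down to the following problem: given $\mu'\in\FF^*$ with $\Norm_g(\mu')=\xi'/\xi\in\kk^*$, produce $\mu\in(\FF^f)^*$ with $\Norm_g(\mu)=\Norm_g(\mu')$, so that the norm-equivalence of Proposition~\ref{prop: S3 final parametrization} applies. Up to that point your argument matches the paper step for step.

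The last step is where your write-up has a genuine gap. You recast the problem as trivializing the $1$-cocycle $\tau=f(\lambda)/\lambda$ for $\langle f\rangle$ acting on $\ker(\Norm_g)$, and you correctly note that Hilbert~90 for $\FF/\FF^f$ only produces some $\nu_0\in\FF^*$ with $\tau=\nu_0/f(\nu_0)$ and $\Norm_g(\nu_0)\in\kk^*$, leaving a residual class in $\kk^*/\Norm_g((\FF^f)^*)$. That quotient is nontrivial in general (it is governed by the norm map of the non-Galois cubic extension $\FF^f/\kk$), so the vanishing of your particular class is not formal; and the sentence ``should be handled by \dots\ an application of Hilbert's Theorem~90 for $\FF/\KK$'' is a plan rather than an argument --- $\FF/\KK$ is cyclic with group $\langle g\rangle$, and its Hilbert~90 says nothing directly about $f$-cocycles. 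The class does vanish, but the only way to see it is essentially to exhibit the trivializing element, which is what the paper does in one line: set
\[
\mu=\frac{\Norm_g(\mu')}{\mu'\,f(\mu')}.
\]
Since $\Norm_g(\mu')\in\kk^*$ is $f$-fixed, one gets $f(\mu)=\mu$; and since $\Norm_g(f(\mu'))=f(\Norm_g(\mu'))=\Norm_g(\mu')$ while $\Norm_g(\Norm_g(\mu'))=\Norm_g(\mu')^3$, one computes $\Norm_g(\mu)=\Norm_g(\mu')$. In your language this is the explicit choice $\nu=\Norm_g(\lambda)\,/\,(\lambda^2 f(\lambda))\in\ker(\Norm_g)$, which satisfies $\nu/f(\nu)=\tau$. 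With this element inserted your proof closes; without it, the cohomological vanishing on which your argument rests remains unproved.
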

\begin{proof}
	The necessity is clear. Suppose that $\xi$ and $\xi'$ parametrize $S$ and $S'$, respectively. If the corresponding Severi-Brauer surfaces $X$ and $X'$ are $\KK$-birational, then $\xi'=\xi$ or $\xi'=\xi^{-1}$ in $\KK^*/\Norm_g(\SplittingHex^*)$ and  Proposition \ref{prop: S3 final parametrization} implies that we can replace $\xi$ with $\xi^{-1}$, if needed, and assume that $\xi$ and $\xi'$ parametrize isomorphic Severi-Brauer surfaces, thus $\xi' =\Norm_g(\mu')\xi$. Since both $\xi$ and $\xi'$ are fixed by $f$, we see that $\Norm_g(\mu')$ is fixed by $f$. Then 
	\[
	\mu= \frac{\Norm_g(\mu')}{\mu'f(\mu')}
	\] 
	is fixed by $f$ and has the same $\Norm_g$-norm as $\mu'$. This gives that the two sextic del Pezzos are isomorphic.
\end{proof}

\subsection{Sextic $\Dih_6$-del Pezzo surfaces}

By abuse of notation, denoting $\epsilon(g)$, $\epsilon(f)$ and $\epsilon(h)$ by $g$, $f$ and $h$, respectively, we have  
\[
\Dih_6=\langle g,f,h\ |\ g^3=f^2=h^2=\id,\ hf=fh,\ gh=hg,\ fgf=g^2 \rangle.
\]

\begin{lem}
	Let $S$ be a sextic $\Dih_6$-del Pezzo surface. Then the twisted action \eqref{eq: twisted action} can be chosen so that 
	\[
	\alpha_g=((\xi^{-1}, \xi^{-1}), \epsilon(g)),\  \ \alpha_{f} = ((\eta, f(\eta) ), \epsilon(f)),\ \  \alpha_{h} = ((\rho, \rho g(\rho)), \epsilon(h))
	\]
	with conditions
	\begin{equation}\label{eq: D6 coefficients conditions short}
		\xi=\frac{\eta}{f(\eta)fg(\eta)},\ \ \Norm_{h}(\xi) \Norm_g(\rho) = 1,\ \ g(\rho)\Norm_f(\rho)=\Norm_h(f(\eta)),\ \  \xi\in(\SplittingHex^*)^g,\ \ \rho\in(\SplittingHex^*)^h.
	\end{equation}
	Moreover every such pair $(\rho,\eta)$ yields a del Pezzo surface with said properties.
\end{lem}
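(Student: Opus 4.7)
The plan is to combine the parametrizations derived in Proposition~3.3 and Lemma~3.8, using that $\Dih_6$ contains both $\langle g, h\rangle \simeq \ZZ/6\ZZ$ and $\langle g, f\rangle \simeq \Sym_3$ as subgroups. A 1-cocycle $\alpha\colon \Dih_6 \to \Aut_\SplittingHex(M)$ restricts to 1-cocycles on each of these subgroups, so the general forms of $\alpha_g$, $\alpha_f$, $\alpha_h$ and the first two scalar conditions come essentially for free.

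First I would apply Lemma~3.8 to the restriction of $\alpha$ to $\langle g, f\rangle$. After a suitable adjustment of the isomorphism $\varphi\colon S_\SplittingHex \iso M$, this produces $\alpha_g = ((\xi^{-1}, \xi^{-1}), \epsilon(g))$ and $\alpha_f = ((\eta, f(\eta)), \epsilon(f))$ with $\xi \in (\SplittingHex^*)^g$ and $\xi = \eta/(f(\eta)\,fg(\eta))$. Writing $\alpha_h = ((\rho_1, \rho_2), \epsilon(h))$, the relations $h^2 = \id$ and $gh = hg$ yield, exactly as in the proof of Proposition~3.3, that $\rho_1, \rho_2 \in (\SplittingHex^*)^h$, that $\rho_2 = \rho_1 g(\rho_1)$, and that $\Norm_h(\xi) \Norm_g(\rho_1) = 1$. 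Setting $\rho = \rho_1$ gives the stated form of $\alpha_h$.

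The only relation of $\Dih_6$ not yet exploited is $hf = fh$, which corresponds to the cocycle identity $\alpha_h h(\alpha_f) = \alpha_f f(\alpha_h)$. The key computational step is to expand both sides in $T_{\overline{\kk}} \rtimes \Dih_6$ using the semidirect-product formulas in (3.5)--(3.7); both $\epsilon(f)$ and $\epsilon(h)$ act on the torus part in a simple way, so the two sides produce elements of $T_{\overline{\kk}}$ whose two coordinates I would compare directly. After simplification, the two resulting scalar equations collapse (via $hf = fh$) to a single identity, which I expect to rewrite as $\eta\, h(\eta) = \rho\, f(\rho)\, fg(\rho)$; applying $f$ to both sides gives the equivalent form $g(\rho)\Norm_f(\rho) = \Norm_h(f(\eta))$ stated in the lemma. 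This is the main (and only) real obstacle, since the algebra here is slightly heavier than in the cyclic or $\Sym_3$ cases, but it is a routine unwinding of the semidirect product action.

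Finally, for the converse, given any pair $(\rho, \eta)$ satisfying all four conditions, I would simply verify that the formulas for $\alpha_g, \alpha_f, \alpha_h$ define a group homomorphism $\Dih_6 \to \Aut_\SplittingHex(M) \rtimes \Gal(\SplittingHex/\kk)$ by checking all six defining relations: $g^3 = f^2 = (fg)^2 = \id$ are handled by the $\Sym_3$-analysis using $\xi = \eta/(f(\eta)fg(\eta))$; $h^2 = \id$ and $gh = hg$ are handled by the $\ZZ/6$-analysis using $\Norm_h(\xi)\Norm_g(\rho) = 1$; and $hf = fh$ is handled by the new condition $g(\rho)\Norm_f(\rho) = \Norm_h(f(\eta))$. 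Hence every such pair does yield a sextic $\Dih_6$-del Pezzo surface with the required splitting behavior.
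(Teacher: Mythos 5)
Your proposal is correct and follows essentially the same route as the paper, which likewise obtains the forms of $\alpha_g,\alpha_f,\alpha_h$ and the first conditions by combining the $\ZZ/6\ZZ$ and $\Sym_3$ analyses and then extracts the remaining condition $g(\rho)\Norm_f(\rho)=\Norm_h(f(\eta))$ from the single unused relation $hf=fh$ via the cocycle identity $\alpha_h h(\alpha_f)=\alpha_f f(\alpha_h)$. Your anticipated intermediate identity $\eta\,h(\eta)=\rho\,f(\rho)\,fg(\rho)$ and the observation that applying $f$ turns it into the stated form both check out against the semidirect-product formulas.
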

\begin{proof}
	By combining the proofs of Proposition \ref{prop: Z6 twisted actions} and Lemma \ref{lem: S3 twisted actions}, we get the required parametrizations by $\xi$, $\eta$ and $\rho$, satisfying the condition \eqref{eq: Z6 coefficient conditions} and \eqref{eq: S3 coefficients conditions}. The condition $\alpha_{hf}=\alpha_{fh}$ is then equivalent to
	\begin{equation}\label{eq: D6 key relation}
	g(\rho)\Norm_f(\rho)=\Norm_h(f(\eta)).
	\end{equation}
\end{proof}

\begin{lem}\label{lem: D6 pre-equivalence}
	Let $S$ and $S'$ be sextic $\Dih_6$-del Pezzo surfaces, parametrized by~$(\rho,\eta)$ and~$(\rho',\eta')$, respectively. Then $S \simeq S'$ if and only if $(\rho,\eta)\sim(\rho',\eta')$, where the equivalence relation is generated by
	\[ 
	(\rho,\eta)\sim (\rho^{-1},\eta^{-1}),\ \  (\rho,\eta)\sim (\Norm_h(\lambda)\rho,\Norm_f(\lambda)fg(\lambda)\eta),\ \ \lambda\in\SplittingHex^*.
	\]
\end{lem}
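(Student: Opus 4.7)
The proof strategy parallels Propositions \ref{prop: Z6 equivalence} and \ref{prop: S3 equivalence}. Two parametrizations yield isomorphic surfaces if and only if the associated 1-cocycles $\alpha$ and $\alpha'$ are cohomologous, i.e.\ there exists
\[
\beta=((\lambda_1,\lambda_2),\tau)\in(\SplittingHex^*)^2\rtimes\Dih_6
\]
such that $\alpha_r'=\beta\alpha_r r(\beta^{-1})$ for every $r\in\{g,f,h\}$. The plan is to enumerate the possible $\beta$'s and read off the induced relation on $(\rho,\eta)$.

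The first step is to restrict the $\Dih_6$-part $\tau$. Comparing the $\Dih_6$-components of $\alpha_r'=\beta\alpha_r r(\beta^{-1})$, and using that the Galois action \eqref{eq: Galois action on automorphisms} acts only on the toric coefficients (elements of $\Dih_6\subset\Aut(M)$ are defined over $\kk$), one obtains $\tau\epsilon(r)\tau^{-1}=\epsilon(r)$ for all generators $r$. Since $\epsilon(G)=\Dih_6$, this forces $\tau\in\Center(\Dih_6)=\langle\epsilon(h)\rangle$. Hence every cohomology can be factored through the two elementary cases $\beta_0=((\lambda_1,\lambda_2),\id)$ and $\beta_1=((1,1),\epsilon(h))$, and it suffices to analyze each.

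For $\beta_1$, a direct calculation via the semidirect product structure \eqref{eq: semidirect structure} (using that $\epsilon(h)$ is central in $\Dih_6$ and inverts the torus) shows $\alpha_g'=\alpha_g$, $\alpha_h'=((\rho^{-1},\rho^{-1}g(\rho^{-1})),\epsilon(h))$, and $\alpha_f'=((\eta^{-1},f(\eta^{-1})),\epsilon(f))$, giving exactly $(\rho,\eta)\sim(\rho^{-1},\eta^{-1})$. For $\beta_0$, the $\alpha_g$-equation, handled as in Proposition \ref{prop: Z6 twisted actions}, forces $\lambda_2=\lambda_1 g(\lambda_1)$, and setting $\lambda=\lambda_1$ yields $\xi'=\Norm_g(\lambda)^{-1}\xi$. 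Substituting this into the $\alpha_h$-equation gives $\rho'=\Norm_h(\lambda)\rho$, as in Proposition \ref{prop: Z6 equivalence}, while the $\alpha_f$-equation, treated exactly as in Proposition \ref{prop: S3 equivalence}, gives $\eta'=\Norm_f(\lambda)fg(\lambda)\eta$. This is the second generating equivalence in the statement.

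Finally, I would note that the analogue of the equivalence $(\rho,\xi)\sim(g(\rho),\xi)$ from the $\ZZ/6$-case (coming from $\beta=((1,1),\epsilon(g))$) does not produce a new relation here, because $\epsilon(g)\notin Z(\Dih_6)$ and so no such $\beta$ centralizes $\epsilon(f)$. The main obstacle of the proof is largely notational: combining the normalization computations from the $\ZZ/6$ and $\Sym_3$ cases in a single calculation, and keeping track of the nontrivial conjugation $fgf=g^{-1}$ when simplifying the $\alpha_f$-equation. The compatibility of the new $(\rho',\eta')$ with the constraints \eqref{eq: D6 coefficients conditions short} is automatic, since by hypothesis $(\rho',\eta')$ already parametrizes a sextic $\Dih_6$-del Pezzo surface.
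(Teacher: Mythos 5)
Your proof is correct and follows essentially the same route as the paper: restrict the $\Dih_6$-part of the coboundary to the centre $\langle\epsilon(h)\rangle$ via the centralizer condition, and then import the toric and $\epsilon(h)$-computations from the $\ZZ/6\ZZ$ and $\Sym_3$ cases. One small slip: conjugating $\alpha_g$ by $((1,1),\epsilon(h))$ gives $((\xi,\xi),\epsilon(g))$, i.e.\ $\xi'=\xi^{-1}$ rather than $\alpha_g'=\alpha_g$ — which is in fact exactly what consistency with $\eta'=\eta^{-1}$ requires, so the conclusion is unaffected.
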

\begin{proof}
	Indeed, if two 1-cocylces $\alpha$ and $\alpha'$ are cohomologous via $\beta=((\lambda_1,\lambda_2),\tau)$, then
	\[
	\tau\in\Centralizer_{\Dih_6}(\epsilon(g))\cap \Centralizer_{\Dih_6}(\epsilon(h))\cap \Centralizer_{\Dih_6}(\epsilon(f))=\langle\epsilon(h)\rangle.
	\]
	Now the result follows from the proofs of Proposition \ref{prop: Z6 equivalence} and \ref{prop: S3 equivalence}.
\end{proof}

As in the previous case, we are now able to get rid of some parameters:

\begin{prop}\label{prop: Second D6 param}
	Let $S$ be a sextic $\Dih_6$-del Pezzo surface. Then the twisted action \eqref{eq: twisted action} can be chosen so that 
	\[
	\alpha_g=((\xi^{-1}, \xi^{-1}), \epsilon(g)),\  \ \alpha_{f} = ((\xi^{-1}, \xi^{-1} ), \epsilon(f)),\ \  \alpha_{h} = ((\rho, \rho g(\rho)), \epsilon(h))
	\]
	with conditions
	\begin{equation}\label{eq: D6 conditions on coefficients}
	\Norm_{h}(\xi) \Norm_g(\rho) = 1,\ \ g(\rho)\Norm_f(\rho)\Norm_h(\xi)=1,\ \  \xi\in(\SplittingHex^*)^{\langle g,f\rangle},\ \ \rho\in(\SplittingHex^*)^h.
	\end{equation}
	Moreover every such pair $(\rho,\xi)$ yields a del Pezzo surface with said properties.
	
	Two such parametrizations $(\rho,\xi)$ and $(\rho',\xi')$ yield the same surface if and only if $(\rho,\xi)\sim(\rho',\xi')$ where the equivalence is generated by 
	\[
	(\rho,\xi)\sim (\rho^{-1},\xi^{-1}),\ \ (\rho,\xi)\sim (\Norm_h(\mu)\rho,\Norm_g(\mu^{-1})\xi),\ \ \mu\in(\SplittingHex^*)^{gf}.
	\]
\end{prop}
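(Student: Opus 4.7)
My plan is to mirror the argument of Proposition \ref{prop: S3 final parametrization}, exploiting the fact that $\langle g, f\rangle \subset \Dih_6$ is a copy of $\Sym_3$ under which the pair $(\xi, \eta)$ from the preceding lemma transforms exactly as in the $\Sym_3$-case, while the additional generator $h$ only acts on $\rho$.

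First I would apply the equivalence of Lemma \ref{lem: D6 pre-equivalence} with coefficient $\lambda = fg(\eta^{-1})$. Using $(fg)^2 = \id$ in $\Sym_3$, a short computation gives $\Norm_f(\lambda) fg(\lambda) \eta = \Norm_f(g(\eta^{-1}))$, so after this move I may assume $\eta \in (\SplittingHex^*)^f$; simultaneously $\rho$ is replaced by $\Norm_h(\lambda)\rho$, still in $(\SplittingHex^*)^h$. Plugging $f(\eta) = \eta$ into $\xi = \eta/(f(\eta) fg(\eta))$ gives $\xi = fg(\eta)^{-1}$, and combining with $g(\xi) = \xi$ yields $gfg(\eta) = fg(\eta)$; using the relation $gfg = f$ in $\Dih_6$, this becomes $g(\eta) = \eta$, so $\eta \in (\SplittingHex^*)^{\langle g, f\rangle}$ and $\eta = \xi^{-1}$. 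Substituting $f(\eta) = \xi^{-1}$ in the third condition of \eqref{eq: D6 coefficients conditions short} turns it into $g(\rho)\Norm_f(\rho)\Norm_h(\xi) = 1$, which together with $\Norm_h(\xi)\Norm_g(\rho) = 1$ gives exactly \eqref{eq: D6 conditions on coefficients}. Conversely, any pair $(\rho, \xi)$ satisfying these conditions defines a cocycle of the claimed shape by direct verification.

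For the equivalence claim, I would write a general $\beta = ((\lambda_1, \lambda_2), \tau) \in \Aut_{\SplittingHex}(M)$ and ask which ones give a cohomology between two cocycles in the reduced form. As in the earlier cases, $\tau$ must centralize $\epsilon(g), \epsilon(f)$ and $\epsilon(h)$ in $\Dih_6$, forcing $\tau \in \langle \epsilon(h)\rangle$. The case $\tau = \epsilon(h)$ with $\lambda_1 = \lambda_2 = 1$ yields $(\rho, \xi) \sim (\rho^{-1}, \xi^{-1})$ by the same computation as in Propositions \ref{prop: Z6 equivalence} and \ref{prop: S3 final parametrization}. For $\tau = \id$, the $g$- and $h$-equations force $\lambda_2 = \lambda_1 g(\lambda_1)$, $\xi' = \Norm_g(\lambda_1^{-1}) \xi$ and $\rho' = \Norm_h(\lambda_1)\rho$, as before. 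The novelty is the $f$-equation, which demands that both toric coordinates of $\beta \alpha_f f(\beta^{-1})$ coincide and equal $\xi'^{-1}$; a direct calculation shows this to be equivalent to $fg(\lambda_1) = g(\lambda_1)$, which via $gfg = f$ is in turn equivalent to $\lambda_1 \in (\SplittingHex^*)^{gf}$. Setting $\mu = \lambda_1$ produces the second generator of the equivalence relation in the statement.

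I anticipate the main technical step to be the $f$-equation. One has to pin down the precise condition on $\lambda_1$ that makes $\beta \alpha_f f(\beta^{-1})$ reduced, verify compatibility with the $g$-equation (i.e.\ that $\lambda_1 \in (\SplittingHex^*)^{gf}$ also implies $f(\lambda_1) = g^2(\lambda_1)$, which is needed to match $\xi'^{-1}$ with $\Norm_g(\lambda_1)\xi^{-1}$), and check sufficiency in the sense that $\Norm_g(\lambda_1^{-1})\xi$ remains in $(\SplittingHex^*)^{\langle g, f\rangle}$. The latter follows from $f(\Norm_g(\lambda_1^{-1})) = \Norm_g(g^2(\lambda_1^{-1})) = \Norm_g(\lambda_1^{-1})$ using $f(\lambda_1) = g^2(\lambda_1)$; the rest is a routine adaptation of the $\ZZ/6$- and $\Sym_3$-case computations.
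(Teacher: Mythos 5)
Your proposal is correct and follows essentially the same route as the paper: normalize $\eta$ into $\SplittingHex^f$ via the equivalence of Lemma \ref{lem: D6 pre-equivalence} with $\lambda=fg(\eta^{-1})$, deduce $\eta\in(\SplittingHex^*)^{\langle g,f\rangle}$ and $\eta=\xi^{-1}$ from \eqref{eq: D6 coefficients conditions short}, and then derive the two generating equivalences from $\beta=((1,1),\epsilon(h))$ and from toric $\beta$ with the $f$-equation forcing $\lambda\in(\SplittingHex^*)^{gf}$. The only difference is that you spell out a few verifications (e.g.\ $f$-invariance of $\Norm_g(\lambda^{-1})\xi$) that the paper delegates to the proofs of Propositions \ref{prop: Z6 equivalence} and \ref{prop: S3 final parametrization}.
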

\begin{proof}
	We again use the second equivalence found in Proposition \ref{lem: D6 pre-equivalence} with coefficient $\lambda= fg(\eta^{-1})$ to repalce $\eta$ by $\Norm_f(fg(\eta^{-1})))fg(fg(\eta^{-1})))\eta = \Norm_f(g(\eta^{-1}))$ to assume that $\eta\in\SplittingHex^f$. The first and the fourth equalities in \eqref{eq: D6 coefficients conditions short} imply 
	\[
	\eta=\frac{g(\eta)fg(\eta)}{gf(\eta)}=\frac{g(\eta)g^2(\eta)}{g(\eta)}=g^2(\eta),
	\]
	so $\eta\in\SplittingHex^g$. Now the first equality in \eqref{eq: D6 coefficients conditions short} gives $\eta=\xi^{-1}$. 
	
	The equivalence via the automorphism $\beta=((1,1),\epsilon(h))$ translates to $(\rho,\xi)\sim (\rho^{-1},\xi^{-1})$, as in the proof of Proposition \ref{prop: Z6 equivalence}. The equivalence via a toric automorphism, necessarily of the form $\beta=((\lambda,\lambda g(\lambda)),\id)$, translates to $(\rho,\xi)\sim (\Norm_h(\mu)\rho,\Norm_g(\mu^{-1})\xi)$, as in the proof of Propositions \ref{prop: Z6 equivalence} and \ref{prop: S3 final parametrization}, where $\mu$ is fixed by $gf$.
\end{proof}

\begin{prop}\label{prop: D6 parametrization of SB curves and surfaces}
	Let $S$ be a sextic $\Dih_6$-del Pezzo surface, parametrized as in Proposition \ref{prop: Second D6 param}. Then one has the following:
	\begin{enumerate}[leftmargin=*, labelindent=20pt, itemsep=5pt]
		\item\label{D6 SB data surfaces} The Severi-Brauer surfaces over~$\KK=\SplittingHex^{\langle g,s\rangle}$ given by $S$ are parametrized by $\xi$ and $\xi^{-1}$.
		\item\label{D6 SB data curves} The involution surfaces over $\LL=\SplittingHex^h$ given by $S$ are $C_{\rho}\times C_{g(\rho)}$, $C_{\rho}\times C_{g^2(\rho)}$ and $C_{g(\rho)}\times C_{g^2(\rho)}$.
		\item\label{D6 SB data curve iso} Moreover, the conics $C_{\rho}$, $C_{g(\rho)}$ and $C_{g^2(\rho)}$ are either all isomorphic to each other, or pairwise non-isomorphic and each of them is the Brauer product of the other two.
	\end{enumerate}
\end{prop}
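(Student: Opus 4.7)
The plan is to adapt the proof of Proposition \ref{prop: Z6 parametrization of SB curves and surfaces} to the $\Dih_6$-setting. Parts \ref{D6 SB data curves} and \ref{D6 SB data curve iso} transfer essentially \emph{verbatim}: the cocycle value $\alpha_h$ in Proposition \ref{prop: Second D6 param} has the same shape $((\rho, \rho g(\rho)), \epsilon(h))$ as in the $\ZZ/6\ZZ$-case, so the three rational maps $\pi_{01}$, $\pi_{12}$, $\pi_{02}$ from \eqref{eq: contraction of x0=x1}--\eqref{eq: contraction of x0=x2} descend to conic bundles whose induced twisted actions on $\PP_\SplittingHex^1\times\PP_\SplittingHex^1$ are given by precisely the same $(\rho,\rho g(\rho))$-formulas. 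Lemma \ref{lem: SB curve twisted action} then identifies the three resulting $\LL$-surfaces as $C_{g^2(\rho)}\times C_{g(\rho)}$, $C_\rho \times C_{g^2(\rho)}$ and $C_\rho \times C_{g(\rho)}$. The only formal difference is that $\LL = \SplittingHex^h$ is now a degree-$6$ extension of $\kk$ rather than a degree-$3$ extension, but since the computation involves only $\alpha_h$ this is irrelevant.

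For part \ref{D6 SB data curve iso} I would argue as in Proposition \ref{prop: Z6 parametrization of SB curves and surfaces}\ref{Z6 SB data curve iso}: because $g$ and $h$ commute, being an $h$-norm is a $g$-invariant condition on $\SplittingHex^*$, so one of $\{\rho, g(\rho), g^2(\rho)\}$ is an $h$-norm if and only if all three are, and an $\LL$-isomorphism between any two of the conics yields, by applying $g$ cyclically, $\LL$-isomorphisms between all three. When the three conics are pairwise non-isomorphic, Theorem \ref{thm: birational classification of dP8}(2) applied over $\LL$ forces each of them to be the Brauer product of the other two.

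Part \ref{D6 SB data surfaces} requires more care because the Severi--Brauer surface descends from the cubic intermediate field $\SplittingHex^g$ down to $\KK = \SplittingHex^{\langle g, s\rangle}$, a quadratic extension of $\kk$, with $\Gal(\SplittingHex/\KK) = \langle g, s\rangle \simeq \Sym_3$ (the relation $sgs = g^{-1}$ being a direct consequence of $gh = hg$, $fh = hf$, $fgf = g^2$). Projecting $M$ to its first $\PP^2_\SplittingHex$-factor, $\alpha_g$ yields, after the same toric normalisation as in the $\ZZ/6\ZZ$-case, the standard cyclic matrix $R_\xi \in \PGL_3(\SplittingHex)$. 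I would then compute the image of $\alpha_s = \alpha_f \cdot f(\alpha_h)$ in $\PGL_3(\SplittingHex)$ using Proposition \ref{prop: Second D6 param} together with the semidirect-product identities \eqref{eq: semidirect structure}, and bring it by a diagonal conjugation to the anti-diagonal form
\[
\smallmat{1 & 0 & 0 \\ 0 & 0 & \tau \\ 0 & s(\tau^{-1}) & 0}
\]
prescribed by Lemma \ref{lem: SB surface S3 twisted action}(\ref{Each SB S3}), for a suitable $\tau \in \SplittingHex^*$ expressible in $\rho$, $\xi$. The key identity $g(\rho)\Norm_f(\rho)\Norm_h(\xi) = 1$ from \eqref{eq: D6 conditions on coefficients} is exactly what is needed to verify $\tau s(\tau)sg(\tau) = \xi$, so Lemma \ref{lem: SB surface S3 twisted action}(\ref{SB S3 isomorphism}) identifies the $\KK$-surface obtained via the first projection with $X_\xi$. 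The second projection is handled symmetrically: swapping the $x$- and $y$-coordinates replaces $\xi$ by $\xi^{-1}$, and by Lemma \ref{lem: SB surface Z3 twisted action} the resulting surface is $X_{\xi^{-1}} \simeq X_\xi^{\mathrm{op}}$.

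The main technical obstacle is the bookkeeping in this last computation: one has to track how the diagonal conjugation needed to normalise the $s$-matrix interacts with the norm identities in \eqref{eq: D6 conditions on coefficients}, and check that the resulting $\tau$ satisfies $\tau s(\tau)sg(\tau) = \xi$ exactly (not merely up to an element of $\Norm_g(\SplittingHex^*)$, which would be insufficient to pin down $\xi$ itself). Once this identification is made, the rest of the statement follows from the parametrization results of Section \ref{subsec: Severi-Brauer varieties} with no further input.
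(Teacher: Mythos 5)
Your proposal is correct and follows essentially the same route as the paper: parts (2) and (3) are transferred verbatim from the $\ZZ/6\ZZ$-case since only $\alpha_h$ enters, and for part (1) the paper likewise computes $\alpha_s=\alpha_h h(\alpha_f)=((\rho h(\xi),\rho g(\rho)h(\xi)),\epsilon(hf))$, checks via $g(\rho)\Norm_f(\rho)\Norm_h(\xi)=1$ that the projected matrix already has the anti-diagonal shape of Lemma \ref{lem: SB surface S3 twisted action} with $\tau=\rho h(\xi)$, and verifies $\tau s(\tau)sg(\tau)=\xi$. The only cosmetic difference is that no diagonal conjugation is actually needed to normalise the $s$-matrix — the identity from \eqref{eq: D6 conditions on coefficients} shows it is already in the required form.
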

\begin{proof}
	We only prove the statement \ref{D6 SB data surfaces} about surfaces, because for involution surfaces and curves the calculation is the same as in Proposition \ref{prop: Z6 parametrization of SB curves and surfaces}. We use the the parametrization found in Proposition \ref{prop: Second D6 param}. Note that the contraction of the 3 curves $E_1$, $E_2$ and $E_3$ is now defined over a field $\SplittingHex^{\langle g,s\rangle}$. We have
	\[
	\alpha_s=\alpha_h h(\alpha_f)=((\rho h(\xi),\rho g(\rho) h(\xi)),\epsilon(hf)),
	\] 
	hence the induced twisted action of $\alpha_g$ and $\alpha_s$ after contraction is given by the matrices
	\[
	\begin{pmatrix}
		0 & 0 & \xi\\
		1 & 0 & 0\\
		0 & 1 & 0
	\end{pmatrix} \ \ \ \text{and}\ \ \ 
	\begin{pmatrix}
		1 & 0 & 0\\
		0 & 0 & \rho h(\xi)\\
		0 & \rho g(\rho) h(\xi) & 0
	\end{pmatrix}.
	\]
	Notice that
	\[
	\rho g(\rho) h(\xi)= s(\rho^{-1}h(\xi^{-1})),
	\]
	because this is equivalent (in view of $\rho\in\SplittingHex^h$ and $\xi\in\SplittingHex^{\langle g,f \rangle}$) to $\Norm_f(\rho)\Norm_h(\xi)g(\rho)=1$, the equality insured by \ref{eq: D6 conditions on coefficients}. On the other hand, computing $\tau s(\tau)sg(\tau)$ with $\tau=\rho h(\xi)$ gives $\xi$. We conclude by using Lemma \ref{lem: SB surface S3 twisted action}.
\end{proof}

\begin{thm}\label{thm: D6 iso criterion}
	Let $S$ and $S'$ be two sextic $\Dih_6$-del Pezzo surfaces. Then $S\simeq S'$ if and only if $\FF=\FF'$ and their Severi-Brauer data are equivalent.
\end{thm}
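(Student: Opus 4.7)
The plan is to follow the same strategy as Theorems \ref{thm: Z6 iso criterion} and \ref{thm: S3 iso criterion}, adapted to the $\Dih_6$ setting. Necessity is immediate: an isomorphism $S\iso S'$ forces $\FF=\FF'$ (hence also $\KK=\KK'$ and $\LL=\LL'$), and the base-changed isomorphisms $S_\KK\iso S'_\KK$ and $S_\LL\iso S'_\LL$ yield birational equivalences between the corresponding Severi--Brauer surfaces and involution surfaces in the Severi--Brauer data.

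For sufficiency, I would parametrize $S$ and $S'$ by pairs $(\rho,\xi)$ and $(\rho',\xi')$ as in Proposition \ref{prop: Second D6 param}, and prove $(\rho,\xi)\sim(\rho',\xi')$. First, the $\KK$-birationality of $X,X'$, combined with Proposition \ref{prop: D6 parametrization of SB curves and surfaces}\ref{D6 SB data surfaces}, Lemma \ref{lem: SB surface S3 twisted action} and Proposition \ref{prop: birationality of SB}, gives $\xi'\equiv\xi^{\pm 1}\pmod{\Norm_g(\FF^*)}$; after possibly applying the involution equivalence $(\rho,\xi)\sim(\rho^{-1},\xi^{-1})$, this reduces to $\xi'=\Norm_g(\lambda)\,\xi$ for some $\lambda\in\FF^*$. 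Likewise, the $\LL$-birationality of $Y,Y'$, via Proposition \ref{prop: D6 parametrization of SB curves and surfaces}\ref{D6 SB data curves}--\ref{D6 SB data curve iso} together with Theorem \ref{thm: birational classification of dP8} applied over $\LL$, gives $\rho'=g^{t}(\rho)\,\Norm_h(\nu)$ for some $t\in\{0,1,2\}$ and $\nu\in\FF^*$.

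The key difficulty, absent in the $\ZZ/6$-case, is that the only scaling equivalence available is $(\rho,\xi)\sim(\Norm_h(\mu)\rho,\Norm_g(\mu^{-1})\xi)$ with $\mu\in(\FF^*)^{gf}$; in particular, no independent $g$-twist equivalence on $\rho$ is available. To absorb the $g^{t}$-factor, the plan is to exploit the cocycle relations
\[
\Norm_h(\xi)\Norm_g(\rho)=1,\qquad g(\rho)\Norm_f(\rho)\Norm_h(\xi)=1
\]
of \eqref{eq: D6 conditions on coefficients}: solving the second one for $g(\rho)$ and using the $h$-fixity of $\rho$ and $f(\rho)$ rewrites $g(\rho)/\rho$ modulo $\Norm_h(\FF^*)$ purely in terms of $f(\rho)$, which, by combining with the analogous relation for $(\rho',\xi')$ and iterating $t$ times, brings $\rho'$ into the form $\Norm_h(\nu')\rho$ for a revised $\nu'\in\FF^*$; this reduces the problem to the case $t=0$.

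The final step produces a single $\mu\in(\FF^*)^{gf}$ realizing both scalings simultaneously. Imitating the averaging trick used in the proof of Proposition \ref{prop: S3 final parametrization}, I would set
\[
\mu_0\;=\;\frac{\Norm_g(\lambda^{-1})}{\lambda^{-1}\cdot gf(\lambda^{-1})},
\]
and verify directly that $\mu_0\in(\FF^*)^{gf}$ and $\Norm_g(\mu_0)=\Norm_g(\lambda^{-1})$ (using that $\Norm_g(\lambda^{-1})$ is $f$-fixed, which follows from the $\langle g,f\rangle$-invariance of $\xi$ and $\xi'$); this settles the $\xi$-coordinate. The remaining $\Norm_h$-discrepancy between $\Norm_h(\mu_0)\rho$ and $\rho'$ must then be realised by multiplying $\mu_0$ by a suitable element of $(\FF^*)^{gf}\cap\ker\Norm_g$ with prescribed $\Norm_h$-value; existence of such an element reduces, by Hilbert's Theorem 90 (Theorem \ref{thm: Hilbert 90}) applied to the commuting cyclic subgroups $\langle g\rangle$ and $\langle h\rangle$, to the composed-norm identity $\Norm_h(\Norm_g(\lambda))=\Norm_g(\Norm_h(\nu'))$, itself a direct consequence of the first cocycle relation above applied to both $(\rho,\xi)$ and $(\rho',\xi')$. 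Reconciling the $g$- and $h$-norm constraints on a single $gf$-fixed element is the main technical obstacle I anticipate when filling in the details.
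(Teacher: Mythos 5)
Your necessity argument and the first two reductions of the sufficiency direction (bringing $\xi'$ to $\Norm_g(\lambda)\xi$ after possibly inverting, showing $\rho'\equiv g^t(\rho)$ modulo $\Norm_h$, and the averaging trick $\theta'=\Norm_g(\theta)/(\theta\, gf(\theta))$ to make the $\xi$-scaling $gf$-fixed) match the paper's proof. Two issues remain, one minor and one fatal. The minor one: your mechanism for absorbing the $g^t$-factor (``rewriting $g(\rho)/\rho$ modulo $\Norm_h(\FF^*)$ purely in terms of $f(\rho)$'') is not substantiated --- the relations only give $f(\rho)=g^2(\rho)$, which does not by itself make $g(\rho)/\rho$ an $h$-norm. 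The paper instead applies $sg^2$ to the \emph{equation} $\rho'=g^t(\rho)\Norm_h(\lambda)$ and uses the $sg^2$-invariance of both $\rho$ and $\rho'$ to conclude $g^t(\rho)\equiv\rho \pmod{\Norm_h(\FF^*)}$; this uses the link between $\rho$ and $\rho'$, not a property of $\rho$ alone.

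The fatal gap is exactly the step you flag as an ``anticipated obstacle'': producing $\zeta\in(\FF^*)^{gf}$ with $\Norm_g(\zeta)=1$ and $\Norm_h(\zeta)=\Norm_h(\delta)$. Your proposed reduction via Hilbert's Theorem 90 applied to $\langle g\rangle$ and $\langle h\rangle$ cannot work as stated: the subgroup $\langle gf\rangle$ is not normal in $\Dih_6$ (indeed $g\langle gf\rangle g^{-1}=\langle f\rangle$), so $\FF^{gf}$ is not stable under $g$, the three constraints on $\zeta$ do not decouple into cyclic-norm problems, and a ``composed-norm identity'' is only a necessary condition, not a sufficient one. This is the genuinely hard point of the theorem, and the paper resolves it by a completely different, non-formal argument: it observes that the existence of such a $\zeta$ is \emph{equivalent} to the existence of a $\kk$-rational point on an auxiliary sextic $\Dih_6$-del Pezzo surface $S_0$ parametrized by $\rho_0=\Norm_h(\delta)$, $\xi_0=1$, and then proves $S_0(\kk)\ne\varnothing$ geometrically by exhibiting a point of degree $2$ and a point of degree $3$ on $S_0$ and intersecting two suitable anticanonical-type curves to force a sixth, Galois-invariant, intersection point. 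Without some substitute for this rationality argument your proof does not close.
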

\begin{proof}
The necessity is obvious, so we prove the sufficiency. Suppose that $S$ and $S'$ are parametrized by the pairs $(\rho,\xi)$ and $(\rho',\xi')$, respectively. We then show that these pairs are equivalent in the sense of Proposition~\ref{prop: Second D6 param}. In what follows we use that 
\[
g(\rho) \Norm_f(\rho)\Norm_h(\xi)=\Norm_g(\rho)\Norm_h(\xi)=1
\] 
implies $\rho = gf(\rho)$.   

By Proposition \ref{prop: D6 parametrization of SB curves and surfaces}, one has $X\simeq X_{\xi}$ or $X\simeq X_{\xi^{-1}}$, and $X'\simeq X_{\xi'}$ or $X'\simeq X_{{\xi'}^{-1}}$. Since $X$ is $\KK$-birational to $X'$, we get that $\xi=\xi'$ or $\xi={{\xi'}^{-1}}$ in $\KK^*/\Norm_g(\FF^*)$. By using the equivalences of Proposition~\ref{prop: Second D6 param}, we can replace the pair $(\rho,\xi)$ with $(\rho^{-1},\xi^{-1})$, and hence assume that $\xi$ and $\xi'$ parametrize isomorphic Severi-Brauer surfaces. As in the proof of Theorem \ref{thm: Z6 iso criterion}, we get $\rho'=g^t(\rho)$ in $\LL^*/\Norm_h(\FF^*)$ for $t\in\{0,1,2\}$.	

If $\rho'$ parametrizes the same curve as $g(\rho)$ we find that $\rho' = g(\rho) \Norm_h(\lambda)$, where $\lambda\in \SplittingHex$. Since $\rho'$ and $\rho$ are fixed by $gs = sg^2$, we get that 
\[
s(\rho)\Norm_h(sg^2(\lambda))=g(\rho)\Norm_h(\lambda),
\] 
hence, by applying $s$ to both sides, we have
\[
g(\rho) = \rho \Norm_h\left (\frac{g^2(\lambda)}{s(\lambda)} \right).
\]
Similarly, if $\rho'$ parametrizes the same curve as $g^2(\rho)$ we get that $\rho' = g^2(\rho) \Norm_h(\lambda)$. Since $\rho'$ is $sg^2$-invariant, we get $sg(\rho)\Norm_h(sg^2(\lambda)) = g^2(\rho)\Norm_h(\lambda)$. Applying $sg$ to this yields $\rho \Norm_h(g(\lambda)) = s(\rho)\Norm_h(sg(\lambda))$, and by using $sg^2(\rho) = \rho$ we obtain that $\rho \Norm_h(g(\lambda)) = s(sg^2(\rho))\Norm_h(sg(\lambda)) = g^2(\rho)\Norm_h(sg(\lambda))$. Hence 
\[
g^2(\rho) = \rho N_h\left (\frac{g(\lambda)}{sg(\lambda)}\right ).
\]
Thus, we may assume that $\rho'$ parametrizes the same conic as $\rho$. Then there are $\delta,\theta \in \SplittingHex$ such that $\rho' = \Norm_h(\delta)\rho$ and $\xi' = \Norm_g(\theta) \xi$. Since $\xi,\xi' \in \SplittingHex^f$ we find that $\Norm_g(f(\theta)) = f(\Norm_g(\theta)) = \Norm_g(\theta)$. Let $\theta' = \Norm_g(\theta)/\theta gf(\theta)$, then $\theta' \in (\SplittingHex^{gf})^*$ and $\Norm_g(\theta') = \Norm_g(\theta)$, so we can use a norm-equivalence with $\mu=\theta'$ to get
\[
(\rho',\xi')=(\Norm_h(\delta)\rho,\Norm_g(\theta)\xi)\sim (\Norm_h(\theta')\Norm_h(\delta)\rho,\Norm_g({\theta'}^{-1})\Norm_g(\theta)\xi)=(\Norm_h(\theta'\delta)\rho,\xi)
\]
and hence assume $\xi' = \xi$ from the very beginning. So, it remains to show that the pair $(\Norm_h(\delta)\rho,\xi)$ is equivalent to $(\rho,\xi)$. Note that the conditions \eqref{eq: D6 conditions on coefficients} imply  
\[
\Norm_g(\Norm_h(\delta))=1,\ \ g\Norm_h(\delta)\Norm_f(\Norm_h(\delta))=1.
\]
We claim that there exists $\zeta\in\SplittingHex^{gf}$ such that $\Norm_g(\zeta) = 1$, $\Norm_h(\zeta)=\Norm_h(\delta)$. This will achieve the proof by using the norm-equivalence of Proposition \ref{prop: Second D6 param}.

We can write every $u \in\Gal(\SplittingHex/\kk)$ as $h^iv$, where $v \in \langle g,f \rangle, i \in \{ 0,1 \} $. It is easy to check that setting 
\[
\alpha_u=((\Norm_h(\delta^i), \Norm_h(\delta^i g(\delta^i))),\epsilon(u)).
\]
defines a sextic $\Dih_6$-surface $S_0$ parametrized by $\rho_0=\Norm_h(\delta)$, $\xi_0=1$. In particular,
\[
\alpha_g=((1,1),\epsilon(g)),\ \ \alpha_h=((\Norm_h(\delta),\Norm_h(\delta g(\delta)),\epsilon(h)),\ \ \alpha_f=((1,1),\epsilon(f)).
\]
A pair $(\zeta_1,\zeta_2)$ corresponds to a $\kk$-rational point on $S_0$ if and only if $\alpha_u \circ u (\zeta_1,\zeta_2) = (\zeta_1,\zeta_2)$ for $u = g,h,f$. For $u=g$ we get
\begin{flalign}\label{eq: D6 surface u=g}
(\zeta_1,\zeta_2) = \alpha_g(g(\zeta_1,\zeta_2)) = (g(\zeta_2^{-1}),g(\zeta_1 \zeta_2^{-1}))
\end{flalign}
which is equivalent to $\zeta_2 = \zeta_1g(\zeta_1)$, $\Norm_g(\zeta_1) = 1$.

For $u=h$ we get
\begin{flalign}\label{eq: D6 surface u=h}
(\zeta_1,\zeta_2) = \alpha_h (h(\zeta_1,\zeta_2)) = (\Norm_h(\delta) h(\zeta_1^{-1}), \Norm_h(\delta g(\delta)) h(b\zeta2^{-1}))
\end{flalign}
which is equivalent to $\Norm_h(\zeta_1) = \Norm_h(\delta)$, $\Norm_h(\zeta_2) = \Norm_h(\delta g(\delta))$.

For $u=f$ we get
\begin{flalign}\label{eq: D6 surface u=f}
(\zeta_1,\zeta_2) = \alpha_f (f(\zeta_1,\zeta_2)) = (f(\zeta_2^{-1}),f(\zeta_1^{-1}))
\end{flalign}
which is equivalent to $\zeta_2 = f(\zeta_1)^{-1}$. Thus, the existence of a $\kk$-rational point on $S_0$ is equivalent to the existence of $\zeta \in \SplittingHex^*$ such that $\Norm_g(\zeta) = 1$, $\zeta g(\zeta)f(\zeta) = 1$, $\Norm_h(\zeta) = \Norm_h(\delta)$ which is further equivalent to $\Norm_g(\zeta) = 1$, $\Norm_h(\zeta) = \Norm_h(\delta)$, $gf(\zeta) =\zeta$. This is exactly $\zeta$ we are looking for. It remains to notice that $S_0$ is $\kk$-rational. This can be proven using the results of Section \ref{section: closed points}, but we sketch a direct argument below.

Let us prove that there is a $2$-point and a $3$-point on $S_0$. Then we can take the two $6$-curves that intersect all the lines of the hexagon ones all the components of the $3$-point ones and one of the components of the $2$-points twice and one component once (they are the strict transform of the geometric cubic plane curves through all eight points with a double point at one of the components of a degree 2 point). Their intersection multiplicity is then $6$ but they intersect each other in only $5$ points with multiplicity $1$ and therefore there is a $6$th point. Since those two curves form a Galois orbit, this point must be $\kk$-rational. 

Points of degree 2 and 3 can be found directly by taking suitable orbits under the actions \eqref{eq: D6 surface u=g}, \eqref{eq: D6 surface u=h} and \eqref{eq: D6 surface u=f}. The point $(1,1)$ is invariant under $\alpha_g\circ g$ and $\alpha_f\circ f$, hence its $\Dih_6$-orbit is $\{(1,1),(\Norm_h(\delta), \Norm_h(\delta g(\delta)))\}$, which gives a point of degree 2 on $S_0$. The point $(\delta,f(\delta^{-1}))$ is obviously $\alpha_f\circ f$-invariant, and 
\[
\{(\delta,f(\delta^{-1})), (g(f(\delta)),g(af(\delta))), (g^2(f(\delta^{-1})\delta^{-1}),g^2(\delta^{-1}))\}
\]
is its $\Dih_6$-orbit, giving a point of degree 3 on $S_0$. Indeed, the $\alpha_g\circ g$-invariance of this set is straightforward. As was noticed in the beginning of the proof, $\rho_0=\Norm_h(\delta)$ is $gf=fg^2$-invariant. We hence get
\[
\alpha_h\circ h(\delta,f(\delta^{-1}))=(\Norm_h(\delta)h(\delta^{-1}),g^2\Norm_h(\delta^{-1})hf(\delta))=(\delta,ffg^2\Norm_h(\delta^{-1})hf(\delta))=(\delta,f(\delta^{-1})),
\]
hence $\alpha_h\circ h$ fixes each element of the set, as $h$ commutes with $g$. Finally, we observe that
\[
\alpha_f\circ f(gf(\delta),g(af(\delta)))=(fg(\delta^{-1}f(\delta^{-1})),fgf(\delta^{-1}))=(g^2(f(\delta^{-1})a^{-1}),g^2(\delta^{-1})). 
\]
\end{proof}

\subsection{The Amitsur invariant} As was already recalled in the Introduction, for any smooth projective geometrically irreducible variety $X$ over $\kk$, there is an exact sequence of groups
\[
0\to\Pic(X)\to\Pic(X_{\overline{\kk}})^{\Gal(\overline{\kk}/\kk)}\to\Br(\kk)\to\Br(\kk(X)).
\]
The group $\Am(S)=\Pic(X_{\overline{\kk}})^{\Gal(\overline{\kk}/\kk)}/\Pic(X)\subset\Br(\kk)$ is called the \emph{Amitsur subgroup} of $X$ in~$\Br(\kk)$. Note that this is a birational invariant. 
	
\begin{ex}\label{ex: Amitsur}
		By \cite[Proposition 5.1]{CTKarpenkoMerkurjev} and \cite[Proposition 2.5]{Trepalin2023}, one has the following.
		\begin{enumerate}[leftmargin=*, labelindent=20pt, itemsep=5pt]
			\item If $S$ is a Severi-Brauer surface over a field $\kk$, then $\Am(S)$ is spanned by the class of a central simple algebra of degree 3 over $\kk$. In particular, this group is either trivial, or isomorphic to $\ZZ/3\ZZ$, and the latter holds if and only if $S$ (or the corresponding algebra) is not trivial.
			\item Suppose that $S\simeq C_1\times C_2$, where $C_1$ and $C_2$ are two smooth conics. If $C_1\simeq C_2\simeq\PP_\kk^1$, then $\Am(S)=0$. If $C_1\simeq C_2$ are both not trivial, then $\Am(S)\simeq\ZZ/2$ is generated by the class of~  $C_1$ in $\Br(\kk)[2]$, the 2-torsion subgroup of the Brauer group. If $C_1$ and $C_2$ are not trivial and not isomorphic to each other, then $\Am(S)\simeq\ZZ/2\times\ZZ/2$ is generated by the classes of $C_1$ and $C_2$ in $\Br(\kk)[2]$.
		\end{enumerate}
\end{ex}
	
\begin{proof}[Proof of Theorem \ref{thm: Biregular}]
		The necessity is obvious, so we will prove that the three conditions from the statement are sufficient. The fields $\KK$ and $\LL$ are uniquely defined by $\FF=\FF'$, $S$ and $S'$ as the fields from the Severi-Brauer data. By Example \ref{ex: Amitsur}, if $\Am(S_\KK)=\Am(S'_\KK)=0$, then $X\simeq X'\simeq \PP_\KK^2$. If $\Am(S_\KK)=\Am(S'_\KK)\simeq\ZZ/3\ZZ$, then $X$ and $X'$ both generate the same subgroup of $\Br(\KK)$ isomorphic to $\ZZ/3\ZZ$, hence are birational. Similarly, the groups $\Am(S_\LL)$ and $\Am(S'_\LL)$ are both either trivial, or isomorphic to $\ZZ/2\times\ZZ/2$. In the fist case, $Y\simeq Y'\simeq\PP_\LL^1\times\PP_\LL^1$. In the second case, the non-trivial elements of the Amitsur subgroup correspond to 3 non-trivial conics of Propositions~\ref{prop: Z6 parametrization of SB curves and surfaces} and~\ref{prop: D6 parametrization of SB curves and surfaces}.
\end{proof}

We leave to the reader the reformulation this biregular classification in terms of two Brauer elements $\alpha\in\Br(\KK)[3]$ and $\beta\in\Br(\LL)[2]$, as in \cite{Blunk}.

\section{Automorphism groups}\label{sec: auto}

In this Section, we use the parametrization of sextic $G$-del Pezzo surfaces found in Section~\ref{sec: biregular classification}, to compute their automorphism groups. This complements the results of J.~Schneider and S.~Zimmermann \cite{SchneiderZimmermann} who computed the automorphism groups of $\kk$-rational surfaces, and also the works of C.~Shramov and V.~Vologodsky \cite{ShramovVologodskyPointless}, and C.~ Shramov and A.~Vikulova \cite{ShramovVikulova} which deal with automorphism groups of del Pezzo surfaces without $\kk$-rational points; see \cite{Boitrel,SmithCubicSurfaces,SmithQuartic,ZaitsevDegree5,ZaitsevQuadrics} for related discussion.

\medskip

Let $\varphi\colon S_\SplittingHex\iso M$ be the twisted action \eqref{eq: twisted action}. Consider the group homomorphism
\[
\Phi\colon\Aut_\kk(S)\to\Aut_\SplittingHex(M),\ \ \gamma\mapsto \varphi\circ\gamma\circ\varphi^{-1}.
\] 
Note that, since $\gamma$ commutes with all $u\in G$, one has
\[
\alpha_u\circ u\circ \Phi(\gamma)=\varphi\circ u\circ\varphi^{-1}\circ \varphi\circ\gamma\circ\varphi^{-1}=\varphi\circ \gamma\circ u\circ\varphi^{-1}=\Phi(\gamma)\circ\alpha_u\circ u,
\]
so $\Phi$ induces an isomorphism
\[
\Phi\colon\Aut_{\kk}(S) \iso \{\gamma \in \Aut_{\SplittingHex}(M) \colon u(\gamma) = \alpha_u^{-1}\circ\gamma\circ\alpha_u\ \text{for all}\ u\in G\},
\]
where the action of $u$ on $\gamma$ is given by the conjugation \eqref{eq: Galois action on automorphisms}.

In the following theorem, we stick to notation used in Section \ref{sec: biregular classification}. 

\begin{thm}\label{thm: Autom}
Let $S$ be a non-$\kk$-rational sextic $G$-del Pezzo surface, $\SplittingHex$ be its splitting field and $\varphi: S_{\SplittingHex}\iso M$ be the isomorphism \eqref{eq: twisted action}. 

\begin{enumerate}[leftmargin=*, labelindent=20pt, itemsep=5pt]
	
\item\label{Auto Case Z6} Suppose that $G\simeq\ZZ/6$. Put
\[
T_1=\big \{\big((\lambda,\lambda g(\lambda)),\id\big ) \colon \lambda\in\SplittingHex^*,\ \Norm_g(\lambda)=\Norm_h(\lambda)=1 \big \}\subset T_{\SplittingHex}.
\] 
Then one of the following holds:

\vspace{0.3cm}

\begin{enumerate}[label=(\alph*), left=1em]
	\item $\Phi(\Aut_\kk(S))\simeq T_1$ if $\xi,\rho$ are non trivial;
	\item $\Phi(\Aut_\kk(S))\simeq T_1\rtimes \langle\alpha_h\rangle$ if $\xi$ is trivial;
	\item $\Phi(\Aut_\kk(S))\simeq T_1\rtimes \langle\alpha_g\rangle$ if $\rho$ is trivial.
\end{enumerate}

\vspace{0.3cm}

\item\label{Auto Case S3} Suppose that $G\simeq\Sym_3$. Put 
\[
T_2=\big \{ \big ((\lambda,f(\lambda^{-1})),\id\big )\colon \lambda\in({\SplittingHex^*})^{gf},\  \Norm_g(\lambda)=1\big \}\subset T_{\SplittingHex}.
\]
Then $\Phi(\Aut_\kk(S))\simeq T_2$ (note that $\xi$ is not trivial here).

\item\label{Auto Case D6} Suppose that $G\simeq \Dih_6$. Put 
\[
T_3=\big \{ \big ((\lambda,f(\lambda^{-1})),\id\big )\colon \lambda\in({\SplittingHex^*})^{gf},\  \Norm_g(\lambda)=\Norm_h(\lambda) = 1\big \}\subset T_{\SplittingHex}.
\]
Then one of the following holds:

\begin{enumerate}[label=(\alph*), left=1em]
	\item[(a)] $\Phi(\Aut_\kk(S))\simeq T_3$ if $\xi$ is not trivial;
	\item[(b)] $\Phi(\Aut_\kk(S))\simeq T_3 \rtimes \langle\alpha_h\rangle$ if $\xi$ is trivial.
\end{enumerate}
\end{enumerate}
\end{thm}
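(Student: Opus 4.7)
The plan is to use the isomorphism stated before the theorem, which realizes $\Phi(\Aut_\kk(S))$ as the set of $\gamma \in T_\SplittingHex \rtimes \Dih_6$ satisfying $u(\gamma) = \alpha_u^{-1}\gamma\alpha_u$ for every generator $u$ of $G$. Writing $\gamma = ((\lambda_1,\lambda_2),\tau)$ and projecting the defining equation to the $\Dih_6$-factor (where Galois acts trivially, since the twist only affects the torus) forces $\tau$ to commute with $\epsilon(u)$ for every $u \in G$, i.e.\ $\tau \in C := \Centralizer_{\Dih_6}(\epsilon(G))$. A direct inspection of $\Dih_6$ gives $C \simeq \ZZ/6$ for $G \simeq \ZZ/6$, and $C = \langle\epsilon(h)\rangle$ (the center of $\Dih_6$) for $G \simeq \Sym_3$ and $G \simeq \Dih_6$.

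Second, I would dispose of the toric part $\tau = \id$ by explicit computation. Since toric elements commute, conjugation $\alpha_u^{-1}\gamma\alpha_u$ reduces to conjugation by the $\Dih_6$-part $\epsilon(u)$ alone, and the formulas in \eqref{eq: semidirect structure} translate the equations $u(\gamma) = \alpha_u^{-1}\gamma\alpha_u$ into: $\lambda_2 = \lambda_1 g(\lambda_1)$ together with $\Norm_g(\lambda_1) = 1$ (from $u = g$); $\Norm_h(\lambda_1) = 1$ (from $u = h$); and $\lambda_1 \in (\SplittingHex^*)^{gf}$ with $\lambda_2 = f(\lambda_1^{-1})$ (from $u = f$, combined with the previous). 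Collecting the constraints for each Galois group recovers $T_1$, $T_2$, $T_3$ as stated.

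The heart of the proof is the analysis of non-trivial $\tau \in C$. For each such $\tau$ I would compute $\alpha_u^{-1}\gamma\alpha_u$ explicitly (now the torus part of $\alpha_u$ no longer cancels) and obtain norm equations coupling $\lambda_1$ to $\xi$ and $\rho$. For $\tau = \epsilon(h)$ the $g$-equation yields $\Norm_g(\lambda_1) = \xi^{-2}$; since $\KK^*/\Norm_g(\SplittingHex^*)$ is $3$-torsion by Lemma \ref{lem: SB surface Z3 twisted action}, this is solvable iff $\xi \in \Norm_g(\SplittingHex^*)$, i.e.\ iff the Severi--Brauer surface $X_\xi$ is trivial. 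Symmetrically, for $\tau = \epsilon(g^{\pm 1})$ in the $\ZZ/6$-case, the $h$-equation forces $\Norm_h(\lambda_1)$ to agree with a suitable power of $\rho$, which is solvable iff the conic datum over $\LL$ is trivial. Mixed elements $\tau = \epsilon(g^i h)$ impose both conditions simultaneously, so they contribute no new generators. When the relevant triviality holds, one normalizes the cocycle (using Proposition \ref{prop: Z6 equivalence}, resp.\ Proposition \ref{prop: Second D6 param}) so that $\xi = 1$ (resp.\ $\rho = 1$); in this normalization $\alpha_h$ (resp.\ $\alpha_g$) itself satisfies the defining equation for $\Phi(\Aut_\kk(S))$, because the cocycle identity $\alpha_g g(\alpha_h) = \alpha_h h(\alpha_g)$ reduces to $g(\alpha_h) = \alpha_g^{-1}\alpha_h\alpha_g$ once $h(\alpha_g) = \alpha_g$. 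This supplies the desired non-toric lift; since $T_i$ is the kernel of projection to $\Dih_6$, hence normal, the extension splits and produces the stated semidirect product.

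The main obstacle will be this third step — particularly in the $\ZZ/6$-case, where $|C|=6$ requires checking five distinct non-trivial values of $\tau$ and verifying that the three subcases (a)--(c) are mutually exclusive and exhaustive. Exclusivity follows from the observation that if $\xi$ and $\rho$ were both trivial then the cocycle would be trivial and $S$ would be $\kk$-rational, contrary to our standing hypothesis; hence exactly one of the three alternatives holds. Hilbert's Theorem 90 (Theorem \ref{thm: Hilbert 90}) is the standard tool for producing the explicit $\lambda_1$ realizing the triviality conditions, and the classification results from Section~\ref{sec: biregular classification} translate the norm equations on $(\lambda_1, \lambda_2)$ into the required biregular invariants. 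Once the solvability analysis is complete, the final assembly into semidirect products is a formality.
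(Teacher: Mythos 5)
Your proposal follows essentially the same route as the paper's proof: first constrain the $\Dih_6$-component $\tau$ to the centralizer of $\epsilon(G)$, then solve the conjugation equations for the toric part to obtain $T_1$, $T_2$, $T_3$, then show that a non-trivial $\tau$ forces the norm conditions $\Norm_g(\lambda)=\xi^{-2}$ (hence $\KK$-triviality) or $\Norm_h(\lambda)=\rho^2 g(\rho)$ (hence $\LL$-triviality), and finally exhibit $\alpha_h$ (resp.\ $\alpha_g$) as the extra generator after normalizing $\xi=1$ (resp.\ $\rho=1$). The only cosmetic slips — the exclusivity of the three $\ZZ/6$-subcases is cleaner via $\indexx(S)\mid\gcd(2,3)$ forcing $\kk$-rationality than via ``the cocycle would be trivial,'' and the $h$-equation gives the specific element $\rho^2 g(\rho)=\Norm_h(\rho)g(\rho)$ rather than a literal power of $\rho$ — do not affect the argument.
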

\begin{proof}
Take $\gamma=((\lambda,\mu),v)\in\Phi(\Aut_\kk(S))$. Recall that we have the split short exact sequence~\eqref{eq: SES for Aut of DP6} and the subgroup group $\Dih_6\subset\Aut(S_\SplittingHex)$ is generated by the automorphisms $\iota$, $\theta$ and $\sigma$, thus $v\circ u = u\circ v$ for all $ u\in G$. Therefore,  $v\in\langle\epsilon(g),\epsilon(h)\rangle$ if $G\simeq\ZZ/6$, and $v\in\langle\epsilon(h)\rangle$ if $G\simeq\Sym_3$ or $G\simeq\Dih_6$. We then consider 3 cases.

\ref{Auto Case Z6} Suppose that $G\simeq\ZZ/6$. We first show that if $v$ is of order 3, then $S$ is $\LL$-trivial, i.e. $\rho$ is an $h$-Norm. Indeed, in this case we may assume that $v=\epsilon(g)$ and then the condition $u(\gamma) = \alpha_u^{-1}\circ\gamma\circ\alpha_u$ translates, for $u=g$ and $u=h$ respectively, to
\begin{flalign}
		& ((1,\xi^{-1}),\epsilon(g^{-1}))\circ ((\lambda,\mu),\epsilon(g))\circ ((\xi^{-1},\xi^{-1}),\epsilon(g)) = ((\xi^{-1}\lambda^{-1}\mu,\xi^{-2}\lambda^{-1}),\epsilon(g)) = ((g(\lambda),g(\mu)),\id) \hspace{1cm} & \label{eq: Z6 Auto conjugation of g by g} \\
		& ((\rho,\rho g(\rho)),\epsilon(h))\circ ((\lambda,\mu),\epsilon(g))\circ ((\rho,\rho g(\rho)),\epsilon(h)) = ((\lambda^{-1}\rho^2 g(\rho),\mu^{-1}\rho g(\rho^2)),\epsilon(g)) = ((h(\lambda),h(\mu)),\epsilon(g)) \hspace{1cm} & \label{eq: Z6 Auto conjugation of g by h}
\end{flalign}
which is equivalent to
\begin{equation}\label{eq: Z6 Auto conjugation of g by g and h - corollary}
		\begin{cases}
			\mu=\lambda g(\lambda)\xi,\\
			\lambda g(\mu)\xi^2=1
		\end{cases}\ \ \text{and}\ \ \
		\begin{cases}
			\Norm_h(\lambda)=\rho^2g(\rho)\\
			\Norm_h(\mu)=\rho g(\rho^2).
		\end{cases}
\end{equation}
Since $\rho\in\SplittingHex^h$, we get $\Norm_h(\lambda)=\Norm_h(\rho) g(\rho)$, so $\rho=\Norm_h(g^2(\lambda/\rho))$. Therefore, we may assume that $\rho=1$.	

So, if $S$ is not $\LL$-{rational}, then $\Phi(\Aut_\kk(S))$ cannot contain elements $((\lambda,\mu),v)$ with $v$ or order 3 or 6, hence $v=\id$, or $v=\epsilon(h)$. In the first case, we get
\begin{flalign}
	& ((1,\xi^{-1}),\epsilon(g^{-1}))\circ ((\lambda,\mu),\id)\circ ((\xi^{-1},\xi^{-1}),\epsilon(g))=((\lambda^{-1}\mu,\lambda^{-1}),\id)=((g(\lambda),g(\mu)),\id)\ \ \text{for}\ u=g \hspace{1cm} & \label{eq: Z6 Auto conjugation of toric by g}\\
	& ((\rho,\rho g(\rho)),\epsilon(h))\circ ((\lambda,\mu),\id)\circ ((\rho,\rho g(\rho)),\epsilon(h)) = ((\lambda^{-1},\mu^{-1}),\id) = ((h(\lambda),h(\mu)),\id)\ \ \text{for}\ u=h. \hspace{1cm} & \label{eq: Z6 Auto conjugation of toric by h}
\end{flalign}
The first equality is equivalent to $\mu=\lambda g(\lambda)$, $\Norm_g(\lambda)=1$, while the second is equivalent to $\Norm_h(\lambda)=\Norm_h(\mu)=1$. We conclude that all toric elements are of the form
\begin{equation}\label{eq: Z6 Auto toric elemements}
\big \{\big((\lambda,\lambda g(\lambda)),\id\big ) \colon \lambda\in\SplittingHex^*,\ \Norm_g(\lambda)=\Norm_h(\lambda)=1 \big \}.
\end{equation}
In the second case $v=\epsilon(h)$ we obtain	
\begin{flalign}
	& ((1,\xi^{-1}),\epsilon(g^{-1}))\circ ((\lambda,\mu),\epsilon(h))\circ ((\xi^{-1},\xi^{-1}),\epsilon(g))=((\lambda^{-1}\mu,\lambda^{-1}\xi^{-2}),\epsilon(h))=((g(\lambda),g(\mu)),\epsilon(h)), \hspace{1cm} & \label{eq: Z6 Auto conjugation of h by g} \\
	& ((\rho,\rho g(\rho)),\epsilon(h))\circ ((\lambda,\mu),\epsilon(h))\circ ((\rho,\rho g(\rho)),\epsilon(h)) = ((\lambda^{-1}\rho^2,\mu^{-1}\rho^2g(\rho^2)),\epsilon(h)) = ((h(\lambda),h(\mu)),\epsilon(h)). \hspace{1cm} & \label{eq: Z6 Auto conjugation of h by h}
\end{flalign}	
which is equivalent to
\begin{equation}\label{eq: Z6 Auto conjugation of h by g and h - corollary}
	\begin{cases}
		\mu=\lambda g(\lambda),\\
		\Norm_g(\lambda)\xi^2=1
	\end{cases}\ \ \text{and}\ \ \
	\begin{cases}
		\Norm_h(\lambda)=\rho^2\\
		\Norm_h(\mu)=\rho^2 g(\rho^2).
	\end{cases}
\end{equation}
Now $\Norm_g(\lambda)\xi^2=1$ is equivalent (recall that $\xi\in\SplittingHex^g$) to $\Norm_g(\lambda\xi)=\xi$, hence $S$ is $\KK$-trivial.
We conclude that if $S$ is neither $\KK$-trivial, nor $\LL$-trivial, then $\Phi(\Aut_\kk(S))$ consists only of toric automorphisms. If $S$ is $\KK$-trivial, then we put $\xi=1$. Then $\alpha_h\in\Phi(\Aut_\kk(S))$ and together with toric automorphisms from above generates the whole group $\Phi(\Aut_\kk(S))$. As was shown above, it acts on toric automorphism by inversion. 

Finally, suppose that $S$ is $\LL$-trivial, i.e. $\rho=1$. Note that $S$ is not $\KK$-trivial, since we assume $S$ not $\kk$-rational. Therefore, $v$ cannot be of order 2 (and hence of order 6 either), since otherwise \eqref{eq: Z6 Auto conjugation of h by g}, \eqref{eq: Z6 Auto conjugation of h by h} and \eqref{eq: Z6 Auto conjugation of h by g and h - corollary} again imply that $\xi=\Norm_g(\lambda\xi)$, a contradiction.  As before, we find that toric automorphisms are of the form \eqref{eq: Z6 Auto toric elemements}, so it remains to determine the form of an automorphism with $v$ of order 3. But here we simply notice, using $\Norm_h(\xi)=1$, that $\alpha_g\in\Phi(\Aut_\kk(S))$, hence the result.
	
\ref{Auto Case S3} Here every automorphism in $\Phi(\Aut_\kk(S))$ is of the form $\gamma=((\lambda,\mu),\id)$, or $\gamma=((\lambda,\mu),\epsilon(h))$. In the first case, the condition $u(\gamma) = \alpha_u^{-1}\circ\gamma\circ\alpha_u$ translates to
	\begin{flalign}
		& ((1,\xi^{-1}),\epsilon(g^{-1}))\circ ((\lambda,\mu),\id)\circ ((\xi^{-1},\xi^{-1}),\epsilon(g))=((\lambda^{-1}\mu,\lambda^{-1}),\id)=((g(\lambda),g(\mu)),\id)\ \ \text{for}\ u=g,& \label{eq: S3 Auto conjugation of toric by g}\\
		& ((\xi^{-1},\xi^{-1}),\epsilon(f))\circ ((\lambda,\mu),\id)\circ ((\xi^{-1},\xi^{-1}),\epsilon(f))=((\mu^{-1},\lambda^{-1}),\id)=((f(\lambda),f(\mu)),\id)\ \ \text{for}\ u=f,& \label{eq: S3 Auto conjugation of toric by f}
	\end{flalign}
which gives $\mu=f(\lambda^{-1})=\lambda g(\lambda)$ and $\Norm_g(\lambda)=1$, which is equivalent to $\lambda \in (\textbf{F}^*)^{gf}$, $\Norm_g(\lambda) = 1$. In the second case, we respectively get
	\begin{flalign}
		& ((1,\xi^{-1}),\epsilon(g^{-1}))\circ ((\lambda,\mu),\epsilon(h))\circ ((\xi^{-1},\xi^{-1}),\epsilon(g))=((\lambda^{-1}\mu,\lambda^{-1}\xi^{-2}),\epsilon(h))=((g(\lambda),g(\mu)),\epsilon(h)),& \label{eq: S3 Auto conjugation of h by g}\\
		& ((\xi^{-1},\xi^{-1}),\epsilon(f))\circ ((\lambda,\mu),\epsilon(h))\circ ((\xi^{-1},\xi^{-1}),\epsilon(f))=((\xi^{-2}\mu^{-1},\xi^{-2}\lambda^{-1}),\epsilon(h))=((f(\lambda),f(\mu)),\epsilon(h)),& \label{eq: S3 Auto conjugation of h by f}
	\end{flalign}
which implies $\mu=\lambda g(\lambda),\, \xi^{-2} = \lambda g(\mu) = \Norm_g(\lambda)$.
Thus $\Norm_g(\xi \lambda)=\Norm_g(\xi)\Norm_g(\lambda)=\xi^3\cdot\xi^{-2} =\xi$, so $S$ is $\KK$-trivial. Therefore, if $S$ is not $\KK$-trivial then $\Phi(\Aut_\kk(S))$ consists only of toric automorphisms.

\ref{Auto Case D6}
Once again, every automorphism in $\Phi(\Aut_\kk(S))$ is of the form $\gamma=((\lambda,\mu),\id)$, or $\gamma=((\lambda,\mu),\epsilon(h))$. Suppose that our automorphism group contains a toric automorphism; then  the conjugation by $\alpha_f$ gives:
\begin{flalign}
	& ( (\xi^{-1},\xi^{-1}),\epsilon(f))\circ ((\lambda,\mu),\id)\circ ( (\xi^{-1},\xi^{-1}),\epsilon(f))=((\mu^{-1},\lambda^{-1}),\id)=((f(\lambda),f(\mu)),\epsilon(h)),& 
\end{flalign}
 together with \eqref{eq: Z6 Auto conjugation of toric by g} and \eqref{eq: Z6 Auto conjugation of toric by h} this gives that all the toric automorphisms are as in $T_3$ similar to the toric case in \ref{Auto Case S3} with the only difference being the additional assumption that the $h$-Norms are $1$. As in the previous case, we observe that if $\Phi(\Aut_\kk(S))$ contains an element of the form $((\lambda,\mu),\epsilon(h))$, then \eqref{eq: S3 Auto conjugation of h by g} implies that $\xi$ is a $g$-norm. Hence, by Lemma~\ref{lem: SB surface S3 twisted action} \ref{SB S3 isomorphism}, we may put $\xi = 1$ without loss of generality. \\
Further, the conjugation by $\alpha_f$ gives
	\begin{flalign}
	& ((1,1),\epsilon(f))\circ ((\lambda,\mu),\epsilon(h))\circ  ((1,1),\epsilon(f))=((\mu^{-1} ,\lambda^{-1}),\epsilon(h))=((f(\lambda),f(\mu)),\epsilon(h)),& 
\end{flalign}
which is equivalent to $\mu = f(\lambda^{-1})$. We claim that $\alpha_h$ belongs to $\Phi(\Aut_\kk(S))$. Indeed, the two conditions required by \eqref{eq: S3 Auto conjugation of h by g}
are satisfied, because $\xi=1$ and hence $\Norm_g(\rho)=1$. Further, the second system in \eqref{eq: Z6 Auto conjugation of h by g and h - corollary} is satisfied because $\lambda=\rho$ is $h$-invariant (and $\mu=\lambda g(\lambda)=\rho g(\rho)$ for $\alpha_h$). So, it remains to check that $\mu = f(\lambda^{-1})$, which is equivalent to $g(\rho)\Norm_f(\rho)=1$. But $g(\rho)\Norm_f(\rho)N_h(\xi)=1$, as was found in our parametrization of $S$ and since $\xi = 1$ this implies that $\alpha_h$ belongs to $\Phi(\Aut_\kk(S))$.	
\end{proof}

\section{Closed points}\label{section: closed points}

\subsection{Generalities}\label{subsec: closed points general}

We start with recalling some general facts. The \emph{index} $\indexx(X)$ of an algebraic variety $X$ over $\kk$ is the greatest common divisor of the degrees of all closed points on $X$, or equivalently, the greatest common divisor of the degrees of all finite field extensions $\EE/\kk$ such that $X$ has a point over $\EE$. It is well known that index is a birational invariant \cite[Proposition 6.8]{GabberLiuLorenzini}. 

\begin{rem}\label{rem: index 1 equivalent to rationality} Let $S$ be a del Pezzo surface of degree 6 over $\kk$.
	\begin{enumerate}
		\item Recall that $|-K_S|$ gives an embedding of $S$ into $\PP_\kk^6$ as a surface of degree 6. By intersecting this surface by two hyperplanes in general position, we see that $S$ has a point of degree dividing $6$ (alternatively, we can consider the Galois action on the vertices of $\Sigma$). In particular, the index of a sextic del Pezzo surface always divides 6. If $S$ is a $G$-surface, then $S$ actually has a point of degree 6, as we have a transitive action of $\Gal(\overline{\kk}/\kk)$ on the vertices of $\Sigma$.
		
		\item If $X(\kk)\ne\varnothing$, then $\indexx(X)=1$. The converse is known to fail in general, but for a del Pezzo surface $S$ of degree 6 one can argue as in \cite{CorayPointsAlgebriques}. 
		Suppose that $\indexx(S)=1$. If $S$ has a point of degree $d$, then the proof of \cite[Théorème]{CorayPointsAlgebriques} shows that $S$ contains an elliptic curve with an effective divisor of degree $3\gcd(d,2)$. We can assume that there is a point of odd degree on $S$, as otherwise $\indexx(S)\ne 1$. Therefore $S$ has an effective zero-cycle of degree 3 and hence $S(\kk)\ne\varnothing$ as explained in \cite[\S 2]{CorayPointsAlgebriques}.
	\end{enumerate}
\end{rem}

The following classical statement is known as the Lang-Nishimura lemma.

\begin{lem}[{\cite{LangSomeApplications,Nishimura}}]\label{lem: Lang-Nishimura}
	Let $\kk$ be a field, and let $\phi\colon X'\dashrightarrow X$ be a rational map of
	$\kk$-schemes. Assume that $X'$ has a smooth $\kk$-point and that $X$ is proper. Then $X(\kk)\ne\varnothing$.
\end{lem}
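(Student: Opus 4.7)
The plan is to prove the statement by induction on $n = \dim X'$, using the classical blow-up argument. First I would shrink $X'$ to an open neighborhood of the smooth $\kk$-point $p$ on which $X'$ is smooth; this is harmless since rational maps only depend on a dense open of the source. The base case $n = 0$ is immediate: then (a neighborhood of $p$ in) $X'$ is just $\Spec \kk$, so $\phi$ is defined at $p$ and $\phi(p) \in X(\kk)$.

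For the inductive step, if $\phi$ is already regular at $p$, then $\phi(p)$ is a $\kk$-point of $X$ and we are done, because morphisms defined over $\kk$ send $\kk$-points to $\kk$-points. Otherwise, I would blow up $p$ to obtain a birational morphism $\pi \colon \widetilde{X'} \to X'$ with exceptional divisor $E = \pi^{-1}(p) \cong \PP^{n-1}_{\kk}$. Since $X'$ is smooth (after shrinking) and $p$ is a smooth center, $\widetilde{X'}$ is smooth. Hence $\phi \circ \pi \colon \widetilde{X'} \dashrightarrow X$ is a rational map from a smooth scheme to a proper one, so its indeterminacy locus has codimension at least two in $\widetilde{X'}$. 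Intersecting with the codimension-one divisor $E$, the map $\phi \circ \pi$ is regular on a dense open subset of $E$, which yields a well-defined rational map $E \cong \PP^{n-1}_{\kk} \dashrightarrow X$.

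Now $\PP^{n-1}_{\kk}$ is smooth and possesses an obvious $\kk$-rational point such as $[1:0:\dots:0]$, and its dimension $n-1$ is strictly smaller than $n$. The inductive hypothesis applied to this rational map therefore gives $X(\kk) \neq \varnothing$, completing the induction.

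The main (and essentially only non-trivial) ingredient is the fact used above that a rational map from a smooth noetherian scheme to a proper one has indeterminacy locus of codimension at least two; this is where properness of $X$ enters, via the valuative criterion of properness applied to the DVRs coming from codimension-one points of the source. Everything else is dimension induction together with the standard computation that blowing up a smooth closed point of an $n$-dimensional smooth variety produces an exceptional divisor isomorphic to $\PP^{n-1}_{\kk}$.
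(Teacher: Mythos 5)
Your argument is correct: this is the standard blow-up proof of the Lang--Nishimura lemma (induct on $\dim X'$, blow up the smooth $\kk$-point, use that a rational map from a smooth variety to a proper variety is defined at all codimension-one points by the valuative criterion, and restrict to the exceptional $\PP^{n-1}_{\kk}$). The paper does not prove the lemma but only cites \cite{LangSomeApplications,Nishimura}, so there is nothing to contrast; your write-up is a complete and correct proof of the cited statement.
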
 

Finally, there is an explicit criterion of $\kk$-rationality for a minimal geometrically rational surfaces:

\begin{thm}[{\cite{Isk1996}}]\label{Iskovskikh Criterion}
	A minimal geometrically rational surface $S$ over a perfect field $\kk$ is $\kk$-rational if and only if the following two conditions are satisfied:
	\begin{description}
		\item[(i)] $S(\kk)\ne\varnothing$;
		\item[(ii)] $K_S^2\geqslant 5$.
	\end{description}
\end{thm}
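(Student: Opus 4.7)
The necessity of $S(\kk)\ne\varnothing$ is a direct application of the Lang--Nishimura Lemma~\ref{lem: Lang-Nishimura}: a birational map $\PP_\kk^2\dashrightarrow S$ exists by assumption, $\PP_\kk^2$ has abundant smooth $\kk$-points, and $S$ is proper. For the bound $K_S^2\geqslant 5$, I would invoke the classification of $\kk$-minimal geometrically rational surfaces recalled in the paper: such an $S$ is either a del Pezzo surface with $\Pic(S)\simeq\ZZ$, or a conic bundle $\pi\colon S\to B$ with $\Pic(S)\simeq\ZZ^2$. In the del Pezzo case with $K_S^2\leqslant 4$, the Galois-cohomology group $\Cohom^1(\Gal(\overline{\kk}/\kk),\Pic(S_{\overline{\kk}}))$, a stable birational invariant of smooth projective surfaces, turns out to be non-trivial by an explicit lattice computation inside the root system $\mathrm{E}_{9-K_S^2}$, whereas it obviously vanishes for $\PP_\kk^2$. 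For conic bundles with $K_S^2\leqslant 4$, one has at least four geometric singular fibres, and Iskovskikh's analysis of Sarkisov links emanating from a minimal conic bundle rules out $\kk$-rationality.

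\textbf{Sufficiency.} Assume $S(\kk)\ne\varnothing$ and $K_S^2\geqslant 5$. Minimality excludes $K_S^2=7$, since on a geometric del Pezzo surface of degree $7$ the strict transform of the line through the two blown-up points is a Galois-fixed $(-1)$-curve, whose contraction contradicts minimality. In the remaining del Pezzo cases I proceed by inspection: for $K_S^2=9$, $S$ is a Severi--Brauer surface, which becomes $\PP_\kk^2$ as soon as it carries a $\kk$-point (projection from the point); for $K_S^2=8$ with $\Pic(S)\simeq\ZZ$, Proposition~\ref{prop: biregular classification of dP8} realises $S$ as $\Weil_{\LL/\kk}C$ for a conic $C$ over a quadratic extension, and a $\kk$-point forces $C\simeq\PP_\LL^1$, yielding rationality; for $K_S^2=6$, a $\kk$-point lying off the hexagon $\Sigma$ determines a Galois-invariant pencil of conics on $S$ (obtained from the sextic anticanonical model), producing a birational map to $\PP_\kk^2$; for $K_S^2=5$, one invokes the classical Enriques--Swinnerton-Dyer argument which exhibits a pencil of conics through any $\kk$-point. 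For minimal conic bundles with $K_S^2\geqslant 5$ there are at most three singular geometric fibres; the image $\pi(p)$ of the $\kk$-point equips $B$ with a $\kk$-point, so $B\simeq\PP^1_\kk$, and successively contracting appropriate components of the degenerate fibres one arrives at a $\kk$-form of a Hirzebruch surface with a $\kk$-section, which is $\kk$-rational.

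\textbf{Main obstacle.} The genuinely difficult direction is the necessity of $K_S^2\geqslant 5$ in the del Pezzo case at low degrees: smooth $\kk$-minimal del Pezzo surfaces of degrees $1$, $2$, $3$ are in fact birationally (super)rigid, and ruling out their $\kk$-rationality ultimately requires the Noether--Fano--Iskovskikh method in its full generality, while degree $4$ can still be handled by the cohomological invariant indicated above. A comparably delicate step is the conic-bundle side, where excluding $\kk$-rational models demands a careful analysis of elementary links between minimal conic bundles with many degenerate fibres.
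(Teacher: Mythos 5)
This theorem is quoted by the paper directly from \cite{Isk1996} with no proof supplied, so there is nothing internal to compare your sketch against; it has to stand on its own as an outline of Iskovskikh's argument. Most of it is a reasonable summary: Lang--Nishimura for the necessity of a point, minimality excluding degree $7$, and the standard case-by-case treatment of Severi--Brauer surfaces, quadric forms, sextics and quintics in the sufficiency direction are all fine (modulo the omitted $\rk\Pic=2$ case in degree $8$ and the fact that for minimal conic bundles you cannot literally ``contract components of degenerate fibres'' over $\kk$ --- one performs elementary transformations instead).

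There is, however, a genuine error in the necessity of $K_S^2\geqslant 5$. You claim that for a minimal del Pezzo surface of degree $\leqslant 4$ the invariant $\Cohom^1(\Gal(\overline{\kk}/\kk),\Pic(S_{\overline{\kk}}))$ is non-trivial ``by an explicit lattice computation,'' and in your final paragraph you insist that degree $4$ in particular can be handled this way. This is false: that cohomology group vanishes for a generic minimal quartic del Pezzo surface (Galois acting through the full Weyl group $W(\mathrm D_5)$) and likewise for generic minimal cubic surfaces (full $W(\mathrm E_6)$); the classical Manin/Swinnerton-Dyer tables show the group is frequently $0$ even in the minimal case, so it cannot detect non-rationality. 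The actual proof of non-rationality for minimal del Pezzo surfaces of degree $4$ (and for minimal conic bundles with $K_S^2\in\{1,2,3,4\}$, to which a pointed minimal quartic is birational after blowing up a $\kk$-point) is the Segre--Manin--Iskovskikh method: the Noether--Fano inequalities combined with the classification of links, showing that every link out of such a surface lands again on a surface with $K^2\leqslant 4$. In other words, the ``main obstacle'' you isolate for degrees $1$--$3$ is equally unavoidable in degree $4$, and the cohomological shortcut you propose there does not exist. As written, the necessity direction of your proof has a hole precisely at the hardest point of the theorem.
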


\begin{cor}
	A sextic $G$-del Pezzo surface $S$ is $\kk$-rational if and only if $S(\kk)\ne\varnothing$.
\end{cor}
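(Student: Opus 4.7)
The plan is to obtain this as a direct consequence of the two results cited just above, namely the Lang--Nishimura Lemma~\ref{lem: Lang-Nishimura} and Iskovskikh's criterion (Theorem~\ref{Iskovskikh Criterion}), together with the observation that a sextic $G$-del Pezzo surface is automatically $\kk$-minimal.

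For the forward implication, suppose $S$ is $\kk$-rational. Then there is a birational map $\PP_\kk^2 \dashrightarrow S$. Since $\PP_\kk^2$ has a smooth $\kk$-point (for instance, $[1:0:0]$) and $S$ is proper, Lemma~\ref{lem: Lang-Nishimura} yields $S(\kk)\neq\varnothing$.

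For the reverse implication, assume that $S(\kk)\neq\varnothing$. By definition of a sextic $G$-del Pezzo surface we have $\Pic(S)\simeq\ZZ$, so $S$ is $\kk$-minimal: a birational morphism $S \to S'$ of smooth projective $\kk$-surfaces would strictly decrease the Picard rank, which is already~1. Since $K_S^2=6\geqslant 5$ and $S(\kk)\neq\varnothing$, both hypotheses of Theorem~\ref{Iskovskikh Criterion} are satisfied, and we conclude that $S$ is $\kk$-rational.

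There is essentially no hard step here; the only point that deserves a line of justification is the $\kk$-minimality of $S$, which is immediate from the definition $\Pic(S)\simeq\ZZ$.
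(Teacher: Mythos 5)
Your proof is correct and matches the paper's intended argument: the corollary is an immediate consequence of Iskovskikh's criterion (Theorem~\ref{Iskovskikh Criterion}) applied to the $\kk$-minimal surface $S$ with $K_S^2=6\geqslant 5$, exactly as you argue. The only cosmetic difference is that you invoke Lang--Nishimura for the forward implication, whereas that direction is already contained in the ``if and only if'' of the criterion itself; this is harmless.
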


\begin{mydef}
Let $S$ be a non-$\kk$-rational $G$-del Pezzo surface of degree 6. Consider its Severi-Brauer data from Definition \ref{def: SB data}. If the Severi-Brauer surface $X$ is trivial, i.e. $X\simeq\PP_\KK^2$, we say that \emph{$S$ is $\KK$-trivial}. If the involution surface $Y$ is trivial, i.e. $Y\simeq\PP_{\LL}^1\times\PP_{\LL}^1$, then we say that \emph{$S$ is $\LL$-trivial}. Note that these properties do not depend on the choice of $X$ or $Y$.
\end{mydef}

\begin{prop}\label{prop: index 2}
	Let $S$ be a $G$-del Pezzo surface of degree 6 over $\kk$, and assume that $S$ is not $\kk$-rational $($equivalently, $S(\kk)=\varnothing$$)$. The following statements are equivalent.
	\begin{enumerate}
		\item\label{closed points K-trivial 1} $S$ is $\KK$-trivial;
		\item\label{closed points K-trivial 2} $\indexx(S)=2$;
		\item\label{closed points K-trivial 3} $S$ has a degree 2 point.
	\end{enumerate}
\end{prop}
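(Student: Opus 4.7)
The plan is to establish the three implications $(3)\Rightarrow(2)\Rightarrow(1)\Rightarrow(3)$, using the Severi--Brauer data of $S$ together with basic properties of the index. First, $(3)\Rightarrow(2)$ is immediate: a closed point of degree $2$ forces $\indexx(S) \in \{1,2\}$, and since $S$ is not $\kk$-rational, Remark~\ref{rem: index 1 equivalent to rationality} rules out $\indexx(S)=1$, so $\indexx(S)=2$.

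For $(2)\Rightarrow(1)$, I would use that $X$ is $\KK$-birational to $S_\KK$ by definition of the Severi--Brauer data, so by the birational invariance of the index recalled in Subsection~\ref{subsec: closed points general} one has $\indexx(X)=\indexx(S_\KK)$. Next I would check the divisibility $\indexx(S_\KK)\mid\indexx(S)$: if $P\in S$ is a closed point of degree $d$, then $P\times_\kk\KK\subset S_\KK$ is an \'etale $\KK$-scheme that decomposes as a disjoint union of closed points of $S_\KK$ whose degrees sum to $d$; in particular $\indexx(S_\KK)$ divides each of these degrees, hence divides $d$, and taking the gcd over all $P$ gives $\indexx(S_\KK)\mid\indexx(S)$. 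Since $X$ is a Severi--Brauer surface over $\KK$, Proposition~\ref{prop: SB closed points} gives $\indexx(X)\in\{1,3\}$. Assuming $\indexx(S)=2$, the divisibility $\indexx(X)\mid 2$ forces $\indexx(X)=1$, so $X$ admits a $\KK$-rational point and is therefore isomorphic to $\PP_\KK^2$, i.e. $S$ is $\KK$-trivial.

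For $(1)\Rightarrow(3)$, I would invoke the Lang--Nishimura lemma. The contraction $\eta\colon S_\KK\to X$ provided by the Severi--Brauer data is a $\KK$-birational morphism, so its inverse is a rational map $X\dashrightarrow S_\KK$; since $X\simeq\PP_\KK^2$ has a smooth $\KK$-point and $S_\KK$ is proper over $\KK$, Lemma~\ref{lem: Lang-Nishimura} yields $S_\KK(\KK)\ne\varnothing$. Any $\KK$-point of $S_\KK$ descends under the projection $S_\KK\to S$ to a closed point of $S$ whose residue field embeds into $\KK$ and hence equals $\kk$ or $\KK$. As $S(\kk)=\varnothing$ by the assumed non-$\kk$-rationality of $S$, the residue field must be $\KK$, producing a closed point of degree $2$. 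The only mildly delicate point in the whole argument is the divisibility $\indexx(S_\KK)\mid\indexx(S)$ used in $(2)\Rightarrow(1)$; the rest reduces to standard facts about Severi--Brauer surfaces and the Lang--Nishimura principle.
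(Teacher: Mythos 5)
Your proof is correct and follows essentially the same route as the paper: Lang--Nishimura for $(1)\Rightarrow(3)$, Remark~\ref{rem: index 1 equivalent to rationality} for $(3)\Rightarrow(2)$, and Proposition~\ref{prop: SB closed points} for $(2)\Rightarrow(1)$. The only (harmless) difference is that for $(2)\Rightarrow(1)$ you package the argument as the divisibility $\indexx(X)=\indexx(S_\KK)\mid\indexx(S)$, whereas the paper directly transports a point of degree prime to $3$ from $S$ to $X$; both reduce to the same fact that a non-trivial Severi--Brauer surface has no closed points of degree prime to $3$.
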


\begin{proof}
	First, \ref{closed points K-trivial 1} implies \ref{closed points K-trivial 3}, because $S$ being $\KK$-trivial implies, by the Lang-Nishimura lemma, that $S(\KK)\ne\varnothing$. Since $S(\kk)=\varnothing$, we get a quadratic point on $S$. Next, \ref{closed points K-trivial 3} implies \ref{closed points K-trivial 2}, because $\indexx(S)\ne 1$ by Remark \ref{rem: index 1 equivalent to rationality}. Finally, \ref{closed points K-trivial 2} implies \ref{closed points K-trivial 1}. Indeed, if $\indexx(S)=2$ then $S$ has a point whose degree is not divisible by 3. But then the same holds for $X$, hence $X\simeq\PP_\KK^2$ by Proposition~\ref{prop: SB closed points}.
\end{proof}

\begin{prop}\label{prop: index 3}
	Let $S$ be a $G$-del Pezzo surface of degree 6 over $\kk$, and assume that $S$ is not $\kk$-rational. The following statements are equivalent.
	\begin{enumerate}
		\item\label{closed points L-trivial 1} $S$ is $\LL$-trivial.
		\item\label{closed points L-trivial 2} $\indexx(S)=3$.
		\item\label{closed points L-trivial 3} $S$ has a degree 3 point.
	\end{enumerate}
\end{prop}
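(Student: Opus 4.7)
The plan is to mirror the proof of Proposition~\ref{prop: index 2}, exchanging the Severi--Brauer surface $X$ for the involution surface $Y$ and replacing the $3$-divisibility argument of Proposition~\ref{prop: SB closed points} by a parity argument for products of conics. The implication $(3)\Rightarrow(2)$ is immediate: a degree-$3$ point on $S$ forces $\indexx(S)$ to divide $3$, and Remark~\ref{rem: index 1 equivalent to rationality}~\emph{(2)} rules out $\indexx(S)=1$ since $S$ is not $\kk$-rational.

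For $(2)\Rightarrow(1)$, I would observe that a closed point of $S$ of $\kk$-degree $d$ base-changes to a closed subscheme of $S_\LL$ whose $\LL$-degrees sum to $d$, so $\indexx(S_\LL)$ divides $\indexx(S)=3$. Birational invariance of the index together with Proposition~\ref{prop: biregular classification of dP8} yields $\indexx(Y)\mid 3$, where $Y\simeq C_1\times C_2$ is a product of two conics over $\LL$. If either $C_i$ were nontrivial, every closed point of $Y$ would project to a closed point of $C_i$ of even $\LL$-degree (since a nontrivial conic has index $2$) and hence have even degree itself, contradicting $\indexx(Y)\mid 3$. Thus both $C_i$ are trivial and $S$ is $\LL$-trivial.

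For $(1)\Rightarrow(3)$, $\LL$-triviality of $S$ makes $S_\LL$ $\LL$-birational to $\PP^1_\LL\times\PP^1_\LL$ and hence $\LL$-rational, so Lemma~\ref{lem: Lang-Nishimura} produces a closed point of $S$ of degree dividing $[\LL:\kk]$. For $G\simeq\ZZ/6\ZZ$, $[\LL:\kk]=3$ and $S(\kk)=\varnothing$ forces this degree to equal $3$. For $G\simeq\Dih_6$, $\LL$ has degree $6$ over $\kk$, so one must further rule out degrees $2$ and $6$. A degree-$2$ point would give $\indexx(S)\mid 2$, hence $\KK$-triviality by Proposition~\ref{prop: index 2}; combined with $\LL$-triviality, this gives $\Am(S_\KK)=\Am(S_\LL)=0$, and Theorem~\ref{thm: Biregular} then identifies $S$ with the standard split model (the one with $(\rho,\xi)=(1,1)$ in the parametrisation of Proposition~\ref{prop: Second D6 param}), which carries the manifest $\kk$-rational point $([1:1:1],[1:1:1])$, contradicting $S(\kk)=\varnothing$. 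To produce a degree-$3$ point directly I would use the norm-equivalence of Proposition~\ref{prop: Second D6 param} to bring $\rho=\Norm_h(\mu)$ into a form with $\mu$ in one of the sixth-degree subfields $\FF^{gf_i}$, and then verify that a point of the shape $P=([\mu:1:g(\mu)^{-1}],[1:\mu:\mu g(\mu)])$ in $M(\FF)$ has twisted stabiliser exactly $\Klein_i=\langle h,f_i\rangle$, giving a $\Dih_6$-orbit of size $3$ and hence the desired degree-$3$ closed point on $S$.

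The main obstacle is the $\Dih_6$ case of $(1)\Rightarrow(3)$: since $\LL$-triviality is defined over a sextic extension, a degree-$3$ point must correspond to a twisted stabiliser equal to one of the Klein $4$-subgroups $\Klein_i$, and this requires descending the $h$-norm datum $\rho\in\Norm_h(\FF^*)$ to a compatible choice of $\mu$. The delicate step is checking that such a $\mu$ can always be selected in some $\FF^{gf_i}$ so that the constraints $\Norm_h(\xi)\Norm_g(\rho)=1$ and $g(\rho)\Norm_f(\rho)\Norm_h(\xi)=1$ of Proposition~\ref{prop: Second D6 param} simultaneously hold and the displayed point $P$ is fixed by $\alpha_h\circ h$ and some $\alpha_{f_i}\circ f_i$ but not by $\alpha_g\circ g$, which is what transforms the abstract $\LL$-point produced by Lang--Nishimura into an honest degree-$3$ closed point on $S$.
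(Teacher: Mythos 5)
Your implications $(3)\Rightarrow(2)$ and $(2)\Rightarrow(1)$ are correct and essentially match the paper: the parity argument on $Y\simeq C_1\times C_2$ (a nontrivial conic has no closed points of odd degree) is just a hands-on substitute for the paper's one-line appeal to Springer's theorem, and the base-change/birational-invariance reduction of the index is fine. The $\ZZ/6\ZZ$ case of $(1)\Rightarrow(3)$ is also exactly the paper's argument.

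The genuine gap is the $\Dih_6$ case of $(1)\Rightarrow(3)$, and you have correctly located it yourself. Here $[\LL:\kk]=6$, so the closed point produced by Lang--Nishimura from $S(\LL)\ne\varnothing$ has degree $1$, $2$, $3$ or $6$; your Theorem~\ref{thm: Biregular} argument excludes degrees $1$ and $2$, but nothing in your write-up excludes degree $6$, and a degree-$6$ point does not yield a degree-$3$ point. The explicit construction you sketch (choosing $\mu\in\FF^{gf_i}$ with prescribed $g$- and $h$-norms so that $P$ has twisted stabiliser $\Klein_i$) is precisely the hard content, it is left unverified, and it is of the same level of difficulty as the norm manipulations the paper only carries out later (Propositions~\ref{prop: D6 closed points} and~\ref{prop: 3-points}) \emph{after} the index is already known to be $3$. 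The paper closes the gap differently and more cheaply: it descends to the intermediate cubic (non-Galois) field $\EE=\FF^{\langle h,f\rangle}\subset\LL$, over which the pair $\{E_1,F_1\}$ contracts to a degree-$8$ del Pezzo surface $Z$ with $\rk\Pic_\EE(Z)=1$ and $Z_\LL\simeq\PP^1_\LL\times\PP^1_\LL$; by Proposition~\ref{prop: biregular classification of dP8}, $Z\simeq\Weil_{\LL/\EE}\PP^1_\LL$, which manifestly has an $\EE$-point, so $S(\EE)\ne\varnothing$ and the image of $\Spec\EE\to S$ is a closed point of degree dividing $3$, hence equal to $3$. To complete your proof you would either need to import this Weil-restriction step or actually carry out the norm computation you flagged as delicate.
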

\begin{proof}
	First, \ref{closed points L-trivial 1} implies \ref{closed points K-trivial 3}. Indeed, suppose that $S_\LL$ is birational to $Y=\PP_\LL^1\times\PP_\LL^1$. Then $S(\LL)\ne\varnothing$ by the Lang-Nishimura lemma and hence in the case $\Gal(\FF/\kk)\simeq\ZZ/6\ZZ$ the $g$-orbit of an $\LL$-rational point gives a point of degree 3 on $S$, as $S(\kk)=\varnothing$. Assume that $\Gal(\FF/\kk)\simeq\Dih_6$. In this case, we consider the birational contraction $S_\EE\to Z$ over the field $\EE=\FF^{\langle h,f\rangle}\subset\LL$, see Section \ref{subsec: SB data}. Then $Z$ is a del Pezzo surface of degree 8 with $\rk\Pic_\EE(Z)=1$ such that $Z_\LL\simeq\PP_\LL^1\times\PP_\LL^1$ and by Proposition~\ref{prop: biregular classification of dP8} one has $Z\simeq\Weil_{\LL/\EE}\PP_{\LL}^1$, see \cite[Lemma 7.3]{ShramovVologodskyPointless} for details. Therefore, $Z(\EE)\ne\varnothing$ and thus $S(\EE)\ne\varnothing$. Now the $g$-orbit of an $\EE$-rational point gives a 3-point on $S$.
	Next, \ref{closed points L-trivial 3} implies \ref{closed points K-trivial 2}, because $\indexx(S)\ne 1$ by Remark \ref{rem: index 1 equivalent to rationality}. Finally, \ref{closed points L-trivial 2} implies \ref{closed points K-trivial 1}. Indeed, if $\indexx(S)=3$ then $S$ has a point of odd degree, hence the same is true for $Y$. But then $Y(\LL)\ne\varnothing$ by Springer's theorem \cite{Springer}.
\end{proof}

Finally, we have the following property. 

\begin{prop}\label{prop: index 6}
	Let $S$ be a $G$-del Pezzo surface of degree 6 over $\kk$, and assume that $S$ is not $\kk$-rational. The following statements are equivalent.
	\begin{enumerate}
		\item\label{closed points L,K-nontrivial 1} $S$ is neither $\KK$-trivial, nor $\LL$-trivial.
		\item\label{closed points L,K-nontrivial 2} $\indexx(S)=6$.
		\item\label{closed points L,K-nontrivial rigidity} $S$ is birationally super-rigid, i.e. $\Bir_\kk(S)=\Aut_\kk(S)$.
	\end{enumerate}
\end{prop}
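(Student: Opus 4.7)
The equivalence $(1) \Leftrightarrow (2)$ is immediate from the previous two propositions. Indeed, by Remark~\ref{rem: index 1 equivalent to rationality}, $\indexx(S)$ divides~$6$, and since $S(\kk)=\varnothing$ we have $\indexx(S)\in\{2,3,6\}$. Proposition~\ref{prop: index 2} identifies $\indexx(S)=2$ with $\KK$-triviality and Proposition~\ref{prop: index 3} identifies $\indexx(S)=3$ with $\LL$-triviality, so $\indexx(S)=6$ is exactly the condition that $S$ is neither $\KK$- nor $\LL$-trivial.

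For the implication $(2)\Rightarrow(3)$, the plan is to invoke the Sarkisov program (Theorem~\ref{thm: sarkisov}), which expresses every birational self-map of $S$ as a composition of Sarkisov links and automorphisms. It therefore suffices to show that no Sarkisov link originates from $S$ and that $S$ is solid. Every Sarkisov link of type~II out of a minimal sextic del Pezzo surface must be centered at a closed point whose Galois orbit, together with the hexagonal arrangement of $(-1)$-curves, forces its degree to be~$2$ or~$3$; this is the content of the link analysis in Section~\ref{sec: birational}. Absence of such points, guaranteed by $\indexx(S)=6$, thus rules out any link of type~II. A hypothetical birational conic bundle structure on $S$ (equivalently, a link of type~III or~IV) would, via Lang--Nishimura (Lemma~\ref{lem: Lang-Nishimura}) applied to a suitable fibre, yield a point of degree~$2$ on $S$, again contradicting $\indexx(S)=6$. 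Hence no link leaves $S$, so $S$ is solid and $\Bir_\kk(S)=\Aut_\kk(S)$.

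For $(3)\Rightarrow(1)$, I argue by contrapositive: assume $S$ is $\KK$-trivial (the $\LL$-trivial case being strictly analogous). By Proposition~\ref{prop: index 2} the surface $S$ admits a closed point $p$ of degree~$2$. Blowing up $p$ produces a del Pezzo surface of degree~$4$ carrying two new $(-1)$-curves; on its base change to $\overline{\kk}$, two distinct Galois-stable configurations of disjoint $(-1)$-curves can be contracted, yielding two birational morphisms to sextic del Pezzo surfaces. Their composition produces a Sarkisov link $\chi\colon S\dashrightarrow S'$ of type~II based at~$p$. Using the Cremona-type involution $\iota$ coming from the toric structure of the hexagon, one arranges (possibly after precomposition with an automorphism) that $S'\simeq S$; the resulting birational self-map permutes the $(-1)$-curves of $S_{\overline{\kk}}$ in a pattern distinct from the image of $\Aut_\kk(S)$ under $\Phi$, and therefore supplies an element of $\Bir_\kk(S)\setminus\Aut_\kk(S)$.

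The hardest step is $(2)\Rightarrow(3)$: one must certify that, in the complete list of Sarkisov links out of a sextic del Pezzo surface, \emph{every} link (including those producing conic bundle targets) requires a closed point of degree~$2$ or~$3$. This relies on the explicit classification of links developed in Section~\ref{sec: birational} combined with the description of involution surfaces recalled in Section~\ref{subsec: involution surfaces}. The construction in $(3)\Rightarrow(1)$ is comparatively concrete but also depends on this link machinery to verify that the self-map produced is genuinely non-biregular, e.g.\ by comparing its action on $\Pic(S_{\overline{\kk}})$ with that of $\Phi(\Aut_\kk(S))$ determined in Theorem~\ref{thm: Autom}.
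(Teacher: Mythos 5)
Your overall strategy is the one the paper uses: the equivalence of (1) and (2) is read off from Propositions \ref{prop: index 2} and \ref{prop: index 3} together with the fact that $\indexx(S)$ divides $6$, and the equivalence with (3) comes from the Sarkisov program via Iskovskikh's classification of links out of a minimal sextic del Pezzo surface. Your $(2)\Rightarrow(3)$ is fine in substance, though the assertion that a type~II link must be centered at a point of degree $2$ or $3$ is imprecise: the classification recalled at the start of Section~\ref{sec: birational} allows centers of degree $1,2,3,4,5$ (Bertini and Geiser involutions included). This does not affect the conclusion, since $\indexx(S)=6$ excludes closed points of all these degrees at once.

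The genuine problem is in $(3)\Rightarrow(1)$. Precomposing a link $\chi\colon S\dashrightarrow S'$ with an automorphism of $S$, or appealing to the Cremona-type involution $\iota$, cannot change the target $S'$ up to isomorphism: by Corollary~\ref{cor: 2 linkSBData} the isomorphism class of $S'$ is determined by $S$ and the splitting field $\EE$ of the blown-up $2$-point (one has $\KK'=\EE$), so ``arranging $S'\simeq S$'' is not something you can do after the fact --- it holds precisely when $\EE=\KK$. The fix is either (a) to invoke Proposition~\ref{prop: 2-points} (resp.\ Proposition~\ref{prop: 3-points}), which produces a $2$-point (resp.\ $3$-point) \emph{in general position} with the right splitting field, yielding a genuine self-link and hence a non-biregular element of $\Bir_\kk(S)$; or (b) to observe that if $S'\not\simeq S$ then birational rigidity, and a fortiori superrigidity, already fails. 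Note also that the center must be in general position for the link to exist at all; for $2$-points this is automatic by Proposition~\ref{prop: position of points}, but for $3$-points it is not, which is exactly why Proposition~\ref{prop: 3-points} is needed in the $\LL$-trivial case.
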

\begin{proof}
The equivalence of \ref{closed points L,K-nontrivial 1} and \ref{closed points L,K-nontrivial 2} follows from Propositions \ref{prop: index 2} and \ref{prop: index 3}, and the fact that $\indexx(S)$ divides 6. Their equivalence to \ref{closed points L,K-nontrivial rigidity} follows from the Sarkisov program for $S$.
\end{proof}

\subsection{Explicit parametrization}

Now we want to look at $d$-points on $S$. For their image under~$\varphi$ we can use the coordinates $(\lambda_1,\lambda_2)$ defined over some field that lies over $\SplittingHex$. This point then corresponds to $([1:\lambda_1:\lambda_2],[1:\lambda_1^{-1}:\lambda_2^{-1}])$. 

\begin{Notation}
	In what follows, we fix the twisted action $\varphi\colon S_{\SplittingHex}\iso M$ as in \eqref{eq: twisted action}. Further, $\EE$ will denote the splitting field of a degree $d$ point on $S$. If $\EE\nsubseteq\SplittingHex$ then the generators of  $\Gal(\EE/\kk)$ of order 3 and 2 will be denoted $w$ and $t$, respectively, as in Remark \ref{rem: Galois group of the composite}. We will consider the $\varphi$-image of a $d$-point $p$ on $S$, whose geometric components are denoted $p_1,\ldots,p_d$, $d\in\{2,3\}$. By a slight abuse of notation, we mean that $\varphi$ is naturally extended to the composite field $\FF\EE$ when $\EE\nsubseteq\SplittingHex$. We set $\varphi(p_1)=(\lambda_1,\lambda_2)$, where $\lambda_1,\lambda_2\in\SplittingHex\EE^*$. When $\EE\nsubseteq\SplittingHex$ we will assume that $t$ is chosen so that it fixes $p_1$.
\end{Notation}

\begin{prop}\label{prop: Z6 closed points}
    Let $S$ be a non-$\kk$-rational sextic $\ZZ/6\ZZ$-del Pezzo surface. 
    \begin{enumerate}[leftmargin=*, labelindent=20pt, itemsep=5pt]
        \item\label{Z6 2-points} If $S$ is $\KK$-trivial, then there is a $2$-point on $S$. If a 2-point splits over a subfield of $\SplittingHex$, then this field must be $\KK=\SplittingHex^g$ and the $\varphi$-image of this point is given by the $\alpha_h\circ h$-orbit of the point $(\lambda,\lambda g(\lambda))$, where $\lambda\in \SplittingHex^*$, $\Norm_g(\lambda)=1$. 
        
        The $\varphi$-image of $2$-points over another quadratic splitting field $\EE\nsubseteq\SplittingHex$ is given by the $t$-orbit of the point $(\lambda,\lambda g(\lambda))$, where $\lambda \in \SplittingHex \EE^*$, $\ \Norm_g(\lambda) = 1$,  $\Norm_h(\lambda) = \rho$.
        
        \item\label{Z6 3-points} If $S$ is $\LL$-trivial, then there is a $3$-point on $S$. If a 3-point splits over a subfield of $\SplittingHex$, then this field must be $\LL=\SplittingHex^h$ and the $\varphi$-image of this point is given by the $\alpha_g\circ g$-orbit of $(\lambda_1,\lambda_2)$, where $\lambda_1,\lambda_2 \in \SplittingHex^*$, $\Norm_h(\lambda_1)=\Norm_h(\lambda_2)=1.$
        
        The $\varphi$-image of $3$-points with some other splitting field $\EE\nsubseteq\SplittingHex$ (of degree $3$ or $6$) is given by the $w$-orbit of the point $(\lambda,\lambda g(\lambda))$, where $\lambda \in \SplittingHex\EE^*$, $\Norm_g(\lambda)=\xi^{-1}$, $\Norm_h(\lambda)=1$. If $\EE/\kk$ is of degree $6$, then we have two more conditions.  If $ \SplittingHex \cap \EE = \kk$, then $t(\lambda)=\lambda$. If $\SplittingHex \cap \EE = \KK$, then the second and third components of $\varphi(p)$ are permuted by $\alpha_h \circ h$.
        \item\label{Z6 index 6} If $S$ is neither $\KK$-trivial, nor $\LL$-trivial, then $\indexx(S)=6$.
    \end{enumerate}
\end{prop}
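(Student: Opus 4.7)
The overall plan is to interpret closed points of degree $d$ on $S$ as $\Gal(\overline{\kk}/\kk)$-orbits of size $d$ in $S(\overline{\kk})$, and then, via $\varphi\colon S_{\SplittingHex}\iso M$, to transport the problem to the search of orbits in $M(\overline{\kk})$ under the twisted action $u\mapsto\alpha_u\circ u$ of the relevant Galois group. The computations are of the same flavor as those in Theorem~\ref{thm: Autom}\ref{Auto Case Z6}: the explicit torus coordinates on $M$ (coming from \eqref{eq: del Pezzo 6 torus action}) together with the semidirect relations \eqref{eq: semidirect structure} reduce each fixed-point condition to a system of norm equations on $\lambda_1,\lambda_2$.

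Part \ref{Z6 index 6} is just Proposition~\ref{prop: index 6} specialised to $G\simeq\ZZ/6\ZZ$, since $\indexx(S)\in\{1,2,3,6\}$ and the first three are excluded by hypothesis. For part \ref{Z6 2-points}, the existence of a $2$-point from $\KK$-triviality is Proposition~\ref{prop: index 2}. For the parametrization, I would first use the norm-equivalence $(\rho,\xi)\sim(\Norm_h(\lambda')\rho,\Norm_g(\lambda'^{-1})\xi)$ of Proposition~\ref{prop: Z6 equivalence} to reduce, without loss of generality, to $\xi=1$. Now a $2$-point with splitting field $\EE\subseteq\SplittingHex$ has stabilizer of index $2$ in $G=\ZZ/6\ZZ$, which is forced to be $\langle g\rangle$; hence $\EE=\KK$, and $\varphi(p_1)$ must be fixed by $\alpha_g\circ g$. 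Writing $\alpha_g\circ g\colon(\lambda_1,\lambda_2)\mapsto(\xi^{-1}g(\lambda_2)^{-1},\xi^{-1}g(\lambda_1)g(\lambda_2)^{-1})$, the fixed-point conditions become $\lambda_2=\lambda_1 g(\lambda_1)$ and $\Norm_g(\lambda_1)=\xi^{-1}=1$, which (setting $\lambda=\lambda_1$) is exactly the claimed description. For a splitting field $\EE\not\subseteq\SplittingHex$ (quadratic), I would invoke Remark~\ref{rem: Galois group of the composite} with $\SplittingHex\cap\EE=\kk$ to get $\Gal(\SplittingHex\EE/\kk)\simeq\ZZ/6\ZZ\times\langle t\rangle$, and pull back the cocycle along the restriction to $G$. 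Among the index-$2$ subgroups, the only one whose fixed field is neither contained in $\SplittingHex$ nor equal to $\KK$ is the full first factor $\ZZ/6\ZZ$, so $p_1$ must be fixed by both $\alpha_g\circ g$ and $\alpha_h\circ h$. The first gives $(\lambda,\lambda g(\lambda))$ with $\Norm_g(\lambda)=1$, and the second, using $\alpha_h\circ h\colon(\lambda_1,\lambda_2)\mapsto(\rho h(\lambda_1)^{-1},\rho g(\rho)h(\lambda_2)^{-1})$, reduces to $\Norm_h(\lambda)=\rho$ (the condition on $\lambda_2$ is automatic since $gh=hg$ and $\rho\in\SplittingHex^h$). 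The orbit under $t$ then provides the second geometric component.

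Part \ref{Z6 3-points} is handled symmetrically, interchanging the roles of $g$ and $h$. $\LL$-triviality allows me to take $\rho=1$ by the same equivalence from Proposition~\ref{prop: Z6 equivalence}; a $3$-point with $\EE\subseteq\SplittingHex$ must have splitting field $\LL=\SplittingHex^h$ (the only cubic subfield of $\FF$), and the fixed-point condition for $\alpha_h\circ h$ becomes $\Norm_h(\lambda_1)=\rho=1$, $\Norm_h(\lambda_2)=\rho g(\rho)=1$, as stated. For $\EE\not\subseteq\SplittingHex$, I would split into $[\EE:\kk]=3$ and $[\EE:\kk]=6$; the sextic case subdivides further by whether $\SplittingHex\cap\EE=\kk$ (forcing $t(\lambda)=\lambda$) or $\SplittingHex\cap\EE=\KK$ (forcing the components of $\varphi(p)$ to be compatible under $\alpha_h\circ h$), and in each subcase Remark~\ref{rem: Galois group of the composite} together with the fixed-point equations for the generator $w$ (and $t$ when applicable) delivers the stated conditions $\Norm_g(\lambda)=\xi^{-1}$ and $\Norm_h(\lambda)=1$.

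The main technical obstacle is the bookkeeping in the two sub-cases with $\EE\not\subseteq\SplittingHex$: one must enumerate the relevant subgroups of $\Gal(\SplittingHex\EE/\kk)$, match each with the field-theoretic intersection $\SplittingHex\cap\EE$, and verify that the resulting toric equations collapse to the succinct form stated in the proposition. The algebra itself is straightforward norm manipulation (analogous to the end of the proof of Theorem~\ref{thm: Autom}\ref{Auto Case Z6}), but the case distinction has to be done carefully to ensure that every possible $2$- or $3$-point is accounted for exactly once.
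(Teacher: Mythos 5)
Your proposal is correct and follows essentially the same route as the paper: reduce to $\xi=1$ (resp. $\rho=1$) using the norm-equivalence, transport the point to $M$ via $\varphi$, and solve the fixed-point equations for $\alpha_g\circ g$ and $\alpha_h\circ h$ in torus coordinates, with the subfield bookkeeping handled by Remark~\ref{rem: Galois group of the composite}. The two displayed fixed-point conditions you derive are exactly the paper's equations \eqref{eq: Z6 closed points g} and \eqref{eq: Z6 closed points h}, and your case analysis for $\EE\nsubseteq\SplittingHex$ matches the paper's.
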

\begin{proof}
	If $p$ is a 2-point (respectively, a 3-point) split over $\KK=\SplittingHex^g$ (respectively, over $\LL=\SplittingHex^h$), then $q=\varphi(p)$ satisfies
	\[
	q=\varphi\circ u\circ \varphi^{-1}(q)=\alpha_u u(q),
	\]
	where $u=g$ for 2-points and $u=h$ for 3-points. So, we start by looking at pairs $(\lambda_1,\lambda_2)$ which are fixed by either $\alpha_g\circ g$, or $\alpha_h\circ h$. We have
    \begin{equation}\label{eq: Z6 closed points g}
    (\lambda_1,\lambda_2) = \alpha_g (g (\lambda_1,\lambda_2)) = \left (\frac{1}{\xi g(\lambda_2)}, \frac{g(\lambda_1)}{\xi g(\lambda_2)} \right )
    \end{equation}
    if and only if $\lambda_2 = \lambda_1 g(\lambda_1)$ and $\Norm_g(\lambda_1^{-1}) = \xi$. We have 
    \begin{equation}\label{eq: Z6 closed points h}
    (\lambda_1,\lambda_2) = \alpha_h (h(\lambda_1,\lambda_2)) = \left (\frac{\rho}{h(\lambda_1)}, \frac{\rho g(\rho)}{h(\lambda_2)} \right )
    \end{equation}
    if and only if $\Norm_h(\lambda_1) = \rho$, $\Norm_h(\lambda_2) = \rho g(\rho)$. We now consider 3 cases from the statement.    
    
    \ref{Z6 2-points} 
    A $\KK$-rational point on the Severi-Brauer surface  gives rise to a $2$-point on $S$. Let $p=\{p_1,p_2\}$ be a $2$-point on $S$ that splits over $\KK$ and put $(\lambda_1,\lambda_2) = \varphi(p_1)$. Since $p_1$ is fixed by $g$, the condition \eqref{eq: Z6 closed points g} holds, hence we get, using $\xi=1$, that $\lambda_1=\lambda \in \SplittingHex^*,$ $\Norm_g(\lambda)=1$, $\lambda_2 = \lambda g(\lambda)$. On the other hand, $p_2=h(p_1)$ and thus \eqref{eq: Z6 closed points h} implies $\varphi(p_2) = \alpha_h (h(p_1)) = (\rho h(\lambda^{-1}),\rho h(\lambda^{-1})g(\rho h(\lambda^{-1})))$. 
    
    Now let $p$ be a $2$-point that does not split over $\KK$. Then its splitting field $\EE$ is quadratic extension of $\kk$ and $\Gal(\EE/\kk)\simeq\ZZ/2$. So, $(\lambda_1,\lambda_2)=\varphi(p_1)$ is invariant under $\alpha_g\circ g$ and $\alpha_h\circ h$, so $\Norm_g(\lambda_1) =1$, $\Norm_h(\lambda_1) = \rho$, and $\lambda_2 = \lambda_1 g(\lambda_1)$, where $\lambda_1 \in \SplittingHex\EE$. Note that $p_2=t(p_1)$, but since $t$ acts trivially on $\SplittingHex$ and $\varphi$ is defined over $\SplittingHex$, we have that the action of $t$ is not twisted by $\varphi$.     
    
    \ref{Z6 3-points} 
    An $\LL$-rational point on an involution surface gives rise to a $3$-point on $S$. Let $p=\{p_1,p_2,p_3\}$ be a $3$-point, and as before $p_1=(\lambda_1,\lambda_2)$. Since $p_1$ is fixed by $\alpha_h\circ h$, we can use the condition \eqref{eq: Z6 closed points h} and $\rho=1$ to deduce that $\Norm_h(\lambda_1)=\Norm_h(\lambda_2)=1$. The other two components $p_2$ and $p_3$ are represented by the image of $(\lambda_1,\lambda_2)$ under $\alpha_g\circ g$ and $\alpha_{g^2}\circ g^2$ which is exactly what is stated in the Proposition. 
    
    Now let $p$ be a 3-point that does not split over $\LL$, and let $\EE$ be its splitting field. Then $\Gal(\EE/\kk)\simeq\ZZ/3\ZZ$ or $\Gal(\EE/\kk)\simeq\Sym_3$. In the first case, we have either $\EE\cap\FF=\EE$ or $\EE\cap\FF=\kk$, but the former case forces $\EE=\LL$, which was already discussed above. So, we assume $\EE\cap\FF=\kk$ and let $w$ be a generator of $\Gal(\EE/\kk)$. As before, letting $(\lambda_1,\lambda_2)=\varphi(p_1)$, we notice that $g$ and $h$ act trivially on $p$, so \eqref{eq: Z6 closed points g} gives $\lambda_1=\lambda$, $\lambda_2=\lambda g(\lambda)$ and $\Norm_g(\lambda)=\xi^{-1}$, while \eqref{eq: Z6 closed points h} gives $\Norm_h(\lambda)=1$ (recall that $\rho=1$). Taking a $w$-orbit of $p_1$, we get the 3-point from the statement.
    
    In the second case $\Gal(\EE/\kk)\simeq\Sym_3$, let $w$ be a generator of order 3 and $t$ be a generator of order 2. If $\FF\cap\EE=\kk$ then we argue as in the previous paragraph to get that a 3-point is of the said form (note that $g$ and $h$ act trivially on $p$), and choose $t$ so that it fixes~$p_1$, and permutes $p_2$ and $p_3$. Suppose that $\FF\cap\EE=\KK=\FF^g$. As was mentioned in Remark \ref{rem: Galois group of the composite}, the group $\Gal(\FF\EE/\kk)$, viewed as a subgroup of $\Gal(\FF/\kk)\times\Gal(\EE/\kk)$, is generated by $(g,\id)$, $(\id,w)$ and $(h,t)$. So, let $\varphi(p_1)=(\lambda_1,\lambda_2)$, where $\lambda_1,\lambda_2\in\FF\EE^*$. Note that $(\lambda_1,\lambda_2)$ is $(g,\id)$-invariant, which gives $\lambda_1=\lambda$, $\lambda_2=\lambda g(\lambda)$ and $\Norm_g(\lambda)=\xi^{-1}$ as before. As we choose $t$ to preserve $p_1$ and permute $p_2$ with $p_3$, we find that $(\lambda,\lambda g(\lambda))$ is $\alpha_h\circ h$-invariant, hence $\Norm_h(\lambda)=1$. By contrast, $p_2$ is sent onto $p_3$ by $(h,t)$, hence the claim.
    
    \ref{Z6 index 6} Follows from Proposition \ref{prop: index 6}.
\end{proof}

\begin{prop}\label{prop: S3 closed points}
    Let $S$ be a non-$\kk$-rational sextic $\Sym_3$-del Pezzo surface. Then there is a $3$-point on $S$. If such point splits over a subfield of $\SplittingHex$, then it must be $\SplittingHex$ itself. The $\varphi$-image of such 3-point is given by the $\alpha_g\circ g$-orbit of the point $\left (\lambda, f(\lambda^{-1})\xi^{-1} \right)$, where $\lambda\in\SplittingHex^*$. 
    
    The $\varphi$-image of 3-points with some other splitting field $\EE\nsubseteq\SplittingHex$ $($of degree $3$ or $6$$)$ is given by the $w$-orbit of the point $(\lambda,\lambda g(\lambda))$, where $w$ is an order $3$ element in $\Gal(\EE/\kk)$, $\lambda \in (\SplittingHex \EE^*)^{gf}$, $\Norm_g(\lambda)=\xi^{-1}$.
    
    If $\EE/\kk$ is of degree $6$ $($thus the Galois group is $\Sym_3$$)$, then we have two more conditions.  If $ \SplittingHex \cap \EE = \kk$, then $t(\lambda)=\lambda$. If $\SplittingHex \cap \EE = \KK$, then the second and third components are permuted by $\alpha_f \circ f$.
\end{prop}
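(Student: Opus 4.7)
The strategy mirrors that of Proposition \ref{prop: Z6 closed points}, now applied to the $\Sym_3$ setting using the parametrization $\alpha_g=((\xi^{-1},\xi^{-1}),\epsilon(g))$, $\alpha_f=((\xi^{-1},\xi^{-1}),\epsilon(f))$ with $\xi\in\kk^*$ from Proposition \ref{prop: S3 final parametrization}, and the formulas for $\epsilon(g)=\theta$ and $\epsilon(f)=\sigma$ from \eqref{eq: dP6 order 3 rotation}. For existence I would verify by a direct computation that, for every $\lambda\in\FF^*$, the point $(\lambda,f(\lambda^{-1})\xi^{-1})$ is fixed by $\alpha_f\circ f$. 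Using the cocycle identity $\alpha_f\cdot f(\alpha_g)=\alpha_{g^2}\cdot g^2(\alpha_f)$ forced by the relation $fg=g^2f$, its $\alpha_g\circ g$-orbit consists of three distinct points that are cyclically permuted by $\alpha_g\circ g$, with $\alpha_f\circ f$ fixing the first and swapping the other two. Hence these three points form a single $\Gal(\FF/\kk)$-orbit and define a closed $3$-point on $S$; taking $\lambda=1$ yields an explicit instance.

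For the subfield claim, the splitting field of a closed point is Galois over $\kk$, and since $\Sym_3$ has only $\{e\}$, $\langle g\rangle$ and $\Sym_3$ as normal subgroups, the Galois subfields of $\FF$ are $\FF$, $\KK$ and $\kk$, of degrees $6, 2, 1$. As a $3$-point has splitting field of degree at least $3$, any splitting field contained in $\FF$ must equal $\FF$. Given such a 3-point $\{p_1,p_2,p_3\}$, the stabilizer of $p_1$ in $\Gal(\FF/\kk)$ is conjugate to $\langle f\rangle$, so after relabelling we may assume $f(p_1)=p_1$; then $\varphi(p_1)$ must be $\alpha_f\circ f$-invariant, and solving this equation yields $\varphi(p_1)=(\lambda,f(\lambda^{-1})\xi^{-1})$ for some $\lambda\in\FF^*$, the other two components being $\alpha_g\circ g(\varphi(p_1))$ and $\alpha_{g^2}\circ g^2(\varphi(p_1))$.

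For a $3$-point whose splitting field $\EE$ is not contained in $\FF$, the $\Gal(\FF\EE/\kk)$-action on $\{p_1,p_2,p_3\}$ factors through $\Gal(\EE/\kk)$, so Remark \ref{rem: Galois group of the composite} describes $\Gal(\FF\EE/\kk)$ in each case. If $[\EE:\kk]=3$ then $\FF\cap\EE=\kk$ automatically, the subgroup $\Gal(\FF/\kk)$ fixes each $p_i$, so $\varphi(p_1)$ is fixed by both $\alpha_g\circ g$ and $\alpha_f\circ f$; the first gives $(\lambda,\lambda g(\lambda))$ with $\Norm_g(\lambda)=\xi^{-1}$, and the second additionally forces $gf(\lambda)=\lambda$, i.e.\ $\lambda\in(\FF\EE^*)^{gf}$, via the identity $fg^2=gf$. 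For $[\EE:\kk]=6$ with $\FF\cap\EE=\kk$, the same conditions hold together with $t(\lambda)=\lambda$, since $(\id,t)\in\Gal(\FF\EE/\kk)$ fixes $p_1$. For $\FF\cap\EE=\KK$, by Remark \ref{rem: Galois group of the composite} the group $\Gal(\FF\EE/\kk)$ is generated by $(g,\id),(\id,w),(f,t)$, and the stabilizer of $p_1$ is generated by $(g,\id)$ and $(f,t)$; invariance under the first gives the same form for $\varphi(p_1)$, while $(f,t)$-invariance is equivalent to the statement that $\alpha_f\circ f$ sends $\varphi(p_2)$ to $\varphi(p_3)$.

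The main technical subtlety arises precisely in this last subcase, where $(f,\id)\notin\Gal(\FF\EE/\kk)$: the invariance conditions coming from $f$ and $t$ cannot be imposed separately, so the cleanest way to state the constraint is geometrically, as a permutation condition on the components under $\alpha_f\circ f$, rather than as an algebraic equation on $\lambda$.
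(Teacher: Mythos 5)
Your proof is correct and follows essentially the same route as the paper's: the Galois-theoretic argument that the only candidate subfield of $\FF$ is $\FF$ itself, and the case-by-case invariance computations for $\EE=\FF$, $\EE\cap\FF=\kk$ and $\EE\cap\FF=\KK$ all match (the paper gets existence more cheaply from an alternating triple of hexagon vertices, and your claim that the $\alpha_g\circ g$-orbit has three distinct points should be justified by $S(\kk)=\varnothing$). The only place you stop short is the subcase $\EE\cap\FF=\KK$: the algebraic condition $\lambda\in(\FF\EE^*)^{gf}$ is still available there if one reads $gf$ as the element $(g,\id)(f,t)=(gf,t)$ of $\Gal(\FF\EE/\kk)$ — combining $(g,\id)$-invariance (which gives the form $(\lambda,\lambda g(\lambda))$ with $\Norm_g(\lambda)=\xi^{-1}$) with $(f,t)$-invariance forces $(ft)(\lambda)=g^2(\lambda)$, i.e. $(gf,t)(\lambda)=\lambda$ — so the constraint you state only geometrically is the same equation on $\lambda$ as in the other cases.
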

\begin{proof}
    Any three vertices at distance 2 from each other in the graph $\Sigma$ give a $3$-point on $S$, hence $\indexx(S)=3$ by Proposition~\ref{prop: index 3}. Let $\EE$ be the splitting field of a 3-point on $S$. Then one of the following holds:
    \begin{enumerate}[label=\alph*)]
    	\item $\EE\cap\FF=\FF$, hence $\EE=\FF$.
    	\item $\EE\cap\FF=\kk$.
    	\item $\EE\cap\FF$ is quadratic over $\kk$, i.e. $\EE\cap\FF=\FF^g=\KK$. Then $\Gal(\EE/\kk)\simeq\Sym_3$.
    	\item $\EE\cap\FF$ is cubic over $\kk$. This case is not possible since $\EE\cap\FF$ must be Galois over $\kk$.
    \end{enumerate}    
    Suppose that we are in case (a). A 3-point that splits over $\SplittingHex$ has one component fixed by~$f$. Thus we solve $(\lambda_1,\lambda_2) = \alpha_f\circ f(\lambda_1,\lambda_2)$ and we get \[
    (\lambda_1,\lambda_2) = \alpha_f\circ f(\lambda_1,\lambda_2) = \left (\frac{1}{\xi f(\lambda_2)}, \frac{1}{\xi f(\lambda_1)}\right )
    \] 
    which is equivalent to $\lambda_2 = \xi^{-1}f(\lambda_1^{-1})$; we set $\lambda_1=\lambda$. Calculating further the $\alpha_g\circ g$-orbit of this pair, we get the points from the statement.     
    
    Now assume that there is a $3$-point that splits over a field $\EE\ne\SplittingHex$, i.e. we are in one of the cases (b) or (c), so either $\SplittingHex \cap \EE = \kk$ or $\SplittingHex \cap \EE = \SplittingHex^g=\KK$ and $\Gal(\EE/\kk)\simeq \Sym_3$. As before, we let $w$ be an order 3 generator in $\Gal(\EE/\kk)$ and $t$ be an order 2 generator, if exists. If $\EE\cap\SplittingHex=\kk$ then we argue as in $\ZZ/6$ case, taking $t\in\Gal(\EE/\kk)$ a transposition that fixes $p_1$. Suppose that $\FF\cap\EE=\KK$. By Remark \ref{rem: Galois group of the composite}, the group $\Gal(\FF\EE/\kk)$, viewed as a subgroup of $\Gal(\FF/\kk)\times\Gal(\EE/\kk)$, is generated by $(g,\id)$, $(\id,w)$ and $(f,t)$. The proof finishes as in Proposition \ref{prop: Z6 closed points} by noticing that 
    \[
    \left (\lambda,\frac{1}{f(\lambda)\xi} \right )=\alpha_g\circ g\left (\lambda,\frac{1}{f(\lambda)\xi} \right )=(gf(\lambda),g(\lambda)gf(\lambda)),
    \]
    is equivalent to $gf(\lambda) = \lambda$, $\Norm_g(\lambda) = \xi^{-1}$.
\end{proof}

\begin{prop}\label{prop: D6 closed points}
    Let $S$ be a non-$\kk$-rational sextic $\Dih_6$-del Pezzo surface. Then one of the following holds. 
    \begin{enumerate}[leftmargin=*, labelindent=20pt, itemsep=5pt]
        \item \label{D6 2-points} If $S$ is $\KK$-trivial then there is a $2$-point on $S$. If a 2-point splits over a subfield of $\SplittingHex$, then this field is either $\KK=\SplittingHex^{\langle g,s\rangle}$, or $\KK=\SplittingHex^{\langle g,f\rangle}$, i.e. there are no $2$-points that split over $\SplittingHex^{\langle g,h\rangle}$. The $\varphi$-image of all $2$-points split over $\KK$ is given by the $\alpha_h\circ h$-orbit of the point $(\lambda,\lambda g(\lambda))$, where $\Norm_g(\lambda)=1$, $\Norm_{gs}(\lambda) = \rho$. The $\varphi$-image of all $2$-points split over $\SplittingHex^{\langle g,f\rangle}$ is given by the $\alpha_h\circ h$-orbit of the point $(\lambda,\lambda g(\lambda))$, where $\Norm_g(\lambda)=1$, $\lambda\in\SplittingHex^{gf}$. 
        
        The $\varphi$-image of 2-points with some other quadratic splitting field $\EE\nsubseteq\SplittingHex$ is given by the $t$-orbit of $(\lambda,\lambda g(\lambda))$, where $\lambda \in\SplittingHex \EE^*$, $\Norm_g(\lambda) = 1$, $\Norm_h(\lambda)=\rho$, $\Norm_{gs}(\lambda)=\rho$. 
        
        \item \label{D6 3-points} If $S$ is $\LL$-trivial, then there is a $3$-point on $S$. If a 3-point splits over a subfield of $\SplittingHex$, then this field must be $\SplittingHex^h$. The $\varphi$-image of this point is given by the $\alpha_g\circ g$-orbit of $(\lambda,f(\lambda^{-1})\xi^{-1})$, where $\lambda\in\SplittingHex^*$ and $\Norm_h(\lambda)=1$.
        
        If a 3-point splits over some other field $\EE\nsubseteq\SplittingHex$, then its $\varphi$-image is given by the $w$-orbit of $(\lambda,\lambda g(\lambda))$, where $\lambda\in\SplittingHex\EE^*$,  $\rho=\Norm_g(\lambda)=\Norm_h(\lambda)=\Norm_{gs}(\lambda) = 1$. If $\EE/\kk$ is of degree 6, then we have two more conditions. If $\SplittingHex \cap \EE = \kk$ then $t(\lambda)=\lambda$. If $\SplittingHex\cap \EE$ is of degree $2$ over~$\kk$ with the Galois group generated by $v \in \{ f,s,h\}$ then the second and third components are permuted by $\alpha_v \circ v$.
        \item If $S$ is neither $\KK$-trivial, nor $\LL$-trivial, then $\indexx(S)=6$.
    \end{enumerate}
\end{prop}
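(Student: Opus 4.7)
The plan is to carry out a case-by-case analysis parallel to the proofs of Propositions~\ref{prop: Z6 closed points} and \ref{prop: S3 closed points}, but now managing all three types of Galois generators $g, h, f$ simultaneously. Part~(3) is immediate from Proposition~\ref{prop: index 6}, and the existence of a 2-point in (1) (resp.\ a 3-point in (2)) follows from Proposition~\ref{prop: index 2} (resp.\ Proposition~\ref{prop: index 3}). Thus the substance of the proof is the explicit description of the points and of their splitting fields.

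Using the parametrization of Proposition~\ref{prop: Second D6 param}, I first record explicit formulas for the twisted Galois action on a torus point $(\lambda_1,\lambda_2)=\varphi(p_1)$, namely
\[
\alpha_g\circ g(\lambda_1,\lambda_2)=\Bigl(\tfrac{1}{\xi g(\lambda_2)},\tfrac{g(\lambda_1)}{\xi g(\lambda_2)}\Bigr),\qquad
\alpha_h\circ h(\lambda_1,\lambda_2)=\Bigl(\tfrac{\rho}{h(\lambda_1)},\tfrac{\rho g(\rho)}{h(\lambda_2)}\Bigr),
\]
together with the analogous formulas for $\alpha_f\circ f$ and $\alpha_s\circ s$ derived from $\alpha_s=\alpha_h h(\alpha_f)=((\rho h(\xi),\rho g(\rho)h(\xi)),\epsilon(s))$. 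For part (1), $\KK$-triviality gives $\xi\in\Norm_g(\SplittingHex^*)$; as in the proof of Theorem~\ref{thm: D6 iso criterion} (taking $\theta'=\Norm_g(\theta)/(\theta gf(\theta))\in \SplittingHex^{gf}$), the equivalence of Proposition~\ref{prop: Second D6 param} lets us assume $\xi=1$. Similarly for (2) one normalizes $\rho$ as much as possible.

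For 2-points with splitting field $\EE\subseteq\SplittingHex$, the stabilizer $H=\Gal(\SplittingHex/\EE)$ is a normal index-2 subgroup, hence one of $\langle g,f\rangle$, $\langle g,s\rangle$, $\langle g,h\rangle$. In each case I solve the fixed-point equations for the two generators of $H$: the $\langle g,f\rangle$ case forces $\lambda\in\SplittingHex^{gf}$ and $\Norm_g(\lambda)=1$, and the $\langle g,s\rangle$ case forces $\Norm_g(\lambda)=1$ together with $\Norm_{gs}(\lambda)=\rho$ (this last equality comes out from the second coordinate of $\alpha_s\circ s(\lambda,\lambda g(\lambda))=(\lambda,\lambda g(\lambda))$ after substituting $\rho g(\rho)=g^2(\rho)^{-1}$ from $\Norm_g(\rho)=1$). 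The delicate case is $H=\langle g,h\rangle$: the raw fixed-point conditions read $\Norm_g(\lambda)=1$, $\Norm_h(\lambda)=\rho$, and one then checks that, using the constraint $gf(\rho)=\rho$ from \eqref{eq: D6 conditions on coefficients}, any such $\lambda$ also satisfies the $\alpha_f\circ f$-fixed condition $gf(\lambda)=\lambda$, which upgrades the stabilizer to all of $\Dih_6$ and produces a $\kk$-rational point; since $S(\kk)=\varnothing$, no genuine 2-point with splitting field $\SplittingHex^{\langle g,h\rangle}$ arises. This rigidity verification is the main technical obstacle.

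For splitting fields $\EE\not\subseteq\SplittingHex$, I invoke Remark~\ref{rem: Galois group of the composite} to describe $\Gal(\SplittingHex\EE/\kk)$ as a subgroup of $\Dih_6\times\Gal(\EE/\kk)$, chosen so that the diagonal identification is compatible with $\SplittingHex\cap\EE$. Writing the stabilizer of a component using the generators $(g,\id)$, $(h,\id)$, $(\id,w)$, $(v,t)$ with $v\in\{f,h,s\}$ appropriate to $\SplittingHex\cap\EE$, I combine the $\varphi$-invariance under these with the previously established constraints. For a 2-point this yields the conditions $\Norm_g(\lambda)=1$, $\Norm_h(\lambda)=\rho$, $\Norm_{gs}(\lambda)=\rho$. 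For part (2) the same scheme applies: the index-3 subgroups of $\Dih_6$ are the three conjugate Klein four-groups $\Klein_i$, and their intersection is the normal subgroup $\langle h\rangle$, which forces the Galois closure of any 3-point's residue field to be $\SplittingHex^h=\LL$ when it lies inside $\SplittingHex$; for the non-contained case, the same disjointness analysis based on $\SplittingHex\cap\EE\in\{\kk,\SplittingHex^f,\SplittingHex^h,\SplittingHex^s\}$ yields the stated conditions, finishing the proof.
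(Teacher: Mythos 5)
Your overall strategy --- solving the fixed-point equations for the twisted actions $\alpha_u\circ u$, splitting into cases according to $\SplittingHex\cap\EE$, and invoking Remark~\ref{rem: Galois group of the composite} for the composite field --- is exactly the paper's, and most of the computations you outline match. But the step you yourself flag as ``the main technical obstacle'', namely excluding $2$-points that split over $\SplittingHex^{\langle g,h\rangle}$, is resolved incorrectly. You claim that any $\lambda$ with $\Norm_g(\lambda)=1$ and $\Norm_h(\lambda)=\rho$ automatically satisfies $gf(\lambda)=\lambda$, so that the point is fixed by all of $\Dih_6$ and hence $\kk$-rational. This is false: the conditions $\Norm_g(\lambda)=1$ and $\lambda h(\lambda)=\rho$ are stable under replacing $\lambda$ by $\lambda\mu$ for any $\mu$ with $\Norm_g(\mu)=\Norm_h(\mu)=1$, and such $\mu$ need not lie in $\SplittingHex^{gf}$; the constraint $gf(\rho)=\rho$ on the parameter does not propagate to $\lambda$. (Compare the proof of Theorem~\ref{thm: D6 iso criterion}, where producing an element with prescribed $g$- and $h$-norms that is in addition $gf$-invariant is a genuine existence problem requiring its own argument --- it is equivalent to the $\kk$-rationality of an auxiliary surface.)

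The correct way to kill this case --- and the one the paper uses --- is indirect: the fixed-point relation $\Norm_h(\lambda)=\rho$ exhibits $\rho$ as an $h$-norm, so by Proposition~\ref{prop: D6 parametrization of SB curves and surfaces} and Lemma~\ref{lem: SB curve twisted action} the conics $C_\rho$, $C_{g(\rho)}$, $C_{g^2(\rho)}$ are all trivial and $S$ is $\LL$-trivial. Since part (1) already assumes $S$ is $\KK$-trivial, Propositions~\ref{prop: index 2} and~\ref{prop: index 3} force $\indexx(S)=1$, hence $S(\kk)\ne\varnothing$ and $S$ is $\kk$-rational, contradicting the hypothesis. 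With this replacement the rest of your outline goes through: the $\langle g,f\rangle$ and $\langle g,s\rangle$ computations, the composite-field cases via Remark~\ref{rem: Galois group of the composite}, and the normal-core argument identifying $\SplittingHex^h$ as the only possible splitting field inside $\SplittingHex$ for a $3$-point all agree with the paper's proof.
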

\begin{proof}
	
\ref{D6 2-points} We assume that $\xi = 1$. If $\EE/\kk$ is a splitting field of a point of degree 2, then either $\EE\cap\FF=\kk$, or $\EE\subset\FF$ and in this case $\EE=\FF^{\langle g,u\rangle}$, where $u\in\{h,f,s\}$. Suppose that we are in the latter case, let $p=\{p_1,p_2\}$ be a 2-point split over $\EE$ and let $\varphi(p_1)=(\lambda_1,\lambda_2)$. Then, as before, we notice that $p_1$ is $\alpha_g\circ g$-invariant, which gives the conditions $\lambda_1=\lambda$, $\lambda_2=\lambda g(\lambda)$, $\Norm_g(\lambda)=1$. Suppose that $u=f$. Then $\alpha_f\circ f$-invariance translates to 
\[
\lambda g(\lambda)f(\lambda)=1
\]
which implies (since $\Norm_g(\lambda)=1$) that $f(\lambda)=g^2(\lambda)$, hence $\lambda\in\FF^{gf}$. If $u=s$ then, using parametrization of Proposition \ref{prop: Second D6 param} with $\xi=1$, we get
\[
\alpha_s=\alpha_h h(\alpha_f)=((\rho,\rho g(\rho)),\epsilon(hf)).
\]
Therefore, the $\alpha_s\circ s$-invariance of $(\lambda,\lambda g(\lambda))$ means
\[
(\lambda,\lambda g(\lambda))=(\rho s(\lambda) sg(\lambda), \rho g(\rho)s(\lambda)),
\]
hence $\lambda=\rho s(\lambda)sg(\lambda)$. By using $\Norm_g(s(\lambda))=1$, we notice that two equalities $\Norm_g(\lambda)=1$ and $\lambda=\rho s(\lambda)sg(\lambda)$ are equivalent to $\Norm_g(\lambda)=1$ and $\Norm_{gs}(\lambda)=\rho$.

Finally, the $\alpha_h\circ h$-invariance of $(\lambda,\lambda g(\lambda))$ implies 
\[
(\lambda,\lambda g(\lambda))=(\rho h(\lambda^{-1}),\rho g(\rho)h(\lambda^{-1})hg(\lambda^{-1})),
\]
which is equivalent to $\rho=\Norm_h(\lambda)$. But this implies that $S$ is $\LL$-trivial, hence $\kk$-rational, a contradiction. 

Now, in the former case $\EE\cap\SplittingHex=\kk$, we again let $\varphi(p_1)=(\lambda_1,\lambda_2)$ and notice that this point is $\alpha_u\circ u$-invariant for $u\in\{g,h,f\}$, and is sent to $\varphi(p_2)$ by $t$.

\ref{D6 3-points}: We assume that $\rho = 1$. Let $\EE$ be the splitting field of a 3-point on $S$. Since $\EE\cap\SplittingHex$ is Galois over $\kk$, one of the following holds:
\begin{enumerate}[label=\alph*)]
	\item $\EE\cap\FF=\FF$, hence $\EE=\FF$ and in this case we necessarily have $\EE=\SplittingHex^h$.
	\item $\EE\cap\FF=\kk$.
	\item $\EE\cap\FF$ is quadratic over $\kk$, i.e. $\EE\cap\FF=\FF^{\langle g,u\rangle}=\KK$ where $u\in\{h,f,s\}$.
\end{enumerate}    

In the case (a), we notice that $\varphi(p_1)=(\lambda_1,\lambda_2)$ is $\alpha_h\circ h$-invariant, hence we get the conditions $\Norm_h(\lambda_1)=\Norm_h(\lambda_2)=1$. Assuming that $\alpha_f\circ f$ fixes $p_1$ and swaps $p_2$ with $p_3$, we get $\lambda_1f(\lambda_2)\xi=1$. Now the $\alpha_g\circ g$-orbit of $(\lambda_1,\lambda)$ gives a 3-point on $S$.

In the case (b), we notice that $(\lambda_1,\lambda_2)$ is $\alpha_u\circ u$-invariant for $u\in\{g,h,f\}$, hence we get conditions on $\lambda$ from the statement. The 3-point is then generated by the $w$-orbit of $(\lambda,\lambda g(\lambda))$, and we have an extra condition $t(\lambda)=\lambda$. Finally, in the case (c) where $\FF\cap\EE$ is quadratic over~$\kk$, Remark \ref{rem: Galois group of the composite} gives the generators of $\Gal(\EE\FF/\kk)$. Notice that $(\lambda_1,\lambda_2)$ is both $(g,\id)$- and $(u,\id)$-invariant, while $(f,t)$ (for $u=h$) or $(h,t)$ (for $u=s$ and $u=f$) preserves $p_1$ and swaps $p_2$ with $p_3$. Hence the result.      
\end{proof}

\subsection{Closed points in general position} Given $n\leqslant 6$ distinct points $p_1,\ldots,p_n$ on $\PP^2_{\overline{\kk}}$, we
will say that they are in general position if no three of them are contained in a line, and for $n=6$
all six are not contained in a conic. 
If $p\in\PP_{\overline{\kk}}^2$ is a point of degree $d$ in general position, then its blow-up is a del Pezzo surface of degree $9-d$, see for example \cite[Theorem IV.2.6]{ManinCubicForms}. Given a sextic $G$-del Pezzo surface $S$, we will say that a point $p\in S$ of degree $d$ is \emph{in general position} on $S$ if its blow-up is a del Pezzo surface (whose $G$-invariant Picard rank is then 2). An interesting feature of degree 3 and degree 6 points on non-trivial Severi-Brauer surfaces is that they are automatically in general position and hence can serve as a centre of a Sarkisov link \cite[Lemma 2.8]{Shramov1}. Let us investigate the position of points on sextic del Pezzo surfaces.

\begin{prop}\label{prop: position of points}
	Let $S$ be a non-$\kk$-rational sextic $G$-del Pezzo surface. Then one has the following. 
	\begin{enumerate}[leftmargin=*, labelindent=20pt, itemsep=5pt]
		\item\label{general position index 2} Suppose that $\indexx(S)=2$. Then any 2-point on $S$ is in general position. 
		\item\label{general position index 3} Suppose that $\indexx(S)=3$. If $p\in S$ is a 3-point that does not split over $\SplittingHex$, then $p$ is in general position. Otherwise, we have the following subcases (in the notation of Propositions \ref{prop: Z6 closed points}, \ref{prop: S3 closed points} and \ref{prop: D6 closed points}, and the restrictions on $\lambda_1$, $\lambda_2$, given therein). 
		\begin{enumerate}[leftmargin=*, labelindent=3pt, itemsep=5pt]
			\item \label{genPos Z6} If $G\simeq\ZZ / 6 \ZZ$, then $p$ is in general position if and only if
			\[ 
			\lambda_2 \neq \lambda_1 g(\lambda_1), \, \lambda_1 \neq \xi \lambda_2 g^2(\lambda_2),\ \xi g(\lambda_1)g^2(\lambda_2)\ne 1.
			\] 
			\item \label{genPos S3 D6} If $G\simeq \Sym_3$ or $G\simeq\Dih_6$, then $p$ is in general position if and only if
			\[ 
			\xi\lambda g(\lambda) f(\lambda)\ne 1,\ \lambda \not \in \SplittingHex^{gf}.
			\]
		\end{enumerate}
	\end{enumerate}
\end{prop}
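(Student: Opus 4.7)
My plan is to translate general position into explicit algebraic conditions using the parametrisations of Propositions \ref{prop: Z6 closed points}, \ref{prop: S3 closed points} and \ref{prop: D6 closed points}, and to use Galois equivariance whenever possible. The blow-up of a $d$-point on $S$ fails to be a del Pezzo surface precisely when it contains an irreducible rational curve of self-intersection $\leqslant -2$. Such a curve is either the strict transform of a $(-1)$-curve in $\Sigma$ passing through some $q_j$; or the strict transform of a member of one of the rulings $|L-E_i|$ containing two of the $q_j$ (when $d\geqslant 2$); or, for $d=3$, the strict transform of a line containing all three $q_j$, or of a conic through $p_1,p_2,p_3,q_1,q_2,q_3$. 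Since the quoted parametrisations place every component in the open torus $T=S\setminus\Sigma$, no component can lie on a $(-1)$-curve, so that failure mode does not occur. Under the first projection $S_{\FF}\to\PP^2_{\FF}$, writing $Q_j=[1:\mu_1^{(j)}:\mu_2^{(j)}]$, the remaining failure modes translate to three $3\times 3$ determinantal conditions for collinearity with some $p_i$, a single $3\times 3$ determinantal condition for collinearity of the $Q_j$, and a $6\times 6$ determinantal condition for the conic through all six points.

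\textbf{Part (1) and the non-$\FF$-split subcase of (2).} A direct computation in $\Pic(S_{\overline\kk})$ shows that $g$ permutes the classes $L-E_i$ cyclically while $h$ acts as the standard Cremona involution and fixes each $L-E_i$; hence $\{|L-E_1|,|L-E_2|,|L-E_3|\}$ is a single $\Gal(\FF/\kk)$-orbit. For part (1), if the two components of a 2-point lay on a common line through some $p_i$, Galois conjugation would force them to be collinear with every $p_j$, but the unique line through $q_1$ and $q_2$ cannot pass through three non-collinear points. For the non-$\FF$-split subcase of (2), an order-3 Galois element $w\in\Gal(\EE\FF/\kk)$ fixes $\FF$ pointwise and cycles the $q_j$; combined with the $\Gal(\FF/\kk)$-action, any $p_i$-collinearity of two components upgrades first to collinearity of all three $Q_j$ on a line through $p_i$, and then to that line containing two distinct $p_j$, placing the $q_j$ on $\Sigma$, a contradiction. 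The three-collinear and conic conditions similarly force, after $w$-symmetrisation and use of the parametrising norm conditions $\Norm_g(\lambda)=\xi^{-1}$ and $\Norm_h(\lambda)=1$, an equation over $\FF$ whose only solutions degenerate the 3-point into a single point of multiplicity 3, contradicting $\EE\not\subseteq\FF$.

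\textbf{The $\FF$-split subcase of (2).} Here the plan is a direct computation. The three components form the $\alpha_g\circ g$-orbit of $(\lambda_1,\lambda_2)$ in the $\ZZ/6$ case and of $(\lambda,f(\lambda^{-1})\xi^{-1})$ in the $\Sym_3$ and $\Dih_6$ cases. After substituting these coordinates into the four collinearity determinants and the conic determinant, the $\alpha_g\circ g$-symmetry of the 3-point identifies the three $p_i$-collinearity conditions with one another, leaving a single representative. In the $\ZZ/6$ case, this representative together with the three-$Q_j$-collinearity determinant yield the two inequalities $\lambda_2\neq\lambda_1 g(\lambda_1)$ and $\lambda_1\neq\xi\lambda_2 g^2(\lambda_2)$, while the conic determinant collapses to $\xi g(\lambda_1)g^2(\lambda_2)\neq 1$. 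In the $\Sym_3$ and $\Dih_6$ cases the additional $f$-symmetry further collapses the determinants into the two stated inequalities $\xi\lambda g(\lambda)f(\lambda)\neq 1$ (the conic condition) and $\lambda\notin\FF^{gf}$ (the three-collinearity condition). The principal technical hurdle is the $6\times 6$ conic determinant: its Laplace expansion produces a lengthy polynomial that must be collapsed, using the parametrising relations and the toric symmetries, into the single stated inequality. I expect this to be the most algebraically involved step of the proof.
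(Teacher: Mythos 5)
Your framework (enumerating the failure modes, reducing to determinantal conditions, exploiting Galois symmetry) is sound, and your synthetic argument for part (1) and for the ``two components plus a coordinate point'' collinearities in the non-$\FF$-split case is a clean alternative to the paper's direct computation. However, the plan for the $\FF$-split subcase of (2) contains a genuine error. The $\alpha_g\circ g$-symmetry does \emph{not} collapse the nine conditions ``$q_i,q_j,p_k$ collinear'' to a single representative: it identifies the condition for the pair $\{i,j\}$ against $p_k$ with that for $\{i+1,j+1\}$ against $p_{k+1}$, so three inequivalent conditions survive (one for each of the three pairs, each checked against $[1:0:0]$, say). These three conditions are precisely the three inequalities of \ref{genPos Z6}: writing $q_j=[1:\mu_1^{(j)}:\mu_2^{(j)}]$, collinearity of $q_i,q_j$ with $[1:0:0]$ reads $\mu_2^{(i)}/\mu_1^{(i)}=\mu_2^{(j)}/\mu_1^{(j)}$, and the three ratios are $\lambda_2/\lambda_1$, $g(\lambda_1)$ and $1/(\xi g^2(\lambda_2))$, giving exactly $\lambda_2=\lambda_1g(\lambda_1)$, $\xi g(\lambda_1)g^2(\lambda_2)=1$ and $\lambda_1=\xi\lambda_2g^2(\lambda_2)$. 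Your attribution of $\xi g(\lambda_1)g^2(\lambda_2)\ne 1$ to the conic determinant, and of one of the other inequalities to the three-components-collinear determinant, is therefore wrong; the conic computation you flag as ``the most involved step'' (which is in fact only a $3\times3$ determinant, since three of the six points are the coordinate points) would not produce the stated condition.

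The second gap is that the ``three components collinear'' and ``six points on a conic'' degenerations must be shown to be \emph{automatically} impossible, and this does not follow from the explicit parametrization alone: the paper excludes them by observing that a Galois-invariant curve of class $L$ or $2L-E_1-E_2-E_3$ through the $3$-point would descend to a $\KK$-rational curve of degree $1$ or $2$ on the Severi--Brauer surface $X$, which is impossible because $X$ is non-trivial (equivalently $\Pic(X)=3\ZZ L$) whenever $\indexx(S)=3$. Your proposed ``$w$-symmetrisation plus norm conditions'' sketch for these two failure modes neither covers the $\FF$-split case nor identifies this arithmetic input, so as written the exclusion of the line and conic degenerations is missing.
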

\begin{proof}	
\ref{general position index 2} If $p$ splits over $\SplittingHex$, then $\varphi(p)$ is the $\alpha_h \circ h$ orbit of $ (\lambda, \lambda g(\lambda))$, $\lambda \in \SplittingHex^*$ with different conditions on $\lambda$ depending on the group $G$, see Propositions \ref{prop: Z6 closed points} and \ref{prop: D6 closed points}. By viewing $S_{\overline{\kk}}$ as the blow-up of $\PP^2_{\overline{\kk}}$, we should study the position of the points 
\[
\left [1:\lambda: \lambda g(\lambda))\right ],\  \left [1: \frac{\rho}{h(\lambda)}:  \frac{\rho}{h(\lambda)} g\left ( \frac{\rho}{h(\lambda)}\right )\right ],\ [1:0:0],\ [0:1:0],\ [0:0:1],
\]
namely we need to check whether any three of them are colinear. The only non-obvious triple to check is the first two points and a coordinate point, which (by applying $\alpha_g \circ g$, if needed) we can assume to be $[1:0:0]$. In that case, the $x_1$- and $x_2$-components of the first two points must satisfy the equation $x_1+ax_2=0$ for some $a\in\kk^*$, which implies $\rho=\Norm_h(\lambda)$ and hence $S$ is $\LL$-trivial, a contradiction.

If $p$ splits over a different field $\EE$, then we have that $\varphi(p)$ is the $t$ orbit of $ (\lambda, \lambda g(\lambda)), \, \lambda \in (\SplittingHex\EE)^*$ with different conditions on $\lambda$ depending on the case of $G$. As in the previous case, we need to examine the position of points
\[
[1:\lambda: \lambda g(\lambda))],\  [1:t(\lambda): t(\lambda) g(t(\lambda)))],\ [1:0:0],\ [0:1:0],\ [0:0:1]
\]
on $\PP^2_{\overline{\kk}}$. As before, we only check that the first three points are not collinear. If they are, then $t(\lambda) = \lambda$, which contradicts to $(\lambda,\lambda g(\lambda))$ having a $t$-orbit of size 2.

\ref{general position index 3} First, we show that a 3-point $p\in S$ cannot lie on a 1-curve $C\subset S$, the preimage of a conic on $\PP_\FF^2$. Suppose that $p\in C$ and consider the blow-down $\pi\colon S\to X$ over $\KK$, where $T$ is a non-trivial Severi-Brauer surface. Note that $C$ passes through the three $(-1)$-curves on $S$ which are blow-down, and $\pi(p)$ is a 3-point on $X$. Hence $C$ contains two $3$-points over $\KK$, and therefore $C$ is $\Gal(\FF/\KK)$-invariant. But this is impossible (for example, because a Sarkisov 3-link at any of these 3-points maps $C$ to a $\KK$-line on $X^{\rm op}$), hence $X$ must be trivial. Suppose that $p$ splits over $\EE \not \subset \SplittingHex$. Then $\varphi(p)$ is the $w$-orbit of $(\lambda, \lambda g(\lambda))$, $\lambda \in (\SplittingHex\EE)^*$ with different conditions on $\lambda$ depending on $G$. We check that there are no collinear triples among the points
\[
[1:\lambda: \lambda g(\lambda))],\  [1:w(\lambda): w(\lambda) wg(\lambda))],\ [1:w^2(\lambda): w^2(\lambda) w^2g(\lambda))],\ [1:0:0],\ [0:1:0],\ [0:0:1]
\]
As above, we check this for the first two points and $[1:0:0]$. If these points were collinear, then we would have $w(\lambda) = \lambda$ which again gives a contradiction.

Finally, we consider $3$-points which split over $\SplittingHex$. In the case \ref{genPos Z6}, the components of $\varphi(p)$ are 
\[
(\lambda_1, \lambda_2),\ \left (\frac{1}{g(\lambda_2 \xi) }, g\left (\frac{\lambda_1}{\lambda_2 \xi}\right )\right ), \, \left (g^2\left (\frac{\lambda_2}{\lambda_1} \right ), \frac{1}{g^2(\lambda_1\xi)}\right ).
\]
If $x_1+Ax_2=0$ is a line through $[1:0:0]$ and the first two components above, then we mus have $A=-\lambda_1/\lambda_2$ and $\lambda_2=\lambda_1g(\lambda_1)$. Similarly, we find that if $[1:0:0]$ is collinear with the second and the third components, then $A=-1/g(\lambda_1)$ and $\xi g(\lambda_1) g^2(\lambda_2)=1$. If $[1:0:0]$ is collinear with the first and the third components, then $\xi\lambda_2 g^2(\lambda_2)=\lambda_1$.

In the cases \ref{genPos S3 D6}, the components of $\varphi(p)$ are 
\[
\left (\lambda,\frac{1}{f(\lambda)\xi} \right ), \big (gf(\lambda),g(\lambda f(\lambda)) \big), \left (\frac{1}{g^2(\lambda f(\lambda))\xi},\frac{1}{\xi g^2(\lambda)} \right ),
\]
where $\lambda\in\SplittingHex^*$ and $\Norm_h(\lambda) = 1$ when $G\simeq\Dih_6$. As in the previous case, we check when the components of any two of these points simultaneously satisfy the equation $x_1+Ax_2=0$. The result follows.
\end{proof}

\begin{prop}\label{prop: 3-points}
Let $S$ be a sextic $G$-del Pezzo surface of index 3. Then it has $3$-points in general position whose splitting field is a subfield of $\SplittingHex$, namely $\LL=\SplittingHex^h$ when $G\simeq\ZZ/6\ZZ$, or $\FF$ itself when $G\simeq\Sym_3$, or $\LL=\SplittingHex^h$ when $G\simeq\Dih_6$.
\end{prop}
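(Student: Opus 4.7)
The plan is to exhibit, in each of the three cases $G \simeq \ZZ/6\ZZ,\,\Sym_3,\,\Dih_6$, a 3-point on $S$ splitting over the designated subfield of $\SplittingHex$ and satisfying the general-position conditions of Proposition~\ref{prop: position of points}(2). A preliminary reduction: $\kk$ must be infinite, since otherwise $\Br(\kk) = 0$, forcing $\Am(S_{\KK}) = \Am(S_{\LL}) = 0$, and Theorem~\ref{thm: Biregular} would then identify $S$ with the split sextic del Pezzo surface, contradicting $S(\kk) = \varnothing$ (a consequence of $\indexx(S) = 3$). Hence $\SplittingHex$ is infinite, and a Zariski-density argument on the parameter space of such 3-points will apply.

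For $G \simeq \Sym_3$, by Proposition~\ref{prop: S3 closed points} the relevant 3-points are parametrized by $\lambda \in \SplittingHex^*$, which is a Zariski-open subset of the $\kk$-variety $\SplittingHex \simeq \AA^6_\kk$. The two non-general-position conditions from Proposition~\ref{prop: position of points}\ref{genPos S3 D6}, namely $\lambda \in \SplittingHex^{gf}$ and $\xi\lambda g(\lambda) f(\lambda) = 1$, define proper $\kk$-subvarieties: the first is a proper $\kk$-linear subspace of codimension $3$, and the second is a proper hypersurface, as seen by rescaling $\lambda \mapsto c\lambda_0$ for a fixed $\lambda_0 \in \SplittingHex^*$ and noting that $c^3 \xi \lambda_0 g(\lambda_0) f(\lambda_0)$ takes infinitely many distinct values as $c$ varies over $\kk^*$. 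Hence the complement contains a $\kk$-rational $\lambda$.

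For $G \simeq \ZZ/6\ZZ$ and $G \simeq \Dih_6$, I would apply Hilbert's Theorem~\ref{thm: Hilbert 90} to the cyclic quadratic extension $\SplittingHex/\SplittingHex^h$ to realize the variety of norm-one elements as the image of the surjection $\mu \mapsto \mu/h(\mu)$. This exhibits the parameter space (for each component $\lambda_i$, or for the single $\lambda$ in the $\Dih_6$ case) as an irreducible $\kk$-variety of positive dimension, namely a twisted form of $\Torus_m$. The non-general-position conditions of Proposition~\ref{prop: position of points}\ref{genPos Z6} (respectively \ref{genPos S3 D6}) translate into finitely many non-trivial polynomial equations on this variety; each cuts out a proper closed subvariety, and their union does not exhaust the whole space, so the complement contains a $\kk$-point.

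The main obstacle is checking, in the $\ZZ/6\ZZ$ case, that the three bad conditions
\[
\lambda_2 = \lambda_1 g(\lambda_1), \quad \lambda_1 = \xi \lambda_2 g^2(\lambda_2), \quad \xi g(\lambda_1) g^2(\lambda_2) = 1
\]
can be dodged simultaneously. The cleanest strategy is sequential: fix a norm-one $\lambda_1$ generically, then observe that each of the three conditions becomes a non-trivial single-variable algebraic constraint on $\lambda_2$ on the norm-one torus; since this torus is positive-dimensional and irreducible over the infinite field $\kk$, a $\lambda_2$ avoiding all three loci exists. A directly analogous sequential step handles the $\Dih_6$ case, where only two conditions on the single parameter $\lambda$ need to be avoided.
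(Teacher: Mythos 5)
Your proof is correct in outline but takes a genuinely different route from the paper's. The paper argues by explicit construction in each case: for $G\simeq\ZZ/6\ZZ$ it sets $\lambda_1=b/h(b)$ with $b\in(\SplittingHex^g)^*$ and $\lambda_2=\xi^{-1}$, reducing the problem to avoiding a union of at most three lines in the two-dimensional $\kk$-space $\SplittingHex^g$; for $G\simeq\Sym_3$ it observes that the cubic condition $\xi\lambda g(\lambda)f(\lambda)=1$ already forces $gf(\lambda)=\lambda$ (using the $f$-invariance of $\xi$), so only the single linear condition $\lambda\notin\SplittingHex^{gf}$ must be arranged; for $G\simeq\Dih_6$ it runs an explicit perturbation $a\mapsto a+1$. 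Your uniform strategy — parametrize the norm-one locus via Hilbert 90, note that its $\kk$-points are Zariski dense in an irreducible torus, and avoid finitely many proper closed subvarieties — is more conceptual and scales better, but it shifts the entire burden onto the assertion that each bad locus is \emph{proper}, which you state but do not verify; that verification is precisely where the paper's computations live. Concretely, you must rule out that the norm-one torus is contained in $\{\lambda: \xi\lambda g(\lambda)f(\lambda)=1\}$ or in $\SplittingHex^{gf}$ (the paper's implication trick is the cleanest way to see the former is contained in the latter, reducing to one check), and in the $\ZZ/6\ZZ$ case that the fibre of $\lambda_2\mapsto\lambda_2 g^2(\lambda_2)$ over $\xi^{-1}\lambda_1$ is proper — in fact this fibre is finite, since $\lambda g^2(\lambda)=1$ together with $g^3=\id$ forces $g(\lambda)=\lambda$ and $\lambda^2=1$. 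These checks are routine, so I regard your argument as complete modulo stated details. Your preliminary reduction to infinite $\kk$ is needed (the paper uses it tacitly in its dimension counts) and is correct, though the quickest justification is that $\indexx(S)=3$ forces the Severi--Brauer surface $X$ over $\KK$ to be non-trivial by Proposition~\ref{prop: index 2}, hence $\Br(\KK)\ne 0$ and $\KK$ cannot be finite.
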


\begin{proof}

We consider different possibilities for $G$, following the notation of Proposition \ref{prop: position of points}.

\begin{itemize}
\item If $G\simeq \ZZ / 6 \ZZ$, then we look for $\lambda_1,\lambda_2 \in \SplittingHex^*$ such that $\Norm_h(\lambda_1)=\Norm_h(\lambda_2)=1$ and the condition of \ref{genPos Z6} is satisfied. Choose $\lambda_1 = b/h(b)$, where  $b \in (\SplittingHex^g)^*$, and $\lambda_2 = \xi^{-1}$ (recall that we may assume $\rho=1$ in \eqref{eq: Z6 coefficient conditions} for index 3 surfaces). We therefore get the inequalities:
\[ 
\xi^{-1} \neq \frac{b^2}{h(b^2)},\ \frac{b}{h(b)} \neq \xi^{-1},\ \frac{b}{h(b)}\neq 1.
\]
To satisfy the third one, it is enough to choose $b \notin L_1=\kk^*$. Since $\Norm_h(\xi^{-1}) = 1$ and $\xi \in \SplittingHex^g$, Theorem \ref{thm: Hilbert 90} implies that there exists $a \in (\SplittingHex^g)^*$ such that $a/h(a) = \xi^{-1}$. Note that for any other $a' \in  \SplittingHex^g$ such that $a'/h(a')=\xi^{-1}$ one has $a/a'\in \SplittingHex^{g,h} = \kk$, so all elements $b\in\SplittingHex^g$ satisfying the second condition lie in $1$-dimensional subspace $L_2\subset \SplittingHex^g$.

If $\xi^{-1}$ is not a square in $\SplittingHex^g$, then there are no $b$ satisfying the equality $\xi^{-1}=b^2/h(b^2)$ and we set $L_3=\varnothing$. Suppose that $\xi^{-1}=c^2$ for some $c\in\SplittingHex^g$. Then for all $b\in\SplittingHex^g$ satisfying $c^2=b^2/h(b^2)$ one has $b/h(b)=\pm c$. As above, the equality $b_1/h(b_1)=c=b_2/h(b_2)$ implies $b_1/b_2\in\SplittingHex^{g,h}=\kk$ and we conclude that the set $L_3$ of all $b$ satisfying the condition $\xi^{-1}=b^2/h(b^2)$ is of dimension at most~1. To sum up, since $\SplittingHex^g$ is of dimension $2$ over $\kk$, the set $\SplittingHex^g\setminus\{L_1\cup L_2\cup L_3\}$ is not empty, hence the existence of $b$ and $\lambda_1$, $\lambda_2$ satisfying the desired restrictions.

\item  If $G\simeq \Sym_3$, then we take $\lambda \in \SplittingHex^* \setminus \SplittingHex^{gf}$ and we only need to check that $\xi^{-1} \neq \lambda g(\lambda) f(\lambda)$. But if one has $\xi^{-1}=\lambda g(\lambda) f(\lambda)$, then the $f$-invariance of $\xi$ implies that $\lambda f(\lambda) fg(\lambda) = f(\lambda f(\lambda) g(\lambda)) = \lambda f(\lambda) g(\lambda) $ which is equivalent to $gf(\lambda) = \lambda$, a contradiction. 

\item If $G\simeq \Dih_6$, then we look for the same $\lambda$ as in the $\Sym_3$-case, with an additional property $\Norm_h(\lambda) = 1$. Take $a \in \SplittingHex$ and put $\lambda =a/h(a)$. The property $\lambda \neq gf(\lambda)$ is equivalent to $\Norm_{gs}(a)  \notin \SplittingHex^h$, so if the latter holds for $a$ then we are done. Suppose that $\Norm_{gs}(a) \in \SplittingHex^h$. Then take $b=a + 1$. If $\Norm_{gs}(b)\notin\SplittingHex^h$, then we choose $\lambda=b/h(b)$. Otherwise, we notice that 
\[
\Norm_{gs}(b)=\Norm_{gs}(a) + 1 + (a + gs(a)) = h(\Norm_{gs}(b)) = \Norm_{gs}(a) + 1 + h(a + gs(a)),
\] 
and therefore
\[
a + \frac{N_{gs}(a)}{a} = h(a) + \frac{N_{gs}(a)}{h(a)}
\]
which gives 
\[
\Norm_h(a) (h(a)-a)=\Norm_{gs}(a) (h(a)-a ).
\]
by multiplying with $N_h(a)$ and rearranging. \\
Thus, either $a \in \SplittingHex^h$ or $\Norm_h(a) = \Norm_{gs}(a)$. The latter is equivalent to $gf(a) = a$, hence it is enough to choose $a \in \SplittingHex \setminus (\SplittingHex^h \cup \SplittingHex^{gf}) \ne \varnothing$. 
\end{itemize}
\end{proof}

\begin{prop}\label{prop: 2-points}
	Let $S$ be a sextic $G$-del Pezzo surface of index 2. Then it has $2$-points in general position whose splitting field is a quadratic subfield of $\SplittingHex$, namely $\KK=\SplittingHex^g$ when $G\simeq\ZZ/6\ZZ$, or $\SplittingHex^{\langle g,f\rangle}$ and $\KK=\SplittingHex^{\langle g,s\rangle}$ when $G\simeq\Dih_6$.
\end{prop}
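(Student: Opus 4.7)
The plan is to exploit $\indexx(S) = 2$ to produce explicit 2-points on $S$. By Proposition~\ref{prop: index 2}, this hypothesis is equivalent to $S$ being $\KK$-trivial with $S(\kk) = \varnothing$, and Proposition~\ref{prop: index 3} forces $S$ to not be $\LL$-trivial as well. After applying the equivalences of Propositions~\ref{prop: Z6 equivalence} and~\ref{prop: Second D6 param}, I may assume the parameter $\xi$ has been normalized to $1$, while $\rho \neq 1$ (since otherwise $S$ would be $\LL$-trivial).

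The key observation is that the canonical element $q = (1, 1) \in \SplittingHex^* \times \SplittingHex^*$ in the twisted parametrization $\varphi\colon S_\SplittingHex \iso M$ carries most of the weight. A direct computation from the formulas for $\alpha_g, \alpha_f, \alpha_h$ shows that $q$ is fixed by $\alpha_g \circ g$ (because $\xi = 1$), and, in the $\Dih_6$ case, also by $\alpha_f \circ f$, whereas $\alpha_h \circ h(q) = (\rho, \rho g(\rho)) \neq q$. Hence the $\Gal(\SplittingHex/\kk)$-orbit of $q$ has size exactly~$2$, with stabilizer $\langle g \rangle$ (giving splitting field $\KK = \SplittingHex^g$) in the $\ZZ/6\ZZ$ case, and stabilizer $\langle g, f \rangle$ (giving splitting field $\SplittingHex^{\langle g,f\rangle}$) in the $\Dih_6$ case. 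This produces the required 2-points over these two quadratic fields.

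For the remaining case---a 2-point split over $\KK = \SplittingHex^{\langle g, s\rangle}$ in the $\Dih_6$ setting---the point $q$ does not suffice, since one checks $\alpha_s \circ s(q) = (\rho, \rho g(\rho)) \neq q$. I will instead invoke Lang--Nishimura (Lemma~\ref{lem: Lang-Nishimura}) applied to the birational map $X = \PP^2_\KK \dashrightarrow S_\KK$, which yields $S(\KK) \neq \varnothing$; the absence of $\kk$-points then forces any such $\KK$-point to correspond to a closed point of $S$ of residue field exactly $\KK$, i.e.\ a 2-point split over $\KK$. In every case, Proposition~\ref{prop: position of points}\ref{general position index 2} immediately ensures that the resulting 2-point is in general position, with no further verification required. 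The only ``hard part'' is the bookkeeping tracking how $q$ interacts with the three involutions $\alpha_h \circ h$, $\alpha_f \circ f$, $\alpha_s \circ s$; everything else is a direct application of the structural results of Sections~\ref{sec: biregular classification} and~\ref{section: closed points}.
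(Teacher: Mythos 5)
Your proof is correct. For the $\ZZ/6\ZZ$ case and for the $\SplittingHex^{\langle g,f\rangle}$ sub-case of $\Dih_6$, your argument is the paper's argument in disguise: the point $q=(1,1)$ is exactly the choice $\lambda=1$ in the parametrizations of Propositions~\ref{prop: Z6 closed points} and~\ref{prop: D6 closed points}, and your verification that its twisted stabilizer is $\langle g\rangle$ (resp.\ $\langle g,f\rangle$) while $\alpha_h\circ h(q)=(\rho,\rho g(\rho))\ne q$ is the needed check (note $\rho\ne 1$ indeed follows from $S$ not being $\LL$-trivial, or simply from $S(\kk)=\varnothing$). Where you genuinely diverge is the sub-case $\EE=\KK=\SplittingHex^{\langle g,s\rangle}$ for $G\simeq\Dih_6$: the paper exhibits an explicit solution $\lambda=g(\rho^{-1})$ of the conditions $\Norm_g(\lambda)=1$, $\Norm_{gs}(\lambda)=\rho$, using the cocycle relations $\Norm_g(\rho)=1$ and $g(\rho)\Norm_f(\rho)=1$, whereas you invoke Lang--Nishimura on $\PP^2_\KK\simeq X\dashrightarrow S_\KK$ together with $S(\kk)=\varnothing$ to produce a residue-field-$\KK$ point abstractly (this is the same mechanism already used in Proposition~\ref{prop: index 2}). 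Both are valid, and your appeal to Proposition~\ref{prop: position of points}(1) to get general position for free is legitimate since that proposition covers \emph{all} 2-points on an index-2 surface. The trade-off: your route is shorter and avoids the norm computation, but it is non-constructive, while the paper's explicit $\lambda$ records the coordinates of the 2-point in the parametrization, which is the form in which such points are consumed elsewhere in the text. A cosmetic remark: you could add one line noting that $G\simeq\Sym_3$ cannot occur here (such surfaces always have index 3 by Proposition~\ref{prop: S3 closed points}), which is why the statement omits that case.
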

\begin{proof}
We again consider different possibilities for $G$, and use the notation of Proposition \ref{prop: Z6 closed points}. If $G\simeq \ZZ / 6 \ZZ$, then we can choose $\lambda = 1$. The case $G\simeq\Sym_3$ is not possible. Suppose that $G\simeq\Dih_6$. 

Recall that we have two options for the splitting field $\EE$ of such a $2$-point. If we want $\EE=\SplittingHex^{\langle g,f\rangle}$, then we can take $\lambda = 1$.
If we want $\EE = \KK$, then we need $\lambda \in \SplittingHex$ such that $\Norm_g(\lambda) = 1$, $\Norm_{gs}(\lambda) = \rho$. Let $\lambda = g(\rho^{-1})$. Since, by Proposition \ref{prop: Second D6 param}, one has $\Norm_g(\rho) = 1$ and $g(\rho)\Norm_f(\rho)=1$, these properties are indeed satisfied. 
\end{proof}

We will see in the last Section, that these two propositions guarantee the existence of self-links; this will be used in our construction of homomorphisms.

\subsection{A surface with many splitting fields of 3-points} In the following example, we construct, by using the explicit cocycle parametrization of sextic del Pezzo surfaces from above, an $\Sym_3$-surface that contains an uncountable number of 3-points in general position with pairwise distinct splitting fields. This example will later demonstrate that the surfaces mentioned in Theorem~\ref{thm: pliability} do exist.

\begin{ex}\label{example: main example}
Let $\SplittingHex=\CC(t_1,t_2,t_3,s)$ and $G \subset \Aut(\SplittingHex)$ be the subgroup which acts as $\Sym_3$ on the variables $\{t_1,t_2,t_3\}$ and fixes $s$. We let $g = (123)$ and $f=(23)$ be the generators of $G$. Set $\kk=\FF^G$.

\begin{itemize}[leftmargin=*, labelindent=3pt, itemsep=5pt]
	\item First, we claim that $\xi=s$ defines a non-$\kk$-rational sextic $\Sym_3$-del Pezzo surface $S$ over $\kk$. 
	
	Indeed, $s \in \kk^*$, hence by Propositions \ref{prop: S3 final parametrization} it is enough to prove that $s\notin \Norm_g(\SplittingHex^*)$ to show that $S$ is not rational over $\kk$. Assume that there are $P,Q \in\CC[t_1,t_2,t_3,s]$ such that 
	\[
	s = \Norm_g\left (\frac{P}{Q}\right ) = \frac{P(t_1,t_2,t_3,s) P(t_2,t_3,t_1,s) P(t_3,t_1,t_2,s)}{Q(t_1,t_2,t_3,s)Q(t_2,t_3,t_1,s) Q(t_3,t_1,t_2,s)}.
	\] 
	Multiplying both sides of the equation by the denominator of the fraction on the right, and considering the resulting expressions on both sides as polynomials in $s$ with coefficients in $\CC[t_1,t_2,t_3]$, we see that the highest degree of $s$ on the right is divisible by 3, whereas on the left it is not. This is a contradiction.
	
	\item Next, for every $z\in\CC$, define 
	\[
	\EE_z=\kk\big (\sqrt[3]{s(t_1 + z)(t_2 + z)(t_3 + z)}\big ),\ \  \lambda_z= \frac{\sqrt[3]{s(t_1 + z)(t_2 + z)(t_3 + z)}}{t_3 + z}
	\] 
	We claim that $\EE_z$ is a cubic Galois extension of $\kk$ and $\lambda_z^{-1}$ yields a $3$-point on $S$ in general position.
	
	Since $s(t_1 + z)(t_2 + z)(t_3 + z)$ is $G$-invariant, it belongs to $\kk^*$. Since $\kk^*$ contains a third root of unity, we immediately see that $\EE_z$ is of degree $3$ over $\kk$ and is Galois. To prove that $\lambda_z^{-1}$ yields a $3$-point we need, according to Proposition \ref{prop: S3 closed points}, to prove that
	\[ 
	\lambda_z \in (\EE_z \SplittingHex)^{gf},\ \ \Norm_g(\lambda_z)=s.
	\]
	The invariance of $\lambda_z$ under $gf=(12)$ is obvious. We further compute
	\[
	\Norm_g(\lambda_z) = \frac{\sqrt[3]{s(t_1 + z)(t_2 + z)(t_3 + z)}^3}{\Norm_g(t_3 + z)} = \frac{s(t_1 + z)(t_2 + z)(t_3 + z)}{(t_1 + z)(t_2 + z)(t_3 + z)} = s,
	\] 
	as required. Notice that the resulting 3-point is in general position on $S$ by Proposition \ref{prop: position of points}, since $\EE_z$ is not a subfield of $\FF$.
	
	\item Finally, we claim that for $v \neq z$ one has $\EE_z \neq \EE_v$ inside of a fixed algebraic closure of $\kk$.
	
	Now we want to prove that for $v \neq z \in \CC$ we have that $\EE_z \neq \EE_v$. To do this it is enough to prove that $\sqrt[3]{s(t_1 + z)(t_2 + z)(t_3 + z)} \not\in \EE_v$. 
	Assume the converse holds, then there are $a,b,c \in \kk^*$ such that:
	\begin{equation}\label{eq: example of S3-surface coefficients}
	(t_1 + z)(t_2 + z)(t_3 + z) = \left  (a + b\sqrt[3]{s(t_1 + v)(t_2 + v)(t_3 + v)} + c\sqrt[3]{s^2(t_1 + v)^2(t_2 + v)^2(t_3 + v)^2} \right)^3.
	\end{equation}
	Setting $\mu_v = s(t_1 + v)(t_2 + v)(t_3 + v)$, the right hand side becomes
	\[
	(a^3 + b^3 \mu_v + c^3 \mu_v^2 + 6abc\mu_v) + (3a^2b + 3c^2a\mu_v + 3b^2c\mu_v)\sqrt[3]{\mu_v} + (3 ab^2 + 3a^2c + 3bc^2 \mu_v) \sqrt[3]{\mu_v^2}.
	\]
	Now on the left hand side of \eqref{eq: example of S3-surface coefficients} we have an element of the field $\kk$, while on the right hand side a polynomial from $\kk[\sqrt[3]{\mu_v}]$. This gives us the three equations
	\begin{align}
		& a^3 + b^3 \mu_v + c^3 \mu_v^2 + 6abc\mu_v = (t_1 + z)(t_2 + z)(t_3 + z) \label{prop: pliab eq1} \\
		& 3a^2b + 3c^2a\mu_v + 3b^2c\mu_v = 0 \label{prop: pliab eq2} \\
		& 3 ab^2 + 3a^2c + 3bc^2 \mu_v = 0 \label{prop: pliab eq3}
	\end{align}
	Let us prove that $a,b,c\neq 0$. If $a = 0$ then \eqref{prop: pliab eq3} implies $bc = 0$, hence \eqref{prop: pliab eq1} gives either at $b^3 \mu_v = \mu_z$ or $c^3 \mu_v^2 = \mu_z$. The latter is impossible by comparing the degrees. The former is impossible since different irreducible components of $\mu_z$ and $\mu_v$ are not associates in the polynomial ring. By multiplying \eqref{prop: pliab eq2} by $b$ and \eqref{prop: pliab eq3} by $a$, we get
	\[
	3a^2b^2 + 3abc^2 \mu_v + 3b^3c \mu_v = 0 = 3a^2b^2 + 3a^3c + 3abc^2 \mu_v 
	\]	
	which simplifies to 
	\[
	b^3 s(t_1 + v)(t_2 + v)(t_3 + v) = a^3,
	\]
	which gives a contradiction by considering the degree of $s$ of both sides modulo 3.
\end{itemize}
\end{ex}

\section{Sarkisov links and birational classification}\label{sec: birational}

Let $S$ be a del Pezzo surface of degree 6 over $\kk$ with $\Pic(S)\simeq\ZZ$, and $\phi\colon S\dashrightarrow S'$ be a birational map to another Mori fibre space $S'$. By \cite[Theorem 2.6]{Isk1996}, the map $\phi$ is a composition of automorphisms and the Sarkisov links of type II, and each of them is represented by the following diagram:
\begin{equation}\label{eq: Sarkisov link of type II bis}
	\xymatrix{
		&T\ar@{->}[dl]_{\eta}\ar@{->}[dr]^{\eta'}&\\
		S\ar@{-->}[rr]^{\chi}\ar[dr]&& S'\ar[dl]\\
		& {\rm pt} &}
\end{equation}
where $\eta,\eta'$ are birational morphisms, namely the blow-ups of closed points of degrees $d$ and $d'$, and $S',T$ are del Pezzo surfaces as well. Moreover, one of the following holds:
\begin{enumerate}
	\item $S\simeq S'$, $d=5$, $\chi$ is a birational Bertini involution;
	\item $S\simeq S'$, $d=4$, $\chi$ is a birational Geiser involution;
	\item $d=3$, $K_{S'}^2=6$;
	\item $d=2$, $K_{S'}^2=6$;
	\item $d=1$, $S'\simeq\PP^1\times\PP^1$.
\end{enumerate}

\begin{prop}\label{prop: structure of maps for dP6}
	Let $S$ be a sextic del Pezzo surface with $\Pic(S)\simeq\ZZ$. Suppose that $S$ is not rational over $\kk$, and let $\phi\colon S\dashrightarrow S'$ be a map to another Mori fibre space $S'$. Then $S'$ is a sextic del Pezzo surface with $\Pic(S')\simeq\ZZ$, and one has the following:
	\begin{enumerate}[leftmargin=*, labelindent=20pt, itemsep=5pt]
		\item If $\indexx(S)=6$, then $\phi$ is an isomorphism.
		\item If $\indexx(S)=2$, then $\phi$ is a composition of isomorphisms and Sarkisov links of type II at points of degrees 2 or 4 (Geiser birational involutions).
		\item If $\indexx(S)=3$, then $\phi$ is a composition of isomorphisms and Sarkisov links of type II at points of degree 3.
	\end{enumerate}
\end{prop}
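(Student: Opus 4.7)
The plan is to apply the Sarkisov program together with the birational invariance of the index. Part~(1) is immediate from Proposition~\ref{prop: index 6}: when $\indexx(S)=6$, the surface $S$ is birationally super-rigid, so every birational map from $S$ to a Mori fibre space is automatically an isomorphism.

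For parts~(2) and~(3), I would apply Theorem~\ref{thm: sarkisov} to factor $\phi$ as $\chi_n\circ\cdots\circ\chi_1$, where each $\chi_i\colon S_{i-1}\dashrightarrow S_i$ is either an isomorphism or a Sarkisov link between 2-dimensional Mori fibre spaces, with $S_0=S$ and $S_n=S'$. Since the index is a birational invariant \cite[Proposition 6.8]{GabberLiuLorenzini}, every intermediate $S_i$ has the same index as $S$, namely $2$ or~$3$. The goal is then an induction on $i$ showing that each $S_i$ is a sextic del Pezzo surface with $\Pic(S_i)\simeq\ZZ$ and that $\chi_i$ is of the claimed type.

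For the induction step, assuming $S_i$ is a sextic del Pezzo surface with $\Pic(S_i)\simeq\ZZ$, the classification of Sarkisov links recalled at the beginning of this section (following \cite[Theorem 2.6]{Isk1996}) says that $\chi_{i+1}$ is necessarily a link of type~II, centered at a closed point of some degree $d\in\{1,2,3,4,5\}$, with target $S_{i+1}$ either another sextic del Pezzo surface with $\Pic\simeq\ZZ$ (for $d\in\{2,3,4,5\}$) or $\PP^1_\kk\times\PP^1_\kk$ (for $d=1$). Since $d$ must be divisible by $\indexx(S_i)$, the case $d=1$ is excluded, and we are left with $d\in\{2,4\}$ when $\indexx(S_i)=2$ (degree~$4$ corresponding to Geiser involutions) and $d=3$ when $\indexx(S_i)=3$. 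This confirms that $S_{i+1}$ is again a sextic del Pezzo surface with $\Pic(S_{i+1})\simeq\ZZ$ and closes the induction; the statement about which links appear then reads off directly.

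The only delicate point is the appeal to Iskovskikh's classification to enumerate all possible Sarkisov links from a minimal sextic del Pezzo surface, but this is exactly what has been recalled in the preamble of Section~\ref{sec: birational}; beyond this input the argument is a straightforward combinatorial consequence of the divisibility constraint imposed by the index.
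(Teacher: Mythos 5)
Your proposal is correct and follows essentially the same route as the paper, which simply invokes the Iskovskikh classification of links recalled at the start of Section~\ref{sec: birational} together with Propositions~\ref{prop: index 2}, \ref{prop: index 3} and \ref{prop: index 6}; your divisibility observation ($\indexx(S_i)$ divides the degree of any closed point, and the index is a birational invariant) is exactly the mechanism underlying those propositions. The induction you spell out, using that links at points of degree $2$ or $3$ land on another minimal sextic del Pezzo surface while $d=1$ and $d=5$ are excluded by the index, is a faithful expansion of the paper's one-line argument.
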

\begin{proof}
	Follows from the classification of links and Propositions \ref{prop: index 2}, \ref{prop: index 3}, and \ref{prop: index 6}.
\end{proof}

Since Geiser birational involutions always lead to an isomorphic surface, our next goal is to understand how the Sarkisov links at points of degrees 2 and 3 change the Severi-Brauer data of del Pezzo surfaces. If $\EE$ is a splitting field of the centre $p$ of our Sarkisov link \eqref{eq: Sarkisov link of type II}, then all $(-1)$-curves on $T$ are clearly defined over the composite field $\EE\FF$. However, the splitting field $\FF'$ of the new hexagon $\Sigma'\subset S'$ may be a proper subfield of $\EE\FF$. 

\begin{rem}
	In particular, as we will see below, a Sarkisov link can change the Galois action on the hexagon of $(-1)$-curves. In geometric setting, i.e. for complex $G$-minimal sextic del Pezzo surfaces acted on by a finite group $G\subset\Aut(S)$, this phenomena was already noticed in \cite[Example 4.5]{YasinskyRigidity}.
\end{rem}

So, let $\chi\colon S\dashrightarrow S'$ be a Sarkisov link \eqref{eq: Sarkisov link of type II} between sextic del Pezzo surfaces at a point $p\in S$ of degree $\deg p\in\{2,3\}$. Let $\EE$ be the splitting field of $p$, let $\Sigma$ and $\Sigma'$ be the hexagons of $S$ and $S'$, and $\FF$ and $\FF'$ be their splitting fields. Then we have a short exact sequence of groups
\begin{equation}\label{eq: new Galois action}
	\begin{tikzcd}
		1
		\ar{r}
		& 
		H
		\ar{r}
		& 
		\Gal(\EE\FF)
		\ar{r}{\Psi}
		& 
		\Aut(\Sigma')
		& 
	\end{tikzcd}
\end{equation}
and $\FF'=(\EE\FF)^H$. Our next goal is to identify the subgroups $H$ in each case. Furthermore, we will see how the fields $\KK$ and $\LL$, associated to a sextic del Pezzo surface, change under Sarkisov links.

\begin{rem}
	The statements of Propositions \ref{prop: 2-points new Splitting Field} and \ref{prop: 3-points new Splitting Field} about new fields $\KK'$ and $\LL'$ can be also extracted from the proofs of \cite[Claim 5.8]{LinShinderZimmermann} and \cite[Proposition 9.8]{AuelBernadara}. Our approach here is more naïve and is based on studying the Galois actions on $(-1)$-curves, and does not use either the motivic invariants of birational maps or mutations in derived categories
\end{rem}

\subsection{Points of degree 2}\label{subsec: Sarkisov at 2-points} In this case, the surface $T$ in \eqref{eq: Sarkisov link of type II} is a del Pezzo surface of degree 4. Recall that $T_{\overline{\kk}}$ is a blow-up of $\PP_{\overline{\kk}}^2$ in 5 points $x_1,\ldots,x_5$ in general position. Denote by $e_1,\ldots,e_5$ the exceptional divisors of this blow-up, $\ell_{ij}$, $i,j\in\{1,\ldots,5\}$, $i<j$, the strict transforms of the lines through $x_i$ and $x_j$, and by $\mathcal{c}$ the strict transform of the conic through $x_1,\ldots,x_5$. These are sixteen $(-1)$-curves on $T$. Their intersection graph is the Clebsch strongly regular quintic graph on 16 vertices, shown on Figure \ref{figure: Clebsch}. We now consider the blow-up $\eta\colon T\to S$ and make the following identifications:

\begin{itemize}[leftmargin=*, labelindent=20pt, itemsep=5pt]
	\item The strict transforms of the sides $E_1,E_2,E_3,F_1,F_2,F_3$ of $\Sigma$ are denoted the same on $T$; in the blow-up of $\PP_{\overline{\kk}}^2$ model they correspond to $e_1,e_2,e_3,\ell_{23},\ell_{13},\ell_{12}$, respectively, where we identify $S_{\overline{\kk}}$ with $\PP_{\overline{\kk}}^2$ blown up in $x_1$, $x_2$ and $x_3$.
	\item The exceptional divisors over the blown up points are denoted $E_4$ and $E_5$; they correspond to $e_4$ and $e_5$, respectively.
	\item The strict transforms of the two $1$-curves in $S$ passing through the $2$-point and $E_1,E_2,E_3$ and $F_1,F_2,F_3$, respectively. They correspond to $\mathcal{c}$ and $\ell_{45}$ and are denoted $C$ and $L_{45}$ on $T$.
	\item The strict transforms of the six $0$-curves on $S$ that pass through one component of the $2$-point and opposite lines in the hexagon; these correspond to $\ell_{*4}$ and $\ell_{*5}$ for $*\in\{1,2,3\}$ and are denoted $L_{*4}$ and $L_{*5}$ on $T$.
\end{itemize}
		
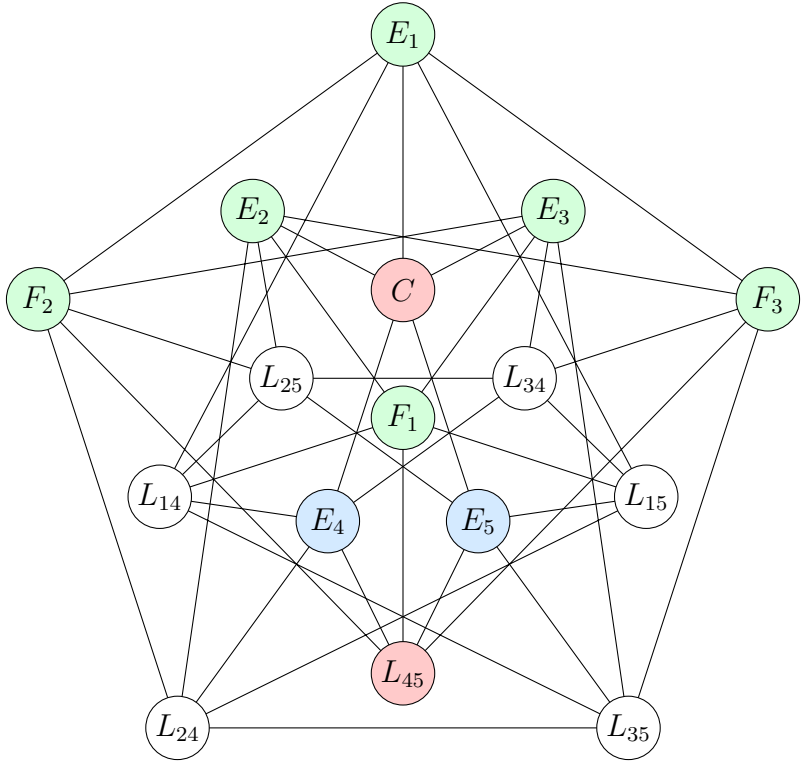
\begin{figure}[H]
		\begin{center}
			\begin{tikzpicture}
				\def\scale{1.7} 
				
				\draw (18:\scale cm) -- (162:\scale cm) -- (306:\scale cm) -- (90:\scale cm) -- (234:\scale cm) -- cycle;
				\draw (18:3*\scale cm) -- (90:3*\scale cm) -- (162:3*\scale cm) -- (234:3*\scale cm) -- (306:3*\scale cm) -- cycle;
				\draw (18:\scale cm) -- (54:2*\scale cm) --(90:\scale cm) -- (126:2*\scale cm) -- (162:\scale cm) -- (198:2*\scale cm) -- (234:\scale cm) -- (270:2*\scale cm) -- (306:\scale cm) -- (342:2*\scale cm) -- cycle;
				\draw (18:3*\scale cm) -- (126:2*\scale cm) -- (234:3*\scale cm)-- (342:2*\scale cm) -- (90:3*\scale cm) -- (198:2*\scale cm) -- (306:3*\scale cm) -- (54:2*\scale cm) -- (162:3*\scale cm) -- (270:2*\scale cm) -- cycle;
				
				\foreach \x in {18,90,162,234,306}
				{
					\draw (\x:\scale cm) -- (\x:3*\scale cm);
				}
				
				\foreach \x in {54, 126, 198, 270, 342}
				{
					\draw (\x:0cm) -- (\x:2*\scale cm);
				}
				
				\filldraw[black,fill=mygreen] (18:3*\scale cm) circle (12pt) node {$F_3$};
				\filldraw[black,fill=mygreen] (90:3*\scale cm) circle (12pt) node {$E_1$};
				\filldraw[black,fill=mygreen] (162:3*\scale cm) circle (12pt) node {$F_2$};
				\filldraw[black,fill=white] (234:3*\scale cm) circle (12pt) node {$L_{24}$};
				\filldraw[black,fill=white] (306:3*\scale cm) circle (12pt) node {$L_{35}$};
				
				\filldraw[black,fill=white] (18:\scale cm) circle (12pt) node {$L_{34}$};
				\filldraw[black,fill=mypink] (90:\scale cm) circle (12pt) node {$C$};
				\filldraw[black,fill=white] (162:\scale cm) circle (12pt) node {$L_{25}$};
				\filldraw[black,fill=myblue] (234:\scale cm) circle (12pt) node {$E_4$};
				\filldraw[black,fill=myblue] (306:\scale cm) circle (12pt) node {$E_5$};
				
				\filldraw[black,fill=mygreen] (54:2*\scale cm) circle (12pt) node {$E_3$};
				\filldraw[black,fill=mygreen] (126:2*\scale cm) circle (12pt) node {$E_2$};
				\filldraw[black,fill=white] (198:2*\scale cm) circle (12pt) node {$L_{14}$};
				\filldraw[black,fill=mypink] (270:2*\scale cm) circle (12pt) node {$L_{45}$};
				\filldraw[black,fill=white] (342:2*\scale cm) circle (12pt) node {$L_{15}$};
				
				\filldraw[black,fill=mygreen] (342:0cm) circle (12pt) node {$F_1$};
				
			\end{tikzpicture}
		\end{center}
		\caption{The Clebsch graph of 16 exceptional curves. Two vertices are connected by an edge if and only if corresponding curves intersect. Blue vertices are contracted by $\eta$, pink vertices are contracted by $\eta'$. The green vertices are mapped to the hexagon on $S$ by $\eta$, the white vertices are mapped to the hexagon on $S'$ by $\eta'$.}
		\label{figure: Clebsch}
\end{figure}

\begin{cor}
	Consider the Sarkisov link \eqref{eq: Sarkisov link of type II bis} at a point of degree 2. The morphism $\eta$  contracts $E_4$ and $E_5$. The morphism $\eta'$ contracts $C$ and $L_{45}$. The hexagon $\Sigma'$ consists of (the $\eta'$-images of) the curves $L_{14}$, $L_{15}$, $L_{24}$, $L_{25}$, $L_{34}$, $L_{35}$, whose configuration is shown on Figure \ref{pic: new hexagon 1}. 
\end{cor}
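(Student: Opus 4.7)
First, I would observe that the claim about $\eta$ is immediate from the paragraph preceding the corollary, where $E_4$ and $E_5$ are defined as the exceptional divisors of the blow-up $\eta$ of the $2$-point on $S$.

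Next, for the identification of $\eta'$, my plan is to use the $2$-ray game. Since $T$ is obtained from $S$ by blowing up a single closed point, the $\kk$-Picard rank of $T$ equals $2$, so the Mori cone $\overline{\NE}(T/\Spec\kk)$ has exactly two extremal rays. One is generated by the effective class $[E_4+E_5]$ and is contracted by $\eta$; the other gives $\eta'$. To pin it down, I would work in $\Pic(T_{\overline{\kk}}) = \ZZ H \oplus \bigoplus_{i=1}^{5}\ZZ e_i$ (the basis coming from the pullback of $\PP^2_{\overline{\kk}}$ and the five blown-up points) and look for a Galois-stable configuration of pairwise disjoint $(-1)$-curves whose class is not proportional to $[E_4+E_5]$. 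The two classes $[C] = 2H - \sum_{i=1}^{5} e_i$ and $[L_{45}] = H - e_4 - e_5$ are each individually Galois-invariant: the hexagon Galois permutes $\{e_1, e_2, e_3\}$ and preserves any expression symmetric in them, while the $\ZZ/2$-action associated with the $2$-point swaps $e_4$ and $e_5$ and fixes both classes. A direct intersection computation gives $C \cdot L_{45} = 2 - 1 - 1 = 0$, providing the required Galois-stable disjoint pair, so this must be what $\eta'$ contracts.

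Finally, for the hexagon $\Sigma'$, I would identify the $(-1)$-curves on $S'$ as the $\eta'$-pushforwards of those $(-1)$-curves on $T$ disjoint from both $C$ and $L_{45}$. A short check gives
\[
C \cdot L_{ij} = 0 \text{ for all } i<j, \qquad L_{45} \cdot L_{i\alpha} = 0 \text{ for } i \in \{1,2,3\},\ \alpha \in \{4,5\},
\]
so the six curves $L_{i\alpha}$ with $i \in \{1,2,3\}$, $\alpha \in \{4,5\}$ descend to six $(-1)$-curves on $S'$ (self-intersections being preserved by the pushforward, as each is disjoint from both contracted curves). The intersection table $L_{i\alpha} \cdot L_{j\beta} = 1$ iff $i \ne j$ and $\alpha \ne \beta$ then assembles them into the hexagonal cycle
\[
L_{14} - L_{25} - L_{34} - L_{15} - L_{24} - L_{35} - L_{14},
\]
which matches Figure \ref{pic: new hexagon 1}. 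The remaining point is that these are \emph{all} the $(-1)$-curves on $S'$, which is automatic since $S'$ is a del Pezzo surface of degree $6$ and therefore possesses exactly six $(-1)$-curves over $\overline{\kk}$. There is no substantial obstacle here: the entire argument is a combinatorial unwinding of the Clebsch graph via explicit intersection numbers on $T$.
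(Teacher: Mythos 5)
Your proposal is essentially the paper's own (implicit) argument: the corollary is stated there without proof precisely because it is the combinatorial unwinding of the curve identifications and intersection numbers encoded in the Clebsch graph, and your computations of $C\cdot L_{45}=0$, $C\cdot L_{i\alpha}=L_{45}\cdot L_{i\alpha}=0$ and $L_{i\alpha}\cdot L_{j\beta}=1-[i=j]-[\alpha=\beta]$ reproduce exactly that. The $2$-ray game framing and the remark that a degree $6$ del Pezzo surface has exactly six $(-1)$-curves are correct and complete the identification of $\Sigma'$.

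There is, however, one genuinely false step in your justification of the Galois-stability: you claim that $[C]=2H-\sum e_i$ and $[L_{45}]=H-e_4-e_5$ are \emph{each individually} Galois-invariant because ``the hexagon Galois permutes $\{e_1,e_2,e_3\}$.'' This mis-describes the action. Since $\Pic(S)\simeq\ZZ$, the Galois group acts transitively on the two triples $\{E_1,E_2,E_3\}$ and $\{F_1,F_2,F_3\}$, so it contains an element (the one inducing $h$ or a reflection $f$ on the hexagon) acting on $\Pic(T_{\overline{\kk}})$ by the quadratic Weyl element $e_i\mapsto H-e_j-e_k$, $H\mapsto 2H-e_1-e_2-e_3$; this element sends $H-e_4-e_5$ to $2H-e_1-\cdots-e_5$, i.e.\ it \emph{swaps} $[C]$ and $[L_{45}]$. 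Indeed they must be swapped: $\eta'$ blows up a closed point of degree $2$ on $S'$, so its two exceptional components cannot both be defined over $\kk$. The conclusion survives because all that is needed is that the \emph{pair} $\{C,L_{45}\}$ is a Galois-stable set of disjoint $(-1)$-curves --- which it is, being characterized as the strict transforms of the two $1$-curves of $S$ through the $2$-point and the triples $\{E_i\}$, $\{F_i\}$ respectively --- and that it is the unique such pair other than $\{E_4,E_5\}$ (any other disjoint pair, such as $\{E_1,F_1\}$ or $\{L_{14},L_{24}\}$, fails to be Galois-stable for the same transitivity reason). With that correction your argument is sound.
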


\begin{figure}
	\centering
	\begin{subfigure}{0.45\textwidth}
		\centering
			\begin{tikzpicture}
			\newdimen\R
			\R=3.2cm
			\newdimen\A
			\A=1.9cm
			\newdimen\B
			\B=2.8cm
			\newdimen\C
			\C=2.45cm
			\draw (0:\R) \foreach \x in {60,120,...,360} {  -- (\x:\R) };
			
			\foreach \x/\l/\p in
			{ 60/{}/above,
				120/{}/above,
				180/{}/left,
				240/{}/below,
				300/{}/below,
				360/{}/right
			}
			\node[inner sep=1pt,circle,draw,fill,label={\p:\l}] at (\x:\R) {};
			
			\foreach \start/\end/\label in
			{ 0/60/$L_{35}\ \ $,
				60/120/$L_{14}$,
				120/180/$\ \ L_{25}$,
				180/240/$L_{34}\ \ $,
				300/360/$\ \ L_{24}$
			}
			\draw (\start:\R) -- (\end:\R) node[midway, above, fill=white] {\label};
			
			\draw (240:\R) -- (300:\R) node[midway, below, fill=white] {$L_{15}$};
			
			
			\draw[<->, blue] 
			($(60:\B)!0.5!(120:\B)$) -- ($(240:\B)!0.5!(300:\B)$)
			node[pos=0.1, left, fill=white] {\( f \)};  
			
			\draw[<->, blue] 
			($(120:\B)!0.5!(180:\B)$) -- ($(180:\B)!0.5!(240:\B)$);  
			
			\draw[<->, blue] 
			($(0:\B)!0.5!(60:\B)$) -- ($(300:\B)!0.5!(360:\B)$);  
			
			
			\draw[<->, darkgreen] 
			($(60:\C)!0.5!(120:\C)$) -- ($(240:\C)!0.5!(300:\C)$)
			node[pos=0.6, below, fill=white] {\( h \)};  
			
			\draw[<->, darkgreen] 
			($(120:\C)!0.5!(180:\C)$) --
			($(300:\C)!0.5!(360:\C)$); 
			
			\draw[<->, darkgreen] 
			($(0:\C)!0.5!(60:\C)$) --  ($(180:\C)!0.5!(240:\C)$);  
			
			\draw[->, red] 
			($(120:\A)!0.5!(180:\A)$) -- ($(0:\A)!0.5!(60:\A)$)
			node[midway, above,fill=white] {\( g \)};  
			
			\draw[->, red] 
			($(0:\A)!0.5!(60:\A)$) -- ($(240:\A)!0.5!(300:\A)$);  
			
			\draw[->, red] ($(240:\A)!0.5!(300:\A)$) -- ($(120:\A)!0.5!(180:\A)$); 
			
		\end{tikzpicture}
		\caption{$f$ permutes the components of $p$}
		\label{pic: new hexagon 1}
	\end{subfigure}
	\hfill
	\begin{subfigure}{0.45\textwidth}
		\centering
			\begin{tikzpicture}
			\newdimen\R
			\R=3.2cm
			\newdimen\A
			\A=1.9cm
			\newdimen\B
			\B=2.8cm
			\newdimen\C
			\C=2.45cm
			\draw (0:\R) \foreach \x in {60,120,...,360} {  -- (\x:\R) };
			
			\foreach \x/\l/\p in
			{ 60/{}/above,
				120/{}/above,
				180/{}/left,
				240/{}/below,
				300/{}/below,
				360/{}/right
			}
			\node[inner sep=1pt,circle,draw,fill,label={\p:\l}] at (\x:\R) {};
			
			\foreach \start/\end/\label in
			{ 0/60/$L_{35}\ \ $,
				60/120/$L_{14}$,
				120/180/$\ \ L_{25}$,
				180/240/$L_{34}\ \ $,
				300/360/$\ \ L_{24}$
			}
			\draw (\start:\R) -- (\end:\R) node[midway, above, fill=white] {\label};
			
			\draw (240:\R) -- (300:\R) node[midway, below, fill=white] {$L_{15}$};
			
			
			\draw[<->, blue] 
			($(120:\B)!0.5!(180:\B)$) -- 	($(0:\B)!0.5!(60:\B)$)
			node[pos=0.1, left, fill=white] {\( f \)};  
			
			\draw[<->, blue] 
			($(180:\B)!0.5!(240:\B)$) -- ($(300:\B)!0.5!(360:\B)$);  

			
			\draw[<->, darkgreen] 
			($(60:\C)!0.5!(120:\C)$) -- ($(240:\C)!0.5!(300:\C)$)
			node[pos=0.6, below, fill=white] {\( h \)};  
			
			\draw[<->, darkgreen] 
			($(120:\C)!0.5!(180:\C)$) --
			($(300:\C)!0.5!(360:\C)$); 
			
			\draw[<->, darkgreen] 
			($(0:\C)!0.5!(60:\C)$) --  ($(180:\C)!0.5!(240:\C)$);  
			
			\draw[->, red] 
			($(120:\A)!0.5!(180:\A)$) -- ($(0:\A)!0.5!(60:\A)$)
			node[midway, above,fill=white] {\( g \)};  
			
			\draw[->, red] 
			($(0:\A)!0.5!(60:\A)$) -- ($(240:\A)!0.5!(300:\A)$);  
			
			\draw[->, red] ($(240:\A)!0.5!(300:\A)$) -- ($(120:\A)!0.5!(180:\A)$); 
			
		\end{tikzpicture}
		\caption{$f$ preserves the components of $p$}
		\label{pic: new hexagon 2}
	\end{subfigure}
	\caption{Possible actions in the case $\EE\subset\FF$, $\Gal(\FF/\kk)\simeq\Dih_6$.}
	\label{pic:twohexagons}
\end{figure}

\begin{prop}\label{prop: 2-points new Splitting Field}
	Let $S$ be a non-$\kk$-rational sextic $G$-del Pezzo surface over $\kk$ and $\SplittingHex$ be the splitting field of its hexagon. Let $p$ be a point of degree 2 on $S$ with splitting field $\EE$. Let $\chi: S \dashrightarrow S'$ be a Sarkisov link based at $p$. Then $S'$ is a sextic del Pezzo surface over $\kk$ and one has $\SplittingHex'=(\SplittingHex \EE)^H$ for the splitting field of its hexagon, where $H \subset\Gal(\SplittingHex \EE / \kk)$ is a subgroup and one of the following cases holds.
	\begin{enumerate}[leftmargin=*, labelindent=20pt, itemsep=5pt]
	\item If $\EE\subset\SplittingHex$, then $H=\{\id\}$ and $\SplittingHex'=\SplittingHex$. In particular, $\Gal(\SplittingHex'/\kk)=\Gal(\SplittingHex/\kk)$. 
	\item If $\EE\cap\SplittingHex=\kk$, then $H=\langle h\rangle$. In particular, $\Gal(\SplittingHex'/\kk)\simeq (\Gal(\SplittingHex/\kk)/H)\times\Gal(\EE/\kk)$.
	\end{enumerate}	
	Furthermore, one has $\KK'=\EE$ and $\LL'=\LL$.
\end{prop}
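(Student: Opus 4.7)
The plan is to analyze the Galois action on $\Pic(T_{\FF\EE})$ and determine the kernel $H$ of the induced action on the new hexagon $\Sigma' = \{L_{ij} : i \in \{1,2,3\},\, j \in \{4,5\}\}$. Since $T = \Bl_p S$ and $p$ splits over $\EE$, all sixteen $(-1)$-curves of $T_{\overline{\kk}}$ are defined over $\FF\EE$, and the Galois action on $T_{\FF\EE}$ is determined by its action on the hexagon $\Sigma \subset S$ (encoded by the embedding $G \hookrightarrow \Dih_6$) together with its permutation of the two components of $p$. Writing the action of $\sigma \in \Gal(\FF\EE/\kk)$ on $\Sigma$ as $h^{\epsilon}\tau$ with $\tau \in \Sym_3$ and $\epsilon \in \{0,1\}$, and on $\{E_4, E_5\}$ as $\eta \in \Sym_2$, a direct Picard-group computation using $L_{ij} = L - e_i - e_j$ together with the Cremona formulas $h(L) = 2L - e_1 - e_2 - e_3$ and $h(e_i) = L - e_j - e_k$ (for $\{i,j,k\}=\{1,2,3\}$) should give $\sigma(L_{ij}) = L_{\tau(i)\eta(j)}$ independently of $\epsilon$. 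I would then conclude that $\sigma \in H$ if and only if $\tau = \id$ and $\eta = \id$, equivalently $\sigma|_\Sigma \in \{\id, h\}$ and $\sigma$ fixes each component of $p$.

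In Case~(1), $\EE \subset \FF$ gives $\FF\EE = \FF$, and ``$\sigma$ fixes each component of $p$'' translates to $\sigma \in \Gal(\FF/\EE)$. Propositions~\ref{prop: Z6 closed points} and~\ref{prop: D6 closed points} restrict the possible splitting fields inside $\FF$ to $\EE = \KK = \FF^g$ for $G \simeq \ZZ/6\ZZ$ and $\EE \in \{\FF^{\langle g, s\rangle}, \FF^{\langle g, f\rangle}\}$ for $G \simeq \Dih_6$; in each case one checks directly that $h \notin \Gal(\FF/\EE)$, so $H = \{\id, h\} \cap \Gal(\FF/\EE) = \{\id\}$ and $\FF' = \FF$. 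In Case~(2), Lemma~\ref{lem: Galois group of the composite} gives $\Gal(\FF\EE/\kk) \simeq G \times \Gal(\EE/\kk)$ with $G$ acting trivially on $\EE$, so the condition on $p$ becomes $\sigma|_\EE = \id$; intersecting with $\{\id, h\} \times \Gal(\EE/\kk)$ yields $H = \langle h \rangle$, yielding the stated structure of $\Gal(\FF'/\kk)$.

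To identify $\KK'$ and $\LL'$ I would apply the Galois correspondence to suitable stabilizers in the hexagon's combinatorial data. The alternating triple $\{L_{14}, L_{24}, L_{34}\} \subset \Sigma'$ is preserved setwise by $\sigma$ if and only if $\eta(4) = 4$, equivalently $\sigma|_\EE = \id$; its stabilizer is therefore $\Gal(\FF\EE/\EE)$, which contains $H$ in both cases, giving $\KK' = (\FF\EE)^{\Gal(\FF\EE/\EE)} = \EE$. Similarly, each opposite pair $(L_{i4}, L_{i5})$ is preserved iff $\tau$ fixes $i \in \{1,2,3\}$, so the joint stabilizer of all three pairs equals $\{\sigma : \tau = \id\}$, which is $\langle h \rangle$ in Case~(1) and $\langle h \rangle \times \Gal(\EE/\kk)$ in Case~(2); its fixed field in $\FF\EE$ is $\LL$ in both cases, so $\LL' = \LL$. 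The main obstacle I anticipate is the Picard-group computation itself: while $h$ is not a biregular automorphism of $T$ (unless $p$ happens to be Cremona-invariant), its action on $\Pic(T_{\FF\EE})$ via the Weyl group is nonetheless well-defined, and the cancellation $e_1 + e_2 + e_3 - e_a - e_b = e_{\tau(i)}$ when $\{a,b\} = \{1,2,3\} \setminus \{\tau(i)\}$ is precisely what makes $h$ fix every $L_{ij}$ as a divisor class --- a somewhat surprising fact, given the non-trivial action of $h$ on the original hexagon, which drives the entire structure of $H$ in Case~(2).
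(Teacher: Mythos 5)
Your proof is correct and follows essentially the same route as the paper: both arguments reduce to computing the Galois action on the new hexagon $\Sigma'=\{L_{ij}\}$ and reading off the kernel $H$ together with the stabilizers of the alternating triple and of the opposite pairs that define $\KK'$ and $\LL'$; your uniform formula $\sigma(L_{ij})=L_{\tau(i)\eta(j)}$ is just a lattice-theoretic packaging of the curve-by-curve identifications the paper carries out (and the cancellation you flag is exactly why $h$ acts on $\Sigma'$ only through its permutation of the components of $p$). The one local difference is that, in the case $\EE\subset\FF$ with $G\simeq\Dih_6$, the paper shows $h$ must swap the components of $p$ by deriving a contradiction with $\indexx(S')=2$, whereas you exclude $\EE=\FF^{\langle g,h\rangle}$ via Proposition~\ref{prop: D6 closed points} and check directly that $h\notin\Gal(\FF/\EE)$ for the two remaining quadratic subfields --- an equally valid shortcut, since that proposition is precisely where the needed fact is established.
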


\begin{rem}\label{rem: indexx 2 D6 new embedding}
In the case where $\EE = \SplittingHex^{\langle g, f \rangle}$ we have $\SplittingHex' = \SplittingHex$, but the embedding changes (compare to Remark \ref{rem: EmbedUniq}) and therefore also the surface changes.
\end{rem}

\begin{proof}
The two mentioned cases are the only possible ones and are mutually exclusive. We consider them separately.

\begin{description}[leftmargin=*, labelindent=20pt, itemsep=5pt]
	\item[\it Case $\EE\subset\SplittingHex$] Then $\EE\SplittingHex=\SplittingHex$ and the generators of $\Gal(\EE\SplittingHex/\kk)=\Gal(\SplittingHex/\kk)$ belong to the set $\{f,g,h\}$. We now recover their actions on the hexagon $\Sigma'$ of $S'$.
	\begin{itemize}[leftmargin=*, labelindent=15pt, itemsep=5pt]
		\item First, $g$ acts trivially on the components of $p$, hence fixes $E_4$ and $E_5$ on $T$. The action on $\Sigma'$ can be recovered either from intersection indices, or by using the identifications made in the beginning of this subsection. Namely, since $g$ acts as $E_1\mapsto E_2\mapsto E_3\mapsto E_1$, we find that $g(L_{14})$ corresponds to the curve passing through $p_4$, $E_2$ and $F_2$, i.e. $L_{24}$. We see that $g$ acts by the clock-wise rotation by $2\pi/3$ on $\Sigma'$.
		
		\item Recall that $\Gal(\SplittingHex/\kk)$ cannot be $\langle g,f\rangle\simeq\Sym_3$ since $\indexx(S)=2$. If $\langle g,h\rangle\simeq\ZZ/6$ then the action of $h$ on the components of $p$, hence on $E_4$ and $E_5$, is non-trivial (since $p$ is not $\kk$-rational). As above, we find that $h$ exchanges the opposite sides of $\Sigma'$ and conclude that all non-trivial elements of the Galois group act non-trivially on the new hexagon $\Sigma'$, hence $H=\{\id\}$ and $\SplittingHex$ is the splitting field of $\Sigma'$.
		
		Now, if $\Gal(\SplittingHex/\kk)=\langle g,f,h\rangle\simeq\Dih_6$, then we notice that the action of $f$ on $\Sigma'$ is nontrivial, regardless of whether $f$ permutes the components of $p$ or not. Namely, either it acts as $L_{15}\leftrightarrow L_{14}$, $L_{34}\leftrightarrow L_{25}$, $L_{24}\leftrightarrow L_{35}$ (when the components of $p$ are permuted by $f$, i.e. $\EE=\FF^{\langle g,s\rangle}$), or as $L_{25}\leftrightarrow L_{35}$, $L_{34}\leftrightarrow L_{24}$ (when the components of $p$ are not permuted by $f$, i.e. $\EE=\FF^{\langle g,f\rangle}$). On the other hand, if $h$ does not permute the components of $p$, then it acts trivially on $\Sigma'$ and hence $S'$ is a $\Sym_3$-del Pezzo surface, which is impossible as $\indexx(S')=2$. Therefore, $h$ permutes the components of $p$ and hence acts by central symmetry on $\Sigma'$, see Figures \ref{pic: new hexagon 1} and \ref{pic: new hexagon 2}. This again implies that the new action $\Psi$ in \eqref{eq: new Galois action} is faithful. 
	\end{itemize}	 
	
	If $\Gal(\SplittingHex/\kk)=\langle g,h\rangle\simeq\ZZ/6$, then $\EE=\FF^g$. Since the action of $g$ and $h$ on $\Sigma'$ is the same as on $\Sigma$, we conclude that $\KK'={\SplittingHex'}^g=\SplittingHex^g$. If $\Gal(\SplittingHex/\kk)=\langle g,h,f\rangle\simeq\Dih_6$, then $\EE=\SplittingHex^{\langle g,f\rangle}$ or $\EE=\SplittingHex^{\langle g,s\rangle}$ as we know from Proposition \ref{prop: D6 closed points}. In the former case, $g$ and $f$ preserve the components of $p$ and hence we are in the situation of Figure \ref{pic: new hexagon 2}. The contraction to Severi-Brauer surfaces is thus defined over $\FF^{\langle g,f\rangle}$, i.e. $\KK'=\EE$. In the latter case, $g$ preserves the components of $p$, while $f=s\circ h$ does not, hence we are in the situation of Figure \ref{pic: new hexagon 1}. The contraction to Severi-Brauer surfaces is thus defined over $\FF^{\langle g,s\rangle}$, i.e. once again $\KK'=\EE$. Finally, it is clear that in all the cases $\LL'=\FF^h=\LL$.
	
	\vspace{0.3cm}
	
	\item[\it Case $\EE\cap\SplittingHex=\kk$] Then $\Gal(\EE\SplittingHex/\kk)\simeq\Gal(\EE/\kk)\times\Gal(\SplittingHex/\kk)$. Let $t$ be the generator of $\Gal(\EE/\kk)$. Note that $g$, $h$ and $f$ act trivially on the components of $p$, while the automorphism $t$ permutes the components of $p$ and preserves the curves $E_1,E_2,E_3,F_1,F_2,F_3$. Therefore, $g$ and $f$ act on $\Sigma'$ as shown on Figure \ref{pic: new hexagon 2}, $h$ acts trivially on $\Sigma'$ and $t$ switches its opposite sides. Thus $H=\langle h\rangle$, and we have either $\KK'=(\SplittingHex\EE)^{\langle g,h\rangle}=\EE$ (if $\Gal(\SplittingHex/\kk)\simeq\ZZ/6\ZZ$), or $\KK'=(\SplittingHex\EE)^{\langle g,h,f\rangle}=\EE$ (if $\Gal(\SplittingHex/\kk)\simeq\Dih_6$). Finally, in all cases $\LL'=(\EE\SplittingHex)^{\langle h,t\rangle}=\SplittingHex^h=\LL$.
\end{description}
\end{proof}

\begin{cor}\label{cor: 2 linkSBData}
Let $S$ be a $G$-del Pezzo surface of degree 6 and index $2$ with Severi-Brauer data $\{\SplittingHex,\ \KK,\ \PP^2_{\KK},\ \LL,\ Y\}$, and $\chi\colon S \dashrightarrow S'$ be a Sarkisov link of type II based at a $2$-point with splitting field~$\EE$. Then the Severi-Brauer data of $S'$ is given by $\{\SplittingHex',\ \EE,\ \PP^2_{\EE},\ \LL,\ Y\}$. Moreover the surface $S'$ is uniquely determined by $S$ and $\EE$.
\end{cor}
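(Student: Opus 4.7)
The plan is to combine Proposition~\ref{prop: 2-points new Splitting Field} with the birational invariance of the index and with the biregular classification results of Section~\ref{sec: biregular classification}. First, Proposition~\ref{prop: 2-points new Splitting Field} already gives $\SplittingHex'$ (depending on whether $\EE\subset\SplittingHex$ or $\EE\cap\SplittingHex=\kk$) and, crucially, identifies the two field extensions arising from the Severi--Brauer data of $S'$ as $\KK'=\EE$ and $\LL'=\LL$. So the only remaining content is to identify the Severi--Brauer surface $X'$ over $\KK'$ and the involution surface $Y'$ over $\LL'$, and then invoke a biregular classification theorem.

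For $X'$, the idea is to use that the index is a birational invariant (see Section~\ref{subsec: closed points general}, in particular the fact that $\indexx$ is preserved by birational maps). Since $\indexx(S)=2$, one has $\indexx(S')=2$, and Proposition~\ref{prop: index 2} then forces $S'$ to be $\KK'$-trivial, i.e.\ $X'\simeq\PP^2_{\KK'}=\PP^2_\EE$. Alternatively, one can argue directly: the two geometric components of the blown-up point $p$ give an $\EE$-rational zero-cycle on the exceptional divisor of $\eta$, whose image under $\eta'$ provides an $\EE$-point of $S'$, and then Lang--Nishimura (Lemma~\ref{lem: Lang-Nishimura}) shows that $X'$ has an $\EE$-point and is therefore trivial.

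For $Y'$, the key observation is that $S$ and $S'$ are $\kk$-birational via $\chi$, hence $S_\LL$ and $S'_\LL$ are $\LL$-birational. Since $\LL'=\LL$, both $Y$ and $Y'$ are obtained by contracting $\LL$-defined $(-1)$-curves on $S_\LL$ and $S'_\LL$ respectively, so $Y$ and $Y'$ are $\LL$-birational. This is exactly what is required for the Severi--Brauer data of $S$ and $S'$ to be equivalent in the sense of Definition~\ref{def: SB data}.

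Finally, the uniqueness of $S'$ (given $S$ and $\EE$) is a direct consequence of the biregular classification obtained in Theorems~\ref{thm: Z6 iso criterion}, \ref{thm: S3 iso criterion} and~\ref{thm: D6 iso criterion}: any two sextic $G$-del Pezzo surfaces with the same splitting field and equivalent Severi--Brauer data are isomorphic. The only subtle point (and likely the main obstacle worth flagging) is that when $G\simeq\Dih_6$ and $\EE=\SplittingHex^{\langle g,f\rangle}$, one has $\SplittingHex'=\SplittingHex$ but the embedding $\epsilon\colon G\hookrightarrow\Dih_6$ changes (see Remark~\ref{rem: indexx 2 D6 new embedding}), so $S'$ is genuinely different from $S$ even though the splitting field is the same; one must check that the Severi--Brauer data still uniquely determine $S'$ up to $\kk$-isomorphism in this twisted situation, which is handled by the same classification theorems applied to the new embedding.
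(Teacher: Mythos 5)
Your proposal is correct and follows exactly the derivation the paper intends: the paper states this corollary without a separate proof, as an immediate consequence of Proposition~\ref{prop: 2-points new Splitting Field} (which gives $\SplittingHex'$, $\KK'=\EE$, $\LL'=\LL$) combined with the birational invariance of the index, Proposition~\ref{prop: index 2}, and the classification Theorems~\ref{thm: Z6 iso criterion} and~\ref{thm: D6 iso criterion}. Your flagging of the $\Dih_6$ embedding issue (Remarks~\ref{rem: EmbedUniq} and~\ref{rem: indexx 2 D6 new embedding}) is also exactly the right point to be careful about, and it is indeed resolved because $\KK'$ pins down the embedding.
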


\subsection{Points of degree 3}\label{subsec: Sarkisov at 3-points} In this case, the surface $T$ in \eqref{eq: Sarkisov link of type II} is a del Pezzo surface of degree 3. Then $T_{\overline{\kk}}$ is a blow-up of $\PP_{\overline{\kk}}^2$ in 6 points $x_1,\ldots,x_6$ in general position. Denote by $e_1,\ldots,e_6$ the exceptional divisors of this blow-up, by $\ell_{ij}$, $i,j\in\{1,\ldots,6\}$, $i<j$, the 15 strict transforms of the lines through $x_i$ and $x_j$, by $\mathcal{c}_i$ the strict transforms of the conics through all points except $x_i$. These are twenty-seven $(-1)$-curves on $T$. Their intersection graph can be visualized as the complement of the Schl\"afli strongly regular graph shown on Figure \ref{graph: Schläfli}. We now consider the blow-up $\eta\colon T\to S$ of the point $p=\{p_4,p_5,p_6\}$ and make identifications similar to the previous case:

\begin{itemize}[leftmargin=*, labelindent=20pt, itemsep=5pt]
	\item The strict transforms of the sides $E_1,E_2,E_3,F_1,F_2,F_3$ of $\Sigma$ are denoted the same on $T$; they correspond to $e_1,e_2,e_3,\ell_{23},\ell_{13},\ell_{12}$, respectively.
	\item The exceptional divisors over $p_4$, $p_5$ and $p_6$ are denoted $E_4$, $E_5$ and $E_6$; they correspond to $e_4$, $e_5$ and $e_6$ in the blow-up of the plane model.
	\item The strict transforms of:
	\begin{itemize}[leftmargin=*, labelindent=5pt, itemsep=5pt]
		\item[---] three $1$-curves on $S$ passing through all $E_1,E_2,E_3$ and a pair\footnote{Here and below, it means that the curve simply intersects both members of the pair.} $\{p_4,p_5\}$, $\{p_4,p_6\}$ or $\{p_5,p_6\}$ correspond to $\mathcal{c}_6$, $\mathcal{c}_5$ and $\mathcal{c}_4$, respectively. They will be denoted $C_6$, $C_5$ and $C_4$ on $T$.
		\item[---] three $1$-curves on $S$ passing through all $F_1,F_2,F_3$ and a pair $\{p_4,p_5\}$, $\{p_4,p_6\}$ or $\{p_5,p_6\}$ correspond to the lines $\ell_{45}$, $\ell_{46}$ and $\ell_{56}$. They will be denoted $L_{45}$, $L_{46}$ and $L_{56}$.   
		\item[---] three $2$-curves on $S$ passing through $p$ and a pair $\{E_1,E_2\}$, $\{E_1,E_3\}$ or $\{E_2,E_3\}$ correspond to the conics $\mathcal{c}_3$, $\mathcal{c}_2$ and $\mathcal{c}_1$. They will be denoted $C_3$, $C_2$ and $C_1$.    
	\end{itemize}
	\item The strict transforms of the nine $0$-curves on $S$ that pass through one point among $p_4$, $p_5$ and $p_6$, and opposite lines in the hexagon, i.e. $\{E_1,F_1\}$, $\{E_2,F_2\}$ or $\{E_3,F_3\}$; these correspond to $\ell_{i4}$, $\ell_{i5}$ and $\ell_{i6}$ for $i\in\{1,2,3\}$ and will be denoted $L_{i4}$, $L_{i5}$, $L_{i6}$ on $T$.
\end{itemize}

\begin{cor} \label{cor: 3LinkHexagon}
	Consider the Sarkisov link \eqref{eq: Sarkisov link of type II bis} at a point of degree 3. The morphism $\eta$  contracts $E_4$, $E_5$ and $E_6$. The morphism $\eta'$ contracts $C_1$, $C_2$ and $C_3$. The hexagon $\Sigma'$ consists of (the $\eta'$-images of) the curves $L_{45}$, $L_{46}$, $L_{56}$, $C_4$, $C_5$, $C_6$, whose configuration is shown on Figure \ref{pic:two hexagons points of degree 3}.
\end{cor}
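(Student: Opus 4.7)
The first claim is immediate: by construction, $\eta\colon T\to S$ is the blow-up of the $3$-point $p=\{p_4,p_5,p_6\}$, hence $\eta$ contracts $E_4+E_5+E_6$. The main work is thus identifying $\eta'$ and the new hexagon.

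My plan for $\eta'$ is as follows. Since $p$ is in general position, $T$ is a smooth degree-$3$ del Pezzo surface with $\rho_\kk(T)=\rho_\kk(S)+1=2$, so by the theory of $2$-ray games recalled in Section~\ref{sec: Sarkisov program} the invariant Mori cone $\overline{\NE}(T_\kk)$ has exactly two extremal rays. One is clearly $\RR_{\geqslant 0}[E_4+E_5+E_6]$ (yielding $\eta$), and the other determines $\eta'$. I would pass to $\overline{\kk}$ and use the plane basis $H,e_1,\dots,e_6$ of $\NS(T_{\overline{\kk}})$ coming from the standard blow-down $T_{\overline{\kk}}\to\PP_{\overline{\kk}}^2$, and check the following: $[C_i]=[\mathcal{c}_i]=2H-\sum_{j\neq i}e_j$, so $C_i\cdot C_j=0$ for $i\neq j$; the cyclic generator $g$ acts on $\{1,\dots,6\}$ as $(1\,2\,3)(4\,5\,6)$ and permutes $C_1,C_2,C_3$ cyclically; and, using $h(H)=2H-e_1-e_2-e_3$ together with $h(e_k)=H-e_\ell-e_m$ for $\{k,\ell,m\}=\{1,2,3\}$, one verifies $h(C_i)=C_i$ for $i\in\{1,2,3\}$ (with an analogous check for $f$ in the $\Sym_3$- and $\Dih_6$-cases). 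Thus $[C_1+C_2+C_3]$ is a Galois-invariant effective class. In the coordinates $u=3H-(e_1+e_2+e_3)$, $v=e_4+e_5+e_6$ on the rank-$2$ invariant subspace, I would enumerate the Galois orbits of $(-1)$-curves on $T_{\overline{\kk}}$ and verify that every orbit sum lies in the cone spanned by $(0,1)$ and $(2,-3)$, concluding that these are precisely the two extremal rays of $\overline{\NE}(T_\kk)$. This identifies $\eta'$ with the contraction of $C_1,C_2,C_3$.

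For the new hexagon, I would use the push-forward identity $\eta'_*(D)^2=D^2+\sum_{i=1}^{3}(D\cdot C_i)^2$ for $(-1)$-curves $D$ on $T$: the image $\eta'(D)$ is a $(-1)$-curve on $S'$ iff $D$ is disjoint from $C_1\cup C_2\cup C_3$. A direct intersection computation using $[\ell_{ij}]=H-e_i-e_j$ and $[\mathcal{c}_k]=2H-\sum_{l\neq k}e_l$ yields exactly $\{L_{45},L_{46},L_{56},C_4,C_5,C_6\}$ as the set of such curves, and one further checks that $L_{ij}\cdot C_k=1$ precisely when $k\in\{i,j\}$ (with the $L$'s pairwise disjoint and the $C$'s pairwise disjoint), reproducing the hexagonal configuration of Figure~\ref{pic:two hexagons points of degree 3}. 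The main obstacle in the whole argument is the identification of the second extremal ray: its uniform behaviour across $G\in\{\ZZ/6\ZZ,\Sym_3,\Dih_6\}$ is not a priori obvious and requires a short case analysis of the Galois action on $\NS(T_{\overline{\kk}})$, in contrast to the purely combinatorial computation of the new hexagon once $\eta'$ has been identified.
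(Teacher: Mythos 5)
Your argument is correct and supplies exactly the details the paper leaves implicit: the corollary is stated there without proof, as a direct consequence of the combinatorial identifications of the $27$ lines made just before it together with the $2$-ray game, which is precisely what you carry out. One small caution: the Galois generator $g$ does not always act as $(1\,2\,3)(4\,5\,6)$ on the indices --- its action on $\{p_4,p_5,p_6\}$ depends on how the splitting field $\EE$ of $p$ sits inside $\FF\EE$ (for instance it is trivial on the components of $p$ when $\EE\cap\FF=\kk$, cf.\ the proof of Proposition~\ref{prop: 3-points new Splitting Field}) --- but this does not affect your conclusion, since $\{C_1,C_2,C_3\}$ is intrinsically the set of $(-1)$-curves on $T$ meeting all three of $E_4,E_5,E_6$, hence is stable under any Galois action preserving $\{E_4,E_5,E_6\}$. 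That observation also lets you bypass the orbit-by-orbit cone enumeration: a Galois-stable set of pairwise disjoint $(-1)$-curves is contractible over $\kk$, and since the invariant Picard rank of $T$ is $2$, its class necessarily spans the second extremal ray.
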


\begin{rem}
	The configuration of $(-1)$-curves on the cubic surface $T$, appearing in our link, is shown on Figure~\ref{graph: Schläfli}. To visualize this graph, we adapt Coxeter's approach \cite{Coxeter-polytope2-21}, where the initial double-six \cite[p. 457]{Coxeter-polytope2-21} is chosen as
	\[
	\begin{tabular}{cccccc}
		$E_1$ & $E_2$ & $E_3$ & $E_4$ & $E_5$  & $E_6$ \\
		$C_1$ & $C_2$ & $C_3$ & $C_4$ & $C_5$ & $C_6$ \\
	\end{tabular}
	\]
	Two vertices are adjacent in the Schläfli graph if and only if the corresponding pair of $(-1)$-curves on $T$ are skew. The adjacencies should be understood as follows. Consider a vertex $x$ on the outer circle and all the vertices along the corresponding axis in the graph. If there are 4 such vertices, then $x$ is connected to the two vertices on the inner circle that lie on this axis. If there are 3 such vertices (i.e., the axis passes through the center), then $x$ is connected to two (out of the three coinciding) vertices at the center. Now consider a vertex $y$ on the inner circle and all the vertices along the corresponding axis in the graph. If there are 4 such vertices, then $y$ is connected to the two vertices on the outer circle that lie on this axis. If there are 3 such vertices (i.e., the axis passes through the center), then $y$ is connected to exactly two vertices in the centre.
\end{rem}

\begin{figure}[ht!]\centering
	\begin{tikzpicture}[scale=7]
		
		\foreach \i in {1,...,12}
		{
			\draw ({cos(30*\i+15)},{sin(30*\i+15)}) -- ({cos(30*(\i+1)+15)},{sin(30*(\i+1)+15)});
			\draw ({cos(30*\i+15)},{sin(30*\i+15)}) -- ({cos(30*(\i+2)+15)},{sin(30*(\i+2)+15)});
			\draw ({cos(30*\i+15)},{sin(30*\i+15)}) -- ({cos(30*(\i+3)+15)},{sin(30*(\i+3)+15)});
			\draw ({cos(30*\i+15)},{sin(30*\i+15)}) -- ({cos(30*(\i+4)+15)},{sin(30*(\i+4)+15)});
			\draw ({cos(30*\i+15)},{sin(30*\i+15)}) -- ({cos(30*(\i+5)+15)},{sin(30*(\i+5)+15)});
			\draw ({cos(30*\i+15)},{sin(30*\i+15)}) -- ({cos(30*(\i+6)+15)},{sin(30*(\i+6)+15)});
			\draw ({cos(30*\i)*sqrt(2-sqrt(3))},{sin(30*\i)*sqrt(2-sqrt(3))}) -- ({cos(30*(\i+2))*sqrt(2-sqrt(3))},{sin(30*(\i+2))*sqrt(2-sqrt(3))});
			\draw ({cos(30*\i)*sqrt(2-sqrt(3))},{sin(30*\i)*sqrt(2-sqrt(3))}) -- ({cos(30*(\i+3))*sqrt(2-sqrt(3))},{sin(30*(\i+3))*sqrt(2-sqrt(3))});
			\draw ({cos(30*\i)*sqrt(2-sqrt(3))},{sin(30*\i)*sqrt(2-sqrt(3))}) -- ({cos(30*(\i+4))*sqrt(2-sqrt(3))},{sin(30*(\i+4))*sqrt(2-sqrt(3))});
			\draw ({cos(30*\i)*sqrt(2-sqrt(3))},{sin(30*\i)*sqrt(2-sqrt(3))}) -- ({cos(30*(\i+5))*sqrt(2-sqrt(3))},{sin(30*(\i+5))*sqrt(2-sqrt(3))});
			\draw ({cos(30*\i)*sqrt(2-sqrt(3))},{sin(30*\i)*sqrt(2-sqrt(3))}) -- ({cos(30*(\i+6))*sqrt(2-sqrt(3))},{sin(30*(\i+6))*sqrt(2-sqrt(3))});
		}
		\draw[fill=myyellow] ({cos(30*0+15)},{sin(30*0+15)}) circle (2pt) node {$C_5$};
		\draw[fill=myyellow] ({cos(30*1+15)},{sin(30*1+15)}) circle (2pt) node {$C_6$};
		\draw[fill=mygreen] ({cos(30*2+15)},{sin(30*2+15)}) circle (2pt) node {$L_{23}$};
		\draw[fill=white] ({cos(30*3+15)},{sin(30*3+15)}) circle (2pt) node {$L_{34}$};
		\draw[fill=myblue] ({cos(30*4+15)},{sin(30*4+15)}) circle (2pt) node {$E_6$};
		\draw[fill=mygreen] ({cos(30*5+15)},{sin(30*5+15)}) circle (2pt) node {$E_1$};
		\draw[fill=mygreen] ({cos(30*6+15)},{sin(30*6+15)}) circle (2pt) node {$E_2$};
		\draw[fill=mygreen] ({cos(30*7+15)},{sin(30*7+15)}) circle (2pt) node {$E_3$};
		\draw[fill=myyellow] ({cos(30*8+15)},{sin(30*8+15)}) circle (2pt) node {$L_{56}$};
		\draw[fill=white] ({cos(30*9+15)},{sin(30*9+15)}) circle (2pt) node {$L_{16}$};
		\draw[fill=mypink] ({cos(30*10+15)},{sin(30*10+15)}) circle (2pt) node {$C_3$};
		\draw[fill=myyellow] ({cos(30*11+15)},{sin(30*11+15)}) circle (2pt) node {$C_4$};
		
		\draw[fill=mygreen] ({cos(30*0)*sqrt(2-sqrt(3))},{sin(30*0)*sqrt(2-sqrt(3))}) circle (2pt) node {$L_{12}$};
		\draw[fill=mypink] ({cos(30*1)*sqrt(2-sqrt(3))},{sin(30*1)*sqrt(2-sqrt(3))}) circle (2pt) node {$C_1$};
		\draw[fill=mygreen] ({cos(30*2)*sqrt(2-sqrt(3))},{sin(30*2)*sqrt(2-sqrt(3))}) circle (2pt) node {$L_{13}$};
		\draw[fill=white] ({cos(30*3)*sqrt(2-sqrt(3))},{sin(30*3)*sqrt(2-sqrt(3))}) circle (2pt) node {$L_{24}$};
		\draw[fill=white] ({cos(30*4)*sqrt(2-sqrt(3))},{sin(30*4)*sqrt(2-sqrt(3))}) circle (2pt) node {$L_{35}$};
		\draw[fill=myblue] ({cos(30*5)*sqrt(2-sqrt(3))},{sin(30*5)*sqrt(2-sqrt(3))}) circle (2pt) node {$E_5$};
		\draw[fill=myyellow] ({cos(30*6)*sqrt(2-sqrt(3))},{sin(30*6)*sqrt(2-sqrt(3))}) circle (2pt) node {$L_{45}$};
		\draw[fill=myblue] ({cos(30*7)*sqrt(2-sqrt(3))},{sin(30*7)*sqrt(2-sqrt(3))}) circle (2pt) node {$E_4$};
		\draw[fill=myyellow] ({cos(30*8)*sqrt(2-sqrt(3))},{sin(30*8)*sqrt(2-sqrt(3))}) circle (2pt) node {$L_{46}$};
		\draw[fill=white] ({cos(30*9)*sqrt(2-sqrt(3))},{sin(30*9)*sqrt(2-sqrt(3))}) circle (2pt) node {$L_{15}$};
		\draw[fill=white] ({cos(30*10)*sqrt(2-sqrt(3))},{sin(30*10)*sqrt(2-sqrt(3))}) circle (2pt) node {$L_{26}$};
		\draw[fill=mypink] ({cos(30*11)*sqrt(2-sqrt(3))},{sin(30*11)*sqrt(2-sqrt(3))}) circle (2pt) node {$C_2$};
		
		\draw[fill=white,thick] (0,0) circle (3pt) node {$L_{14,25,36}$};
		
	\end{tikzpicture}
	\caption{The Schl\"afli graph, which is the \emph{complement} of the intersection graph of $(-1)$-curves on a cubic surfaces: two vertices are adjacent in the Schläfli graph if and only if the corresponding pair of $(-1)$-curves are skew. Blue vertices are contracted by $\eta$, pink vertices are contracted by $\eta'$. The green vertices are mapped to the hexagon on $S$ by $\eta$, the yellow vertices are mapped to the hexagon on $S'$ by~ $\eta'$.}\label{graph: Schläfli}
\end{figure}
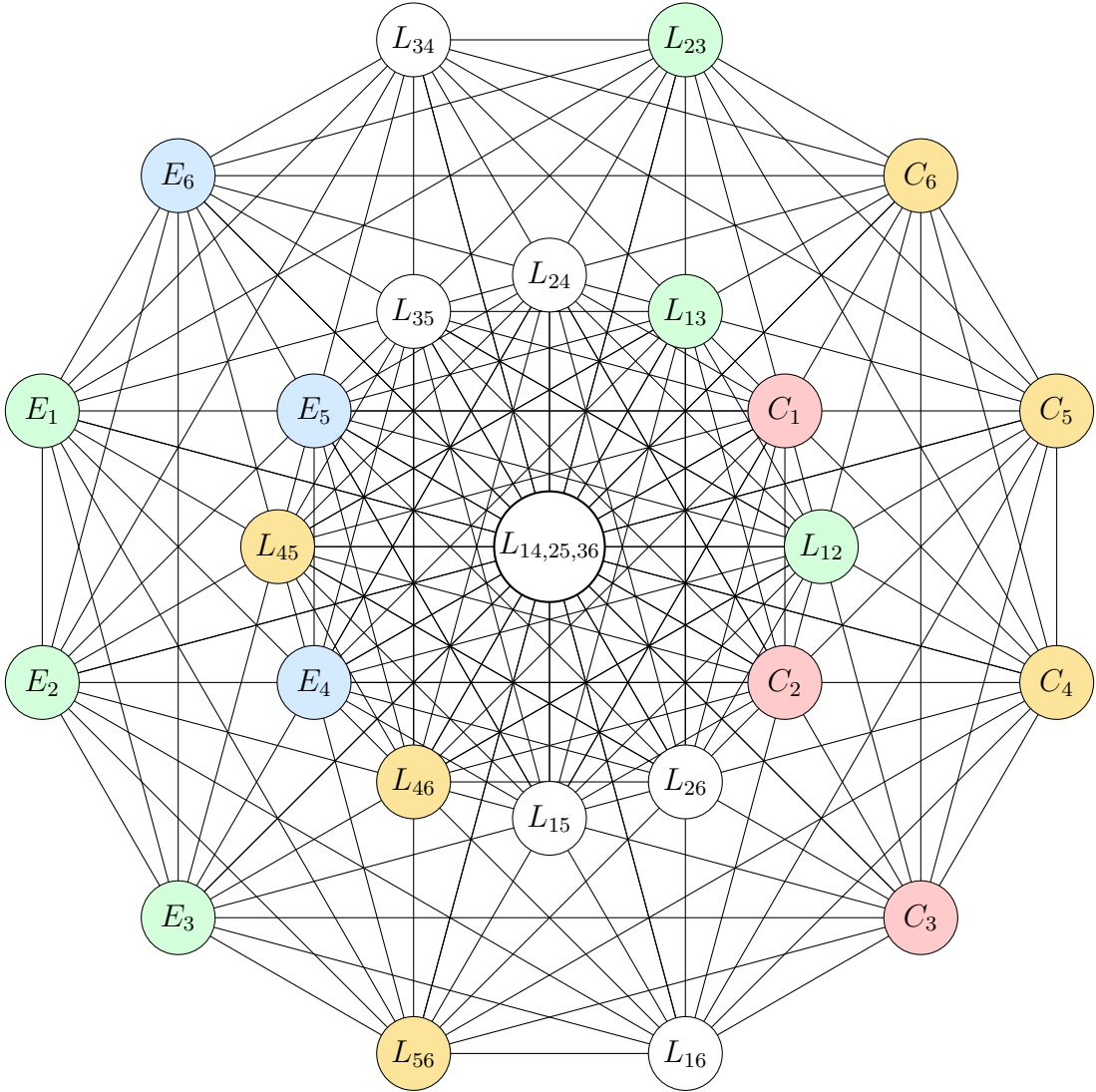

\begin{figure}
	\centering
	\begin{subfigure}{0.45\textwidth}
		\centering
		\begin{tikzpicture}
			\newdimen\R
			\R=3.2cm
			\newdimen\A
			\A=1.9cm
			\newdimen\B
			\B=2.8cm
			\newdimen\C
			\C=2.45cm
			\draw (0:\R) \foreach \x in {60,120,...,360} {  -- (\x:\R) };
			
			\foreach \x/\l/\p in
			{ 60/{}/above,
				120/{}/above,
				180/{}/left,
				240/{}/below,
				300/{}/below,
				360/{}/right
			}
			\node[inner sep=1pt,circle,draw,fill,label={\p:\l}] at (\x:\R) {};
			
			\foreach \start/\end/\label in
			{ 0/60/$L_{46}\ \ $,
			60/120/$C_4$,
			120/180/$\ \ L_{45}$,
			180/240/$C_5\ \ $,
			300/360/$\ \ C_6$
			}
			\draw (\start:\R) -- (\end:\R) node[midway, above, fill=white] {\label};
		
			\draw (240:\R) -- (300:\R) node[midway, below, fill=white] {$L_{56}$};
			
			
			\draw[<->, blue] 
			($(60:\B)!0.5!(120:\B)$) -- ($(240:\B)!0.5!(300:\B)$)
			node[pos=0.1, left, fill=white] {\( f \)};  
			
			\draw[<->, blue] 
			($(120:\B)!0.5!(180:\B)$) -- ($(180:\B)!0.5!(240:\B)$);  
			
			\draw[<->, blue] 
			($(0:\B)!0.5!(60:\B)$) -- ($(300:\B)!0.5!(360:\B)$);  
			
			
			\draw[<->, darkgreen] 
			($(60:\C)!0.5!(120:\C)$) -- ($(240:\C)!0.5!(300:\C)$)
			node[pos=0.6, below, fill=white] {\( h \)};  
			
			\draw[<->, darkgreen] 
			($(120:\C)!0.5!(180:\C)$) --
			($(300:\C)!0.5!(360:\C)$); 
			
			\draw[<->, darkgreen] 
			($(0:\C)!0.5!(60:\C)$) --  ($(180:\C)!0.5!(240:\C)$);  
			
			\draw[<-, red] 
			($(120:\A)!0.5!(180:\A)$) -- ($(0:\A)!0.5!(60:\A)$)
			node[midway, above,fill=white] {\( g \)};  
			
			\draw[<-, red] 
			($(0:\A)!0.5!(60:\A)$) -- ($(240:\A)!0.5!(300:\A)$);  
			
			\draw[<-, red] ($(240:\A)!0.5!(300:\A)$) -- ($(120:\A)!0.5!(180:\A)$); 
			
		\end{tikzpicture}
		\caption{Case $\EE\subset\FF$}
		\label{pic: d=3 new hexagon 1}
	\end{subfigure}
	\hfill
	\begin{subfigure}{0.45\textwidth}
		\centering
		\begin{tikzpicture}
			\newdimen\R
			\R=3.2cm
			\newdimen\A
			\A=1.9cm
			\newdimen\B
			\B=2.8cm
			\newdimen\C
			\C=2.45cm
			\draw (0:\R) \foreach \x in {60,120,...,360} {  -- (\x:\R) };
			
			\foreach \x/\l/\p in
			{ 60/{}/above,
				120/{}/above,
				180/{}/left,
				240/{}/below,
				300/{}/below,
				360/{}/right
			}
			\node[inner sep=1pt,circle,draw,fill,label={\p:\l}] at (\x:\R) {};
			
			\foreach \start/\end/\label in
			{ 0/60/$L_{46}\ \ $,
				60/120/$C_4$,
				120/180/$\ \ L_{45}$,
				180/240/$C_5\ \ $,
				300/360/$\ \ C_6$
			}
			\draw (\start:\R) -- (\end:\R) node[midway, above, fill=white] {\label};
			
			\draw (240:\R) -- (300:\R) node[midway, below, fill=white] {$L_{56}$};
			
			
			\draw[<->, blue] 
			($(120:\B)!0.5!(180:\B)$) -- 	($(0:\B)!0.5!(60:\B)$)
			node[pos=0.2, left, fill=white] {\( t \)};  
			
			\draw[<->, blue] 
			($(180:\B)!0.5!(240:\B)$) -- ($(300:\B)!0.5!(360:\B)$);  
			
			
			\draw[<->, darkgreen] 
			($(60:\C)!0.5!(120:\C)$) -- ($(240:\C)!0.5!(300:\C)$)
			node[pos=0.6, below, fill=white] {\( h,f \)};  
			
			\draw[<->, darkgreen] 
			($(120:\C)!0.5!(180:\C)$) --
			($(300:\C)!0.5!(360:\C)$); 
			
			\draw[<->, darkgreen] 
			($(0:\C)!0.5!(60:\C)$) --  ($(180:\C)!0.5!(240:\C)$);  
			
			\draw[<-, red] 
			($(120:\A)!0.5!(180:\A)$) -- ($(0:\A)!0.5!(60:\A)$)
			node[midway, above,fill=white] {\( w \)};  
			
			\draw[<-, red] 
			($(0:\A)!0.5!(60:\A)$) -- ($(240:\A)!0.5!(300:\A)$);  
			
			\draw[<-, red] ($(240:\A)!0.5!(300:\A)$) -- ($(120:\A)!0.5!(180:\A)$); 

		\end{tikzpicture}
		\caption{Case $\EE\cap\FF=\kk$}
		\label{pic: d=3 new hexagon 2}
	\end{subfigure}
	\caption{Some $\Gal(\FF/\kk)$-actions on $\Sigma'$ for $d=3$}
	\label{pic:two hexagons points of degree 3}
\end{figure}

\begin{prop}\label{prop: 3-points new Splitting Field}
	Let $S$ be a non-$\kk$-rational sextic $G$-del Pezzo surface over $\kk$ and $\SplittingHex$ be the splitting field of its hexagon. Let $p$ be a point of degree 3 on $S$ with splitting field $\EE$. Let $\chi: S \dashrightarrow S'$ be a Sarkisov link based at $p$. Then $S'$ is a sextic del Pezzo surface over $\kk$ and one has $\SplittingHex'=(\SplittingHex \EE)^H$ for the splitting field of its hexagon, where $H \subset\Gal(\SplittingHex \EE / \kk)$ is a subgroup and one of the following cases holds.
	\begin{enumerate}[leftmargin=*, labelindent=20pt, itemsep=5pt]
			\item $\EE\subset\FF$. Then $H = \{\id\}$, i.e. $\FF'=\FF$. 
			\item $[\EE\cap\FF:\kk]=2$. Then $H = \langle g \rangle$ if $G\simeq\ZZ/6\ZZ$ or $G\simeq\Sym_3$, and $H=\langle g\rangle$ or $H=\langle g,s\rangle$ if $G\simeq\Dih_6$ $($the exact answer depends on the extension $\EE\cap\FF/\kk$, see the proof for the details$)$.
			\item $\EE\cap\FF=\kk$. Then $H=\langle g\rangle$ if $G\simeq\ZZ/6\ZZ$ or $G\simeq\Sym_3$, and $H=\langle g,s\rangle$ if $G\simeq\Dih_6$.
		\end{enumerate}
	Furthermore, one has $\KK'=\KK$ and $\LL'=\EE$, whenever $\LL'$ is defined. If $\LL'$ is not defined, i.e. $\Gal(\FF'/\kk)\simeq\Sym_3$, then $\FF'=\EE$.
\end{prop}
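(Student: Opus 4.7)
The plan is to identify the kernel $H=\ker\Psi$ of the Galois action $\Psi\colon\Gal(\FF\EE/\kk)\to\Aut(\Sigma')$ via Corollary~\ref{cor: 3LinkHexagon}, which presents $\Sigma'$ as the six $(-1)$-curves $C_4,C_5,C_6,L_{45},L_{46},L_{56}$. Each $C_i$ is uniquely characterised as the $1$-curve through the triple $\{E_1,E_2,E_3\}$ together with the pair $\{p_j,p_k\}$ complementary to $p_i$ in $\{p_4,p_5,p_6\}$, and each $L_{ij}$ analogously through $\{F_1,F_2,F_3\}$ and $\{p_i,p_j\}$. Consequently, $u\in\Gal(\FF\EE/\kk)$ belongs to $H$ if and only if (i)~$u$ preserves the triples $\{E_1,E_2,E_3\}$ and $\{F_1,F_2,F_3\}$ setwise, and (ii)~$u$ fixes each component $p_i$ with $i\in\{4,5,6\}$.

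These two conditions translate group-theoretically: condition~(ii) amounts to $u|_\EE=\id$, since $\Gal(\EE/\kk)$ acts faithfully on $\{p_4,p_5,p_6\}$ by definition of the splitting field; condition~(i) amounts to $u|_\FF\in\Gal(\FF/\KK)$, which is $\langle g\rangle$ when $G\simeq\ZZ/6\ZZ$ or $G\simeq\Sym_3$, and $\langle g,s\rangle$ when $G\simeq\Dih_6$. Using Lemma~\ref{lem: Galois group of the composite} and Remark~\ref{rem: Galois group of the composite} to describe $\Gal(\FF\EE/\kk)$ as a fibered product of $\Gal(\FF/\kk)$ and $\Gal(\EE/\kk)$ over $\Gal(\EE\cap\FF/\kk)$, one computes $H$ by intersecting the pull-backs of $\{\id\}\subset\Gal(\EE/\kk)$ and of $\Gal(\FF/\KK)\subset\Gal(\FF/\kk)$. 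Case~(3) is immediate from the product decomposition. In case~(1) one uses that the only Galois cubic subfield of $\FF$ that can split a $3$-point is $\LL=\FF^h$ (and no such subfield exists for $G\simeq\Sym_3$, forcing $\EE=\FF$), and both $\langle h\rangle\cap\langle g\rangle$ and $\langle h\rangle\cap\langle g,s\rangle$ are trivial. Case~(2) requires more bookkeeping, especially in the $\Dih_6$ setting: each of the three possible intersections $\EE\cap\FF\in\{\FF^{\langle g,h\rangle},\FF^{\langle g,f\rangle},\FF^{\langle g,s\rangle}\}$ gives a potentially different intersection, and a direct computation shows $H=\langle g,s\rangle$ precisely when $\EE\cap\FF=\FF^{\langle g,s\rangle}=\KK$, while $H=\langle g\rangle$ in the other two sub-cases.

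The Severi--Brauer data of $S'$ is then read off from the combinatorics of $\Sigma'$. An element of $\Gal(\FF'/\kk)$ swaps the two triples $\{C_4,C_5,C_6\}$ and $\{L_{45},L_{46},L_{56}\}$ if and only if it swaps $\{E_1,E_2,E_3\}$ and $\{F_1,F_2,F_3\}$, from which one deduces $\KK'=\KK$ whenever $\KK\subset\FF'$. For $\LL'$, the three pairs of opposite vertices of $\Sigma'$ are $\{C_4,L_{56}\},\{C_5,L_{46}\},\{C_6,L_{45}\}$, and the pair $\{C_i,L_{jk}\}$ is stabilised setwise exactly by the elements of $\Gal(\FF\EE/\kk)$ fixing the component $p_i$; hence the splitting field of the set of three pairs coincides with the residue field of the point $p$, which is $\EE$, yielding $\LL'=\EE$ whenever $\LL'$ is defined. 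A short check shows that $\LL'$ fails to be defined (equivalently $G'\simeq\Sym_3$) precisely when $\KK\subset\EE$, in which case $H=\Gal(\FF\EE/\EE)$ and so $\FF'=\EE$. The principal obstacle is the careful case-by-case analysis for $G\simeq\Dih_6$ in case~(2), where one must distinguish the three quadratic intersections, verify normality of $H$ in $\Gal(\FF\EE/\kk)$ so that $\FF'/\kk$ is Galois, and confirm that the prescribed values of $\KK'$ and $\LL'$ are indeed recovered.
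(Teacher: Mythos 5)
Your proof is correct and rests on the same combinatorial foundation as the paper's, namely the identification of $\Sigma'$ with the curves $C_4,C_5,C_6,L_{45},L_{46},L_{56}$ from Corollary~\ref{cor: 3LinkHexagon}; where the paper computes the action of each Galois generator on $\Sigma'$ case by case (with figures and tables), you package the kernel computation into the single criterion ``$u$ fixes each $p_i$ and preserves the triple $\{E_1,E_2,E_3\}$,'' i.e.\ $u|_{\EE}=\id$ and $u|_{\FF}\in\Gal(\FF/\KK)$, and then evaluate it in the fibered product --- a streamlining rather than a different route. One imprecision: the field over which the three pairs of opposite sides of $\Sigma'$ are defined is the fixed field of $\bigcap_i\mathrm{Stab}(p_i)$, which is the \emph{splitting field} $\EE$ of $p$, not its residue field $\kk(p_4)$ (these differ when $\Gal(\EE/\kk)\simeq\Sym_3$); the conclusion $\LL'=\EE$ is unaffected, but the phrase ``coincides with the residue field of the point $p$'' should be corrected. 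Also note that normality of $H$ needs no verification, since $H$ is by construction the kernel of the homomorphism $\Gal(\FF\EE/\kk)\to\Aut(\Sigma')$.
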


\begin{proof}
Since $\EE/\kk$ is of degree 3 or 6, with $\Gal(\EE/\kk)\simeq\ZZ/3\ZZ$ or $\Gal(\EE/\kk)\simeq\Sym_3$ respectively, one has
	 $\EE\cap\FF=\EE$, or
	 $\EE\cap\FF=\kk$, or
	 $\EE\cap\FF$ is quadratic over~$\kk$, or
	 $\EE\cap\FF\ne\EE$ is cubic over $\kk$. However this last case is not possible, as $\EE\cap\FF$ is then not Galois, which contradicts Remark~\ref{rem: Galois group of the composite}.  So, we deal with the three remaining cases.
	 
\vspace{0.3cm}

\begin{description}
	\item[\it Case $\EE\subset\SplittingHex$] Then $\FF\EE=\FF$ and the generators of $\Gal(\FF\EE/\kk)=\Gal(\SplittingHex/\kk)$ belong to the set $\{f,g,h\}$. If $\Gal(\FF/\kk)\simeq\ZZ/6\ZZ$ then $\EE=\FF^h$ and $\Gal(\EE/\kk)=\langle g\rangle$; if $\Gal(\FF/\kk)\simeq\Sym_3$ then $\EE=\FF$ and $\Gal(\EE/\kk)=\langle g,f\rangle$, as otherwise $\EE$ is not Galois over $\kk$; if $\Gal(\FF/\kk)\simeq\Dih_6$ then $\EE=\FF^{h}$ and $\Gal(\EE/\kk)=\langle g,f\rangle$. In what follows, we assume, without loss of generality, that $g$ acts by $p_4\mapsto p_5\mapsto p_6\mapsto p_4$, while $f$ fixes $p_4$ and swaps $p_5$ and $p_6$. It is then straightforward, by using the identifications made in beginning of Section \ref{subsec: Sarkisov at 3-points}, to recover the action of Galois elements on $\Sigma'$. For example, since $g$ preserves the set $\{E_1,E_2,E_3\}$ and permutes the points $p_4$, $p_5$, $p_6$ as we said above, one has $g(C_4)=C_5$.
	
	Suppose that $\Gal(\FF/\kk)\simeq\ZZ/6\ZZ$ or $\Gal(\FF/\kk)\simeq\Dih_6$, so $\EE=\FF^h$. Then $h$ fixes $p_4$, $p_5$, $p_6$. Since $h$ switches $E_i$ with $F_i$ for $i\in\{1,2,3\}$, we find that $h(C_4)=L_{56}$; similarly, $h(C_5)=L_{46}$ and $h(C_6)=L_{45}$. Similarly, by recalling the action of $f$ on $\Sigma$, we deduce that $f$ switches $C_5$ with $L_{45}$, $C_6$ with $L_{46}$ and $C_4$ with $L_{56}$. To sum up, the action of $g$, $h$ and $f$ on $\Sigma'$ is shown on Figure \ref{pic: d=3 new hexagon 1}. In particular, $H=\{\id\}$, and $\KK'=\KK$, $\LL'=\EE$ when $\Gal(\FF/\kk)\simeq\ZZ/6\ZZ$ or $\Gal(\FF/\kk)\simeq\Dih_6$, and $\KK'=\KK$ when $\Gal(\FF/\kk)\simeq\Sym_3$.		
	
	\vspace{0.3cm}
	
	\item[\it Case $\EE\cap\FF$ is quadratic over $\kk$] 
		
	Note that $\Gal(\EE/\kk)\simeq\Sym_3$ in this case. We use Remark \ref{rem: Galois group of the composite} with $\EE_1=\FF$, $\EE_2=\EE$ to calculate the Galois group $\Gal(\FF\EE/\kk)$. As in that Remark, we denote by $t$ and $w$ some elements of order $2$ and $3$, respectively, in $\Gal(\EE/\kk)$. 
	
	If $G\simeq\ZZ/6\ZZ$ or $G\simeq\Sym_3$, then $\EE\cap\FF=\SplittingHex^g=\EE^w$. If $G\simeq\Dih_6$, then $\EE\cap\FF= \SplittingHex^{\langle g,u\rangle}=\EE^w$ for $u \in \{ s,f,h \}$. Set $t'= h$ for $G\simeq\ZZ/6\ZZ$, set $t'=f$ for $G\simeq\Sym_3$, and choose the element $t'\in\Dih_6$ as in the Remark \ref{rem: Galois group of the composite} when $G\simeq\Dih_6$. In all three cases, observe that 
	\[
	t'|_{\EE \cap \SplittingHex} = t|_{\EE \cap \SplittingHex}
	\]	
	is the unique non-trivial element of $\Gal(\EE\cap\FF/\kk)$.	Thus $\Gal(\FF\EE/\kk)$ is generated by [the preimages under the isomorphism given in the Remark of] $(\id,w)$, $(t',t)$, $(g,\id)$, and in the case of $\Dih_6$ we also add $(u,\id)$. By a slight abuse of notation, we will denote those elements by $w,t,g,u$, respectively.

	Notice that $g$ and $u$ act trivially on the components of the point $p$ (since, by Remark \ref{rem: Galois group of the composite}, their restrictions on~$\EE$ give trivial automorphisms), while $w$ acts as a $3$-cycle on its geometric components and $t$ fixes one component and switches the other two components. 
		
	Since $g$ fixes all components of $p$ and does not exchange the two triplets $\{E_1,E_2,E_3\}$ and  $\{F_1,F_2,F_3\}$ in $\Sigma$, its action on $\Sigma'$ is trivial. The action of $w$ is always by rotation of order 3. The action of $t$ on $\Sigma'$ is by $t(C_4)=L_{56}$, $t(C_5)=L_{45}$, $t(C_6)=L_{46}$.  Thus, for $G\simeq\ZZ/6\ZZ$ and  $G\simeq\Sym_3$ we have $H = \langle g \rangle$ and therefore $\KK' = \FF\EE^{\langle g,w\rangle} = \FF^g = \KK$. In both cases, one has $\Gal(\SplittingHex'/\kk)\simeq\Sym_3$. Note that $\FF'=(\FF\EE)^{g}=\EE$.

	Now suppose $G\simeq\Dih_6$. If $u = (h,\id)$ or $u=(f, \id)$ then $u$ acts as central symmetry on $\Sigma'$ and thus $H = \langle g \rangle$. 
	We observe that $\Gal(\FF'/\kk)\simeq\Dih_6$ and therefore $\KK' = (\FF\EE)^{\langle g,tu,w\rangle} = \FF^{\langle g,s\rangle} = \KK$ and $\LL' = (\FF\EE)^{\langle g,u\rangle} = \EE$. 

	In the case $u = (s,\id)$ we see that $s$ fixes the components of $p$ and does not exchange the two triplets $\{E_1,E_2,E_3\}$ and $\{F_1,F_2,F_3\}$, hence it acts trivially on $\Sigma'$ and $H = \langle g,s \rangle$. Now we have $\Gal(\FF'/\kk)\simeq\Sym_3$ and therefore we have that $\KK'=(\FF\EE)^{\langle g,s,w\rangle } = \FF^{\langle g,s\rangle }=\KK$. Note that $\FF'=(\FF\EE)^{\langle g,s\rangle}=\EE$. To sum up, possible transitions between Galois groups are presented in Table ~\ref{table: d=3 Galois groups quadratic case}. 
	
	\renewcommand{\arraystretch}{1.5}	
	\begin{table}[h!]
		\centering
		\begin{tabular}{|c|c|c|c|}
			\hline
			\diagbox{$\Gal(\EE/\kk)$}{$\Gal(\FF/\kk)$} & $\ZZ/6\ZZ=\langle g,h\rangle$ & $\Sym_3=\langle g,f\rangle$ & $\Dih_6=\langle g,h,f\rangle$\\
			\hline
			$\Sym_3=\langle w,t\rangle$ & $ \Sym_3$ & $ \Sym_3$ & $ \Dih_6$ or $\Sym_3$ \\
			\hline
		\end{tabular}
		\caption{Each cell indicates a group isomorphic to $\Gal(\FF'/\kk)$.}
		\label{table: d=3 Galois groups quadratic case}
	\end{table}
	
	\vspace{0.3cm}
	
	\item[\it Case $\EE\cap\FF=\kk$] Then $\Gal(\EE\FF/\kk)\simeq\Gal(\EE/\kk)\times\Gal(\FF/\kk)$. As before, $w$ and $t$ are the generators of order $3$ and $2$ in $\Gal(\EE/\kk)$, or take just $w$ if $\Gal(\EE/\kk)\simeq\ZZ/3\ZZ$. We will assume that $w(p_4)=p_5$, $w(p_5)=p_6$, while $t$ fixes $p_4$ and swaps $p_5$ with $p_6$. Note that $g$, $h$ and $f$ act trivially on the components of $p$. So, $g(C_4)=C_4$ and we find that $g$ acts trivially on $\Sigma'$. Further, $h$ and $f$ both act by swapping the opposite sides of $\Sigma'$, so in particular $s=h\circ f$ acts trivially on $\Sigma'$. One has $w(C_4)=C_5$, $w(C_5)=C_6$, while $t(C_4)=C_4$, $t(L_{45})=L_{46}$, $t(C_5)=C_6$. To sum up, the actions are shown on Figure \ref{pic: d=3 new hexagon 2}, and the corresponding Galois groups are summarized in Table \ref{table: d=3 Galois groups}. 
	
	\renewcommand{\arraystretch}{1.5}	
	\begin{table}[h!]
		\centering
		\begin{tabular}{|c|c|c|c|}
			\hline
			\diagbox{$\Gal(\EE/\kk)$}{$\Gal(\FF/\kk)$} & $\ZZ/6\ZZ=\langle g,h\rangle$ & $\Sym_3=\langle g,f\rangle$ & $\Dih_6=\langle g,h,f\rangle$\\
			\hline
			$\ZZ/3\ZZ=\langle w\rangle$ & $ \ZZ/6\ZZ=\langle w,h\rangle$ & $ \ZZ/6\ZZ=\langle w,f\rangle$ & $ \ZZ/6\ZZ=\langle w,f\rangle$ \\
			\hline
			$\Sym_3=\langle w,t\rangle$ & $ \Dih_6=\langle w,t,h\rangle$ & $ \Dih_6=\langle w,t,f\rangle$ & $ \Dih_6=\langle w,t,f\rangle$ \\
			\hline
		\end{tabular}
		\caption{Each cell contains a new action of the Galois group on $\Sigma'$.}
		\label{table: d=3 Galois groups}
	\end{table}
\end{description}
Notice that we have $H=\langle g\rangle$ if $\Gal(\FF/\kk)\simeq\ZZ/6\ZZ$ or $\Gal(\FF/\kk)\simeq\Sym_3$, and $H=\langle g,s\rangle$ if $\Gal(\FF/\kk)\simeq\Dih_6$. In each case, we see directly that $\KK'=\KK$ and $\LL'=\EE$.
\end{proof}

\begin{cor}\label{cor: 3 linkSBData}
Let $S$ be a $G$-del Pezzo surface of degree 6 and index $3$ with Severi-Brauer data $\{\SplittingHex,\ \KK,\ X,\ \LL,\ \PP^1_\LL \times \PP^1_\LL\}$ and $\chi\colon S \dashrightarrow S'$ be a Sarkisov link of type II based at a $3$-point with splitting field $\EE$. Then the Severi-Brauer data of $S'$ is given by $\{\SplittingHex',\ \KK,\ X,\ \EE,\ \PP^1_\EE \times \PP^1_\EE\}$. Moreover, the surface $S'$ is uniquely determined by $S$ and $\EE$.
\end{cor}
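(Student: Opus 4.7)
The plan is to build on Proposition~\ref{prop: 3-points new Splitting Field}, which already identifies the splitting field $\SplittingHex'$ of the new hexagon of $S'$ and establishes that $\KK'=\KK$, and that $\LL'=\EE$ whenever $\LL'$ is part of the Severi-Brauer data. What remains is to pin down the Severi-Brauer surface and the involution surface of $S'$, and then to conclude uniqueness.

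First, I would determine the Severi-Brauer surface $X'$ of $S'$. Since $\chi\colon S\rat S'$ is a $\kk$-birational map, base changing to $\KK=\KK'$ yields a $\KK$-birational map $S_\KK\rat S'_\KK$. Composing with the $\KK$-birational contractions to the Severi-Brauer surfaces gives a $\KK$-birational map $X\rat X'$ between Severi-Brauer surfaces. Proposition~\ref{prop: birationality of SB} then forces $X'\simeq X$ or $X'\simeq X^{\mathrm{op}}$; either choice determines the same equivalence class of Severi-Brauer data, as $X$ and $X^{\mathrm{op}}$ are both legitimate members of the data in Definition~\ref{def: SB data}. Equivalently, birational invariance of the Amitsur group forces $\Am(S_\KK)=\Am(S'_\KK)$, pinning down the Brauer class up to inversion.

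Next, I would identify the involution surface of $S'$. The index is a birational invariant, hence $\indexx(S')=\indexx(S)=3$. By Proposition~\ref{prop: index 3}, $S'$ is $\LL'$-trivial, so its involution surface over $\LL'=\EE$ is $\PP^1_\EE\times\PP^1_\EE$. (When $G'\simeq\Sym_3$ and no $\LL'$ belongs to the data, the information about $\EE$ is recorded instead through $\SplittingHex'$, in which $\EE$ appears.) Finally, for uniqueness, the biregular classifications (Theorems~\ref{thm: Z6 iso criterion}, \ref{thm: S3 iso criterion}, \ref{thm: D6 iso criterion}) say that a sextic del Pezzo surface with $\Pic\simeq\ZZ$ is determined up to $\kk$-isomorphism by its Severi-Brauer data. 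Since the data $\{\SplittingHex', \KK, X, \EE, \PP^1_\EE\times\PP^1_\EE\}$ depends only on $S$ and on $\EE$, the surface $S'$ is uniquely determined by $S$ and $\EE$. The main work has already been done: the computation of $\SplittingHex'$ and of the pair $(\KK',\LL')$ in Proposition~\ref{prop: 3-points new Splitting Field}; the present corollary is the clean packaging of that analysis, combined with birational invariance of the Amitsur class over $\KK$ and of the index.
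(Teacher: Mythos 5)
Your proposal is correct and follows essentially the same route the paper intends: the corollary is a direct packaging of Proposition~\ref{prop: 3-points new Splitting Field} (which gives $\SplittingHex'$, $\KK'=\KK$, $\LL'=\EE$), combined with Proposition~\ref{prop: birationality of SB} over $\KK$ to pin down $X'$ up to the inherent $X$ vs.\ $X^{\mathrm{op}}$ ambiguity, birational invariance of the index together with Proposition~\ref{prop: index 3} for the triviality of the involution surface over $\EE$, and the isomorphism criteria of Theorems~\ref{thm: Z6 iso criterion}, \ref{thm: S3 iso criterion}, \ref{thm: D6 iso criterion} for uniqueness. Your observation that $X$ is non-trivial (since $\indexx(S)=3$ rules out $\KK$-triviality by Proposition~\ref{prop: index 2}), so that Proposition~\ref{prop: birationality of SB} indeed applies, is a point worth keeping explicit.
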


\subsection{Birational rigidity of sextic del Pezzo surfaces}

Now we will derive several corollaries from the description of Sarkisov links between sextic del Pezzo surfaces. At the end of the paragraph, we will prove Theorem \ref{thm: pliability}.

\vspace{0.3cm}

Given a sextic $G$-del Pezzo surface $S$ and an integer $d\in\{2,3\}$, put 
\[
\Fields_d(S)=\big \{\EE/\kk\colon \EE\ \text{is a splitting field of a $d$-point on $S$ in general position} \big \}.
\]
It turns out that these sets are birational invariants of a minimal sextic del Pezzo surface~$S$. 

\begin{prop}\label{prop: splitting fields are invariants}
If $S$ and $S'$ are two birational non-$\kk$-rational sextic del Pezzo surfaces of Picard rank 1, then $\Fields_d(S)=\Fields_d(S')$ for $d\in\{2,3\}$.
\end{prop}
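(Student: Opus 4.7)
The plan is to characterize $\Fields_d(S)$ in terms of the existence of $S$-points over specific field extensions, and then to invoke the Lang-Nishimura lemma (Lemma \ref{lem: Lang-Nishimura}). Since the index is a birational invariant, one has $\indexx(S) = \indexx(S')$, and Propositions \ref{prop: index 2}, \ref{prop: index 3}, and \ref{prop: index 6} show that $\Fields_d(S) = \Fields_d(S') = \varnothing$ whenever $d \in \{2,3\}$ differs from $\indexx(S)$. I may therefore assume $d = \indexx(S) = \indexx(S')$.

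For $d = 2$, every 2-point on $S$ is automatically in general position by Proposition \ref{prop: position of points}\ref{general position index 2}. An $\EE$-point for a quadratic extension $\EE/\kk$ yields a closed point of $S$ whose residue field is either $\kk$ or $\EE$; the constraint $\indexx(S) = 2$ forces $S(\kk) = \varnothing$, so the residue field must be $\EE$, producing a 2-point with splitting field $\EE$, and the converse implication is immediate. Hence $\Fields_2(S) = \{\EE/\kk : [\EE:\kk] = 2, \, S(\EE) \neq \varnothing\}$, which is birationally invariant by applying Lang-Nishimura to $S_\EE \dashrightarrow S'_\EE$.

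For $d = 3$, a 3-point with splitting field $\EE$ has residue field a degree-3 subfield $\EE_0 \subseteq \EE$ whose Galois closure is $\EE$; conversely, an $\EE_0$-point yields such a 3-point, with the index condition again excluding a $\kk$-rational residue field. What remains is the general position requirement, which I would handle by splitting on whether $\EE \subset \FF$. When $\EE \not\subset \FF$, general position is automatic by Proposition \ref{prop: position of points}\ref{general position index 3}. When $\EE \subset \FF$, a short analysis of the normal subgroups of $\Gal(\FF/\kk)$ shows that the only Galois subfields of $\FF$ with Galois group $\ZZ/3\ZZ$ or $\Sym_3$ are $\LL$ (in the $\ZZ/6\ZZ$ and $\Dih_6$ cases) and $\FF$ itself (in the $\Sym_3$ case); Proposition \ref{prop: 3-points} then delivers, unconditionally, a 3-point in general position with each such splitting field on any surface where that field arises. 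Thus $\EE \in \Fields_3(S)$ is equivalent to $S(\EE_0) \neq \varnothing$ for any degree-3 subfield $\EE_0 \subseteq \EE$, and Lang-Nishimura yields the invariance.

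The main obstacle is precisely this general position requirement in the degree 3 case, since, unlike the degree 2 case, it is not automatic; its resolution relies on the dichotomy above, and in particular on the unconditional existence statement of Proposition \ref{prop: 3-points} for the short list of splitting fields that can lie inside $\FF$. With that in hand, the invariance of $\Fields_d(S)$ reduces cleanly to the birational invariance of the condition $S(\EE_0) \neq \varnothing$.
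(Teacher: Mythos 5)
Your argument is correct, and it takes a genuinely different route from the paper's. The paper proves the statement by running the Sarkisov program: it reduces to a single link $\chi\colon S\dashrightarrow S'$ centred at a $d$-point, tracks a given general-position $d$-point $p$ through $\chi$ by splitting into the cases $p\notin\Exc(\chi)$, $p\in\Exc(\chi)\setminus\Ind(\chi)$ and $p=\Ind(\chi)$, and falls back on Propositions \ref{prop: 2-points new Splitting Field}, \ref{prop: 3-points new Splitting Field}, \ref{prop: 2-points} and \ref{prop: 3-points} in the last case. You instead identify $\Fields_d(S)$ with a set defined purely by the existence of points of $S$ over suitable extensions --- $\{\EE : [\EE:\kk]=2,\ S(\EE)\ne\varnothing\}$ for $d=2$, and the Galois closures $\EE$ of cubic fields $\EE_0$ with $S(\EE_0)\ne\varnothing$ for $d=3$ --- which is manifestly a birational invariant by Lemma \ref{lem: Lang-Nishimura} together with the birational invariance of the index and the absence of rational points on both surfaces. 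The only genuine issue is the general-position requirement in the definition of $\Fields_d$, and your treatment of it is sound: for $d=2$ it is automatic by Proposition \ref{prop: position of points}, and for $d=3$ either $\EE\not\subseteq\FF$ (again automatic) or $\EE$ is a Galois subfield of $\FF$ with group $\ZZ/3\ZZ$ or $\Sym_3$, hence equals $\LL$ (resp.\ $\FF$ in the $\Sym_3$ case), where Proposition \ref{prop: 3-points} supplies a general-position $3$-point unconditionally. Your approach buys brevity and bypasses the Sarkisov decomposition and the splitting-field transformation results entirely; the paper's approach is more constructive, identifying which point of $S'$ inherits the field $\EE$ under each link, which is bookkeeping that gets reused in the arguments of Section \ref{sec: quotients}.
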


\begin{proof}
It is enough to show that $\Fields_d(S) \subseteq \Fields_d(S')$. Furthermore, it is enough to show that this inclusion holds for $S$ and $S'$ related by a single Sarkisov link $\chi\colon S\dashrightarrow S'$ at a point of degree $d$ (recall that Geiser birational involutions, which may appear in the index 2 case, lead to an isomorphic surface). So, let $\EE \in \Fields_d(S)$ and let $p\in S$ be a $d$-point in general position with splitting field $\EE$. We consider $\indexx(S)=2$ and $\indexx(S)=3$ cases separately.

Assume that $d = 2$. If $p \not\in \Exc(\chi)$ then\footnote{Here and below $\Exc(\phi)$ denotes the \emph{exceptional set} of a birational map $\phi\colon X\dashrightarrow Y$ between surfaces $X$ and $Y$. Recall that there exists a unique maximal	pair of open subsets $U\subset X$, $V\subset Y$ such that $\phi$ restricts as an isomorphism $U\iso V$. Then $\Exc(\phi)=X\setminus U$. By Zariski's theorem, an irreducible curve $C\subset X$ is exceptional if and only if $\phi(C)$ is a point. In particular, the exceptional set of the link \eqref{eq: Sarkisov link of type II} consists of $\eta$-images of curves contracted by $\eta'$.}  $\chi$ is a local $\kk$-isomorphism at $p$ and $\chi(p) \in S'$ has $\EE$ as its splitting field. Suppose that $p$ is contained in $\Exc(\chi) \setminus \Ind(\chi)$. In the notation of Section \ref{subsec: Sarkisov at 2-points}, one has $\Exc(\chi)=\eta(C)\cup\eta(L_{45})$ and $\Ind(\chi)=\eta(C)\cap\eta(L_{45})$. Then $\chi(p) \subset \Ind(\chi^{-1})$. Moreover, since $\chi(p)$ is $\Gal(\overline{\kk}/\kk)$-invariant, $\Ind(\chi^{-1})$ has two geometric components and $S'(\kk)=\varnothing$, we have  $\chi(p) = \Ind(\chi^{-1})$. Since $\chi$ is defined over $\kk$, the 2-point $\Ind(\chi^{-1})$ splits over $\EE$ and there is no smaller field over which it splits. Notice that $\Ind(\chi^{-1})$ is in general position on $S'$ by definition of a Sarkisov link, thus $\EE\in\Fields_2(S')$. It remains to consider the case $p=\Ind(\chi)$. By Proposition~ \ref{prop: 2-points new Splitting Field}, the field $\EE$ is exactly the one over which $\KK'$-contractions to Severi-Brauer surfaces are defined. By Proposition \ref{prop: 2-points}, there are $2$-points in general position that split over $\KK'=\EE$ on $S$. 

Assume that $d=3$. If $p \not\in \Exc(\chi)$, then again $\chi$ is a local $\kk$-isomorphism at $p$ and $\chi(p) \in S'$ has $\EE$ as its splitting field. Suppose that $p\in\Exc(\chi) \setminus \Ind(\chi)$. Notice that $\Gal(\EE/\kk)\simeq\ZZ/3\ZZ$ or $\Gal(\EE/\kk)\simeq\Sym_3$. As above we observe that $\chi(p) = \Ind(\chi^{-1})$ since  $S'$ has no points of degree~1 or 2, and therefore $\Ind(\chi^{-1})$ splits over $\EE$. Since in the $\Gal(\EE/\kk)\simeq\Sym_3$ case there is no Galois extention of degree $3$ in between $\kk \subset \EE$ we conclude that $\EE$ is the splitting field of the 3-point $\Ind(\chi^{-1})$, which is in general position on $S'$ by the definition of a Sarkisov link. It remains to consider the case $p=\Ind(\chi)$. But then we finish as in the case $d=2$. Indeed, either $\EE=\LL'$ is the field over which the contractions to involution surfaces are defined, or $\EE=\FF'$ (when $\Gal(\FF'/\kk)\simeq\Sym_3$). In both cases, Proposition~\ref{prop: 3-points} gives the existence of 3-points in general position on $S'$ with splitting field $\EE$.
\end{proof}

Next, we investigate birational rigidity of sextic $G$-del Pezzo surfaces (see Definition \ref{def: BR}).

\begin{thm}\label{thm: birational rigidity}
	Let $S$ be a $G$-del Pezzo surface of degree 6 over a perfect field $\kk$. If $\indexx(S)=1$, then $S$ is $\kk$-rational and hence is not birationally rigid. If $\indexx(S)=6$, then $S$ is birationally super-rigid. In the remaining two cases, the following holds.
	\begin{enumerate}[leftmargin=*, labelindent=20pt, itemsep=5pt]
		\item If $\indexx(S)=2$, then $S$ is birationally rigid if and only if $G\simeq\ZZ/6\ZZ$ and every 2-point on~$S$ splits over $\KK$.
		\item If $\indexx(S)=3$, then $S$ is birationally rigid if and only if either $G\simeq\ZZ/6\ZZ$ or $G\simeq\Dih_6$ and every 3-point on $S$ splits over $\LL$, or $G\simeq\Sym_3$ and every 3-point on $S$ splits over $\FF$.
	\end{enumerate} 
\end{thm}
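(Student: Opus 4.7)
The plan is to proceed case by case according to $\indexx(S)$, using the description of Sarkisov links between sextic del Pezzo surfaces established in Propositions \ref{prop: 2-points new Splitting Field}, \ref{prop: 3-points new Splitting Field} and Corollaries \ref{cor: 2 linkSBData}, \ref{cor: 3 linkSBData}, together with the biregular classification (Theorems \ref{thm: Z6 iso criterion}, \ref{thm: S3 iso criterion}, \ref{thm: D6 iso criterion}).

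The easy cases are quickly disposed of. If $\indexx(S)=1$ then $S(\kk)\ne\varnothing$ by Remark \ref{rem: index 1 equivalent to rationality} and $S$ is $\kk$-rational by Theorem \ref{Iskovskikh Criterion}, so in particular $S$ is birational to $\PP_\kk^2$ and is not birationally rigid. If $\indexx(S)=6$, then $\Bir_\kk(S)=\Aut_\kk(S)$ by Proposition \ref{prop: index 6}, so $S$ is birationally super-rigid (and a fortiori rigid).

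Suppose now that $\indexx(S)=2$. By Proposition \ref{prop: structure of maps for dP6}, any birational map from $S$ to another Mori fibre space $S'$ decomposes into isomorphisms, Geiser involutions (which are self-maps $S\dashrightarrow S$) and Sarkisov links of type II centred at $2$-points. So $S$ is rigid if and only if every such Sarkisov link $\chi\colon S\dashrightarrow S'$ satisfies $S'\simeq S$. Let $\EE$ be the splitting field of the centre and apply Corollary \ref{cor: 2 linkSBData}: the SB data of $S'$ is $\{\FF',\EE,\PP^2_\EE,\LL,Y\}$. By Theorems \ref{thm: Z6 iso criterion} and \ref{thm: D6 iso criterion}, $S'\simeq S$ forces in particular $\FF'=\FF$. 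From Proposition \ref{prop: 2-points new Splitting Field} we see that $\FF'=\FF$ forces $\EE\subset\FF$. When $G\simeq\ZZ/6\ZZ$ the unique quadratic subfield of $\FF$ is $\KK$, and if every $2$-point on $S$ splits over $\KK$ then $\EE=\KK$, so that $\FF'=\FF$, $\KK'=\KK$, $\LL'=\LL$ and the whole SB data is preserved; conversely, if some $2$-point on $S$ does not split over $\KK$, then $\EE\cap\FF=\kk$ (by Proposition \ref{prop: Z6 closed points}), so $\FF'\ne\FF$ and the corresponding link produces a non-isomorphic $S'$. When $G\simeq\Dih_6$, Proposition \ref{prop: 2-points} provides $2$-points in general position with splitting field $\FF^{\langle g,f\rangle}\ne\KK$, and by Remark \ref{rem: indexx 2 D6 new embedding} the link at such a point changes the embedding $\epsilon$ of $G$ into $\Dih_6$, hence changes the surface; so $G\simeq\Dih_6$ is never rigid in this case. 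The case $G\simeq\Sym_3$ does not occur for index $2$. This gives exactly the criterion of (1).

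The case $\indexx(S)=3$ is analogous. By Proposition \ref{prop: structure of maps for dP6}, every birational map to a Mori fibre space factors through isomorphisms and Sarkisov links of type II centred at $3$-points; we study when the latter give $S'\simeq S$. Let $\EE$ be the splitting field of the centre, so by Corollary \ref{cor: 3 linkSBData} the SB data of $S'$ is $\{\FF',\KK,X,\EE,\PP^1_\EE\times\PP^1_\EE\}$. Proposition \ref{prop: 3-points new Splitting Field} shows that $\FF'=\FF$ if and only if $\EE\subset\FF$, in which case $\KK'=\KK$ and $\LL'=\EE$ (with $\LL'=\FF'=\EE$ in the $\Sym_3$ situation). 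The subfields of $\FF$ which can appear as splitting fields of $3$-points are precisely $\LL=\FF^h$ when $G\simeq\ZZ/6\ZZ$ or $G\simeq\Dih_6$, and $\FF$ itself when $G\simeq\Sym_3$ (see Propositions \ref{prop: Z6 closed points}, \ref{prop: S3 closed points}, \ref{prop: D6 closed points}). In each of these subcases, if every $3$-point splits over the prescribed field, all links preserve the SB data and hence, by the biregular classification, yield $S'\simeq S$; conversely, if some $3$-point has splitting field not contained in $\FF$, the link at that point gives $\FF'\ne\FF$ and therefore $S'\not\simeq S$, witnessing non-rigidity. This gives (2).

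The main obstacle is bookkeeping: correctly tracking, for each Galois group $G$, which subfields of $\FF$ can serve as splitting fields of $2$- or $3$-points in general position, and verifying that when $\EE\subset\FF$ the transformation formulas from Propositions \ref{prop: 2-points new Splitting Field} and \ref{prop: 3-points new Splitting Field} indeed preserve the full SB data (in particular handling the subtlety of the $\Dih_6$ index $2$ case where the embedding $\epsilon$ of $G$ into $\Aut(\Sigma)$ may change under a link, as in Remark \ref{rem: indexx 2 D6 new embedding}).
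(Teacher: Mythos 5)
Your proposal is correct and follows essentially the same route as the paper: reduce to single Sarkisov links via Proposition \ref{prop: structure of maps for dP6}, track the Severi--Brauer data through Propositions \ref{prop: 2-points new Splitting Field} and \ref{prop: 3-points new Splitting Field} (and Corollaries \ref{cor: 2 linkSBData}, \ref{cor: 3 linkSBData}), and decide isomorphism via Theorems \ref{thm: Z6 iso criterion}--\ref{thm: D6 iso criterion}, with Propositions \ref{prop: 2-points} and \ref{rem: indexx 2 D6 new embedding} ruling out rigidity for $\Dih_6$ in index $2$. Your write-up is in fact more explicit than the paper's (which states the conclusion in one line for index $3$), and correctly records $\KK'=\EE$ rather than $\KK'=\KK$.
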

\begin{proof}
	The first two statements were proven in Section \ref{subsec: closed points general}. Note that $S$ is birationally rigid if and only if every Sarkisov link on $S$ leads to an isomorphic surface. Let $\chi\colon S\dashrightarrow S'$ be a Sarkisov link. We keep the usual notation for the Severi-Brauer data of these surfaces. By the results of Section~\ref{sec: biregular classification}, we need to find when the data of $S'$ is equivalent to the one of $S$.
	
	If $\indexx(S)=2$, then Proposition \ref{prop: 2-points new Splitting Field} says that $\LL'=\LL$ and $\KK'=\KK$ (note that both fields are defined as $G$ is not isomorphic to $\Sym_3$). Therefore $S'\simeq S$ if and only if $G\simeq\ZZ/6\ZZ$ and every 2-point on $S$ splits over $\KK$, because for $G\simeq\Dih_6$ we can always find 2-points split over $\FF^{\langle g,f\rangle}$ by Proposition~\ref{prop: 2-points}. In the $\indexx(S)=3$ case, we conclude by Proposition \ref{prop: 3-points new Splitting Field}.
\end{proof}

Finally, we can prove Theorem \ref{thm: pliability}.

\begin{proof}[Proof of Theorem \ref{thm: pliability}]
	Let $S$ be a solid del Pezzo surface; recall that we assume $\Pic(S)\simeq\ZZ$. If $K_S^2=9$, then $S(\kk)=\varnothing$, as otherwise the blow-up of a $\kk$-rational point maps $S$ to a conic bundle $\mathbb{F}_1\to\PP_\kk^1$. So, $S$ is a non-trivial Severi-Brauer surface and one has $\Pl(S)=2$ by Proposition \ref{prop: birationality of SB}. Suppose that $K_S^2=8$. If $S(\kk)\ne\varnothing$, then $S$ is $\kk$-rational and hence is not solid. If $S(\kk)=\varnothing$, then \cite[Theorem 1.6]{Trepalin2023} implies that $S$ is birationally rigid. If $K_S^2=7$ or $K_S^2=5$, then it is well known (see e.g. \cite{SwinnertonDyer}) that $S(\kk)\ne\varnothing$ and such surfaces are $\kk$-rational by Theorem \ref{Iskovskikh Criterion}, hence they are not solid. Let $K_S^2=4$. If $S(\kk)\ne\varnothing$, then the blow-up of a $\kk$-rational point leads to a cubic surface with a structure of a conic bundle, given by the residue conics passing through the exceptional divisor; thus $S$ is not solid. Otherwise, the classification of Sarkisov links \cite[Theorem 2.6]{Isk1996} shows that $S$ is birationally rigid. The same classification shows that del Pezzo surfaces of degree $\leqslant 3$ are birationally rigid. Now the result follows from Example \ref{example: main example}.
\end{proof}

\section{Quotients of groups of birational self-maps}\label{sec: quotients}

\subsection{The associated graph}

We will consider the following standard equivalence relation between Sarkisov links:

\begin{mydef}[{\cite[Definition 3.3.1]{BlancSchneiderYasinsky}}]\label{def:equiLinkDim2}
	Let $\chi\colon S_1\dashrightarrow S_2$, $\chi'\colon S_1'\dashrightarrow S_2'$ be two Sarkisov links between del Pezzo surfaces of Picard rank $1$. We say that $\chi, \chi'$ are \emph{equivalent} if there is a commutative diagram
	\begin{equation*}
		\xymatrix{
			S_1\ar[d]_{\alpha}\ar@{-->}[rr]^{\chi} && S_2\ar[d]^{\beta}\\
			S_1'\ar@{-->}[rr]^{\chi'} && S_2'
		}
	\end{equation*}
	for some isomorphisms $\alpha\colon S_1\iso S_1',\beta\colon S_2\iso S_2'$.
\end{mydef}

\begin{rem}[{\cite[Lemma 3.3.2]{BlancSchneiderYasinsky}}]\label{lem:equivalencLinksSurfaceBasePt}
	Let $\chi\colon S_1\dashrightarrow S_2$, $\chi'\colon S_1'\dashrightarrow S_2'$ be two Sarkisov links between del Pezzo surfaces of Picard rank $1$. The following conditions are equivalent:
	\begin{enumerate}[leftmargin=*, labelindent=20pt, itemsep=5pt]
		\item
		$\chi$ and $\chi'$ are equivalent;
		\item
		there exists an isomorphism $\alpha\colon S_1\iso S_1'$ that sends the base-point of $\chi$ onto the base-point of $\chi'$;
		\item
		there exists an isomorphism $\beta\colon S_2\iso S_2'$ that sends the base-point of $\chi^{-1}$ onto the base-point of $\chi'^{-1}$.
	\end{enumerate}
\end{rem}

\begin{mydef}
	Let $S$ be a sextic del Pezzo surface with $\rk\Pic(S)=1$. We define the directed graph $\Graph_S$ associated to $S$ as follows:
	\begin{itemize}[leftmargin=*, labelindent=20pt, itemsep=5pt]
		\item The vertices of $\Graph_S$ are all sextic del Pezzo surfaces of Picard rank one which are birational to $S$, considered up to $\kk$-isomorphism.
		\item The edges are Sarkisov links between two surfaces, considered up to the equivalence.
	\end{itemize}
\end{mydef}

\begin{prop}\label{prop: one link}.
	Every two vertices of $\Graph_S$ are connected by at least one edge. 
\end{prop}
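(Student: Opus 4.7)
The plan is to show that any two birational sextic del Pezzo surfaces of Picard rank $1$ are already related by a single Sarkisov link of type \II. First I would invoke Proposition \ref{prop: structure of maps for dP6} to see that the two vertices $S,S'$ share the same index: the case $\indexx(S)=1$ is excluded since then $S$ would be $\kk$-rational; the case $\indexx(S)=6$ means $S$ is birationally super-rigid (Proposition \ref{prop: index 6}) and the graph has a unique vertex with nothing to prove. Hence I may assume $\indexx(S)\in\{2,3\}$.

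The second step would be to use Corollaries \ref{cor: 2 linkSBData} and \ref{cor: 3 linkSBData} inductively along the chain of Sarkisov links provided by Proposition \ref{prop: structure of maps for dP6}: in the index $2$ case, 2-point links preserve the cubic field $\LL$ and the involution surface $Y$, so $\LL=\LL'$ and $Y=Y'$; in the index $3$ case, 3-point links preserve the quadratic field $\KK$ and the Severi--Brauer surface $X$, so $\KK=\KK'$ and $X=X'$. Thus the Severi--Brauer data of $S'$ differs from that of $S$ only through a single field $\EE$: namely $\EE=\KK'$ in the index $2$ case, and $\EE=\LL'$ (or $\EE=\SplittingHex'$ when $\Gal(\SplittingHex'/\kk)\simeq\Sym_3$) in the index $3$ case.

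The key step is to exhibit an actual $d$-point in general position on $S$ with splitting field $\EE$. By Propositions \ref{prop: 2-points} and \ref{prop: 3-points}, $S'$ admits a $d$-point in general position with splitting field $\EE$, so $\EE\in\Fields_d(S')$. Proposition \ref{prop: splitting fields are invariants} (the birational invariance of $\Fields_d$) then gives $\EE\in\Fields_d(S)$, and I take the Sarkisov link $\chi\colon S\dashrightarrow S''$ at such a point. By Corollary \ref{cor: 2 linkSBData} or \ref{cor: 3 linkSBData} the surface $S''$ has the same Severi--Brauer data as $S'$, and since $\SplittingHex$ is determined by $\KK$ and $\LL$ (or by $\SplittingHex$ itself in the $\Sym_3$ case), we get $\SplittingHex''=\SplittingHex'$. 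The biregular classification (Theorems \ref{thm: Z6 iso criterion}, \ref{thm: S3 iso criterion} and \ref{thm: D6 iso criterion}) yields $S''\simeq S'$, producing the required edge; the same argument applied with $S=S'$ gives the self-loops needed when $\EE\in\{\KK,\LL,\SplittingHex\}$.

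The main obstacle is the invariance statement Proposition \ref{prop: splitting fields are invariants}: without it, there would be no a priori reason for the splitting field $\EE$ to be realised on $S$, and the entire argument would collapse. Everything else is a bookkeeping exercise in matching the two pieces of Severi--Brauer data across the three possible Galois groups $G\simeq\ZZ/6\ZZ,\Sym_3,\Dih_6$.
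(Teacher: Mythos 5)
Your proposal is correct and follows essentially the same route as the paper's proof: reduce to index $2$ or $3$ via the Sarkisov program, transport the one field $\EE$ that distinguishes the Severi--Brauer data of $S'$ back to $S$ using the birational invariance of $\Fields_d$ (Proposition \ref{prop: splitting fields are invariants}), perform a single link there, and conclude with the isomorphism criteria. The only cosmetic difference is that you read off $\EE$ from the Severi--Brauer data of $S'$ together with Propositions \ref{prop: 2-points} and \ref{prop: 3-points}, whereas the paper takes $\EE$ to be the splitting field of the base point of the last link in the chain and then runs the $\EE\cap\FF$ case analysis explicitly (in particular the observation $\KK\subset\FF'=\EE$, hence $\EE\cap\FF\ne\kk$, in the $\Sym_3$ target case), which your appeal to $\FF'=\KK\EE$ via Corollaries \ref{cor: 2 linkSBData} and \ref{cor: 3 linkSBData} subsumes.
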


\begin{proof}
	Pick any two vertices in $\Graph_S$, and suppose they correspond to the surfaces $Z$ and $Z'$. Since $Z$ and $Z'$ are both birational to $S$, the Sarkisov program (Theorem \ref{thm: sarkisov}) implies that they are connected by a sequence of links 
	\begin{equation*}
		\begin{tikzcd}
		Z=S_0
		\ar[dashed]{r}{\chi_0}
		& 
		S_1
		\ar[dashed]{r}{\chi_1}
		& 
		S_2
		\ar[dashed]{r}{\chi_2}
		& 
		\ldots
		\ar[dashed]{r}{\chi_{r-1}}
		& 
		S_r
		\ar[dashed]{r}{\chi_{r}}
		& 
		S_{r+1}=Z',
		\end{tikzcd}
	\end{equation*}
	where each $\chi_i$ starts at a point $p_i\in S_{i}$ with splitting field $\EE_i$. We can assume that all $p_i$ are $d$-points with $d\in\{2,3\}$, since birational Geiser involutions lead to an isomorphic surface (recall that there are no points of degree 1 or 5, as we assume $S$ irrational over $\kk$). We may assume that $\indexx(S)\in\{2,3\}$, since otherwise $S$ is birationally super-rigid and $\Graph_S$ is a singleton. Suppose that $\indexx(S)=2$. Then we can assume that all $p_i$ are 2-points. 
	By Proposition~\ref{prop: splitting fields are invariants}, there is a $2$-point $p\in Z$ in general position with splitting field $\EE=\EE_{r}$. So, consider the Sarkisov link $\chi\colon Z\dashrightarrow Z''$ centred at this point. We claim that $Z'\simeq Z''$, which will finish the proof in this case. Let $\{\KK,\LL\}$, $\{\KK',\LL'\}$ and $\{\KK'',\LL''\}$ be the fields from the Severi-Brauer data for $Z$, $Z'$ and $Z''$, respectively; note that all these fields are defined, as we are in the index 2 case. Then $\LL=\LL'=\LL''$ by Proposition \ref{prop: 2-points new Splitting Field}, and by the same Proposition $\KK''=\EE=\KK'$. Since the splitting fields of $Z'$ and $Z''$ are $\KK'\LL'$ and $\KK''\LL''$, respectively, we are done. 
	
	In the $\indexx(Z) = 3$ case we can use a similar argument if neither $Z$, nor $Z'$ is an $\Sym_3$-surface. Now assume that one among $Z$ and $Z'$ is an $\Sym_3$-surface. By possibly inverting the path of Sarkisov links, we can assume that $Z'$ is a $\Sym_3$-surface. If $\EE$ is the splitting field of $p_r$, then $\EE =\SplittingHex'$ is the splitting field of the hexagon on $Z'$ by Proposition \ref{prop: 3-points new Splitting Field}. Thus there is a $3$-point $p \in Z$ in general position with splitting field $\EE$ by Proposition \ref{prop: splitting fields are invariants}. Therefore we can find a Sarkisov link $\chi: Z \dashrightarrow Z''$ based at~$p$. Let $\KK$, $\KK'$, and $\KK''$ be the respective fields from the Severi-Brauer data. Proposition \ref{prop: 3-points new Splitting Field} implies that $\KK = \KK' = \KK''$. Notice that $\KK=\KK'\subset\FF'=\EE$ then, so in particular $\EE\cap\FF\ne\kk$. Thus either $\EE \subset \SplittingHex$ or $\EE \cap \SplittingHex$ is quadratic over $\kk$. The first case is clearly impossible for $G\simeq\ZZ / 6 \ZZ$ and is not possible either for $G\simeq\Dih_6$, since in that case no $3$-point splits over an $\Sym_3$-subfield that contains $\KK$ (see Proposition \ref{prop: D6 closed points}). Thus $G\simeq\Sym_3$, $\EE=\FF$ and we conclude that $Z\simeq Z'$ by Theorem \ref{thm: S3 iso criterion}. So, we assume that  $\EE \cap \SplittingHex$ is quadratic over $\kk$. If $G\simeq\ZZ/6\ZZ$ or $G\simeq\Sym_3$, then $\FF''=(\FF\EE)^{g}=\EE=\FF'$ (see the proof of Proposition \ref{prop: 3-points new Splitting Field}), hence $Z'\simeq Z''$ by Theorem \ref{thm: S3 iso criterion}. If $G\simeq\Dih_6$, then $\KK=\FF^{\langle g,s\rangle}$, hence $\EE\cap\FF=\FF^{\langle g,s\rangle}$ and again $\FF''=(\FF\EE)^{\langle g,s\rangle}=\EE=\FF'$ by Proposition~\ref{prop: 3-points new Splitting Field}. We conclude that $Z''\simeq Z'$.
\end{proof}

At this point, we are able to prove Theorem \ref{thm: Birational classification}.

\begin{proof}[Proof of Theorem \ref{thm: Birational classification}]
	By the results of Section \ref{subsec: closed points general}, the described cases cover all possibilities and correspond to indices 1, 2, 3, and 6, respectively. The birationality in the case of index $1$ is clear, since all $\kk$-rational surfaces are birational to each other. The birationality criterium in the index~$6$ case follows from Proposition \ref{prop: index 6} and Theorem \ref{thm: Biregular}. Otherwise, we conclude by Corollaries \ref{cor: 2 linkSBData}, \ref{cor: 3 linkSBData} and Proposition \ref{prop: one link}.
\end{proof}

\begin{Notation}
	Consider the graph $\Graph_S$, and let $(V,E)$ be the sets of its vertices and edges, respectively. Choose for every vertex $v\in V$ (i.e. an isomorphism class of a surface) in $\mathcal{G}_S$ a representative $S_v$, and put $\mathcal{R}_V=\{S_v\}_{v\in V}$. Further, choose for every $S_v\in \mathcal{R}_V \setminus \{S\}$ one Sarkisov link $\chi_v: S\dashrightarrow S_v$ and collect those in the set~$\mathcal{R}_E=\{\chi_v\colon S\dashrightarrow S_v\}_{v\in V}$.
\end{Notation}

In what follows, by a \emph{tour} in a graph we mean a closed walk: a sequence of adjacent edges that starts and ends at the same vertex, possibly repeating vertices and edges. A (generalized) \emph{cycle} for us will be a closed walk in which all vertices (and hence edges) are distinct, except for the first and last vertex, and we also allow loops, i.e. edges that connect a vertex to itself.

\begin{prop}
    Let $S$ be a sextic $G$-del Pezzo surface over a perfect field $\kk$. Then the group $\Bir_{\kk}(S)$ is generated by elements from the following list:
    \begin{enumerate}[leftmargin=*, labelindent=20pt, itemsep=5pt]
        \item $A_{\chi} = \chi_v^{-1}\circ\chi\circ\chi_u$ for any Sarkisov link $\chi\colon S_u\dashrightarrow S_v$ with $u,v\in V$ distinct vertices.        
        \item $B_{\chi}= \chi_v^{-1}\circ\chi$ for any link $\chi\colon S\dashrightarrow S_v$.
        \item $C_{\varphi} = \varphi$ for any self-link $\varphi\colon S\dashrightarrow S$, or an automorphism of $S$.
        \item $D_{\varphi} = \chi^{-1}_v\circ\varphi\circ\chi_v$ for any self-link $\varphi\colon S_v\dashrightarrow S_v$, or an automorphism of $S_v$.
    \end{enumerate}
We will also call those elements the generating tours corresponding to $\chi$ (respectively, $\varphi$). 
\end{prop}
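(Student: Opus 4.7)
The plan is to invoke the Sarkisov program (Theorem~\ref{thm: sarkisov}\ref{sarkisov1}): every $\phi\in\Bir_\kk(S)$ decomposes as $\phi=\psi_r\circ\cdots\circ\psi_1$, where each $\psi_i\colon X_{i-1}\dashrightarrow X_i$ is either a Sarkisov link or an automorphism, with $X_0=X_r=S$. Under the standing non-$\kk$-rationality assumption on $S$, Proposition~\ref{prop: structure of maps for dP6} guarantees that each intermediate $X_i$ is a sextic del Pezzo surface of Picard rank one birational to $S$, and hence corresponds to a vertex $v_i\in V$ of $\Graph_S$.

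First I would arrange that every $X_i$ coincides with the chosen representative $S_{v_i}\in\mathcal{R}_V$: given isomorphisms $\iota_i\colon X_i\iso S_{v_i}$ with $\iota_0=\iota_r=\id_S$, one inserts $\iota_i^{-1}\circ\iota_i$ at each intermediate step and absorbs the isomorphisms into the neighbouring factors. Each new factor $\iota_i\circ\psi_i\circ\iota_{i-1}^{-1}$ is again a Sarkisov link or automorphism, now between surfaces in $\mathcal{R}_V$, so we may assume $X_i=S_{v_i}$ from the outset.

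Next, extending the notation by $\chi_{s_0}=\id_S$ where $s_0$ denotes the vertex corresponding to $S$, I would insert the telescoping identities $\chi_{v_i}\circ\chi_{v_i}^{-1}$ at each intermediate surface and regroup to obtain
\[
\phi=(\psi_r\circ\chi_{v_{r-1}})\circ(\chi_{v_{r-1}}^{-1}\circ\psi_{r-1}\circ\chi_{v_{r-2}})\circ\cdots\circ(\chi_{v_1}^{-1}\circ\psi_1).
\]
Each factor is now a birational self-map of $S$, and a direct case analysis on the pair $(v_{i-1},v_i)$ identifies it as one of the listed generators or its inverse: it is $C_{\psi_i}$ when $v_{i-1}=v_i=s_0$; $B_{\psi_i}$ when $v_{i-1}=s_0\neq v_i$; $B_{\psi_i^{-1}}^{-1}$ when $v_i=s_0\neq v_{i-1}$; $A_{\psi_i}$ when $v_{i-1}\neq v_i$ with both different from $s_0$; and $D_{\psi_i}$ when $v_{i-1}=v_i\neq s_0$.

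There is no substantive obstacle here; the argument is essentially a bookkeeping exercise once the telescoping trick is in place. The only point requiring a moment's attention is that $B_\chi$ is defined only for Sarkisov links $\chi\colon S\dashrightarrow S_v$, so when $\psi_i$ points from $S_{v_{i-1}}$ back to $S$ the corresponding factor must be expressed as $B_{\psi_i^{-1}}^{-1}$; since we generate a group, inverses of generators are admissible, and $\phi$ is thereby exhibited as a word in $\{A_\chi,B_\chi,C_\varphi,D_\varphi\}$ and their inverses.
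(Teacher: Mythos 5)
Your proposal is correct. The engine is the same as in the paper's proof — insert the reference links $\chi_{v}^{-1}\circ\chi_{v}$ at intermediate surfaces so that each factor becomes a birational self-map of $S$ recognisable as one of $A_\chi$, $B_\chi$, $B_{\chi^{-1}}^{-1}$, $C_\varphi$, $D_\varphi$ — but the organisation differs. The paper proceeds in two stages: it first decomposes an arbitrary tour in $\Graph_S$ into a product of cycles based at $S$ (by inserting $\chi\circ\chi^{-1}$ at the first repeated vertex and iterating), and then proves by induction on the length of a cycle that each cycle is a word in the generators. You instead perform the telescoping in a single pass over the whole Sarkisov factorisation, which eliminates both the cycle decomposition and the induction; the price is only the preliminary normalisation replacing each intermediate Mori fibre space by its chosen representative in $\mathcal{R}_V$ (absorbing the isomorphisms into adjacent links, which is legitimate since composing a Sarkisov link with isomorphisms yields a Sarkisov link), a step the paper leaves implicit. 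Your case analysis on the pair $(v_{i-1},v_i)$ is complete and each identification with a generator or its inverse is accurate, including the only delicate point — that a link returning to $S$ must be written as $B_{\psi_i^{-1}}^{-1}$ since $B$ is only defined for links emanating from $S$. The appeal to Proposition~\ref{prop: structure of maps for dP6} to guarantee that all intermediate surfaces are vertices of $\Graph_S$ (under the standing non-rationality hypothesis) is exactly the input needed and is the same one the paper relies on when asserting that a birational self-map corresponds to a tour in $\Graph_S$.
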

\begin{proof}
    Every birational map self-map of $S$ corresponds to a tour (starting at $S$) in $\mathcal{G}_S$. Notice that every tour in $\Graph_S$ is a product of cycles starting at $S$. Indeed, take $S_0 = S \dashrightarrow S_1 \dashrightarrow \dots \dashrightarrow S_n = S$ with links $\chi_i: S_i \dashrightarrow S_{i + 1}$. Let $j$ be the first index such that $S_{j} \in \{ S_{0}, \dots , S_{j - 1} \}$. We take a link $\chi: S \dashrightarrow S_{j-1}$ and insert $\chi\circ\chi^{-1}$ into the sequence. Thus we get a cycle based at $S$ in the path. We now consider the tour starting from $\chi$ and iterate the process, eventually getting the desired decomposition. An example is shown on Figure \ref{fig: tour decompositon}.
    
    \begin{figure}[H]
    	\centering
    	\captionsetup[subfigure]{labelformat=empty} 
    	
    	\begin{subfigure}[t]{0.3\textwidth}
    		\centering
    		\begin{tikzpicture}[->, >=Stealth, node distance=2cm,
    			every node/.style={circle, draw, minimum size=0.2cm}]
    			
    			\node (0) at (90:2.5) {$S_0$};
    			\node (1) at (210:2.5) {$S_1$};
    			\node (2) at (330:2.5) {$S_2$};
    			
    			\draw (0) to [bend right=50] node[left, draw=none, fill=none] {$\chi_0$} (1);
    			\draw (1) to [bend right=50] node[below left, draw=none, fill=none, xshift=12pt, yshift=-2pt] {$\chi_1$} (2); 
    			\draw (2) to [bend right=30] node[above right, draw=none, fill=none, xshift=-4pt, yshift=-1pt] {$\chi_2$} (1); 
    			\draw (1) -- node[below, draw=none, fill=none, xshift=2pt, yshift=2pt] {$\chi_3$} (2);
    			\draw (2) to [bend right=50] node[right, draw=none, fill=none] {$\chi_4$} (0);     		
    		\end{tikzpicture}
    		\caption{The tour $\chi_4\circ\ldots\circ\chi_0$}
    	\end{subfigure}
    	\hfill
    	\begin{subfigure}[t]{0.3\textwidth}
    		\centering
    		\begin{tikzpicture}[->, >=Stealth, node distance=2cm,
    			every node/.style={circle, draw, minimum size=0.2cm}]    			
    			\node (0) at (90:2.5) {$S_0$};
    			\node (1) at (210:2.5) {$S_1$};
    			\node (2) at (330:2.5) {$S_2$};
    			
    			\draw (0) to [bend right=10] node[left, draw=none, fill=none] {$\chi$} (2);
    			\draw[blue] (2) to [bend right=10] node[right, draw=none, fill=none, xshift=-4pt, yshift=6pt] {$\chi^{-1}$} (0);
    			
    			\draw[blue] (0) to [bend right=50] node[left, draw=none, fill=none] {$\chi_0$} (1);
    			\draw[blue] (1) to [bend right=50] node[below left, draw=none, fill=none, xshift=12pt, yshift=-2pt] {$\chi_1$} (2); 
    			\draw (2) to [bend right=30] node[above right, draw=none, fill=none, xshift=-4pt, yshift=-1pt] {$\chi_2$} (1); 
    			\draw (1) -- node[below, draw=none, fill=none, xshift=2pt, yshift=2pt] {$\chi_3$} (2);
    			\draw (2) to [bend right=50] node[right, draw=none, fill=none] {$\chi_4$} (0);     		
    		\end{tikzpicture}
    		\caption{First loop}
    	\end{subfigure}
    	\hfill
    	\begin{subfigure}[t]{0.3\textwidth}
    		\centering
    		\begin{tikzpicture}[->, >=Stealth, node distance=2cm,
    			every node/.style={circle, draw, minimum size=0.2cm}]    			
    			\node (0) at (90:2.5) {$S_0$};
    			\node (1) at (210:2.5) {$S_1$};
    			\node (2) at (330:2.5) {$S_2$};
    			
    			\draw[red] (0) to [bend right=10] node[left, draw=none, fill=none] {} (2);
    			\draw[blue] (2) to [bend right=10] node[right, draw=none, fill=none, xshift=-4pt, yshift=6pt] {} (0);
    			
    			\draw (0) to [bend right=10] node[left, draw=none, fill=none] {$\chi$} (1);
    			\draw[red] (1) to [bend right=10] node[right, draw=none, fill=none, xshift=-4pt, yshift=-2pt] {$\chi^{-1}$} (0);
    			
    			\draw[blue] (0) to [bend right=50] node[left, draw=none, fill=none] {$\chi_0$} (1);
    			\draw[blue] (1) to [bend right=50] node[below left, draw=none, fill=none, xshift=12pt, yshift=-2pt] {$\chi_1$} (2); 
    			\draw[red] (2) to [bend right=30] node[above right, draw=none, fill=none, xshift=-4pt, yshift=-1pt] {$\chi_2$} (1); 
    			\draw (1) -- node[below, draw=none, fill=none, xshift=2pt, yshift=2pt] {$\chi_3$} (2);
    			\draw (2) to [bend right=50] node[right, draw=none, fill=none] {$\chi_4$} (0);     		
    		\end{tikzpicture}
    		\caption{Second and third loops}
    	\end{subfigure}
    \caption{Decomposition of a tour into 3 loops}
    \label{fig: tour decompositon}
    \end{figure}
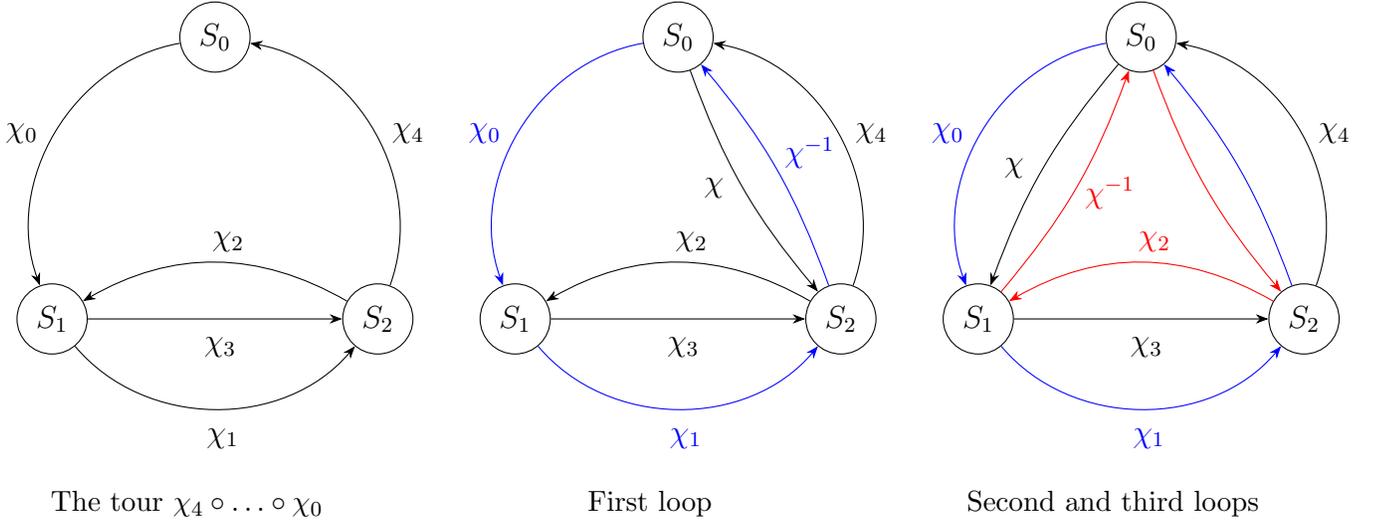
    
  	Next we prove that every cycle can be written as a product of maps given in the statement. We use the induction on the length $n$ of a cycle.  The case $n=1$ corresponds to a generating loop of type $C$. Let $S_{v_0} = S \dashrightarrow S_{v_1} \dashrightarrow \dots \dashrightarrow S_{v_{n+1}} = S$ with links $\chi_i: S_{v_i} \dashrightarrow S_{v_{i+1}}$ be a cycle (recall that we allow $v_{i}=v_{i+1}$) of length $n+1$, and suppose that the claim holds for cycles of length $n$. Suppose that $\chi_0\notin\mathcal{R}_E$. Then 
  	\[
  	\chi_n\circ\ldots\circ\chi_0=\chi_n\circ\ldots\circ\chi_1\circ\chi_{v_1}\circ \chi_{v_1}^{-1}\circ\chi_0=\chi_n\circ\ldots\chi_1\circ\chi_{v_1}\circ B_{\chi_0},
  	\]
  	and we can proceed with the loop $\chi_n\circ\ldots\chi_1\circ\chi_{v_1}$, starting from an element of $\mathcal{R}_E$. Hence, we can assume from the very beginning that $\chi_0 \in \mathcal{R}_E$, i.e. $\chi_0=\chi_{v_1}$. If $\chi_1$ is a self-link, then $v_1=v_2$ and we get
  	\[
  	\chi_n\circ\ldots\circ\chi_2\circ\chi_1\circ\chi_{v_1}=\chi_n\circ\ldots\circ\chi_2\circ\chi_{v_1}\circ\chi_{v_1}^{-1}\circ\chi_1\circ\chi_{v_1}=\chi_n\circ\ldots\circ\chi_2\circ\chi_{v_1}\circ D_{\chi_1}.
  	\]
  	The cycle $\chi_n\circ\ldots\circ\chi_2\circ\chi_{v_1}$ is of length $n$ and we are done by the induction hypothesis. If $\chi_1$ leads back to $S$, then our cycle is equal to $B_{\chi_1^{-1}}^{-1}$. Finally, suppose that $v_1\ne v_2$, then
  	\[
  	\chi_n\circ\ldots\circ\chi_2\circ\chi_1\circ\chi_0=\chi_n\circ\ldots\circ\chi_2\circ\chi_{v_2}\circ\chi_{v_2}^{-1}\circ\chi_1\circ\chi_{v_1}=\chi_n\circ\ldots\circ\chi_2\circ\chi_{v_2}\circ A_{\chi_1}.
  	\]
  	The cycle $\chi_n\circ\ldots\circ\chi_2\circ\chi_{v_2}$ is of length $n$ and we are done by the induction hypothesis.  
\end{proof}

\begin{figure}[H]
	\centering
	\captionsetup[subfigure]{labelformat=empty} 
	
	\begin{subfigure}[t]{0.3\textwidth}
		\centering
		\begin{tikzpicture}[scale=2]
			\node[circle, draw, fill=white] (S) at (0,0) {$S$};
			
			\def\n{6} 
			\def\r{1} 
			
			\foreach \i in {1,...,\n} {
				\node[circle, draw, fill=white] (S\i) at ({\r*cos(360/7*\i)}, {\r*sin(360/7*\i)}) {$S_{v_\i}$};
				\draw[->, red] (S) to (S\i);
			}
			\node (D) at ({\r*cos(360/7*7)}, {\r*sin(360/7*7)}) {$\dots$};
			\draw[->, red] (S) to (D);
		\end{tikzpicture}
		\caption{The set $\mathcal{R}_E$}
	\end{subfigure}
	\hfill
	\begin{subfigure}[t]{0.16\textwidth}
		\centering
		\begin{tikzpicture}[scale=2, every node/.style={circle, draw, fill=white}]
			\node (1) at (0,0) {$S$};
			\node (2) at (0,1) {$S_u$};
			\node (3) at (1,0) {$S_v$};
			\draw[->, red] (1) to node[left, draw=none, fill=none, xshift=-3pt, yshift=0pt] {$\chi_u$} (2);
			\draw[->] (2) to node[right, draw=none, fill=none, xshift=-3pt, yshift=5pt] {$\chi$} (3);
			\draw[->, red] (3) to node[left, draw=none, fill=none, xshift=18pt, yshift=-9pt] {$\chi_v^{-1}$} (1);
		\end{tikzpicture}
		\caption{Type $A_\chi$}
	\end{subfigure}
	\hfill\hfill
	\begin{subfigure}[t]{0.17\textwidth}
		\centering
			\begin{tikzpicture}[scale=2, every node/.style={circle, draw, fill=white}]
			\node (1) at (0,0) {$S$};
			\node (2) at (0,1) {$S_v$};
			\draw[->, bend left = 30] (1) to node[left, draw=none, fill=none, xshift=3pt, yshift=1pt] {$\chi$} (2);
			\draw[->, red, bend left = 30] (2) to node[left, draw=none, fill=none, xshift=29pt, yshift=1pt] {$\chi_v^{-1}$} (1);
		\end{tikzpicture}
		\caption{Type $B_\chi$}
	\end{subfigure}
	\hfill
	\begin{subfigure}[t]{0.09\textwidth}
		\centering
		\begin{tikzpicture}[scale=2, every node/.style={circle, draw, fill=white}]
		\node (1) at (0,0) {$S$};
		\draw[->, loop above] (1) to node[left, draw=none, fill=none, xshift=12pt, yshift=10pt] {$\varphi$} (1);
		\end{tikzpicture}
		\caption{Type $C_\varphi$}
	\end{subfigure}
	\hfill
	\begin{subfigure}[t]{0.16\textwidth}
		\centering
		\begin{tikzpicture}[scale=2, every node/.style={circle, draw, fill=white}]
			\node (1) at (0,0) {$S$};
			\node (2) at (0,1) {$S_v$};
			\draw[->, red, bend left = 30] (1) to node[left, draw=none, fill=none, xshift=4pt, yshift=1pt] {$\chi_v$} (2);
			\draw[->, loop above] (2) to node[left, draw=none, fill=none, xshift=12pt, yshift=10pt] {$\varphi$} (2);
			\draw[->, red, bend left = 30] (2) to node[left, draw=none, fill=none, xshift=30pt, yshift=2pt] {$\chi_v^{-1}$} (1);
		\end{tikzpicture}
		\caption{Type $D_\varphi$}
	\end{subfigure}
  	\caption{Generators of $\Bir_{\kk}(S)$ from the perspective of $\mathcal{G}_S$.}
	\label{fig:generators}
\end{figure}
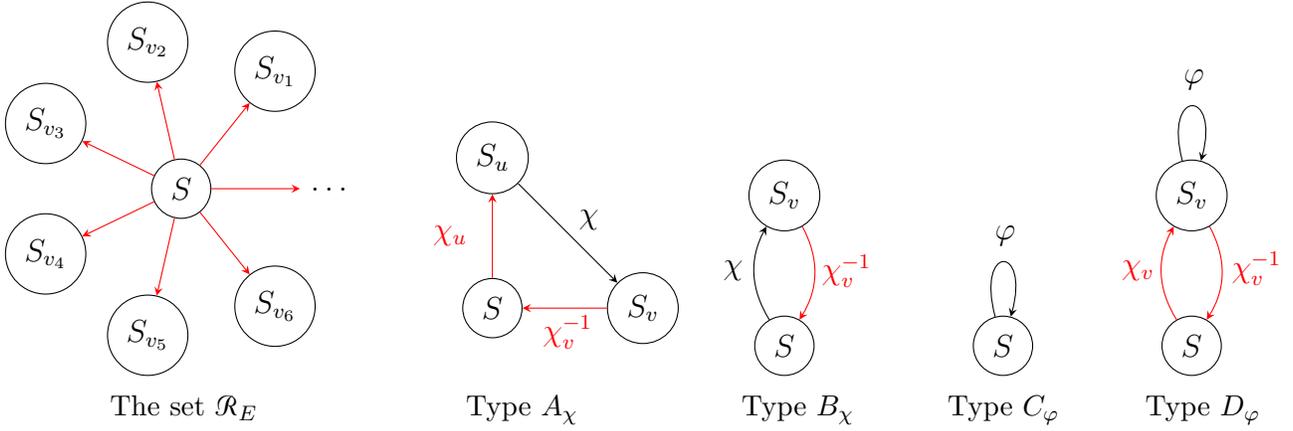

\subsection{Sextic del Pezzo surfaces of index 3}

Let $S$ be a sextic $G$-del Pezzo surface of index~3. If $\chi\colon S\dashrightarrow S'$ is a Sarkisov 3-link \eqref{eq: Sarkisov link of type II}, then $T$ is a cubic surface with $\rk\Pic(T)=2$. So, if $T'\to T\to S$ is a rank 3 fibration corresponding to some relation involving $\chi$, then $T'$ is a del Pezzo surface of degree 2 or 1, hence $T'\to T$ is a blow-up of a point of degree 1 or 2, which is impossible since $\indexx(T)=3$. We conclude that there are no elementary relations in this case.  

Since the composition of a Sarkisov link with an automorphism of Mori fibre space is a Sarkisov link, and the inverse of a Sarkisov link is a Sarkisov link, we obtain \emph{trivial relations} given by $\chi_2\circ \chi_1=\id$ and $\chi_2^{-1}\circ\alpha\circ \chi_1\circ \beta=\id$, where $\chi_1,\chi_2$ are Sarkisov links and $\alpha,\beta$ are automorphisms of Mori fibre spaces; we also include the relations between isomorphism of Mori fibre spaces in the set of trivial relations.

\begin{prop}\label{prop: index 3 relations}
    Let $S$ be a sextic $G$-del Pezzo of index $3$. Then every relation between the generators $A_{\chi}$, $B_{\chi}$, $C_\varphi$, $D_\varphi$ is a product of conjugates of the following relations.
    \begin{enumerate}[leftmargin=*, labelindent=20pt, itemsep=5pt]
        \item The relations between automorphisms given via a product of $C_\varphi$ or $D_\varphi$.
        \item $B_{\chi} = \id$ for each $\chi \in \mathcal{R}_E$. 
        \item The relations
        \begin{enumerate}[label=3\alph*)]
        	\item $A_{\chi_2}\circ A_{\chi_1}$
        	\item $B_{\chi_2^{-1}}^{-1}\circ B_{\chi_1}$
        	\item $C_\varphi\circ C_{\varphi^{-1}}$
        	\item $D_\varphi\circ D_{\varphi^{-1}}$
       	\end{enumerate} 
        for every two links $\chi_1$, $\chi_2$ satisfying $\chi_2\circ\chi_1=\id$, and for all self-links and automorphisms~$\varphi$. 
        \item For every two links $\chi_1$ and $\chi_2$, equivalent via automorphisms $\alpha$ and $\beta$, i.e. satisfying $\alpha\circ \chi_1\circ \beta=\chi_2$, the following relations.
        \begin{enumerate}[label=4\alph*)]
        	\item $A_{\chi_2^{-1}}\circ D_\alpha\circ A_{\chi_1}\circ D_\beta$
        	\item $B_{\chi_2}^{-1}\circ D_\alpha\circ B_{\chi_1}\circ C_\beta$ 
        	\item $C_{\chi_2^{-1}}\circ C_\alpha\circ C_{\chi_1}\circ C_\beta$
        	\item $D_{\chi_2^{-1}}\circ D_\alpha\circ D_{\chi_1}\circ D_\beta$.
        \end{enumerate}        
    \end{enumerate}
\end{prop}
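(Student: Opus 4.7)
The plan is to invoke the Sarkisov program, Theorem~\ref{thm: sarkisov}\ref{sarkisov2}, combined with the observation made just before the statement that no elementary relation exists when $\indexx(S)=3$. Recall that for any Sarkisov $3$-link $\chi\colon S\dashrightarrow S'$ from \eqref{eq: Sarkisov link of type II}, the middle surface $T$ is a cubic surface with $\rk\Pic(T)=2$ and $\indexx(T)=3$; any rank~$3$ fibration dominating $T$ would correspond to a further blow-down to a del Pezzo surface of lower degree, hence to the blow-up of a point of degree $\leqslant 2$ on $T$, which is impossible. Therefore every relation in $\BirMori(S)$ is a product of conjugates of trivial relations (inverse and equivalence relations between Sarkisov links) and of relations among automorphisms.

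The core step is to set up a dictionary between Sarkisov links between arbitrary vertices of $\Graph_S$ and the generators $A_\chi, B_\chi, C_\varphi, D_\varphi$ of $\Bir_\kk(S)$. Given any cycle at $S$ realised as a word $\chi_n\circ\cdots\circ\chi_0$, I would insert the trivial factors $\chi_{v_i}^{-1}\circ\chi_{v_i}$ between consecutive links (with $\chi_{v_i}\in\mathcal{R}_E$ the chosen representative from $S$ to the intermediate vertex $S_{v_i}$, and $\chi_{v_0}=\chi_{v_{n+1}}=\id_S$ by convention), thereby rewriting the word as a product of type $A$, $B$, $C$, $D$ generators. This rewriting is well-defined modulo the identities $\chi_v^{-1}\circ\chi_v=\id$, which are precisely the content of (2) together with the inverse relation of type (3b) applied to the pair $(\chi_v,\chi_v^{-1})$. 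Under this dictionary, every inverse relation $\chi_2\circ\chi_1=\id$ translates, according to whether $\chi_1,\chi_2$ are neither loops at $S$, one returns to $S$, both are self-links at $S$, or both are self-links at some $S_v$, into one of (3a)--(3d); every equivalence relation $\alpha\circ\chi_1\circ\beta=\chi_2$ between links translates into one of (4a)--(4d), with the automorphisms $\alpha,\beta$ producing the $C$- or $D$-type terms; and the automorphism relations at $S$ and at each $S_v$ contribute the relations of type (1).

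The hard part will be the systematic bookkeeping needed to verify that the rewriting procedure, applied to an arbitrary conjugate of a trivial or automorphism relation in $\BirMori(S)$, produces a product of conjugates of the listed relations (1)--(4), and conversely that no additional relation is missed. This is a standard groupoid-to-group calculation: the graph $\Graph_S$ is connected by Proposition~\ref{prop: one link}, the star $\mathcal{R}_E$ at $S$ plays the role of a spanning tree, and the resulting presentation of the fundamental group of $\Graph_S$ at $S$ coincides with the presentation given by generators $A,B,C,D$ modulo (1)--(4). A version of this argument appears in \cite{LamyZimmermann,BLZ} in other settings, and adapts verbatim in our index~$3$ case because the absence of elementary relations reduces the computation to its combinatorial core.
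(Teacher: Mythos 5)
Your proposal follows essentially the same route as the paper: invoke the Sarkisov program together with the observation that no elementary relations occur in index $3$, then rewrite each (conjugate of a) trivial relation by inserting the factors $\chi_v^{-1}\circ\chi_v$ from $\mathcal{R}_E$ and check case by case that the result is a product of conjugates of the relations (1)--(4). The paper simply carries out the bookkeeping you defer to explicitly, reducing a general conjugating map $\psi$ to a single link via $\chi_v$ and listing which of (3a)--(3d), (4a)--(4d) arises in each configuration of source and target vertices.
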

\begin{proof}
	Let $U_n'\circ \ldots \circ U_1' = \id$ be a relation, where $U_i'$ are of the form $A_{\chi}$, $B_{\chi}$, $C_{\varphi}$, $D_{\varphi}$, or their inverse. By Theorem \ref{thm: sarkisov}, we can write this as a relation of Sarkisov links and automorphisms and hence as
	\[
	U_n'\circ \ldots \circ U_1'=\psi_k^{-1}\circ R_k\circ\psi_k \circ\ldots\circ\psi_1^{-1}\circ R_1\circ\psi_1=\id
	\]
	where $R_i$ are trivial relations and $\psi_i$ are some birational maps (possibly including identity maps) in $\BirMori(S)$, see \cite[Proposition 3.15]{LamyZimmermann}. Each $\psi_i^{-1}\circ R_i\circ \psi_i$ is a birational self-map of $S$ and therefore it is enough to prove that $\psi^{-1}\circ R\circ\psi$ is a product of conjugates of the relations from the statement for $R$ a trivial relation and $\psi$ a birational map starting from $S$. Note that $\psi$ can be uniquely written as $\psi=\chi\circ\psi'$, where $\psi'\in\Bir(S)$ and $\chi\colon S\dashrightarrow S_v$ is a (possibly empty) sequence of Sarkisov links. We then consider possible subcases.
	
	\begin{enumerate}[leftmargin=*, labelindent=20pt, itemsep=5pt]
		\item \emph{Case} $\chi=\varnothing$. Then $R=U_{i_1}\circ \ldots\circ U_{i_s}$ is a birational self-map of $S$.
		\begin{itemize}[leftmargin=*, labelindent=-5pt, itemsep=5pt]
			\vspace{0.3cm}
			\item Suppose that $R$ is a composition of automorphisms. Then it is a relation between generators of type $C$, i.e. a relation of type (1).
			\item Suppose that $R=\chi_2\circ\chi_1$. Then $R = B_{\chi_1}$ is of type (2) if $\chi_1 \in \mathcal{R}_E$, $R=C_{\chi_2}\circ C_{\chi_1}$ is of type (3c) if $\chi_1$ is a self-link or automorphism, and $R=B_{\chi_2^{-1}}^{-1}\circ B_{\chi_1}$ is of type (3b) if $\chi_1$ is not a self-link. 
			
			\item If $R={\chi_2}^{-1}\circ\alpha\circ\chi_1\circ\beta$, then $R=C_{\chi_2^{-1}}\circ C_\alpha\circ C_{\chi_1}\circ C_\beta$ is a relation of type (4c) when $\chi_1$ and $\chi_2$ are self-links. If they are not, then they have the form $\chi_1,\chi_2\colon S\dashrightarrow S_v$ for some $v\in V$. Thus
			\[
			{\chi_2}^{-1}\circ\alpha\circ\chi_1\circ\beta=
			\underbrace{
				\chi_2^{-1}\circ\chi_v}_{B_{\chi_2}^{-1}}\circ\underbrace{\chi_v^{-1}\circ\alpha\circ \chi_v}_{D_\alpha}\circ\underbrace{\chi_v^{-1}\circ\chi_1}_{B_{\chi_1}}\circ\underbrace{\beta}_{C_\beta}=B_{\chi_2}^{-1}\circ D_\alpha\circ B_{\chi_1}\circ C_\beta
			\]
			which is a relation of type (4b).	
		\end{itemize}
		
		\item \emph{Case} $\psi=\chi\circ\psi'$, where $\psi'\in\Bir(S)$ and $\chi\colon S\dashrightarrow S_v$ is some Sarkisov link. Then
		\[
		\psi^{-1}\circ R\circ\psi={\psi'}^{-1}\circ\chi^{-1}\circ R\circ\chi\circ\psi'
		\]
		and hence we need to show that $\chi^{-1}\circ R\circ\chi$ is a product of conjugates of relations from the statement. We again consider possible subcases.
		\begin{itemize}[leftmargin=*, labelindent=-5pt, itemsep=5pt]
		\vspace{0.3cm}
		\item Suppose that $R$ is a composition of automorphisms, say $R=\alpha_1\circ\ldots\circ\alpha_m$. Then 
		\[
		\chi^{-1}\circ R\circ\chi=B_{\chi}^{-1}\circ D_{\alpha_1}\circ\ldots\circ D_{\alpha_m}\circ B_{\chi}
		\]	
		it is a relation of type (1).
	 	\item Consider relations $R=\chi_2\circ\chi_1$, and note that $\chi_1$ starts at $S_v$. Suppose first that $\chi_1$ goes to $S$. Then 
		\[
		\chi^{-1}\circ\chi_2\circ\chi_1\circ\chi=B_{\chi}^{-1}\circ B_{\chi_2}\circ (B_{\chi_1^{-1}})^{-1}\circ B_\chi
		\]
		is a conjugate of type (3b) relation by $B_\chi$ (since $\chi_2\circ\chi_1=\id$ is equivalent to $\chi_1\circ\chi_2=\id$).
		Now suppose that $\chi_1\colon S_v\dashrightarrow S_w$. If $v=w$, then 
		\[
		\chi^{-1}\circ\chi_2\circ\chi_1\circ\chi= B_{\chi}^{-1}\circ D_{\chi_2}\circ D_{\chi_1}\circ B_\chi
		\]
		is a conjugate of type (3d) relation by $B_{\chi}$.
		If  $v\ne w$, then 
		\[
		\chi^{-1}\circ\chi_2\circ\chi_1\circ\chi= B_{\chi}^{-1}\circ A_{\chi_2}\circ A_{\chi_1}\circ B_\chi.
		\] 
		is a conjugate of type (3a) relation by $B_\chi$.
		
		\item Further, we consider relations $R=\chi_2^{-1}\circ\alpha\circ\chi_1\circ\beta$. 	
		Suppose first that $\chi_1\colon S_v\dashrightarrow S$, then $\chi_2\colon S_v\dashrightarrow S$. We have
		\[
		\chi^{-1}\circ\chi_2^{-1}\circ\alpha\circ\chi_1\circ\beta\circ\chi=B_\chi^{-1}\circ B_{\chi_2^{-1}}\circ C_\alpha\circ B_{\chi_1^{-1}}^{-1}\circ D_\beta\circ B_\chi,
		\]
		which is a conjugate of type (4b) relation by $B_\chi$, because $\chi_2=\alpha\circ\chi_1\circ\beta$ is equivalent to $\chi_2^{-1}=\beta^{-1}\circ\chi_1^{-1}\circ\beta^{-1}$.
		
		Then suppose that $\chi_1\colon S_v\dashrightarrow S_v$, $\chi_2\colon S_v\dashrightarrow S_v$. We obtain
		\[
		\chi^{-1}\circ\chi_2^{-1}\circ\alpha\circ\chi_1\circ\beta\circ\chi=B_\chi^{-1}\circ D_{\chi_2^{-1}}\circ D_\alpha\circ D_{\chi_1}\circ D_\beta\circ B_\chi,
		\]
		which is a conjugate of type (4d) relation by $B_\chi$.
		
		Finally suppose that $\chi_1\colon S_v\dashrightarrow\colon S_w$, $v\ne w$, then $\chi_2\colon S_v\dashrightarrow S_w$. We have
		\[
		\chi^{-1}\circ\chi_2^{-1}\circ\alpha\circ\chi_1\circ\beta\circ\chi=B_\chi^{-1}\circ A_{\chi_2^{-1}}\circ D_\alpha\circ A_{\chi_1}\circ D_\beta\circ B_\chi,
		\]
		which is a conjugate type (4a) relation by $B_\chi$.
		\end{itemize}
		
		\item Finally, in the general case when $\chi\colon S\dashrightarrow S_v$ is an arbitrary birational map, we can write
		\[
		\psi^{-1}\circ R\circ\psi=(\chi\circ\psi')^{-1}\circ R\circ (\chi\circ\psi') = (\chi_v^{-1}\circ\chi\circ\psi')^{-1}\circ \chi_v^{-1}\circ R\circ \chi_v\circ (\chi_v^{-1}\circ \chi\circ \psi'),
		\]
		and notice that $\chi_v^{-1}\circ\chi\circ\psi'\in\Bir(S)$, hence it is enough to prove the statement for $\chi_v^{-1}\circ R\circ \chi_v$. But this case was considered before. 	
	\end{enumerate}
\end{proof}

Recall that every edge $e\in E$ of $\Graph_S$ corresponds to the equivalence class of a Sarkisov link~$\chi$. Let us denote such edge $e=[\chi]$. Let $\bar{e}=[\chi^{-1}]$ be the edge corresponding to the equivalence class of $\chi^{-1}$. If $e = \bar{e}$, i.e. $\chi$ is equivalent to $\chi^{-1}$, then we call $\chi$ an \emph{almost involution}. Since the vertices of $\Graph_S$ represent different isomorphism classes of surfaces, amost involutions are self-loops (self-links) at the vertices of $\Graph_S$ (but a priori not every self-link is an almost involution). Let us label all the edges of $\Graph_S$, which are not almost involutions, with signs $+$ or $-$, so that for each $e\in E$ with $e\ne\bar{e}$ the edges $e$ and $\bar{e}$ have different signs. Given a Sarkisov link $\chi$ corresponding to an edge $e$ that is not an almost involution, we say that $\chi$ is \emph{positive} if $e$ has label $+$, and \emph{negative} if $e$ has label $-$. Finally, declare the elements of $\mathcal{R}_E$ positive. 

\begin{thm}\label{thm: index 3 structure of Bir}
	Let $S$ be a del Pezzo surface of degree 6 with $\rk\Pic(S)=1$ and $\indexx(S)=3$. Let~ $\mathcal{E}_1$~be the set of positive edges in $\mathcal{G}_S$ such that corresponding links are not equivalent to their inverses, and let $\mathcal{E}_2$ be the set of edges such that corresponding links are equivalent to their inverses (note that this is a subset of the set of self-loops at the vertices of $\Graph_S$). Then the map
	\begin{align*}
	\Psi\colon U_\chi\mapsto \begin{cases}
		\ \ 1_{[\chi]} &  \text{ where $\chi$ is positive or an almost involution,}\\
		-1_{[\chi^{-1}]} & \text{ where $\chi$ is negative,}\\
		\ \ \id & \text{ if } \chi\text{ is an automorphism,}\ \chi\in\mathcal{R}_E\text{ or } \chi^{-1} \in \mathcal{R}_E
	\end{cases}
	\end{align*}
defines a surjective non-trivial group homomorphism
\[ 
\Bir_{\kk}(S) \twoheadrightarrow \left (\bigast_{\mathcal{E}_1\setminus\mathcal{R}_E} \ZZ \right ) \ast \left (\bigast_{\mathcal{E}_2} \ZZ / 2 \ZZ \right ). 
\]
\end{thm}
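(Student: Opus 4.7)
The plan is to realize $\Psi$ as a well-defined group homomorphism by verifying that it vanishes on each of the four families of relations supplied by Proposition~\ref{prop: index 3 relations}, and then to read off surjectivity and non-triviality directly from the construction. Since $\Psi(C_\varphi) = \Psi(D_\varphi) = \id$ by definition and $\Psi(B_\chi) = \id$ for $\chi \in \mathcal{R}_E$, relations of types (1), (2), (3c), (3d), (4c), (4d) are killed immediately, and relation~(3b) collapses because $\chi_2 \circ \chi_1 = \id$ forces $\chi_2^{-1} = \chi_1$, so the expression reduces to $B_{\chi_1}^{-1} \circ B_{\chi_1}$.

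The core of the argument concerns the families (3a), (4a), (4b), where the $\pm$-labelling does the work. For (3a), the identity $\chi_2 \circ \chi_1 = \id$ with $\chi_1\colon S_u \dashrightarrow S_v$ and $u \neq v$ gives $[\chi_1] = e$ and $[\chi_2] = \bar{e}$; by construction of the labelling exactly one of $\chi_1, \chi_2$ is positive and the other negative, so $\Psi(A_{\chi_2})$ and $\Psi(A_{\chi_1})$ cancel in the corresponding $\ZZ$-summand. For (4a) and (4b), Remark~\ref{lem:equivalencLinksSurfaceBasePt} identifies equivalent Sarkisov links with the same edge of $\Graph_S$, hence $[\chi_1] = [\chi_2]$ and $[\chi_1^{-1}] = [\chi_2^{-1}]$; combined with the vanishing of the $C$- and $D$-factors, the two $A$-factors (respectively $B$-factors) land on mutually inverse elements and cancel. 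The subtle point is that the definition of $\Psi$ must be read as depending only on the equivalence class $[\chi]$: in particular, ``$\chi \in \mathcal{R}_E$'' must be interpreted as ``$[\chi]$ is the edge represented by some $\chi_v \in \mathcal{R}_E$'', so that equivalent links all receive the same image.

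Surjectivity is then immediate, as every generator indexed by $e \in \mathcal{E}_1 \setminus \mathcal{R}_E$ (respectively $e \in \mathcal{E}_2$) is hit by $A_\chi$ or $B_\chi$ (respectively $D_\chi$) for any positive representative $\chi$ of $e$. For non-triviality, Proposition~\ref{prop: 3-points} supplies at least one $3$-point in general position on $S$, producing a Sarkisov link whose edge is either a self-loop (contributing to $\mathcal{E}_2$ or to $\mathcal{E}_1 \setminus \mathcal{R}_E$) or joins $S$ to another vertex; in the latter case, combining with a second $3$-link from a point with a different splitting field (which exists by the analysis of Section~\ref{section: closed points}) yields an edge outside $\mathcal{R}_E$. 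The main bookkeeping obstacle, and what I expect to require the most care, is precisely the class-invariance of the $\pm$-labelling and of $\mathcal{R}_E$-membership under equivalence of links; once this is verified, all of the cancellations above are formal.
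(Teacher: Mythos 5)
Your overall strategy --- verify that $\Psi$ kills each relation from Proposition \ref{prop: index 3 relations}, then read off surjectivity and non-triviality --- is the same as the paper's, but there is a genuine error at the central step. The assertion that $\Psi(C_\varphi)=\Psi(D_\varphi)=\id$ ``by definition'' is false when $\varphi$ is a self-link rather than an automorphism: the third clause in the definition of $\Psi$ applies only to automorphisms and to links in $\mathcal{R}_E$ (or their inverses), and $\mathcal{R}_E$ contains no self-links. A self-link $\varphi$ is sent to $1_{[\varphi]}$ or $-1_{[\varphi^{-1}]}$, and these images are precisely where the content of the theorem lives: the $\ZZ/2\ZZ$ factors indexed by $\mathcal{E}_2$ are generated by images of almost-involution self-links, and non-involutive self-links produce $\ZZ$ factors in $\mathcal{E}_1\setminus\mathcal{R}_E$. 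Consequently relations (3c), (3d) --- and (4c), (4d) when $\chi_1,\chi_2$ are self-links --- are not ``killed immediately''; one must check that $C_\varphi\circ C_{\varphi^{-1}}$ maps to $1_{[\varphi]}+1_{[\varphi]}\equiv 0$ in a $\ZZ/2\ZZ$ factor when $\varphi$ is an almost involution, and to $1_{[\varphi]}-1_{[\varphi]}=0$ in a $\ZZ$ factor otherwise, using that $[\varphi]$ and $[\varphi^{-1}]$ carry opposite signs. Your own surjectivity paragraph silently contradicts the assertion, since you hit the $\mathcal{E}_2$ generators with $D_\chi$, which would be impossible if $\Psi(D_\chi)=\id$.

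There is a second, smaller gap in the non-triviality argument. The link based at the $3$-point supplied by Proposition \ref{prop: 3-points} is in fact always a self-link: by Proposition \ref{prop: 3-points new Splitting Field} and Corollary \ref{cor: 3 linkSBData} it leaves the Severi--Brauer data unchanged, so its target is isomorphic to $S$. This is what the paper uses: a self-link is never in $\mathcal{R}_E$, so either it is an almost involution (hence $\mathcal{E}_2\neq\varnothing$) or its positive representative lies in $\mathcal{E}_1\setminus\mathcal{R}_E$. Your alternative branch (``joins $S$ to another vertex, then take a second $3$-link with a different splitting field'') treats a case that does not arise and appeals to an existence statement you do not justify. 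On the positive side, your observation that $\Psi$ must be read as constant on equivalence classes of links --- in particular that ``$\chi\in\mathcal{R}_E$'' should be interpreted as ``$[\chi]$ is an edge represented by $\mathcal{R}_E$'' --- is a correct and worthwhile clarification of the statement.
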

\begin{proof}
	For brevity, let us call a $\mathcal{E}_1$-part and a $\mathcal{E}_2$-part the respective subgroups (free products) in the target of $\Psi$, defined in the statement. We first comment why the relations of Proposition \ref{prop: index 3 relations} are sent to identity. 
	
	The relations of type (1) and (2) are sent to $\id$ according to the third line in the definition of~ $\Psi$. The relations of type (3a) and (3b) involve only two links $\chi_1$ and $\chi_2$ which are inverse to each other (and are not equivalent to their inverses), hence $[\chi_1]$ and $[\chi_2]$ have different signs and the respective relations are sent to $1_{[\chi]}-1_{[\chi]}=0$ in the $\mathcal{E}_1$-part, where $\chi\in\{\chi_1,\chi_2\}$. If $\chi_1$ and $\chi_2$ are almost involutions in (3c) or (3d), then this relation is sent onto $1_{[\varphi]}+1_{[\varphi]}\equiv 0$ in $\mathcal{E}_2$-part, and  onto $1_{[\varphi]}-1_{[\varphi]}=0$ in $\mathcal{E}_1$-part if those are self-links which are not equivalent to their inverses.
	
	The relations of type (4) involve only equivalent links $\chi_1\sim\chi_2$. Hence, (4a) and (4b) are sent to the identity of the $\mathcal{E}_1$-part. If $\chi_1$ and $\chi_2$ are almost involutions in (4c) or (4d), then this relation is sent onto $1_{[\chi_1]}+1_{[\chi_2]}\equiv 0$ in $\mathcal{E}_2$-part, and  onto $1_{[\chi_1]}-1_{[\chi_1]}=0$ in $\mathcal{E}_1$-part if those are self-links which are not equivalent to their inverses.
		
	Next, we check that the target of~ $\Psi$ is not trivial. If this is not the case, then $\mathcal{E}_2=\varnothing$ and $\mathcal{E}_1=\mathcal{R}_E$. Due to Proposition \ref{prop: 3-points}, there is a self-link $\chi$ on $S$. It cannot be equivalent to its inverse, since $\mathcal{E}_2=\varnothing$. Hence, there are two edges $[\chi]$ and $[\chi^{-1}]$ of different signs in $E$. Choose a positive link. It is not in $\mathcal{R}_E$, as this set contains no self-links. Thus $\mathcal{E}_1\setminus\mathcal{R}_E\ne\varnothing$, a contradiction. The surjectivity of $\Psi$ follows from its definition.
\end{proof}

\begin{rem}\label{rem: number of models}
	The indexing set $\mathcal{E}_1 \setminus \mathcal{R}_E$ in Theorem \ref{thm: index 3 structure of Bir} is nonempty when $S$ admits at least two distinct birational models besides itself, i.e. the graph $\Graph_S$ has at least 3 vertices. Indeed, in that case there exist surfaces $S_u$ and $S_v$ with $u,v\in V$, $u\ne v$, distinct from $S$, and a positive edge between $u$ and $v$, which is not in $\mathcal{R}_E$.
\end{rem}

\subsection{Sextic del Pezzo surfaces of index 2}

We start with recalling what are elementary relations between Sarkisov 2-links in this case. 

\begin{prop}[{\cite[Theorem 4.5]{Isk1996}}]\label{prop: relations of 2-links}
Let $S$ be a sextic $G$-del Pezzo surface of index~$2$. Then any non-trivial elementary relation between Sarkisov links on $S$ is of the form 
\[
\chi_6\circ\chi_5\circ\chi_4\circ\chi_3\circ\chi_2\circ\chi_1 = \id
\]
where $\chi_1,\chi_4\colon S \dashrightarrow S_1$, $\chi_2,\chi_5\colon S_1 \dashrightarrow S_2$, $\chi_3,\chi_6\colon S_2 \dashrightarrow S$ are Sarkisov links based at $2$-points such that for all $i \in \mathbb{Z}/6\mathbb{Z}$ we have $\chi_i(\Ind(\chi_{i-1}^{-1})) = \Ind(\chi_{i+1})$. Moreover, one has $\chi_1\sim\chi_4$, $\chi_2\sim \chi_5$ and $\chi_3\sim\chi_6$.
\end{prop}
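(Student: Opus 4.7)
The plan is to apply the general machinery of Sarkisov theory developed in Section~\ref{sec: Sarkisov program}. By Proposition~\ref{pro:from T3}, every non-trivial elementary relation between Sarkisov links arises from a rank~$3$ fibration $Z/B$. Since $\indexx(S)=2$, Proposition~\ref{prop: structure of maps for dP6} forces every link in $\BirMori(S)$ to be a type~II Sarkisov link at a point of degree~$2$, with the intermediate rank~$2$ fibration being a quartic del Pezzo surface of Picard rank~$2$. Consequently, a rank~$3$ fibration $Z$ dominating two adjacent links in the relation must be a del Pezzo surface with $\Pic(Z)\simeq\ZZ^3$, obtained as an iterated blow-up of two $2$-points starting from some sextic del Pezzo model. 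Over $\overline{\kk}$ this realises $Z$ as the blow-up of $\PP^2$ at seven points in general position, so $Z$ is a del Pezzo surface of degree~$2$.

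The next step is to enumerate the Galois-invariant extremal rays of the Mori cone of $Z/\Spec\kk$. Using the combinatorics of the Clebsch graph from Section~\ref{subsec: Sarkisov at 2-points} together with the description of $2$-points from Section~\ref{section: closed points}, I expect to find exactly three contractions of $Z$ to quartic del Pezzo surfaces $T_1,T_2,T_3$ with $\Pic(T_j)\simeq\ZZ^2$. Each $T_j$ then admits exactly two contractions to sextic del Pezzo surfaces via Galois-invariant triples of skew $(-1)$-curves, yielding a Sarkisov link of type~II between them. Traversing the diagram cyclically produces the six Sarkisov links $\chi_1,\ldots,\chi_6$ of the proposition; the compatibility conditions $\chi_i(\Ind(\chi_{i-1}^{-1}))=\Ind(\chi_{i+1})$ follow automatically from the 2-ray game, since the indeterminacy locus of each $\chi_i^{-1}$ is the image of the exceptional locus of the corresponding contraction from $Z$.

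The equivalences $\chi_i \sim \chi_{i+3}$ then reflect the central symmetry of the hexagonal picture: each quartic $T_j$ carries a natural involution exchanging the two $2$-points blown up to arrive at $Z$, and this swaps the Sarkisov link associated with $T_j$ with its opposite in the cycle. Alternatively, this can be verified \emph{a posteriori} via the uniqueness of the Severi--Brauer data (Theorems~\ref{thm: Z6 iso criterion}, \ref{thm: S3 iso criterion}, \ref{thm: D6 iso criterion}): since $\chi_i$ and $\chi_{i+3}$ share the same source and target up to isomorphism, and both are Sarkisov links based at the same pair of $2$-points, Remark~\ref{lem:equivalencLinksSurfaceBasePt} yields the equivalence.

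The main obstacle is the enumeration of rank~$2$ and rank~$1$ fibrations dominated by $Z$: a priori, $Z$ could admit contractions to conic bundles or to sextic del Pezzos of higher index, which would introduce spurious terms into the relation. Ruling these out uses that $\indexx(S)=2$ propagates through the birational chain and obstructs fibrations with a base of positive dimension, together with the birational rigidity considerations underlying Proposition~\ref{prop: structure of maps for dP6}.
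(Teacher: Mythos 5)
Your overall strategy coincides with the paper's (which mostly defers to Iskovskikh's Theorem 4.5 and sketches the geometry): the rank~$3$ fibration dominating the relation is a del Pezzo surface $Z$ of degree $2$, obtained by blowing up two $2$-points, and one reads off the relation from its Galois-invariant contractions. However, your enumeration is wrong at the decisive point. The number of rank~$2$ fibrations dominated by $Z$ is \emph{six}, not three: the Galois-invariant disjoint pairs among the fifty-six $(-1)$-curves that can be contracted are $\{E_4,E_5\}$, $\{F_4,F_5\}$, $\{E_6,E_7\}$, $\{F_6,F_7\}$, $\{L_{45},C_{67}\}$ and $\{L_{67},C_{45}\}$, giving six quartic del Pezzo surfaces $Z_1,\dots,Z_6$. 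Since each rank~$2$ fibration contributes exactly \emph{one} Sarkisov link to the elementary relation (Proposition~\ref{pro:from T3}), your picture with three quartics would produce a relation of length $3$, whereas the whole content of the proposition is that the relation has length $6$ and only closes up after going around the three isomorphism classes of surfaces twice. The sentence ``traversing the diagram cyclically produces the six Sarkisov links'' cannot be extracted from a triangle of three rank~$2$ fibrations; you must exhibit all six contractions.

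The second gap is the mechanism for $\chi_i\sim\chi_{i+3}$. You place an involution on the quartics $T_j$, but a degree~$4$ del Pezzo surface carries no canonical such involution. The correct source is the Geiser involution of the degree~$2$ surface $Z$ itself, i.e.\ the deck involution of $|-K_Z|\colon Z\to\PP^2$: it is central in $\Aut(Z_{\overline\kk})$, hence commutes with the twisted Galois action and is defined over $\kk$, and it swaps $E_k\leftrightarrow F_k$ and $L_{ij}\leftrightarrow C_{ij}$, thereby interchanging the contraction $Z\to Z_i$ with $Z\to Z_{i+3}$ and inducing the isomorphisms required by Remark~\ref{lem:equivalencLinksSurfaceBasePt}. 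Your fallback argument (``both are based at the same pair of $2$-points'') assumes precisely the isomorphism matching base points that has to be constructed; without the Geiser involution the surfaces $S_{v_i}$ and $S_{v_{i+3}}$ are only known to be abstractly birational, and Theorems~\ref{thm: Z6 iso criterion}--\ref{thm: D6 iso criterion} identify the surfaces but not the base points of the links.
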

\begin{proof}
	This was already proven in \cite[Theorem 4.5]{Isk1996}, so let us briefly comment on possible strategy (based on the study of rank 3 fibrations) and on the equivalence of links. Let $\chi_n\circ\ldots\circ\chi_1=\id$ be an elementary relation, where $\chi_i\colon S_i\dashrightarrow S_{i+1}$ are Sarkisov links starting with a blow-up $Z_i\to S_i$ of a 2-point. We argue as in \cite[Lemma 3.3.4]{BlancSchneiderYasinsky} and analyse possible contractions from the surface $T$, obtained from $S$ by two consecutive blow-ups of 2-points $\{p_4,p_5\}$ and $\{p_6,p_7\}$. Here the rank 3 fibration dominating $\chi_i$ is a del Pezzo surface $T$ of degree 2. The set of fifty-six $(-1)$-curves on $T_{\overline{\kk}}$ consists of (the strict transforms of) exceptional divisors $E_i$ over the points $p_i\in\PP_{\overline{\kk}}^2$, $i\in\{1,\ldots 7\}$ (where, as usual, $S_1$ is identified with the blow-up of $p_1$, $p_2$, $p_3$ over $\overline{\kk}$), 21 lines $L_{ij}$ through $p_i$ and $p_j$, 21 conics $C_{ij}$ which pass through all $p_k$'s \emph{except} $p_i$ and $p_j$, and 7 cubics $F_i$ passing through $p_1,\ldots, p_7$ with
	multiplicity two at $p_i$, see \cite[Theorem 26.2]{ManinCubicForms}. The contractions inducing an elementary relation are shown on Figure \ref{fig: relation of 2-links}. Notice that $T$ is equipped with the canonical \emph{Geiser involution}, which is the desk involution of the double cover $|-K_T|\colon T\to\PP^2$. This involution belongs to the centre of $\Aut(T_{\overline{\kk}})$ and swaps $E_k$ with $F_k$, and $C_{ij}$ with $L_{ij}$, hence inducing the equivalence of links.	
\end{proof}

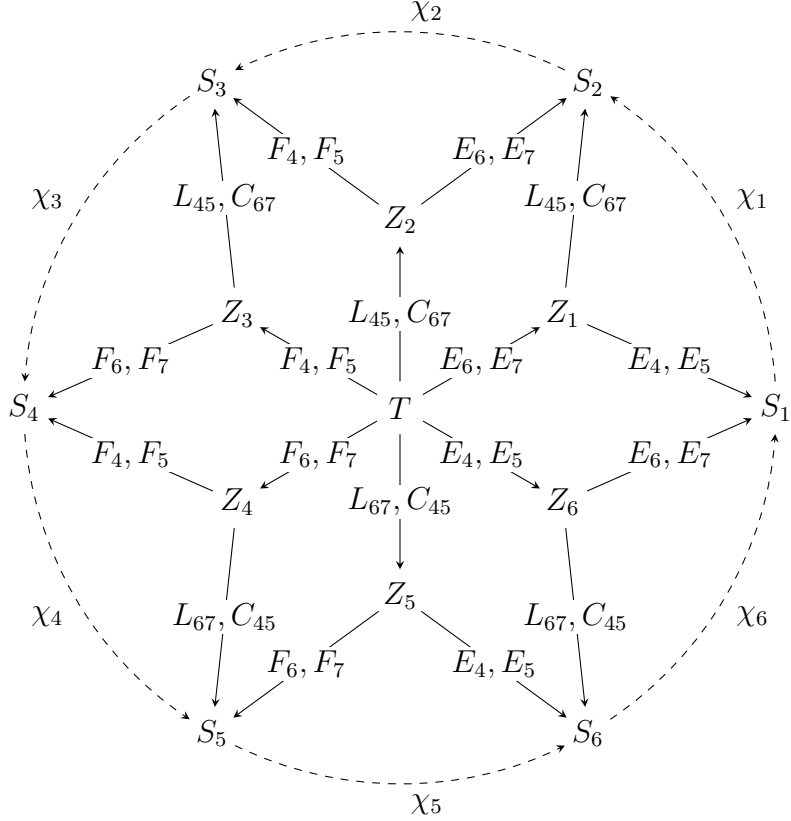
\begin{figure}
	\begin{center}
		
		\begin{tikzpicture}[scale=1.0]
			\def\R{5}     
			\def\r{2.5}   
			
			\draw[white] (0,0) circle (\R cm);
			
			\foreach \i in {1,...,6} {
				\pgfmathsetmacro\angle{60*(\i-1)}
				\coordinate (S\i) at ({\R*cos(\angle)}, {\R*sin(\angle)});
				
				\filldraw[white, fill=white] (S\i) circle (10pt);
				\node[centered] at ($(S\i)$) {$S_{\i}$};
			}
			
			
			\foreach \i in {1,...,6} {
				\pgfmathsetmacro\angle{60*(\i-1) + 30}
				\coordinate (Z\i) at ({\r*cos(\angle)}, {\r*sin(\angle)});
				
				\filldraw[white, fill=white] (Z\i) circle (10pt);
				\node[centered] at ($(Z\i)$) {$Z_{\i}$};
			}
			
			\filldraw[white, fill=white] (0,0) circle (10pt);
			\node[centered] at (0,0) {$T$};
			
			\draw[->, shorten <=10pt, shorten >=10pt] (Z1) -- (S1) node[midway, fill=white, inner sep=1pt] {$E_4,E_5$};
			\draw[->, shorten <=10pt, shorten >=10pt] (Z1) -- (S2) node[midway, fill=white, inner sep=1pt] {$L_{45},C_{67}$};
			\draw[->, shorten <=10pt, shorten >=10pt] (0,0) -- (Z1) node[midway, fill=white, inner sep=1pt] {$E_6,E_7$};; 
			
			\draw[->, shorten <=10pt, shorten >=10pt] (Z2) -- (S2) node[midway, fill=white, inner sep=1pt] {$E_6,E_7$};
			\draw[->, shorten <=10pt, shorten >=10pt] (Z2) -- (S3) node[midway, fill=white, inner sep=1pt] {$F_4,F_5$};
			\draw[->, shorten <=10pt, shorten >=10pt] (0,0) -- (Z2) node[midway, fill=white, inner sep=1pt] {$L_{45},C_{67}$};; 
			
			\draw[->, shorten <=10pt, shorten >=10pt] (Z3) -- (S3) node[midway, fill=white, inner sep=1pt] {$L_{45},C_{67}$};
			\draw[->, shorten <=10pt, shorten >=10pt] (Z3) -- (S4) node[midway, fill=white, inner sep=1pt] {$F_6,F_7$};
			\draw[->, shorten <=10pt, shorten >=10pt] (0,0) -- (Z3) node[midway, fill=white, inner sep=1pt] {$F_4,F_5$};; 
			
			\draw[->, shorten <=10pt, shorten >=10pt] (Z4) -- (S4) node[midway, fill=white, inner sep=1pt] {$F_4,F_5$};
			\draw[->, shorten <=10pt, shorten >=10pt] (Z4) -- (S5) node[midway, fill=white, inner sep=1pt] {$L_{67},C_{45}$};
			\draw[->, shorten <=10pt, shorten >=10pt] (0,0) -- (Z4) node[midway, fill=white, inner sep=1pt] {$F_6,F_7$};; 
			
			\draw[->, shorten <=10pt, shorten >=10pt] (Z5) -- (S5) node[midway, fill=white, inner sep=1pt] {$F_6,F_7$};
			\draw[->, shorten <=10pt, shorten >=10pt] (Z5) -- (S6) node[midway, fill=white, inner sep=1pt] {$E_4,E_5$};
			\draw[->, shorten <=10pt, shorten >=10pt] (0,0) -- (Z5) node[midway, fill=white, inner sep=1pt] {$L_{67},C_{45}$}; 
			
			\draw[->, shorten <=10pt, shorten >=10pt] (Z6) -- (S6) node[midway, fill=white, inner sep=1pt] {$L_{67},C_{45}$};
			\draw[->, shorten <=10pt, shorten >=10pt] (Z6) -- (S1) node[midway, fill=white, inner sep=1pt] {$E_6,E_7$};
			\draw[->, shorten <=10pt, shorten >=10pt] (0,0) -- (Z6) node[midway, fill=white, inner sep=1pt] {$E_4,E_5$}; 
			
			\draw[->, dashed] ([shift={(4:5cm)}]0,0) arc (4:56:5cm) node[midway, above right] {$\chi_1$};
			
			\draw[->, dashed] ([shift={(64:5cm)}]0,0) arc (64:116:5cm) node[midway, above right] {$\chi_2$};
			
			\draw[->, dashed] ([shift={(124:5cm)}]0,0) arc (124:176:5cm) 
			node[midway, above left] {$\chi_3$};
			
			\draw[->, dashed] ([shift={(184:5cm)}]0,0) arc (184:236:5cm) 
			node[midway, below left] {$\chi_4$};
			
			\draw[->, dashed] ([shift={(244:5cm)}]0,0) arc (244:296:5cm) 
			node[midway, below right] {$\chi_5$};
			
			\draw[->, dashed] ([shift={(304:\R cm)}]0,0) arc (304:356:\R cm) 
			node[midway, below right] {$\chi_6$};	
		\end{tikzpicture}
	\end{center}
	\caption{The relation $\chi_6\circ\chi_5\circ\chi_4\circ\chi_3\circ\chi_2\circ\chi_1 = \id$ between 2-links}
	\label{fig: relation of 2-links}
\end{figure}

We finally construct some non-trivial quotients of $\Bir(S)$, where $S$ is a sextic $G$-del Pezzo of index 2.

\begin{rem}
	In the course of the proof of Theorem \ref{thm: index 2 homomorphism}, we will find the system of relations between the generators $U$ in the index 2 case. Namely, these are relations of Proposition \ref{prop: index 3 relations}, the relations \eqref{eq: index 2 relations 1}-\eqref{eq: index 2 relations 3}, and the relations \eqref{eq: index 2 relations 4}-\eqref{eq: index 2 relations 6} sitting between $B_\chi^{-1}$ and $B_\chi$.
\end{rem}

\begin{thm}\label{thm: index 2 homomorphism}
	Let $S$ be a del Pezzo surface of degree 6 with $\rk\Pic(S)=1$ and $\indexx(S)=2$. Let~$\mathcal{E}$~ be the set of positive edges and almost involutions in the graph $\mathcal{G}_S$, and $\mathcal{I}$ be the set (possibly empty) of degree 4 points in general position on $S$. Then the map 
	\begin{align*}
	U_\chi\mapsto 
	\begin{cases}
		\ \ 1_{[\chi]} &  \text{ where $\chi$ is positive or an almost involution,}\\
		\ \ 1_{[\chi^{-1}]} & \text{ where $\chi$ is negative,}\\
		\ \ \id & \text{ if } \chi\text{ is an automorphism,}\ \chi\in\mathcal{R}_E\text{ or } \chi^{-1} \in \mathcal{R}_E
	\end{cases}
	\end{align*}
	defines a surjective non-trivial group homomorphism	
	\[
	\Bir_\kk(S)\twoheadrightarrow\left (\bigoplus_{\mathcal{E}\setminus\mathcal{R}_E}\ZZ/2\ZZ \right ) \ast \left (\bigast_{\mathcal{I}} \ZZ / 2 \ZZ \right ),
	\]
\end{thm}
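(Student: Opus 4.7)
The plan parallels the proof of Theorem \ref{thm: index 3 structure of Bir}, with two new ingredients specific to the index $2$ setting: the hexagonal elementary relations of Proposition \ref{prop: relations of 2-links} and the Geiser involutions at $4$-points. The first step is to establish a presentation of $\Bir_\kk(S)$ in terms of the generators $A_\chi, B_\chi, C_\varphi, D_\varphi$ by extending Proposition \ref{prop: index 3 relations} to index $2$. By Theorem \ref{thm: sarkisov}, every relation is a product of conjugates of trivial and elementary relations; the same decomposition $\psi = \chi \circ \psi'$ with $\psi' \in \Bir_\kk(S)$ used there reduces each conjugated trivial relation to a product of conjugates of the six types (1)--(4) listed in Proposition \ref{prop: index 3 relations}. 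Conjugates of the hexagonal elementary relations reduce similarly to a new family of relations of the form $U_6 \circ \cdots \circ U_1 = \id$, where each $U_i$ is an $A$- or $B$-type generator coming from a $2$-link $\chi_i$, subject to the equivalences $\chi_i \sim \chi_{i+3}$ inherited from Proposition \ref{prop: relations of 2-links}.

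The second step is to check that $\Psi$ vanishes on each defining relation. For the six families inherited from Proposition \ref{prop: index 3 relations}, the arguments transfer verbatim from Theorem \ref{thm: index 3 structure of Bir}, with the only modification that positive edges now contribute order-$2$ elements, so $1_{[\chi]} + 1_{[\chi]} = 0$ replaces $1_{[\chi]} - 1_{[\chi]} = 0$. The crucial new verification is that the hexagonal relations vanish: by $\chi_i \sim \chi_{i+3}$ the six generators map to
\[
1_{[\chi_1]} + 1_{[\chi_2]} + 1_{[\chi_3]} + 1_{[\chi_1]} + 1_{[\chi_2]} + 1_{[\chi_3]} = 2\bigl(1_{[\chi_1]} + 1_{[\chi_2]} + 1_{[\chi_3]}\bigr) = 0
\]
in $\bigoplus_{\mathcal{E}\setminus\mathcal{R}_E} \ZZ/2\ZZ$. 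This computation is precisely the reason why the $\mathcal{E}$-factor must be a direct sum rather than a free product. A Geiser involution $\gamma$ at a $4$-point appears in no elementary relation (those involve only $2$-links by Proposition \ref{prop: relations of 2-links}), so the only relation on $\gamma$ that survives through $\Psi$ is $\gamma^2 = \id$, which explains the free $\ZZ/2\ZZ$ factors indexed by $\mathcal{I}$.

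The third step is surjectivity and non-triviality. Surjectivity is immediate from the definition of $\Psi$. For non-triviality I would combine Proposition \ref{prop: 2-points} with Theorem \ref{thm: Birational classification} and Proposition \ref{prop: 2-points new Splitting Field}: the $2$-points in general position with prescribed quadratic splitting fields furnish either a Sarkisov $2$-link whose target is isomorphic to a surface already reached by a distinct element of $\mathcal{R}_E$ (providing a non-trivial element of $\mathcal{E} \setminus \mathcal{R}_E$) or, whenever $S$ carries a $4$-point in general position, a Geiser involution contributing to $\mathcal{I}$.

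The hard part will be the bookkeeping in the second step for the hexagonal relations. Depending on how many of $\chi_1, \ldots, \chi_6$ are self-links at $S$, links to or from $S$, or links between other vertices, the six generators $U_i$ split among the types $A, B, C, D$ in one of several patterns; in each pattern one must track the sign labels on the edges $[\chi_i]$ and confirm that the pairing $\chi_i \sim \chi_{i+3}$ contributes to $\Psi$ with the \emph{same} sign at both positions, so that the sum indeed collapses to $2\sum 1_{[\chi_i]}$. A secondary obstacle is guaranteeing that $(\mathcal{E} \setminus \mathcal{R}_E) \cup \mathcal{I}$ is genuinely non-empty for every non-$\kk$-rational $S$ with $\indexx(S)=2$, which requires combining the general-position analysis of Section \ref{section: closed points} with the structure of the graph $\mathcal{G}_S$ afforded by Corollary \ref{cor: 2 linkSBData}.
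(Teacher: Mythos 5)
Your proposal is correct and follows essentially the same route as the paper: extend the presentation of Proposition \ref{prop: index 3 relations} by the hexagonal relations of Proposition \ref{prop: relations of 2-links}, observe that the pairing $\chi_i\sim\chi_{i+3}$ makes each equivalence class appear exactly twice so that the relation dies in the $2$-torsion abelian factor (which is precisely why that factor is a direct sum), note that Geiser involutions occur in no elementary relation, and derive non-triviality from the self-links guaranteed by Proposition \ref{prop: 2-points}. The case bookkeeping you flag as the hard part is exactly what the paper carries out in its displayed relations, and your key observation already resolves it.
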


\begin{proof}
Let us first notice that Geiser birational involutions do not appear in any non-trivial relations. Now we find the relations between the generators $U$, which come from the relations of Proposition \ref{prop: relations of 2-links}. Let $R$ be the relation of length 6, given in Proposition \ref{prop: relations of 2-links}, which we view as a sequence of links
\begin{equation}\label{eq: six-relation sequence}
	\begin{tikzcd}
		Z
		\arrow[dashed]{r}{\chi_1}
		& 
		S_{v_1}
		\arrow[dashed]{r}{\chi_2}
		& 
		S_{v_2}
		\arrow[dashed]{r}{\chi_3}
		& 
		S_{v_3}
		\arrow[dashed]{r}{\chi_4}
		& 
		S_{v_4}
		\arrow[dashed]{r}{\chi_5}
		& 
		S_{v_5}
		\arrow[dashed]{r}{\chi_6}
		& 
		Z
	\end{tikzcd}
\end{equation}
We use the notation of the proof of Proposition \ref{prop: index 3 relations}, and again distinguish several cases.
\begin{enumerate}[leftmargin=*, labelindent=5pt, itemsep=5pt]
	\item Suppose that $\chi=\varnothing$, so $Z=S$ in the sequence \eqref{eq: six-relation sequence}. 
	\begin{itemize}[leftmargin=*, labelindent=-5pt, itemsep=5pt]
		\vspace{0.3cm}
		\item Suppose that all surfaces in \eqref{eq: six-relation sequence} are isomorphic, i.e. $v=v_1=\ldots=v_5$. Then
		\begin{equation}\label{eq: index 2 relations 1}
		R=C_{\chi_6}\circ\ldots\circ C_{\chi_1}.
		\end{equation} 
		\item Suppose that any two surfaces in \eqref{eq: six-relation sequence} related by a link are different. Then 
		\begin{equation}\label{eq: index 2 relations 2}
		R=B_{\chi_6^{-1}}^{-1}\circ A_{\chi_5}\circ B_{\chi_4}\circ B_{\chi_3^{-1}}^{-1}\circ A_{\chi_2}\circ B_{\chi_1}
		\end{equation} 
		\item Denote $Z=S=S_v$. If $\chi_1$ and $\chi_4$, or $\chi_2$ and $\chi_5$, or $\chi_3$ and $\chi_6$ are self-links, then we get that $R$ is equal, respectively, to		
		\begin{equation}\label{eq: index 2 relations 3} 
		\begin{aligned}
			& B_{{\chi_6}^{-1}}^{-1}\circ B_{\chi_5}\circ C_{\chi_4} \circ B_{\chi_3^{-1}}^{-1}\circ B_{\chi_2} \circ C_{\chi_1} \quad && \text{when}\ v=v_1=v_3=v_4,\ v_1\ne v_2,\ v_2\ne v_3, \\
			& B_{{\chi_6}^{-1}}^{-1}\circ D_{\chi_5}\circ B_{\chi_4}\circ B_{\chi_3^{-1}}^{-1}\circ D_{\chi_2}\circ B_{\chi_1} \quad && \text{when}\ v_1=v_2=v_4=v_5,\ v\ne v_1,\ v_2\ne v_3. \\
			& C_{\chi_6}\circ B_{{\chi_5}^{-1}}^{-1}\circ B_{\chi_4} \circ C_{\chi_3}\circ B_{{\chi_2}^{-1}}^{-1} \circ B_{\chi_1} \quad && \text{when}\ v=v_2=v_3=v_5,\ v\ne v_1,\ v_1\ne v_2.
		\end{aligned}
		\end{equation} 		
	\end{itemize}	
	
	\item Suppose that $\psi=\chi\circ\psi'$, where $\psi'\in\Bir(S)$ and $\chi\colon S\dashrightarrow S_v=Z$ is a birational map. As in the proof of Proposition \ref{prop: index 3 relations}, it is enough to prove the statement for $\chi$ a single Sarkisov link (the general case then can be reduced to this one, as in the part (3) of the aforementioned proof). 
	\begin{itemize}[leftmargin=*, labelindent=-5pt, itemsep=5pt]
		\item Suppose that all surfaces in \eqref{eq: six-relation sequence} are isomorphic. Then
		\begin{equation}\label{eq: index 2 relations 4} 
		\chi^{-1}\circ R\circ\chi=B_\chi^{-1}\circ D_{\chi_6}\circ\ldots\circ D_{\chi_1}\circ B_\chi.
		\end{equation}  
		\item Suppose that any two surfaces in \eqref{eq: six-relation sequence} related by a link are different. Then
		\begin{equation}\label{eq: index 2 relations 5} 
		\chi^{-1}\circ R\circ\chi=B_\chi^{-1}\circ A_{\chi_6}\circ\ldots\circ A_{\chi_1}\circ B_\chi.
		\end{equation}  
		\item Similarly, we get that $\chi^{-1}\circ R\circ\chi$ equals		
		\begin{equation}\label{eq: index 2 relations 6}
		\begin{aligned}
			B_\chi^{-1}\circ A_{\chi_6}\circ A_{\chi_5} \circ D_{\chi_4}\circ A_{\chi_3} \circ A_{\chi_2}\circ D_{\chi_1}\circ B_\chi \quad & \text{when}\ v=v_1=v_3=v_4,\ v_1\ne v_2,\ v_2\ne v_3, \\
			B_\chi^{-1}\circ A_{\chi_6}\circ D_{\chi_5} \circ A_{\chi_4}\circ A_{\chi_3} \circ D_{\chi_2}\circ A_{\chi_1}\circ B_\chi \quad & \text{when}\ v_1=v_2=v_4=v_5,\ v\ne v_1,\ v_2\ne v_3. \\
			B_\chi^{-1}\circ D_{\chi_6}\circ A_{\chi_5} \circ A_{\chi_4}\circ D_{\chi_3} \circ A_{\chi_2}\circ A_{\chi_1}\circ B_\chi \quad & \text{when}\ v=v_2=v_3=v_5,\ v\ne v_1,\ v_1\ne v_2.
		\end{aligned}
		\end{equation} 			
	\end{itemize}	
	Notice that if two links in the sequence $Z\dashrightarrow\ldots\dashrightarrow S_{v_3}$ are self-links, then all links in \eqref{eq: six-relation sequence} are. This gives all relations.
\end{enumerate}		
	
To check that that the homomorphism $\Psi$ is well defined, we need to prove that all the relations are sent to the identity. For the relations like in Proposition \ref{prop: index 3 relations} this follows similarly as in Theorem~\ref{thm: index 3 structure of Bir}. For the relations found above, we observe that every $U$ appears exactly twice and thus the relation is sent to the identity. The non-triviality follows from the existence of self-links given in Proposition \ref{prop: 2-points}.
\end{proof}

\def\bibindent{2.5em}

\bibliographystyle{alphadin}
\bibliography{biblio}

\begin{thebibliography}{AHTVA19}


\providecommand{\url}[1]{\texttt{#1}}
\expandafter\ifx\csname urlstyle\endcsname\relax
  \providecommand{\doi}[1]{doi: #1}\else
  \providecommand{\doi}{doi: \begingroup \urlstyle{rm}\Url}\fi

\bibitem[AB18]{AuelBernadara}
\textsc{Auel}, Asher ; \textsc{Bernardara}, Marcello:
\newblock Semiorthogonal decompositions and birational geometry of del {Pezzo}
  surfaces over arbitrary fields.
\newblock {In: }\emph{Proc. Lond. Math. Soc. (3)} 117 (2018), Nr. 1, S. 1--64.
\newblock \url{http://dx.doi.org/10.1112/plms.12119}. --
\newblock DOI 10.1112/plms.12119. --
\newblock ISSN 0024--6115

\bibitem[AHTVA19]{Addington}
\textsc{Addington}, Nicolas ; \textsc{Hassett}, Brendan ; \textsc{Tschinkel},
  Yuri  ; \textsc{V{\'a}rilly-Alvarado}, Anthony:
\newblock Cubic fourfolds fibered in sextic del {Pezzo} surfaces.
\newblock {In: }\emph{Am. J. Math.} 141 (2019), Nr. 6, S. 1479--1500.
\newblock \url{http://dx.doi.org/10.1353/ajm.2019.0041}. --
\newblock DOI 10.1353/ajm.2019.0041. --
\newblock ISSN 0002--9327

\bibitem[BDLM24]{BallardDuncanLamarcheMcFaddin}
\textsc{Ballard}, Matthew~R. ; \textsc{Duncan}, Alexander ; \textsc{Lamarche},
  Alicia  ; \textsc{McFaddin}, Patrick~K.:
\newblock Separable algebras and coflasque resolutions.
\newblock {In: }\emph{Adv. Math.} 444 (2024), S.~39.
\newblock \url{http://dx.doi.org/10.1016/j.aim.2024.109596}. --
\newblock DOI 10.1016/j.aim.2024.109596. --
\newblock ISSN 0001--8708. --
\newblock Id/No 109596

\bibitem[Blu10]{Blunk}
\textsc{Blunk}, Mark:
\newblock Del {Pezzo} surfaces of degree 6 over an arbitrary field.
\newblock {In: }\emph{J. Algebra} 323 (2010), Nr. 1, S. 42--58.
\newblock \url{http://dx.doi.org/10.1016/j.jalgebra.2009.09.006}. --
\newblock DOI 10.1016/j.jalgebra.2009.09.006. --
\newblock ISSN 0021--8693

\bibitem[BLZ21]{BLZ}
\textsc{{Blanc}}, J{\'e}r{\'e}my ; \textsc{{Lamy}}, St{\'e}phane  ;
  \textsc{{Zimmermann}}, Susanna:
\newblock {Quotients of higher dimensional Cremona groups}.
\newblock {In: }\emph{Acta Math} 226 (2021), Nr. 2, S. 211--318

\bibitem[Boi23]{Boitrel}
\textsc{Boitrel}, Aurore:
\newblock Del {Pezzo} surfaces of degree $5$ over perfect fields.
\newblock   (2023).
\newblock \url{https://arxiv.org/abs/2304.05328}

\bibitem[BSS11]{BlunkSierraSmith}
\textsc{Blunk}, M. ; \textsc{Sierra}, S.~J.  ; \textsc{Smith}, S.~P.:
\newblock A derived equivalence for a degree 6 del {Pezzo} surface over an
  arbitrary field.
\newblock {In: }\emph{J. \(K\)-Theory} 8 (2011), Nr. 3, S. 481--492.
\newblock \url{http://dx.doi.org/10.1017/is010011013jkt134}. --
\newblock DOI 10.1017/is010011013jkt134. --
\newblock ISSN 1865--2433

\bibitem[BSY22]{BlancSchneiderYasinsky}
\textsc{Blanc}, Jérémy ; \textsc{Schneider}, Julia  ; \textsc{Yasinsky},
  Egor:
\newblock \emph{Birational maps of Severi-Brauer surfaces, with applications to
  Cremona groups of higher rank}.
\newblock 2022

\bibitem[CM04]{CortiMella}
\textsc{Corti}, Alessio ; \textsc{Mella}, Massimiliano:
\newblock Birational geometry of terminal quartic 3-folds. {I}.
\newblock {In: }\emph{Am. J. Math.} 126 (2004), Nr. 4, S. 739--761.
\newblock \url{http://dx.doi.org/10.1353/ajm.2004.0026}. --
\newblock DOI 10.1353/ajm.2004.0026. --
\newblock ISSN 0002--9327

\bibitem[Cor77]{CorayPointsAlgebriques}
\textsc{Coray}, Daniel:
\newblock Points algebriques sur les surfaces de del {Pezzo}.
\newblock {In: }\emph{C. R. Acad. Sci., Paris, S{\'e}r. A} 284 (1977), S.
  1531--1534. --
\newblock ISSN 0366--6034

\bibitem[Cor05]{Corn}
\textsc{Corn}, Patrick:
\newblock Del {Pezzo} surfaces of degree 6.
\newblock {In: }\emph{Math. Res. Lett.} 12 (2005), Nr. 1, S. 75--84.
\newblock \url{http://dx.doi.org/10.4310/MRL.2005.v12.n1.a8}. --
\newblock DOI 10.4310/MRL.2005.v12.n1.a8. --
\newblock ISSN 1073--2780

\bibitem[Cox40]{Coxeter-polytope2-21}
\textsc{Coxeter}, H. S.~M.:
\newblock The polytope {{\(2_{21}\)}}, whose twenty-seven vertices correspond
  to the lines on the general cubic surface.
\newblock {In: }\emph{Am. J. Math.} 62 (1940), S. 457--486.
\newblock \url{http://dx.doi.org/10.2307/2371466}. --
\newblock DOI 10.2307/2371466. --
\newblock ISSN 0002--9327

\bibitem[CR00]{CortiReid}
\textsc{Corti}, Alessio ; \textsc{Reid}, Miles:
\newblock \emph{Explicit birational geometry of 3-folds, {Foreword}}.
\newblock Preprint, {arXiv}:math/0007004 [math.{AG}] (2000).
\newblock \url{https://arxiv.org/abs/math/0007004}.
\newblock \,Version:\,2000

\bibitem[CS16]{CheltsovShramovIcosahedron}
\textsc{Cheltsov}, Ivan ; \textsc{Shramov}, Constantin:
\newblock \emph{Cremona groups and the icosahedron}.
\newblock Boca Raton, FL: CRC Press, 2016 (Monogr. Res. Notes Math.).
\newblock \url{http://dx.doi.org/10.1201/b18980}.
\newblock \url{http://dx.doi.org/10.1201/b18980}. --
\newblock ISBN 978--1--4822--5159--3; 978--1--4822--5160--9

\bibitem[CS23]{CheltsovSarikyan}
\textsc{Cheltsov}, Ivan ; \textsc{Sarikyan}, Arman:
\newblock Equivariant pliability of the projective space.
\newblock {In: }\emph{Sel. Math., New Ser.} 29 (2023), Nr. 5, S.~84.
\newblock \url{http://dx.doi.org/10.1007/s00029-023-00869-4}. --
\newblock DOI 10.1007/s00029--023--00869--4. --
\newblock ISSN 1022--1824. --
\newblock Id/No 71

\bibitem[CT21]{JLCTQuadrics}
\textsc{Colliot-Th{\'e}l{\`e}ne}, J.-L.:
\newblock Quadric surfaces birational to each other.
  https://www.imo.universite-paris-saclay.fr/~jean-louis.colliot-thelene/quadsurfbirat.pdf.
\newblock   (2021)

\bibitem[CTKM08]{CTKarpenkoMerkurjev}
\textsc{Colliot-Th{\'e}l{\`e}ne}, J.-L. ; \textsc{Karpenko}, N.~A.  ;
  \textsc{Merkur'ev}, A.~S.:
\newblock Rational surfaces and the canonical dimension of
  {{\(\mathrm{PGL}_6\)}}.
\newblock {In: }\emph{St. Petersbg. Math. J.} 19 (2008), Nr. 5, S. 793--804.
\newblock \url{http://dx.doi.org/10.1090/S1061-0022-08-01021-2}. --
\newblock DOI 10.1090/S1061--0022--08--01021--2. --
\newblock ISSN 1061--0022

\bibitem[DF04]{DummitFoote}
\textsc{Dummit}, David~S. ; \textsc{Foote}, Richard~M.:
\newblock \emph{Abstract algebra}.
\newblock 3rd ed.
\newblock Chichester: Wiley, 2004. --
\newblock ISBN 0--471--45234--3

\bibitem[DS25]{DuncanSingh}
\textsc{Duncan}, Alexander ; \textsc{Singh}, Pankaj:
\newblock Classifying torsors of tori with {B}rauer groups.arXiv: 2505.10386.
\newblock   (2025)

\bibitem[Dun16]{DuncanTwistedForms}
\textsc{Duncan}, Alexander:
\newblock Twisted forms of toric varieties.
\newblock {In: }\emph{Transform. Groups} 21 (2016), Nr. 3, S. 763--802.
\newblock \url{http://dx.doi.org/10.1007/s00031-016-9394-5}. --
\newblock DOI 10.1007/s00031--016--9394--5. --
\newblock ISSN 1083--4362

\bibitem[GLL13]{GabberLiuLorenzini}
\textsc{Gabber}, Ofer ; \textsc{Liu}, Qing  ; \textsc{Lorenzini}, Dino:
\newblock The index of an algebraic variety.
\newblock {In: }\emph{Invent. Math.} 192 (2013), Nr. 3, S. 567--626.
\newblock \url{http://dx.doi.org/10.1007/s00222-012-0418-z}. --
\newblock DOI 10.1007/s00222--012--0418--z. --
\newblock ISSN 0020--9910

\bibitem[GS17]{Gille}
\textsc{Gille}, Philippe ; \textsc{Szamuely}, Tam{\'a}s:
\newblock \emph{Camb. Stud. Adv. Math.}. Bd. 165: {\emph{Central simple
  algebras and {Galois} cohomology}}.
\newblock 2nd revised and updated edition.
\newblock Cambridge: Cambridge University Press, 2017.
\newblock \url{http://dx.doi.org/10.1017/9781316661277}.
\newblock \url{http://dx.doi.org/10.1017/9781316661277}. --
\newblock ISBN 978--1--107--15637--1

\bibitem[HM13]{HMcK}
\textsc{Hacon}, Christopher~D. ; \textsc{McKernan}, James:
\newblock The {Sarkisov} program.
\newblock {In: }\emph{J. Algebr. Geom.} 22 (2013), Nr. 2, S. 389--405.
\newblock \url{http://dx.doi.org/10.1090/S1056-3911-2012-00599-2}. --
\newblock DOI 10.1090/S1056--3911--2012--00599--2. --
\newblock ISSN 1056--3911

\bibitem[{Isk}96]{Isk1996}
\textsc{{Iskovskikh}}, V.~A.:
\newblock {Factorization of birational maps of rational surfaces from the
  viewpoint of Mori theory}.
\newblock {In: }\emph{{Russ. Math. Surv.}} 51 (1996), Nr. 4, S. 585--652

\bibitem[Kal13]{Kaloghiros}
\textsc{Kaloghiros}, Anne-Sophie:
\newblock Relations in the {Sarkisov} program.
\newblock {In: }\emph{Compos. Math.} 149 (2013), Nr. 10, S. 1685--1709.
\newblock \url{http://dx.doi.org/10.1112/S0010437X13007306}. --
\newblock DOI 10.1112/S0010437X13007306. --
\newblock ISSN 0010--437X

\bibitem[KMRT98]{Knus}
\textsc{Knus}, Max-Albert ; \textsc{Merkurjev}, Alexander ; \textsc{Rost},
  Markus  ; \textsc{Tignol}, Jean-Pierre:
\newblock \emph{Colloq. Publ., Am. Math. Soc.}. Bd.~44: {\emph{The book of
  involutions. {With} a preface by {J}. {Tits}}}.
\newblock Providence, RI: American Mathematical Society, 1998. --
\newblock ISBN 0--8218--0904--0

\bibitem[Kol05]{KollarConics}
\textsc{Koll{\'a}r}, J{\'a}nos:
\newblock Conics in the {Grothendieck} ring.
\newblock {In: }\emph{Adv. Math.} 198 (2005), Nr. 1, S. 27--35.
\newblock \url{http://dx.doi.org/10.1016/j.aim.2005.01.004}. --
\newblock DOI 10.1016/j.aim.2005.01.004. --
\newblock ISSN 0001--8708

\bibitem[Kol16]{Kollar_SB}
\textsc{Koll{á}r}, J{á}nos:
\newblock \emph{Severi-Brauer varieties; a geometric treatment}.
\newblock \url{http://dx.doi.org/10.48550/ARXIV.1606.04368}.
\newblock \,Version:\,2016

\bibitem[KT19]{KreschTschinkelModels}
\textsc{Kresch}, Andrew ; \textsc{Tschinkel}, Yuri:
\newblock Models of {Brauer}-{Severi} surface bundles.
\newblock {In: }\emph{Mosc. Math. J.} 19 (2019), Nr. 3, 549--595.
\newblock \url{www.mathjournals.org/mmj/2019-019-003/2019-019-003-005.html}. --
\newblock ISSN 1609--3321

\bibitem[KT20]{KreschTschinkel}
\textsc{Kresch}, Andrew ; \textsc{Tschinkel}, Yuri:
\newblock Stable rationality of {Brauer}-{Severi} surface bundles.
\newblock {In: }\emph{Manuscr. Math.} 161 (2020), Nr. 1-2, S. 1--14.
\newblock \url{http://dx.doi.org/10.1007/s00229-018-1087-z}. --
\newblock DOI 10.1007/s00229--018--1087--z. --
\newblock ISSN 0025--2611

\bibitem[KT21]{KreschTschinkelInvolution}
\textsc{Kresch}, Andrew ; \textsc{Tschinkel}, Yuri:
\newblock Brauer groups of involution surface bundles.
\newblock {In: }\emph{Pure Appl. Math. Q.} 17 (2021), Nr. 2, S. 649--669.
\newblock \url{http://dx.doi.org/10.4310/PAMQ.2021.v17.n2.a4}. --
\newblock DOI 10.4310/PAMQ.2021.v17.n2.a4. --
\newblock ISSN 1558--8599

\bibitem[KT22]{KreschTschinkelDP6}
\textsc{Kresch}, Andrew ; \textsc{Tschinkel}, Yuri:
\newblock Fibrations in sextic del {Pezzo} surfaces with mild singularities.
\newblock {In: }\emph{Rend. Semin. Mat. Univ. Padova} 148 (2022), S. 65--82.
\newblock \url{http://dx.doi.org/10.4171/RSMUP/109}. --
\newblock DOI 10.4171/RSMUP/109. --
\newblock ISSN 0041--8994

\bibitem[Kur25]{Kurz}
\textsc{Kurz}, Elias:
\newblock A determinant on birational maps of {S}everi-{B}rauer surfaces.
  arXiv:2502.02981.
\newblock   (2025)

\bibitem[Kuz21]{KuznetsovDP6}
\textsc{Kuznetsov}, Alexander:
\newblock Derived categories of families of sextic del {Pezzo} surfaces.
\newblock {In: }\emph{Int. Math. Res. Not.} 2021 (2021), Nr. 12, S. 9262--9339.
\newblock \url{http://dx.doi.org/10.1093/imrn/rnz081}. --
\newblock DOI 10.1093/imrn/rnz081. --
\newblock ISSN 1073--7928

\bibitem[Lan54]{LangSomeApplications}
\textsc{Lang}, Serge:
\newblock Some applications of the local uniformization theorem.
\newblock {In: }\emph{Am. J. Math.} 76 (1954), S. 362--374.
\newblock \url{http://dx.doi.org/10.2307/2372578}. --
\newblock DOI 10.2307/2372578. --
\newblock ISSN 0002--9327

\bibitem[Lie17]{Liedtke}
\textsc{Liedtke}, Christian:
\newblock Morphisms to {Brauer}-{Severi} varieties, with applications to del
  {Pezzo} surfaces.
\newblock {In: }\emph{Geometry over nonclosed fields. Proceedings of the Simons
  symposium, March 22--28, 2015}.
\newblock Cham: Springer, 2017, S. 157--196

\bibitem[LS24]{LamySchneider}
\textsc{Lamy}, St\'{e}phane ; \textsc{Schneider}, Julia:
\newblock \emph{Generating the plane {C}remona groups by involutions}.
\newblock \url{http://dx.doi.org/10.14231/ag-2024-004}.
\newblock \,Version:\,2024

\bibitem[LSZ23]{LinShinderZimmermann}
\textsc{Lin}, Hsueh-Yung ; \textsc{Shinder}, Evgeny  ; \textsc{Zimmermann},
  Susanna:
\newblock Factorization centers in dimension 2 and the {Grothendieck} ring of
  varieties.
\newblock {In: }\emph{Algebr. Geom.} 10 (2023), Nr. 6, S. 666--693.
\newblock \url{http://dx.doi.org/10.14231/AG-2023-024}. --
\newblock DOI 10.14231/AG--2023--024. --
\newblock ISSN 2313--1691

\bibitem[LZ20]{LamyZimmermann}
\textsc{Lamy}, St{\'e}phane ; \textsc{Zimmermann}, Susanna:
\newblock Signature morphisms from the {Cremona} group over a non-closed field.
\newblock {In: }\emph{J. Eur. Math. Soc. (JEMS)} 22 (2020), Nr. 10, S.
  3133--3173.
\newblock \url{http://dx.doi.org/10.4171/JEMS/983}. --
\newblock DOI 10.4171/JEMS/983. --
\newblock ISSN 1435--9855

\bibitem[Man86]{ManinCubicForms}
\textsc{Manin}, Yu.~I.:
\newblock \emph{North-Holland Math. Libr.}. Bd.~4: {\emph{Cubic forms.
  {Algebra}, geometry, arithmetic. {Transl}. from the {Russian} by {M}.
  {Hazewinkel}. 2nd ed}}.
\newblock Elsevier (North-Holland), Amsterdam, 1986

\bibitem[Nis55]{Nishimura}
\textsc{Nishimura}, Hajime:
\newblock Some remarks on rational points.
\newblock {In: }\emph{Mem. Coll. Sci., Univ. Kyoto, Ser. A} 29 (1955), S.
  189--192.
\newblock \url{http://dx.doi.org/10.1215/kjm/1250777265}. --
\newblock DOI 10.1215/kjm/1250777265. --
\newblock ISSN 0368--8887

\bibitem[Roq63]{Roquette}
\textsc{Roquette}, Peter:
\newblock On the {Galois} cohomology of the projective linear group and its
  applications to the construction of generic splitting fields of algebras.
\newblock {In: }\emph{Math. Ann.} 150 (1963), 411--439.
\newblock \url{http://dx.doi.org/10.1007/BF01357435}. --
\newblock DOI 10.1007/BF01357435. --
\newblock ISSN 0025--5831

\bibitem[Ros96]{Rost}
\textsc{Rost}, Markus:
\newblock Remarks on {Jordan} algebras (dim 9, deg 3), cubic surfaces, and del
  {Pezzo} surfaces (deg 6).
\newblock   (1996).
\newblock \url{https://www.math.uni-bielefeld.de/~rost/data/JoCub.pdf}

\bibitem[Sar23]{Sarikyan}
\textsc{Sarikyan}, Arman:
\newblock On the rationality of {Fano}-{Enriques} threefolds.
\newblock {In: }\emph{Algebr. Geom.} 10 (2023), Nr. 6, S. 643--665.
\newblock \url{http://dx.doi.org/10.14231/AG-2023-023}. --
\newblock DOI 10.14231/AG--2023--023. --
\newblock ISSN 2313--1691

\bibitem[SD72]{SwinnertonDyer}
\textsc{Swinnerton-Dyer}, H. P.~F.:
\newblock \emph{Rational points on del {Pezzo} surfaces of degree 5}.
\newblock Algebraic {Geom}., {Oslo} 1970, {Proc}. 5th {Nordic}
  {Summer}-{School} {Math}., 287-290 (1972)., 1972

\bibitem[Ser97]{Serre}
\textsc{Serre}, Jean-Pierre:
\newblock \emph{Galois cohomology. {Transl}. from the {French} by {Patrick}
  {Ion}}.
\newblock Berlin: Springer, 1997. --
\newblock ISBN 3--540--61990--9

\bibitem[Shr20]{Shramov1}
\textsc{Shramov}, C.~A.:
\newblock Birational automorphisms of {Severi}-{Brauer} surfaces.
\newblock {In: }\emph{Sb. Math.} 211 (2020), Nr. 3, S. 466--480.
\newblock \url{http://dx.doi.org/10.1070/SM9304}. --
\newblock DOI 10.1070/SM9304. --
\newblock ISSN 1064--5616

\bibitem[Shr21]{Shramov2}
\textsc{Shramov}, Constantin:
\newblock Finite groups acting on {Severi}-{Brauer} surfaces.
\newblock {In: }\emph{Eur. J. Math.} 7 (2021), Nr. 2, S. 591--612.
\newblock \url{http://dx.doi.org/10.1007/s40879-020-00448-3}. --
\newblock DOI 10.1007/s40879--020--00448--3. --
\newblock ISSN 2199--675X

\bibitem[Smi23]{SmithQuartic}
\textsc{Smith}, Jonathan~M.:
\newblock Automorphisms of quartic del {Pezzo} surfaces in characteristic zero.
\newblock   (2023).
\newblock \url{https://arxiv.org/abs/2308.07904}

\bibitem[Smi25]{SmithCubicSurfaces}
\textsc{Smith}, Jonathan~M.:
\newblock Groups acting on cubic surfaces in characteristic zero.
\newblock {In: }\emph{Proc. Am. Math. Soc.} 153 (2025), Nr. 3, S. 1025--1040.
\newblock \url{http://dx.doi.org/10.1090/proc/17090}. --
\newblock DOI 10.1090/proc/17090. --
\newblock ISSN 0002--9939

\bibitem[Spr52]{Springer}
\textsc{Springer}, Tonny~A.:
\newblock Sur les formes quadratiques d'indice z{\'e}ro.
\newblock {In: }\emph{C. R. Acad. Sci., Paris} 234 (1952), S. 1517--1519. --
\newblock ISSN 0001--4036

\bibitem[SV18]{ShramovVologodskyPointless}
\textsc{Shramov}, Constantin ; \textsc{Vologodsky}, Vadim:
\newblock \emph{Automorphisms of pointless surfaces, arXiv:1807.06477.}
\newblock 2018

\bibitem[SV25]{ShramovVikulova}
\textsc{Shramov}, Constantin ; \textsc{Vikulova}, Anastasia:
\newblock Automorphisms of del Pezzo surfaces without points. arXiv:2505.11596.
\newblock   (2025)

\bibitem[SZ21]{SchneiderZimmermann}
\textsc{Schneider}, Julia ; \textsc{Zimmermann}, Susanna:
\newblock Algebraic subgroups of the plane {Cremona} group over a perfect
  field.
\newblock {In: }\emph{{\'E}pijournal de G{\'e}om. Alg{\'e}br., EPIGA} 5 (2021),
  S.~48.
\newblock \url{http://dx.doi.org/10.46298/epiga.2021.6715}. --
\newblock DOI 10.46298/epiga.2021.6715. --
\newblock ISSN 2491--6765. --
\newblock Id/No 14

\bibitem[Tre23]{Trepalin2023}
\textsc{Trepalin}, Andrey:
\newblock Birational classification of pointless del {P}ezzo surfaces of degree
  8.
\newblock {In: }\emph{Eur. J. Math.} 9 (2023), Nr. 1, [Paper No. 2], 21.
\newblock \url{http://dx.doi.org/10.1007/s40879-023-00591-7}. --
\newblock DOI 10.1007/s40879--023--00591--7. --
\newblock ISSN 2199--675X,2199--6768

\bibitem[Wei89]{Weinstein89}
\textsc{Weinstein}, Felix:
\newblock \emph{On birational automorphisms of Severi--Brauer surfaces}.
\newblock 1989

\bibitem[Wei22]{Weinstein22}
\textsc{Weinstein}, Felix:
\newblock {On birational automorphisms of Severi-Brauer surfaces}.
\newblock {In: }\emph{{Communications in Mathematics}} {Volume 30 (2022), Issue
  1} (2022), M{\^^b a}rz.
\newblock \url{http://dx.doi.org/10.46298/cm.9040}. --
\newblock DOI 10.46298/cm.9040

\bibitem[Yas25]{YasinskyRigidity}
\textsc{Yasinsky}, Egor:
\newblock On $G$-birational rigidity of del Pezzo surfaces.
\newblock {In: }\emph{Épijournal de Géométrie Algébrique} Volume 9 (2025),
  mai.
\newblock \url{http://dx.doi.org/10.46298/epiga.2025.11640}. --
\newblock DOI 10.46298/epiga.2025.11640. --
\newblock ISSN 2491--6765

\bibitem[Zai23]{ZaitsevDegree5}
\textsc{Zaitsev}, A.~V.:
\newblock Forms of del {Pezzo} surfaces of degree 5 and 6.
\newblock {In: }\emph{Sb. Math.} 214 (2023), Nr. 6, S. 816--831.
\newblock \url{http://dx.doi.org/10.4213/sm9686e}. --
\newblock DOI 10.4213/sm9686e. --
\newblock ISSN 1064--5616

\bibitem[Zai24]{ZaitsevQuadrics}
\textsc{Zaitsev}, Alexandr:
\newblock \emph{Automorphisms of two-dimensional quadrics. arXiv:2401.15655}.
\newblock 2024

\end{thebibliography}

\end{document}